\newcommand{\RR}{\mathbb{R}}
\newcommand{\Sk}{\mathbb{S}^k}
\newcommand{\Dkk}{\mathbb{D}^k}
\newcommand{\Sn}{\mathbb{S}^n}
\newcommand{\Dn}{\mathbb{D}^{n+1}}
\newcommand{\Dnn}{\mathbb{D}^n}
\newcommand{\Mp}{\mathcal{M}_p(\mathbb{Z})}
\DeclareMathOperator{\Ima}{Im}
\DeclareMathOperator{\coker}{coker}%
\DeclareMathOperator{\coIma}{coim}%
\DeclareMathOperator{\Hom}{Hom}%
\DeclareMathOperator{\ind}{index}%
\DeclareMathOperator{\Char}{Char}
\newtheorem{theorem}{Theorem}[section]
\newtheorem{lemma}{Lemma}[section]
\newtheorem{remark}{Remark}[section]
\newtheorem{definition}{Definition}[section]
\newtheorem{proposition}{Proposition}[section] 
\newtheorem*{nntheo}{Theorem}
\newtheorem*{acknow}{Acknowledgment}
\newtheorem*{address}{Address}
\newtheorem*{email}{e-mail}
\title{Extending functions from a neighborhood of the sphere to the ball}
\author{Valentin Seigneur}
\begin{document}

\maketitle

\begin{abstract}
In this article, we are interested in the problem of extending the germ of a smooth function $\tilde f$ defined along the standard sphere of dimension $n$ to a function defined on the ball which has no critical points.

The article gives a necessary condition using the Morse chain complex associated to the function $f$, restriction of $\tilde{f}$ to the sphere $\Sn$, which is assumed to be a Morse function.
\end{abstract}

\section{Introduction}

The aim of the article is to study the following question.

Consider the germ of a smooth function with no critical point, defined along the standard sphere of dimension $n$ denoted $\Sn$. 
Can we extend it to a function on the ball with no critical points?

Let $\partial M$ be a closed manifold, boundary of a manifold $M$.
We will use the terminology \textit{Morse germ} to denote the germ along $\partial M$ of a real function $\tilde f$ with no critical point and whose restriction $f$ to the boundary is Morse.  
To our knowledge, this question has been tackled for the first time in the article of Blank and Laudenbach \cite{BL}, who answer it for $n=1$, then in the article of Curley \cite{Cur}, answering it for $n=2$.
In these articles, answers are combinatorical.
 More recently, Barannikov \cite{Bar} gives a necessary condition again of combinatorical nature about the Morse complex of the function $f$, with coefficients in a field. 
 
 In the present article, we give a necessary condition of algebraic nature.
This condition uses the Morse complex with coefficients in $\mathbb{Z}$ of the restriction $f$ of the germ $\tilde{f}$ to $\Sn$ and the normal data given by any representative $\tilde{f}$ of the germ. 
The question is also raised by Arnol'd in \cite{A}, see Problem 1981-8.

As $\tilde{f}$ is non-critical, the set of critical points of $f$, denoted $\mathcal{C}(f)$, is separated into two sets, depending on the derivative of $f$ along the vector normal to the sphere pointing towards the ball:
\begin{itemize}
\item points $x$ for which $\partial_t(\tilde{f}(x,0)) > 0$, points labeled "$-$" forming the set $\mathcal{C}^-(\tilde{f})$;
\item points $x$ for which $\partial_t(\tilde{f}(x,0)) <0$, points labeled "$+$" forming the set $\mathcal{C}^+(\tilde{f})$.
\end{itemize}

Given a Morse-Smale pseudo-gradient adapted to $f$ (see Section \ref{pseudograd} for definitions) we denote by $\partial_k$ the boundary operator of the Morse chain complex restricted to $\mathbb{Z}\mathcal{C}_k(f)$, and write it:
\[ \partial_k := \begin{pmatrix}
\partial_{++,k} & \partial_{+-,k} \\
\partial_{-+,k} & \partial_{--,k} \\
\end{pmatrix},\]
where, for $( \ell_1, \ell_2 )\in \{ +,- \}$, the matrix $\partial_{ \ell_1 \ell_2 ,k}$ sends $\mathbb{Z}\mathcal{C}_k^{\ell_2}(\tilde{f})$ into $\mathbb{Z}\mathcal{C}^{\ell_1 }_k(\tilde{f})$.

In general, the $\mathbb{Z}$-module $\mathbb{Z}\mathcal{C}^+(\tilde{f})$ (resp. $\mathbb{Z}\mathcal{C}^-(\tilde{f})$), freely generated by $\mathcal{C}^+(\tilde{f})$ (resp. by $\mathcal{C}^-(\tilde{f})$), is not a chain complex.
However, it becomes a chain complex with some prescribed homology groups if $\tilde{f}$ extends non-critically.
 It is the purpose of the main theorem of the article, the notation being explained more precisely in Section \ref{nec}.
  We will introduce some subgroup $G(\tilde{f})$ of the group of graded isomorphisms defined on $\mathbb{Z}\mathcal{C}(\tilde{f})$.
   It depends on the order of the critical values of the critical points of $f$ and the splitting of $\mathcal{C}(f)$ into the sets $\mathcal{C}^+(\tilde{f})$ and $\mathcal{C}^-(\tilde{f})$.
\begin{nntheo}[Theorem \ref{th1}, Section \ref{nec}]
If the germ $\tilde{f}$ has a non-critical extension then there is a matrix $M$ in $G(\tilde{f})$ such that $ (M \partial M^{-1} )_{-+} =0$ and such that $(M\partial M^{-1})_{++}$ defines a boundary operator on $(\mathbb{Z}\mathcal{C}^+(\tilde{f}))_{0\leq k \leq n}$ whose homology vanishes in all degree except in degree $n$ for which it is $\mathbb{Z}$.
\end{nntheo}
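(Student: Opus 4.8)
The plan is to realize a non-critical extension $\tilde F$ of the germ on the ball, then modify it to a single-critical-point function and read off the algebra. Concretely, I would first take a non-critical extension $\tilde F$ on $\Dn$ which restricts to $\tilde f$ near $\partial \Dn = \Sn$. By a standard rescaling/collar argument, one may assume $\tilde F$ agrees with $f$ composed with the collar projection on a neighborhood of the boundary, so that a pseudo-gradient $X$ for $\tilde F$ can be chosen transverse to the level sets and tangent to the obvious product structure near $\Sn$; along $\Sn$ the vector $X$ points inward at $\mathcal C^+(\tilde f)$ and outward at $\mathcal C^-(\tilde f)$, which is exactly the sign convention in the statement.

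Next I would attach an external collar and ``cap off'' so that $\Sn$ is no longer the boundary: glue a copy of $\Dn$ (with a function having exactly one minimum, say) to the outside, obtaining a non-critical function $\hat F$ on $S^{n+1}$ minus two small balls — or, more simply, view the problem as a Morse-theoretic cobordism. The key point is that the critical points of $f$ on $\Sn$ become, after a small perturbation of $\tilde F$ near $\Sn$ pushing the germ slightly inward, genuine critical points of an honest Morse function $F$ on $\Dn$: a point of $\mathcal C^+_k(\tilde f)$ gives an interior critical point of index $k$ (the inward-pointing derivative creates no new direction of descent), while a point of $\mathcal C^-_k(\tilde f)$ gives an interior critical point of index $k+1$. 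So $F$ is a Morse function on $\Dn$ with prescribed boundary behavior, and — this is where the hypothesis that the original $\tilde f$ extended non-critically is used — $F$ can additionally be taken so that it has exactly one further critical point, a maximum of index $n+1$ (or: the Morse complex of $F$ has the homology of a point, with the extra generator in top degree accounting for the ball). Here I would invoke the relative handle-cancellation / Morse-theoretic surgery available precisely because the germ extends.

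Then I would run the usual ``continuation'' machinery: the Morse complex of $F$ on $\Dn$ computes $H_*(\Dn, \Sn)$ or $H_*(\Dn)$ depending on orientation of the flow at the boundary, and its differential, written in the block form inherited from the splitting $\mathcal C(F) = \mathcal C^+(\tilde f) \sqcup \mathcal C^-(\tilde f) \sqcup \{\text{max}\}$, is conjugate to $\partial$ by a change of basis which is upper-triangular with respect to the ordering of critical values of $f$ — this is the definition of an element of $G(\tilde f)$, and such changes of basis arise exactly from isotoping the pseudo-gradient and from the ambiguity in choosing the unstable manifolds. Under this conjugation the flow lines from a $\mathcal C^-$ point of index $k$ down to a $\mathcal C^+$ point of index $k-1$ are exactly the entries that can be killed (because in $F$ the index shift makes these flow lines ``internal'' to the capping handle), giving $(M\partial M^{-1})_{-+}=0$; and the restriction of the differential to $\mathbb Z\mathcal C^+(\tilde f)$ together with the top generator is then a genuine complex, which by construction computes the homology of $\Dn$ (a point) in degrees $\le n$ and has the single $\mathbb Z$ in degree $n$ coming from the boundary sphere's fundamental class / the capping maximum. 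That yields exactly the two conclusions of the theorem.

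The main obstacle I expect is the precise bookkeeping in the second and third paragraphs: getting the index conventions right (why $\mathcal C^+$ keeps its index and $\mathcal C^-$ shifts up by one), and — more seriously — proving that the change of basis needed to triangularize and to cancel the $\partial_{-+}$ block genuinely lies in the prescribed group $G(\tilde f)$, i.e. is compatible with the filtration by critical values of $f$ and respects the $\pm$ splitting. This requires a careful analysis of how isotopies of the pseudo-gradient act on the Morse complex (the standard ``naturality of the Morse complex under $C^0$-small isotopies'' plus control of which flow lines can appear/disappear), and of how capping off and handle-cancellation affect only the allowed blocks. The rest — existence of a non-critical extension of product type near the boundary, the perturbation turning the germ's critical points into interior ones, and identifying $H_*(\Dn)$ — is routine Morse theory.
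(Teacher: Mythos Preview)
Your approach is genuinely different from the paper's. The paper does not build a single Morse function on $\Dn$ and read off its complex; instead it uses Cerf theory. A non-critical extension is viewed (after removing a small ball) as a generic path of functions $(f^t)_{t\in[0,1]}$ on $\Sn$ from $f$ to a trivial germ, and the proof is a descending induction on the number of bifurcations along this path: Lemma~\ref{invariance} shows property $(\mathcal P)$ is independent of the pseudo-gradient, and Lemma~\ref{induction} shows that each crossing, birth, or death preserves property $(\mathcal P)$. The base case is the trivial germ, where $(\mathcal P)$ holds tautologically.

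There is a real gap in your proposal, and it is exactly the point you flag as ``the main obstacle.'' Your description of $G(\tilde f)$ as ``upper-triangular with respect to the ordering of critical values of $f$'' is not accurate: by Definition~\ref{group}, an element $M_k\in G_k(\tilde f)$ has the very rigid block form $\begin{pmatrix} I_{p_k} & 0 \\ N_k & I_{q_k}\end{pmatrix}$, so it acts as the \emph{identity} on both $\mathbb Z\mathcal C^+_k$ and $\mathbb Z\mathcal C^-_k$ separately, and only the $(-,+)$-block $N_k$ is nonzero (subject to the critical-value constraint). Showing that whatever change of basis your construction produces lands in this small group, rather than in some larger group of filtration-preserving automorphisms, is precisely the content of the theorem. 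Your sketch gives no mechanism for this; ``isotoping the pseudo-gradient'' produces changes of basis that are triangular for the critical-value filtration, but in general mix $+$ with $+$ and $-$ with $-$ as well, so they do not lie in $G(\tilde f)$. In the paper this is exactly what the bifurcation-by-bifurcation analysis controls: Lemma~\ref{fate} and Lemma~\ref{noncrit} force labels to behave rigidly along the path, and the long case analysis in Lemma~\ref{invariance} (notably the handle-slide of a $-$ point over a $+$ point) is what pins the conjugating matrix down to $G(\tilde f)$.

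A secondary issue is that the ``push inward and cap off'' construction is too loose to carry the argument. Capping $\Dn$ by another disk carrying a single extremum requires the outer function to restrict to $f$ on $\Sn$, which is impossible unless $f$ already has two critical points; and once you perturb $\tilde F$ to create interior critical points you have discarded non-criticality, so the assertion ``$F$ can additionally be taken so that it has exactly one further critical point \dots\ I would invoke the relative handle-cancellation available precisely because the germ extends'' hides the whole difficulty rather than resolving it. (Incidentally, in the paper's conventions $\partial_{-+,k}$ sends $\mathbb Z\mathcal C^+_k$ to $\mathbb Z\mathcal C^-_{k-1}$, the reverse of what your last paragraph describes.) A Morse-theory-on-$\Dn$ proof along your lines may exist---the paper itself invokes the Borodzik--N\'emethi--Ranicki picture in Section~\ref{surgmb}, though for a different purpose---but as written the proposal does not supply the key step.
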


We also have the following theorem:

\begin{nntheo}[Theorem \ref{best}, Section \ref{theexample}]
Let $n \geq 6$.
Let $\tilde{f}$ be a Morse germ along $\Sn$ fulfilling the conclusion of the previous theorem.
We assume that $f$ has only one local maximum, one local minimum and no points of index $1$ or $n-1$.
 There is a Morse germ $\tilde{f_1}$ such that:
\begin{itemize}
\item $G(\tilde{f_1}) = G(\tilde{f})$, in particular $f_1$ has the same number of critical points as $f$, with same indices and labels;
\item $\tilde{f_1}$ extends non-critically.
\end{itemize}

\end{nntheo}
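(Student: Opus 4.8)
The strategy is to normalise the algebraic data with the help of Theorem \ref{th1} and then to realise the normalised data by an explicit handle construction on $\Dn$, producing $\tilde{f_1}$ together with a non-critical extension at the same time. Since $\tilde{f}$ fulfils the conclusion of Theorem \ref{th1}, fix $M\in G(\tilde{f})$ with $(M\partial M^{-1})_{-+}=0$ and such that $\partial':=(M\partial M^{-1})_{++}$ is a boundary operator on $(\mathbb{Z}\mathcal{C}^+(\tilde{f}))_{0\le k\le n}$ whose homology is $\mathbb{Z}$ in degree $n$ and $0$ in every other degree. We shall build a Morse germ $\tilde{f_1}$ whose Morse function $f_1$ has the same critical points as $f$ --- same index and same $\pm$-label for each --- in the same order of critical values, whose Morse chain complex is $M\partial M^{-1}$, and which admits a non-critical extension $g_1$ on $\Dn$ restricting to $f_1$ on $\Sn$. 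Because $G(\,\cdot\,)$ depends only on the indices, the labels and the order of the critical values, any germ $\tilde{f_1}$ with these data automatically satisfies $G(\tilde{f_1})=G(\tilde{f})$; so the entire content of the theorem is the construction of such a $g_1$.

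I would construct $g_1$ as a concatenation of elementary cobordisms, glued in the order prescribed by the critical values, each realising one critical point of $f_1$: a $+$-point of index $\lambda$ as a handle of index $\lambda$ attached along the interior side of the current level set, and a $-$-point as the dual handle attached along the trace of the level set in $\Sn$, the incidence numbers of these handles with the cocores below being precisely the entries of $M\partial M^{-1}$. The vanishing of the block $(M\partial M^{-1})_{-+}$ is exactly what permits the $+$-family and the $-$-family to be performed one after the other without interference, so that $g_1$ restricts on $\Sn$ to a Morse function $f_1$ with exactly the prescribed critical points. The heart of the matter is to realise the chain complex $(\mathbb{Z}\mathcal{C}^+(\tilde{f}),\partial')$ by a handle decomposition using exactly the prescribed handles; here Theorem \ref{th1} tells us that this complex has the homology of $\Sn$, together with the hypotheses that it has no generator of degree $1$ or $n-1$ and a single generator in degrees $0$ and $n$ --- in fact the minimum of $f$ is forced to be a $-$-point and the maximum a $+$-point, for otherwise $H_0(\partial')$ or $H_n(\partial')$ would be wrong since $\mathcal{C}_1(f)=\mathcal{C}_{n-1}(f)=\varnothing$. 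For this I would run the standard realisation of a prescribed free chain complex by a handlebody, as in the proof of the $h$-cobordism theorem: attach handles by increasing index and use general position together with the Whitney trick to isotope each attaching sphere so that its algebraic intersections with the cocores below realise the corresponding row of $\partial'$; the absence of index-$1$ and index-$(n-1)$ handles makes the level manifolds involved simply connected, so the Whitney trick is available and no Whitehead-torsion obstruction arises. The $-$-part is then supplied by the same argument applied to the decomposition read from the maximum down, and the two halves are glued along a common regular level; that the result is $\Dn$ follows, using $n\ge 6$, from the $h$-cobordism theorem.

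The step I expect to be the main obstacle is to carry this handle-theoretic realisation out in the \emph{relative} setting --- the level sets are compact $n$-manifolds with boundary in $\Sn$ and the critical points of $f_1$ are boundary-type singularities of $g_1$ --- and to check that the hypotheses $n\ge 6$ and the absence of index-$1$ and index-$(n-1)$ critical points leave exactly enough room for the Whitney moves and exactly enough connectivity that no handle is ever forced to be created or cancelled, so that $f_1$ ends up with the very same critical points as $f$. A secondary technical point is to arrange the elementary cobordisms so that the critical values of $g_1$ occur in the order prescribed by $f$; this order is realised by $f$ itself, and consecutive critical points can be interchanged whenever no gradient trajectory joins them, which the decoupling $(M\partial M^{-1})_{-+}=0$ and the chosen handle structure make possible --- verifying this carefully is what secures $G(\tilde{f_1})=G(\tilde{f})$.
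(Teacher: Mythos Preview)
Your strategy is genuinely different from the paper's, and considerably harder. The paper does \emph{not} build $\tilde{f_1}$ and its extension from scratch by a handle realisation of the chain complex. Instead it takes a much more direct route: deform $f$ by a generic path of functions (not required to be non-critical) to an \emph{ordered} Morse function $f^1$, meaning $f^1(a)>f^1(b)$ whenever $\mathrm{ind}(a)>\mathrm{ind}(b)$, while keeping the relative order of critical values among points of the same index; this can be done with no crossings among same-index points, so $f$ and $f^1$ share an adapted Morse--Smale pseudo-gradient and hence the same boundary operator. Labels are then assigned to $f^1$ by Lemma~\ref{usefullemma} to match those of $f$, which immediately gives $G(\widetilde{f^1})=G(\tilde f)$. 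The non-critical extension of $\widetilde{f^1}$ is produced by an explicit cancellation procedure: using property~$(\mathcal P)$ one first performs the handle slides coming from $M$ to obtain $\partial_{-+}=0$ and $\partial_{++}$ acyclic, then pushes all label-$+$ points of the smallest index $k_0$ down, uses Lemma~\ref{Freed+} to normalise $\partial_{++,k_0+1}$ to $(0\ I)$, and cancels them non-critically via Lemma~\ref{cancellation} and \cite[Theorem~6.4]{Miln}; one iterates over the index and finally cancels all remaining label-$-$ points, reaching a trivial germ. No relative handle realisation, no $h$-cobordism identification of the result with $\Dn$, and no Whitney trick inside level manifolds with boundary are needed.

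Two remarks on your outline. First, your ``secondary technical point'' is a non-issue: $G(\tilde f)$ depends only on the relative order of critical values among points \emph{of the same index} (Definition~\ref{group}), not on the full order, so you need not --- and the paper does not --- reproduce the global ordering of critical values of $f$. Second, the step you flag as the main obstacle really is one: carrying out a prescribed-complex handle realisation in the half-handle/boundary setting of Section~\ref{surgmb}, and then certifying the result is $\Dn$, is substantially more delicate than what the paper actually does, and your sketch does not indicate how you would control the boundary Morse function while doing it. Your plan is plausible in spirit, but the paper's reorder-then-cancel argument sidesteps exactly these difficulties.
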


The proof of Theorem \ref{best} exhibits the function $f_1$ which is the endpoint of a generic path of functions starting at $f$ that presents no birth or death bifurcations.
We then have a natural bijection between the critical points of $f$ and those of $f_1$. 
The difference between $f_1$ and $f$ is that $f_1$ is \emph{ordered}, in the sense that $f_1(a)>f_1(b)$ whenever $a \in \mathcal{C}_k(f_1)$ and $b\in \mathcal{C}_{k-1}(f_1)$ for any index $k$.
 However, the order of the critical values of points of same index is preserved by the natural bijection between $\mathcal{C}(f)$ and $\mathcal{C}(f_1)$.
 
The necessary condition of non-critical extension given by Theorem \ref{th1} is not sufficient in the general case.
It becomes sufficient for $n\geq 6$ when the indices of the critical points of $f$ which are not extrema take only two values $k$ and $k+1$, where $k$ is between $2$ and $n-2$.
Moreover, if all points of label $+$ are above all points of label $-$ of same index, we can derive a computable arithmetical condition on the matrix of the boundary operator:

\begin{nntheo}[Theorem \ref{2indices}, Section \ref{twoindices}]
Let $\tilde{f}$ be a non-critical Morse germ along $\Sn$ for $n\geq 6$.
Assume that $f$ has only one local maximum and one local minimum, and that the indices of the other critical points can only be $k$ or $k+1$, with $2 \leq k \leq n-2$. 
Assume also that $f(a)>f(b)$ if $\ind (a) = \ind(b)$ and the label of $a$ is $+$ and the label of $b$ is $-$.
Let $X$ be a Morse-Smale pseudo-gradient adapted to $f$ and denote by $\partial$ its boundary operator.
The germ $\tilde{f}$ extends non-critically if and only if 
\[\det (\partial_{++,k+1}) \equiv ~\pm 1~[d_1(\partial_{+-,k+1})],\]
where $d_1(\partial_{+-,k+1})$ is the $g.c.d.$ of the coefficients of the matrix $\partial_{+-,k+1}$.

\end{nntheo}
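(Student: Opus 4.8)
\emph{Strategy.} The proof will not be self-contained: it rests on Theorem~\ref{th1} together with its converse, already established for $n\ge 6$ when the non-extremal critical points of $f$ have only two consecutive indices lying in $[2,n-2]$, namely that in that range the conclusion of Theorem~\ref{th1} is \emph{also sufficient} for $\tilde f$ to extend non-critically. Granting this, it suffices to show that, under the hypotheses of the present theorem, the conclusion of Theorem~\ref{th1} is equivalent to the displayed congruence. So the whole content to be supplied is a purely algebraic translation.

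\emph{Reduction to a matrix condition.} Since the Morse complex of $f$ computes $H_*(\mathbb S^n)$ and $f$ has critical points only in degrees $0$, $k$, $k+1$, $n$, the only possibly nonzero differential is $\partial_{k+1}$, and it must be an isomorphism of free $\mathbb Z$-modules; in particular $\partial_{k+1}\in GL(\mathbb Z)$, so the block of rows $(\partial_{++,k+1}\mid\partial_{+-,k+1})$ of $\partial_{k+1}$ is \emph{unimodular}, i.e.\ the gcd of its maximal minors is $1$. Inspecting the prescribed homology of the quotient complex $\mathbb Z\mathcal C^+(\tilde f)$ in degrees $0$ and $n$ shows that the conclusion of Theorem~\ref{th1} forces the maximum of $f$ to be labelled $+$ and the minimum $-$, and forces $\partial_{++,k+1}$ to be square (its $+$-differential must be an isomorphism); since $\det\partial_{++,k+1}$ has to make sense for the statement, I assume this. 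Next I unwind $G(\tilde f)$: under the ordering hypothesis ($+$ above $-$ within each index), an element $M\in G(\tilde f)$ is the identity in degrees $0$ and $n$, and in each of the degrees $k$ and $k+1$ it is block lower triangular for the splitting $\mathbb Z\mathcal C_j=\mathbb Z\mathcal C_j^+\oplus\mathbb Z\mathcal C_j^-$, with triangular determinant-$\pm1$ diagonal blocks and an arbitrary lower-left ($(-,+)$) block. As $\partial$ is concentrated in degree $k+1$, the two requirements ``$(M\partial M^{-1})_{-+}=0$'' and ``$(M\partial M^{-1})_{++}$ has the homology of $\mathbb S^n$ concentrated in degree $n$'' only concern that degree; a direct block computation of $M_k\,\partial_{k+1}\,M_{k+1}^{-1}$ shows that its $(+,+)$-block is $L\,(\partial_{++,k+1}-\partial_{+-,k+1}N)\,L'$ with $L,L'$ unimodular and $N$ the (essentially free) $(-,+)$-block of $M_{k+1}$, and that once this $(+,+)$-block lies in $GL(\mathbb Z)$ the $(-,+)$-block of the product can be killed using the remaining freedom (the $(-,+)$-block of $M_k$, via $E_k=L^-_k(\partial_{--,k+1}N-\partial_{-+,k+1})(\partial_{++,k+1}-\partial_{+-,k+1}N)^{-1}$). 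Hence the conclusion of Theorem~\ref{th1} holds if and only if there is an integer matrix $N$ with $\det(\partial_{++,k+1}-\partial_{+-,k+1}N)=\pm1$.

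\emph{From the matrix condition to the congruence, and the main obstacle.} Write $P=\partial_{++,k+1}$, $Q=\partial_{+-,k+1}$, $d=d_1(Q)$. If $\det(P-QN)=\pm1$ for some $N$ then, every entry of $QN$ being a multiple of $d$, reduction modulo $d$ gives $\det P\equiv\det(P-QN)=\pm1\ [d]$: the easy direction. Conversely, assume $\det P\equiv\pm1\ [d]$. Replacing $Q$ by $UQV$ with $U,V$ unimodular changes $P$ to $UP$ (so $\det P$ only by a sign), leaves $d$ and the unimodularity of $(P\mid Q)$ unchanged, and only reparametrises the admissible $N$; so I may assume $Q$ is in Smith normal form with invariant factors $d=d_1\mid d_2\mid\cdots$. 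After this normalisation, choosing $N$ amounts to replacing the $i$-th row of $P$ by an arbitrary row congruent to it modulo $d_i$ for $i\le\operatorname{rk}Q$, the remaining rows being frozen; one then builds, by a descending induction on the size (adjusting at each step one row so as to create a $\pm1$ cofactor), a modification with determinant $\pm1$, using the unimodularity of $(P\mid Q)$ — which both forces the frozen rows of $P$ to be primitive and relates the $d_i$ to the elementary divisors of $P$ — together with $\det P\equiv\pm1\ [d_1]$. This last construction is the hard part: the interaction of the frozen rows (coming from $\partial_{-+,k+1}$ and $\partial_{--,k+1}$ through the Smith reduction) with the congruences imposed on the other rows has to be managed with care, and the unimodularity of $\partial_{k+1}$, equivalently of $(P\mid Q)$, is genuinely needed — for instance with $P=0$ and $Q=\operatorname{diag}(1,2)$ one has $d=1$ (so the congruence holds vacuously) yet no $N$ gives $\det(P-QN)=\pm1$.
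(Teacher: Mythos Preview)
Your overall architecture is exactly the paper's: reduce via Theorem~\ref{th1} and its two-index converse to the purely algebraic question ``does there exist $N$ with $\det(\partial_{++,k+1}-\partial_{+-,k+1}N)=\pm1$?'', then translate that into the displayed congruence. The reduction paragraph is essentially correct; one small imprecision is that elements of $G(\tilde f)$ have \emph{identity} diagonal blocks by Definition~\ref{group}, not arbitrary triangular unimodular ones, so your $L,L'$ are in fact $I$ and the $(+,+)$-block is literally $\partial_{++,k+1}-\partial_{+-,k+1}N_{k+1}$. Your computation of $N_k$ killing the $(-,+)$-block is right, and your observation that $(\,P\mid Q\,)$ is unimodular because $\partial_{k+1}\in GL(\mathbb Z)$ is exactly the ingredient the paper uses.

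The genuine gap is the converse direction of the algebraic equivalence. You correctly isolate it as ``the hard part'', you put $Q$ in Smith form, you note that modifying $N$ amounts to changing the $i$-th row of $P$ modulo $d_i$, and you give a good counterexample ($P=0$, $Q=\mathrm{diag}(1,2)$) showing that unimodularity of $(\,P\mid Q\,)$ is not a luxury. But the sentence ``one then builds, by a descending induction on the size \dots'' is not a proof: you never explain how the step works, how the coprimality conditions $\gcd(b_j,c_j)=1$ (equivalently, surjectivity of $(\,P\mid Q\,)$; this is Proposition~\ref{simplprop}) enter the induction, or how the congruence $\det P\equiv\pm1\ [d_1]$ is consumed at the end. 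The paper carries this out in Lemma~\ref{unim}: with $B$ upper triangular and $C$ in Smith form, one first uses B\'ezout identities $u_jb_j+v_jc_j=1$ to make $B$ diagonal, then a sequence of explicit row/column moves collapses $B$ to $\mathrm{diag}(\pm\det B,1,\dots,1)$, at which point the hypothesis $\det B\equiv\pm1\ [c_1]$ finishes. Your sketch points in the same direction but does not supply these moves, and the phrase ``has to be managed with care'' is an acknowledgment rather than a resolution. To complete the proof you must either write out this construction or invoke the lemma.
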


Here is the structure of the article:

In \textbf{Section 2}, we explain the starting point of the article, which is the one of Barannikov \cite{Bar} and which uses generic paths of Morse functions and Cerf theory.

In \textbf{Section 3}, we prove some lemmas necessary to prove Theorem \ref{th1}, and then prove Theorem \ref{th1}, the main result of the present article.

In \textbf{Section 4}, we show that the condition of Theorem \ref{th1} is not sufficient in all generality, by using results on Morse theory on manifolds with boundary. We also prove that given a germ $\tilde{f}$ which fulfills conditions of Theorem \ref{th1}, one can always find another germ $\tilde{f_1}$ which has the same homological properties as $\tilde{f}$ (in fact the same adapted pseudo-gradient) and which extends non-critically.

In \textbf{Section 5}, we give the computable condition when the critical points of the function which are not extrema can only take two values, $k$ and $k+1$, with $k$ between $2$ and $n-2$, and with some assumptions on the critical values.

 
  The main techniques used in this article are those of the $h$-cobordism theorem, explained in the classical book of Milnor \cite{Miln}. 
  We also use results about the change of topologies of level manifolds of a Morse function defined on a manifold with boundary and Cerf's theory about paths of Morse function, \cite{Cerf}.

\section{Preliminaries}
This section introduces the notation used throughout the article.
It also recalls classical results of Morse theory and Cerf theory.

\subsection{Notation}
 
\begin{itemize}
\item If two groups, $\mathbb{Z}$-modules or chain complexes $G$ and $H$ are isomorphic, it will be denoted by $G \simeq H $.
\item If $\mathcal{C}$ is a set and $A$ an integral domain, we define $A\mathcal{C}$ the free $A$-module generated by the elements of $\mathcal{C}$.
As a convention, we set $A \emptyset := 0_A$, the $A$-module reduced to $0$.
Most of the time, we will in fact have $A=\mathbb{Z}$, the ring of integers, except in Section \ref{barra}, where $A$ will be a field.
\item If $A \mathcal{C}_1,..., A\mathcal{C}_n$ is a sequence of such modules, we will denote by $A \mathcal{C}$ their direct sum $\bigoplus_k A \mathcal{C}_k$.
\item If we have a sequence of homomorphisms indexed by $(j,k)\in \mathbb{Z}^2$ as $\phi_{j,k} : A \mathcal{C}_k \to A \mathcal{D}_j$, we denote by $\phi$ their extension, such that $\phi : A \mathcal{C} \to A \mathcal{D}$ and $\phi (x) = \sum_j \phi_{j,k} (x)$ for $x \in A \mathcal{C}_k$.
\item In the same way, if $\phi$ is a homomorphism of $A$-modules defined on the direct sum $A \mathcal{C}$ of $A$-modules $A \mathcal{C}_k$ then $\phi_k$ will denote its restriction to $A \mathcal{C}_k$.
\item If $f: M \to \mathbb{R}$ is a continuous function, the words \emph{above} and \emph{below} will always be taken relatively to $f$ if there is no possible confusion with another function.
\item If $f : M \to \mathbb{R}$ is smooth, then $\mathcal{C}(f)$ will denote the set of critical points of $f$.
\item If $f : M \to \mathbb{R}$ is a Morse function, $\mathcal{C}_k(f)$ will denote the set of critical points of $f$ of index $k$. 
\item A smooth function $f:M \to \RR$ is said to be \emph{non-critical} if $\mathcal{C}(f)=\emptyset$.
\item A Morse function $f$ is said to be \emph{excellent} if any two of its critical points have distinct critical values.
\item The capital letter $I_p$ will often denote the identity matrix. We may sometimes forget the subscript $p$, denoting the dimension of the module on which we operate, if there is no possible confusion.
\end{itemize}

\subsection{Definitions of Morse theory for manifolds with boundary}

We now introduce a few definitions of Morse theory on manifolds with boundary.
The theory developed in the literature is much larger than the following paragraph, thus, we refer to \cite{Laud2} and \cite{BNR} for details.

Let $M$ be a manifold with boundary $\partial M$ such that $\partial M$ is a closed manifold.
We can consider a neighborhood $\mathcal{U}$ of $\partial M$ in $M$ such that $\mathcal{U}$ is diffeomorphic to $\partial M \times [0, \varepsilon )$, where $\varepsilon > 0$.
Such a neighborhood is called a \emph{collar neighborhood} of $\partial M$ in $M$.
We will use the notation $(x,t) \in \partial M \times [0,\varepsilon )$ for such a neighborhood.
We give the following definition of a Morse function on a manifold with boundary, taken from \cite{Laud2}:
\begin{definition}
A smooth function $F:M\rightarrow \mathbb{R}$ is a \emph{Morse function} when all critical points of $F$ lie in the interior of $M$, are non-degenerate and if $F$ restricts to a Morse function on $\partial M$.
\end{definition}
\begin{remark}
The definition of a Morse function on a manifold with boundary varies in the literature. 
For example, the definition taken by Borodzik, Nemethi and Ranicki \cite{BNR} allows critical points on the boundary but we explain why the two definitions are somewhat \emph{topologically equivalent} in Section \ref{surgmb}.
\end{remark}

Let $F:M \to \mathbb{R}$ be a Morse function.
Given the neighborhood $\mathcal{U}$ of $\partial M$, we denote by $d_x F (x,t)$ the derivative of $F$ tangent to $\partial M$ and by $\partial_t F (x,t)$ the derivative of $F$ with respect to $t$.
Let $x \in \mathcal{C}(f)$. 
As $ \mathcal{C}(F) = \emptyset$, we have that $\partial_t F (x,0) \neq 0$.
Thus, the critical set of $f$ splits into two sets, the set of points $x$ for which $\partial_t F (x,0)<0$, that we denote by $\mathcal{C}^+(F)$, and the set of points $x$ for which $\partial_t F(x,0) > 0$, denoted by $\mathcal{C}^-(F)$.
Notice that these two sets depend on $F$, whereas $\mathcal{C}(f)$ only depends on $f$.
Here, we took the notation of Curley \cite{Cur}.
We denote by $\mathcal{C}_k^{\ell}(\tilde{f})$ the points of label $\ell \in \{+,-\}$ and index $k$. If $\ell=+$, its cardinality will be denoted by $p_k$ and if $\ell = -$ by $q_k$. 

\begin{remark} 
In \cite{Laud2}, a Dirichlet point is a point labeled $+$ and a Neumann point is a point labeled $-$.
Borodzik, Nemethi and Ranicki \cite{BNR} call a labeled $+$ point a "boundary stable critical point", and call a labeled $-$ point a "boundary unstable critical point".
\end{remark}

We give the definition of the germ extending a Morse function:

\begin{definition}[Non-critical germ of a Morse function]
Given a Morse function $f$ defined on a closed manifold $\partial M$, a \emph{Morse germ} extending $f$ is the equivalence class of functions ${\tilde{f}:\partial M \times [0,\varepsilon) \rightarrow \RR}$ such that $\tilde{f}$ restricts to $f$ on $\partial M \times \{0\}$, up to restriction to a smaller collar $\partial M \times [0,\varepsilon ')$ with $\varepsilon ' < \varepsilon $ and such that $\tilde{f}$ has no critical point.
\end{definition}

We will often identify implicitly a representative $\tilde{f}$ of the germ and the germ itself.

We will only consider $\partial M=\Sn$, the standard sphere of dimension $n$ embedded in the euclidean space $\RR ^{n+1}$.
 We will always see the collar neighborhood $\Sn \times [0, \varepsilon )$ as a neighborhood of the unit sphere in the ball $\Dn$, with the embedding $(x,t) \mapsto (1-t)x$.
  The sphere $\Sn \times \{t\}$ then represents the sphere of radius $1-t$.

The main problem that the article tackles is the following:

Let $\tilde{f}$ be a non-critical Morse germ defined on $\Sn \times [0,\varepsilon)$. 
When does $\tilde{f}$ extend to a non-critical function $F$ on $\Dn$?

Throughout the article, all restricted functions $f$ that we wish to extend non-critically will be assumed excellent.

 \subsection{Labeled Reeb graph, or Curley graph}
  A useful tool to visualize a Morse function is its Reeb graph.

The \emph{Reeb graph} of a Morse function $f$ defined on a closed manifold $M$ is the graph $\Gamma(f)$ obtained by the equivalence relation:

\begin{center}$x\sim y \Leftrightarrow$ $x$ and $y$ are in the same connected component in the level set $f^{-1}(f(x))$.\end{center}

Notice that $x\sim y$ implies $f(x)=f(y)$.
 We define $pr : M \rightarrow \Gamma(f) $, the projection map on the graph.

The vertices of the graph are in correspondence with the critical values of $f$ and we equip the graph with a height function which maps a point $pr(p)$ of the graph to $f(p) $.
From the previous definition, we can define a graph from a non-critical Morse germ $\tilde{f}$ by adding information to the Reeb graph of its restriction $f$.
We just label each vertex of the graph with the label of the corresponding critical point of $f$, that is we add a $+$ or a $-$ next to the vertex.
 We call this augmented graph the \emph{Curley graph} of the non-critical germ $\tilde{f}$ (see \cite{Cur} from which we take the notation).
 See Figure \ref{curleygraph} for an example of a Curley graph.

In Sections \ref{homo} and \ref{twoindices}, we consider Morse functions having one local maximum and one local minimum.
\begin{proposition}
\label{reebprop} 
Let $n \geq 2$.
If a Morse function defined on $\Sn$, has only one local minimum and one local maximum, the level sets of $f$ are connected.
\end{proposition}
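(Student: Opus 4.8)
The statement is that a Morse function $f$ on $\Sn$ ($n \geq 2$) with a single local minimum and a single local maximum has connected level sets. The plan is to argue by contradiction, passing through the critical levels one at a time and tracking the number of connected components of the sublevel sets $f^{-1}((-\infty, c])$. First I would recall the standard handle-attachment picture from Morse theory: as $c$ increases past a critical value corresponding to a critical point of index $\lambda$, the sublevel set $M^c := f^{-1}((-\infty,c])$ changes by attaching a handle $\Dkk^{\lambda} \times \Dnn^{\,n-\lambda}$ along its attaching sphere. The effect on $\pi_0$ is: an index-$0$ handle creates a new component; an index-$1$ handle either merges two components or adds a loop to one; and handles of index $\geq 2$ do not change $\pi_0$ at all, since their attaching sphere is connected and lands in a single component. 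Symmetrically, passing a critical point of index $\lambda$ from above corresponds to a handle of index $n-\lambda$ for $-f$.

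The key step is then a counting/monotonicity argument. Since $f$ has exactly one local minimum, $M^c$ is empty for $c$ below the minimum value, becomes connected when $c$ passes the minimum, and — because no other index-$0$ critical point exists — the number of components of $M^c$ can never increase as $c$ grows past the minimum; an index-$1$ critical point can only decrease it (by a merge) or leave it unchanged. Hence $M^c$ is connected for every $c$ above the minimum value. Dually, applying the same reasoning to $-f$, which also has a unique minimum (the maximum of $f$), the superlevel set $f^{-1}([c,+\infty))$ is connected for every $c$ below the maximum value. Now suppose some regular level $L_c = f^{-1}(c)$ were disconnected, say $L_c = A \sqcup B$ with $A, B$ nonempty open-closed. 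Each component of $\Sn \setminus L_c$ has boundary contained in $L_c$; I would use the connectedness of $M^c$ and of the superlevel set, together with the fact that every boundary component of $M^c$ must be adjacent to the (connected) superlevel set, to force $L_c$ to be connected — i.e. the two connected pieces $M^c$ and $f^{-1}([c,\infty))$ are glued along $L_c$, and if $L_c$ had two components the sphere would be exhibited as a connected sum decomposition incompatible with both pieces being connected and $\Sn$ being connected with the stated homology. Finally, to handle a critical level, I would note that a slightly-below-critical level and a slightly-above-critical level are both regular and differ by surgery on a connected attaching sphere when the index is not $0$ or $n$; since the only index-$0$ and index-$n$ points are the unique extremum (where the nearby level is a small sphere $\Snn$, connected as $n \geq 2$) and the unique maximum, connectedness of all regular levels propagates to all levels.

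The main obstacle I anticipate is making the gluing/separation argument in the previous paragraph fully rigorous rather than merely plausible: one must carefully rule out, for a regular value $c$, that $\Sn$ decomposes as $M^c \cup_{L_c} N^c$ with $L_c$ disconnected while both $M^c$ and $N^c = f^{-1}([c,\infty))$ are connected — this is really a Mayer--Vietoris or Alexander-duality computation, using $H_0(\Sn) = \mathbb{Z}$ and that the map $H_0(L_c) \to H_0(M^c) \oplus H_0(N^c)$ in the Mayer--Vietoris sequence has image determined by the $\pi_0$'s, forcing $\#\pi_0(L_c) \leq \#\pi_0(M^c) + \#\pi_0(N^c) - \#\pi_0(\Sn) + \operatorname{rk} H_1 \text{-contributions}$; for $n \geq 2$ the sphere has $H_1 = 0$, which pins down $\#\pi_0(L_c) = 1$. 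An alternative, cleaner route that avoids this entirely is to run the handle-count argument on the Reeb graph directly: the Reeb graph $\Gamma(f)$ is connected (as $\Sn$ is), its leaves are exactly the local extrema, so with one minimum and one maximum $\Gamma(f)$ has exactly two leaves; a connected graph with two leaves whose other vertices all have degree $\geq 2$ — combined with the Euler-characteristic constraint that would introduce a cycle only via index-$1$ or index-$(n-1)$ points creating a vertex of valence $\geq 3$ — must be a simple arc, and a point of a simple-arc Reeb graph has connected preimage precisely because the Reeb quotient identifies exactly the components of each level set. I would likely present the Reeb-graph version as the main line and relegate the handle bookkeeping to a remark.
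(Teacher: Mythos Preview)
Your Mayer--Vietoris approach (what you call approach 1) is correct and is genuinely different from the paper's argument. The paper proceeds via an auxiliary lemma (Lemma~\ref{embedreeb}): the Reeb graph $\Gamma(f)$ always embeds back into $M$ by a map $\iota$ with $pr\circ\iota = \mathrm{id}_{\Gamma(f)}$. Given a disconnected level, the paper produces two gradient lines from the global maximum to the global minimum passing through the two components; their projection to $\Gamma(f)$ is a nontrivial loop, but $\iota$ carries it to a loop in $\Sn$ which is nullhomotopic since $\pi_1(\Sn)=0$, contradicting $pr\circ\iota=\mathrm{id}$. Your route instead shows directly that the sublevel and superlevel sets are connected (single local min/max plus the handle bookkeeping you describe), then applies the Mayer--Vietoris fragment $0=H_1(\Sn)\to H_0(L_c)\to H_0(M^c)\oplus H_0(N^c)\to H_0(\Sn)\to 0$ to force $H_0(L_c)\simeq\mathbb{Z}$. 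This is arguably more elementary: it avoids the embedding lemma entirely and uses only $H_1(\Sn)=0$ rather than $\pi_1(\Sn)=0$.

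Your Reeb-graph approach (approach 2), which you say you would present as the main line, has a real gap in the form you wrote it. Knowing that $\Gamma(f)$ is connected with exactly two leaves does \emph{not} force it to be an arc: index-$1$ and index-$(n-1)$ critical points produce trivalent vertices, and nothing you said rules out cycles (a theta graph, for instance, has exactly two trivalent vertices and no leaves, and one can easily modify examples to have two leaves as well). Your phrase ``Euler-characteristic constraint'' does not do the work here without further input. What is actually needed is precisely that $\Gamma(f)$ is a tree, i.e.\ $H_1(\Gamma(f))=0$; and the standard way to see this is that $pr_*:H_1(\Sn)\to H_1(\Gamma(f))$ is surjective --- which is exactly the content of the paper's embedding lemma (a section of $pr$). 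So if you pursue approach 2, you end up re-deriving the paper's argument. I would therefore recommend you make approach 1 the main line; it stands on its own and the Mayer--Vietoris computation you sketched is already the complete rigorous step you were worried about.
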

To prove the proposition, we prove the following lemma which is interesting in itself:
\begin{lemma}
\label{embedreeb}
If $f:M \to \RR$ is a Morse function defined on a manifold $M$, it is always possible to embed its Reeb graph $\Gamma (f)$ into $M$ through a map $\iota$, such that $pr \circ \iota = id_{\Gamma (f)}$.
\end{lemma}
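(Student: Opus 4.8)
The plan is to build $\iota$ by hand: first prescribe its values at the vertices of $\Gamma(f)$, then extend it over each closed edge, using only the Morse normal form at critical points together with the fact that a cobordism containing no critical value is a product. Throughout I assume $M$ closed — the setting in which $\Gamma(f)$ was defined — and, to streamline the exposition, that $f$ is excellent, so that every vertex $v$ of $\Gamma(f)$ is $pr(p_v)$ for a unique critical point $p_v$; I indicate at the end the harmless modification needed in general.

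\emph{Values at the vertices.} I set $\iota(v):=p_v$, so that $pr\circ\iota(v)=v$ by definition of $pr$. Fixing a Morse chart $f=f(p_v)-|x^-|^2+|x^+|^2$ around $p_v$, the set $\{f<f(p_v)\}$ has near $p_v$ one or two local components (two exactly when $\ind(p_v)=1$), and likewise $\{f>f(p_v)\}$ (two exactly when $\ind(p_v)=n-1$); the point $p_v$ lies in the closure of each of these at most four components, and each edge of $\Gamma(f)$ incident to $v$ meets at least one of them near $p_v$. To each incident edge $e$ I attach a short embedded arc $\delta_{v,e}$ issuing from $p_v$ into $pr^{-1}(\mathrm{int}\,e)$, chosen inside the stable separatrix $\{x^+=0\}$ if $e$ is below $v$ and inside the unstable separatrix $\{x^-=0\}$ if $e$ is above; $f$ is strictly monotone along $\delta_{v,e}$. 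Since the number of edges below $v$ is at most two, and is two only when $\ind(p_v)=1$ — in which case the two edges correspond to the two local sheets of $\{f<f(p_v)\}$, hence can be assigned the two distinct ends of the then $1$-dimensional stable separatrix — and symmetrically for the edges above, these arcs can be chosen pairwise disjoint except at the common point $p_v$.

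\emph{Values on the edges.} Let $e$ be an edge with lower endpoint $v_-$ and upper endpoint $v_+$. Choose regular values $a<b$ strictly between $f(p_{v_-})$ and $f(p_{v_+})$, close enough to them that $\delta_{v_-,e}$ and $\delta_{v_+,e}$ already cross $f^{-1}(a)$ and $f^{-1}(b)$, say at $w_-$ and $w_+$. As $[a,b]$ contains no critical value, $\{a\le f\le b\}\cap pr^{-1}(\mathrm{int}\,e)$ is a product cobordism $N_e\times[a,b]$ with $N_e:=f^{-1}(a)\cap pr^{-1}(\mathrm{int}\,e)$ connected and with the projection to $[a,b]$ given by $f$; since a connected manifold is path-connected, I join $w_-$ to $w_+$ inside it by an embedded arc along which $f$ strictly increases. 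Concatenating $\delta_{v_-,e}$, this arc and the reverse of $\delta_{v_+,e}$ yields an embedded arc $A_e\subset\overline{pr^{-1}(\mathrm{int}\,e)}$ from $p_{v_-}$ to $p_{v_+}$ along which $f$ is strictly increasing; hence $pr$ restricts to a homeomorphism $A_e\to\bar e$, and I define $\iota|_{\bar e}$ to be its inverse, which is consistent with the earlier choice $\iota(v_\pm)=p_{v_\pm}$.

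\emph{Gluing and conclusion.} The pieces agree at the vertices, so they assemble into a continuous map $\iota:\Gamma(f)\to M$ with $pr\circ\iota=\mathrm{id}$. It is injective: distinct edges have disjoint interior tubes, so $A_e$ and $A_{e'}$ can meet only over a shared vertex, where both reduce to the single point $p_v$; $\iota$ is injective on each $\bar e$ because $f$ is monotone along $A_e$; and $\iota(v)=p_v$ never lies in an interior tube of a non-incident edge. A continuous injection of the compact space $\Gamma(f)$ into the Hausdorff space $M$ is an embedding, which finishes the proof. When $f$ is not excellent, a vertex $v$ may correspond to a level component carrying several critical points; it then suffices to fix one point of that component that lies in the closure of every incident edge-tube and to run a strictly monotone embedded arc from it into each such tube, after which the previous steps go through unchanged. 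The only genuine obstacle is this: one is tempted to take $\iota$ on an edge to be a gradient trajectory, but a trajectory passes through a single point of each intermediate regular level component and so cannot be made to limit to the prescribed critical points at both ends of the edge at once; the remedy, used above, is to renounce trajectories in favour of arbitrary $f$-monotone embedded arcs — these may be rerouted freely in the middle of an edge, a connected regular level component being path-connected, while the cone normal form at a critical point lets every incident edge be hooked directly onto that point.
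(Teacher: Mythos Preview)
Your proof is correct and follows essentially the same approach as the paper: for each edge $\ell$ of $\Gamma(f)$ joining $pr(a)$ and $pr(b)$, one finds inside $pr^{-1}(\ell)$ a strictly $f$-monotone embedded arc from $b$ to $a$, and takes the union of these arcs as the image of $\iota$. The paper simply asserts that such an arc ``is easy to find'' in the connected manifold-with-boundary $pr^{-1}(\ell)$, whereas you supply the details (separatrix germs at the endpoints, product structure in the middle, and the injectivity check), so your argument is a careful fleshing-out of the paper's sketch rather than a different route.
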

\begin{proof}[Proof of Lemma \ref{embedreeb}]
It is sufficient to link by a strictly increasing line any two consecutive critical points whose projections to $\Gamma (f)$ are linked by an edge. 
The embedding of the whole $\Gamma (f)$ is then given by the union of the embeddings of these lines.
Let $a$ and $b$ be two such critical points, with $f(a)>f(b)$.
If we denote by $\ell$ the closed edge connecting $pr(a)$ and $pr(b)$, then $pr^{-1}(\ell)$ is a connected manifold with boundary, where it is easy to find the desired line.
Denote by $\iota : \Gamma (f) \hookrightarrow M$ the embedding.
We have that $pr \circ \iota = id_{\Gamma(f)}$ by construction.
\end{proof}

\begin{proof}[Proof of Proposition \ref{reebprop}]
It is classical Morse theory that for $n\geq 2$, the number of connected components strictly increases only when one passes above critical points of index $0$ or $n-1$.
In the same way, the number of connected components strictly decreases only when one passes above critical points of index $1$ or $n$.
Assume now that $M=\Sn$, and that $f$ has only one local minimum and one local maximum.
 If one of the level sets of $f$ has more than two connected components, then the Reeb graph must present a non-trivial loop.
 Indeed, let $x$ be a point in one of the connected component and $y$ be a point in the other.
 For a generic set of pseudo-gradient (see Section \ref{pseudograd} for a definition), the gradient line (which is strictly decreasing) passing through $x$ (resp. $y$) connects the global maximum to the global minimum.
 The union of these two gradient lines thus forms a loop in $\Sn$ which projects to a loop in $\Gamma (f)$, which is non-trivial by construction. 
 But it becomes trivial through the embedding $\iota$, since $\pi_1(\Sn)\simeq\{1\}$.
 As $pr \circ \iota=id_{\Gamma (f)}$ there is a contradiction.   
\end{proof}
 
 The Curley graph of a germ whose induced Morse function has only one local maximum and one local minimum is then an ordered sequence of labeled vertices, and each vertex is linked to the one above and the one below by a segment.
 
\begin{figure}[!ht]
\begin{center}
\includegraphics[scale=0.7]{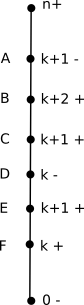}
\caption{Curley graph of a germ with no other critical points of index $0$ or $n$ different from the global minimum and maximum}
\label{curleygraph}
\end{center}
\end{figure}

\subsection{Cobordisms between two spheres \texorpdfstring{$\Sn$}{Lg}}
\label{cobord}
We explain in this subsection the starting point of this article, which is from Barannikov \cite{Bar}, see also \cite{Lau}. 

Let $\tilde{f}$ be a Morse germ.
Suppose that $\tilde{f}$ has an extension $F:\Dn \to \RR$ without critical points, and pick a regular point $z$ in $\Dn$ and a small ball of dimension $n+1$ denoted by $B$ around $z$ such that $F|_{\partial B}$ has only two critical points: a minimum and a maximum.
 There is a germ $\widetilde{f^1}$ whose representative is the function $F$ restricted to a collar neighborhood of $\partial B$ in $B$.
  This germ has its maximum labeled $+$ and its minimum labeled $-$.
   As we will see in Subsection \ref{trivialsec}, the germ $\widetilde{f^1}$ is trivial in the sense that it is the simplest germ that extends to the ball non-critically.
 Notice that $\Dn \setminus B$ is diffeomorphic to a cylinder $\Sn \times [0,1]$.
 To each $t$ of the second factor, the restriction of $F$ to $\Sn \times \{t\}$ is a function $f^t$ on $\Sn$.
 Through this identification, $F$ defines a path of functions from $f$ to $f^1$.
 We can slightly modify the function $F$ to make this path generic, such that $f^t$ is an excellent Morse function for all but finitely many times for which we have one of the three bifurcations --- \emph{birth}, \emph{death} or \emph{crossing} --- described in Cerf \cite[p. 24]{Cerf}.
       For $t$ in $[0,1]$, we also have a germ extending $f^t$ represented by 
       \[\widetilde{f^t} : (x,s) \mapsto f(x,s+t),~~(x,s) \in \Sn \times [0,\varepsilon).\]
Then, a non-critical extension gives a generic path of function $(f^t)_{t\in [0,1]}$ and a path of germs $(\tilde{f^t})_{t\in [0,1]}$.
 As $f^1$ has only one maximum and one minimum as critical points, any critical point of $f^0$ other than the maximum and the minimum gets killed during this path, with a death bifurcation as explained by Cerf in \cite[p. 24-25]{Cerf}.
 We say that two critical points cancel each other \emph{non critically} when the germ defined by the path has no critical point at the death bifurcation.

One may notice the following lemmas, which are already included in \cite[Theorem 1]{Bar}:

\begin{lemma}
\label{fate}
Suppose $(a_t)_{t\in[0,1]}$ is a smooth path of non-degenerate critical points for a generic non-critical path of function $(f^t)_{t\in [0,1]}$ such that the global function $F:(x,t) \mapsto f^t(x)$ has no critical point.
 Then the sign of the real number $\partial_t F (a_t)$ is constant, equal to the sign of $\partial_t F(a_0,0)$, which is the same than $\partial_t \tilde{f^0}(a_0,0)$.  
\end{lemma}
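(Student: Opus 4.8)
The plan is to show that the real-valued function $t \mapsto \partial_t F(a_t,t)$ is continuous and nowhere zero on the connected interval $[0,1]$, so that it has constant sign by the intermediate value theorem. Recall that $F$ lives on $\Dn \setminus B \cong \Sn \times [0,1]$ and is given there by $F(x,t) = f^t(x)$, with no critical point by hypothesis. Fix $t\in[0,1]$. Since $a_t$ is a critical point of $f^t = F(\cdot,t)$, the differential of $F$ in the directions tangent to $\Sn \times \{t\}$ vanishes at $(a_t,t)$, i.e. $d_x F(a_t,t)=0$; using the splitting $T_{(a_t,t)}(\Sn\times[0,1]) = T_{a_t}\Sn \oplus \RR\partial_t$, the full differential therefore reduces to $dF(a_t,t) = \partial_t F(a_t,t)\,dt$. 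As $(a_t,t)$ is not a critical point of $F$, this forces $\partial_t F(a_t,t) \neq 0$ for every $t\in[0,1]$.

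Next I would note that $t \mapsto \partial_t F(a_t,t)$ is smooth, being the composite of the smooth path $t\mapsto(a_t,t)$ (which is part of the hypothesis) with the smooth function $\partial_t F$; in particular it is continuous on all of $[0,1]$. The finitely many birth, death and crossing times of the generic path cause no trouble here: by assumption $a_t$ remains a non-degenerate critical point for every $t\in[0,1]$, so the path $(a_t)$ is not destroyed at any bifurcation and $\partial_t F(a_t,t)$ is defined and continuous across those times. A continuous function on the connected set $[0,1]$ that never vanishes cannot change sign, hence the sign of $\partial_t F(a_t,t)$ does not depend on $t$ and equals the sign of $\partial_t F(a_0,0)$.

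Finally, for $t$ close to $0$ the restriction of $F$ to the collar $\Sn\times[0,\varepsilon)$ is, by construction, a representative $\tilde{f^0}$ of the germ, and the cylinder parameter $t$ restricts to the collar parameter there; thus $\partial_t F(a_0,0) = \partial_t \tilde{f^0}(a_0,0)$, which is precisely the quantity whose sign defines the label of $a_0$. This completes the argument. There is no substantial obstacle: the only point requiring care is the bookkeeping of coordinates, namely that the same letter $t$ plays simultaneously the roles of the Cerf path parameter, of the normal collar coordinate near $t=0$, and of the one direction in which $dF$ is not seen to vanish at a critical point of $f^t$; once these are matched up, continuity plus non-vanishing plus connectedness of $[0,1]$ give the claim.
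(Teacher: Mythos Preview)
Your proof is correct and follows essentially the same approach as the paper: both hinge on the observation that at $(a_t,t)$ one has $d_xF=0$, so $\partial_tF(a_t,t)=0$ would force $dF(a_t,t)=0$, contradicting non-criticality, and then the intermediate value theorem yields constant sign. The paper phrases this as a two-line proof by contradiction, while you spell out continuity and the identification at $t=0$ with the germ more carefully, but the argument is the same.
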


In other words, "if a critical point is labeled $+$ (resp. $-$), it goes down (resp. up) during the path until its possible death".

\begin{proof}
If there is a $t_0$ such that $\partial_t F(a_{t_0},t_0) <0$ whereas $\partial_t \tilde{f^0}(a_0,0)>0$, then, by the intermediate value theorem, there is a time $t'$ such that $\partial_t F(a_{t'},t')=0$.
 But as $a_{t'}$ is a critical point for $f^{t'}$, we would have $d F (a_{t'},t') =0$, which is exactly what we cannot have.
\end{proof}

\begin{lemma}
\label{noncrit}
During a non-critical path $(\tilde{f^t})_{t\in [0,1]}$, if two points cancel each other non-critically then they have the same label.
\end{lemma}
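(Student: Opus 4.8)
The plan is to analyze the local model of a death bifurcation for the path of germs $(\tilde{f^t})_{t\in[0,1]}$ and track the sign of the normal derivative $\partial_t F$ through the cancellation. By Cerf's description (see \cite[p. 24-25]{Cerf}), a death bifurcation occurs at some time $t_0$ where two critical points $a_t, b_t$ of $f^t$, of consecutive indices $j$ and $j+1$, come together; for $t$ slightly less than $t_0$ they are two nondegenerate critical points joined by a single gradient trajectory, and for $t$ slightly greater than $t_0$ there are no critical points in a neighborhood. The key point is that since the extension $F:(x,t)\mapsto f^t(x)$ along the collar is non-critical throughout (this is what "cancel non-critically" means), Lemma \ref{fate} applies: the sign of $\partial_t F(a_t)$ is constant along the branch $(a_t)$, and likewise the sign of $\partial_t F(b_t)$ is constant along $(b_t)$, right up to the bifurcation time $t_0$.

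First I would pass to the limit $t\to t_0$: by continuity, $\partial_t F(a_t)$ and $\partial_t F(b_t)$ both converge to $\partial_t F$ evaluated at the common birth/death point $c := \lim a_t = \lim b_t$ at time $t_0$. Here it is important that $c$ is a point of $\Sn\times\{0\}$, i.e.\ lies on the boundary sphere, not in the interior of the collar — the labels $+$ and $-$ are defined by the sign of $\partial_t\tilde{f^0}$ restricted to $\Sn\times\{0\}$, and the path of germs is precisely constructed so that all these critical points remain on the boundary sphere as $t$ varies (the critical points of $f^t$ are the critical points of $F$ restricted to $\Sn\times\{t\}$, transported to $\Sn\times\{0\}$ by the shift). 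Since $F$ has no critical point anywhere on the collar, $\partial_t F(c,0)\neq 0$, so this common limit is a nonzero real number; call its sign $\sigma$. Then the label of $a_t$ equals $-\mathrm{sgn}(\partial_t F(a_t)) = -\sigma$, and the label of $b_t$ equals $-\mathrm{sgn}(\partial_t F(b_t)) = -\sigma$ as well (using the sign convention from the definitions: label $+$ means $\partial_t\tilde{f}<0$, label $-$ means $\partial_t\tilde{f}>0$, and Lemma \ref{fate} identifies $\mathrm{sgn}(\partial_t F(a_t))$ with $\mathrm{sgn}(\partial_t\tilde{f^0}(a_0,0))$). Hence $a_t$ and $b_t$ carry the same label.

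The main obstacle I anticipate is making rigorous the claim that $a_t$ and $b_t$ genuinely limit to a single common point $c$ and that $\partial_t F$ is continuous there — one must invoke the standard normal form for a death bifurcation (a one-parameter family locally modeled on $x\mapsto x_1^3 + \lambda x_1 - x_2^2 - \cdots - x_{j+1}^2 + x_{j+2}^2 + \cdots$, with $\lambda$ a reparametrization of $t - t_0$) to see that both critical branches are continuous functions of $t$ merging at $\lambda = 0$, and that $F$, being smooth on the collar, has continuous $\partial_t F$ there. A secondary point to be careful about is the sign convention bookkeeping between $\partial_t F(a_t)$ (a derivative transverse to the level sphere $\Sn\times\{t\}$ inside the cylinder) and $\partial_t\tilde{f^0}$ as it enters the definition of the labels; Lemma \ref{fate} already records exactly the identity needed, so once the limiting argument is in place the conclusion is immediate.
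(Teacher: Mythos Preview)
Your argument is correct and is essentially the contrapositive of the paper's proof: the paper assumes the labels differ, so that $\partial_t F(a_t)>0$ and $\partial_t F(b_t)<0$, and concludes by continuity that $\partial_t F$ vanishes at the common limit point $c$ at time $t_0$, contradicting non-criticality of $F$; you instead observe directly that non-criticality forces $\partial_t F$ to be nonzero at $(c,t_0)$, so both branches inherit that sign. One notational quibble: you write $\partial_t F(c,0)$, but the limit point lives at time $t_0$, so the relevant quantity is $\partial_t F(c,t_0)$ (equivalently $\partial_s\tilde{f^{t_0}}(c,0)$ in the shifted-germ picture you describe)---your parenthetical explanation makes clear you mean the right thing, but the formula as written would be evaluated at the wrong time.
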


\begin{proof}
Suppose we kill two critical points of $f^0$, say $b$ of index $k+1$ and $a$ of index $k$, with a generic path of functions $f^t$, such that $b$ is labeled $+$ and $a$ is labeled $-$.
 If the death bifurcation happens at time $t_0$, we then have two paths of non-degenerate critical points ${b}_t$ and $a_t$ for $f^t$ with $t$ in $[0,t_0)$.
We identify the path of functions with a non-critical extension of $f$ and denote it by $F : (x,t) \mapsto F(x,t)$.
 Since $\partial_t F(a_t)>0$ and $\partial_t F({b}_t)<0$ for $t<t_0$, then at the death time $t_0$, we have a limit point 
 \[c_l=\lim\limits_{t\to t_0} b_t =\lim\limits_{t\to t_0} a_t \]
  such that $\partial_t F({b}_{t_0})\leq 0$ and $\partial_t F(a_{t_0})\geq 0$ since the partial derivative $\partial_t F$ is continuous. 
 Then $\partial_{t_0} F(c_l)=0$, but as $c_l$ is also a critical point of the function $F(\centerdot , t_0):x\mapsto F(x,t_0)$, we have that $c_l$ is a critical point of $F$, but we assumed that $F$ has no critical point. 
\end{proof}

\subsection{Adapted pseudo-gradients and handle slides}
\label{pseudograd}

\subsubsection{Definitions and handle slides}
In this subsection, we fix a Morse function $f$ on a closed manifold $M$ of dimension $n$.
The proof of the main theorem of the article, Theorem \ref{th1} stated in Section \ref{nec}, deals with pseudo-gradient vector fields \emph{adapted} to $f$, with the following definition:
\begin{definition}
A vector field $X$ is a \emph{pseudo-gradient vector field adapted to $f$} when:
\begin{itemize}
\item $df_x(X(x))<0$ for all $x \notin \mathcal{C}(f)$,
\item for all $a\in \mathcal{C}(f)$, there are Morse coordinates $(y_1,...,y_n)$, for which \[f(y) = f(a) - \sum\limits_{1\leq j \leq k} y_j^2 +\sum\limits_{k+1 \leq j \leq n} y_j^2,\] and
\[ X(y)=(2y_1,...,2y_k,-2y_{k+1},...,-2y_n)\]
in these coordinates.
\end{itemize}
\end{definition}

Such a pseudo-gradient is said to be \emph{Morse-Smale} when unstable manifolds and stable manifolds of critical points intersect transversally. 
With such assumption, and a choice of orientations for each unstable manifold, we get a \emph{Morse} boundary operator $\partial$.

We recall in this subsection results about pseudo-gradients adapted to Morse functions.
The material can be found in \cite{Laud4} and \cite[Sec. 7]{Miln}.
We are in particular interested in the description of generic paths of adapted pseudo-gradients.

We first define the notion of handle slide, and give its effect on the boundary operator. 
It is a notion introduced in \cite[Sec. 7]{Miln}.

Let $a$ and $b$ be two critical points of some fixed Morse function $f$ of same index $k$ with $f(a)>f(b)$.
We assume we are given an order on $\mathcal{C}_j(f)$ for all index $j$, and that for this order $a$ is the $i$-th point of $\mathcal{C}_k(f)$ and $b$ is the $\ell$-th point.
With such orders on critical points, we can write $\partial$ in matricial notation.
  For any critical point $c$ and any pseudo-gradient $X$ adapted to $f$, denote by $W^u(c,X)$ the unstable manifold of $c$ relative to $X$.
  Assume we are given the orientations of the unstable manifolds $W^u(c,X)$ for all $c$, the choices of these orientations being arbitrary.
   Then, it is not difficult to give to $f^{-1}(d)\cap W^u(d,X)$ an orientation which varies in a consistent way when $d$ changes continuously.
   Given an oriented manifold $M$, we denote by $-M$ the manifold with orientation reversed.
  We denote by $\sharp$ the connected sum of two oriented manifolds --- not necessarily closed ---, as introduced in the beginning of \cite{KM}.
Let $t\mapsto X^t$ be a path of pseudo-gradients adapted to $f$.

\begin{definition}[Handle slide]
We say that there is a \emph{handle slide} of $a$ over $b$ at time $t_0$ if $X^t$ is Morse-Smale for all $t$ except at some time $t_0$ for which:
\begin{itemize}
\item There is an orbit of $X^{t_0}$ connecting $a$ and $b$,
 \item $f^{-1}(f(b)-\varepsilon) \cap W^u(a,X^{t_0+\varepsilon})$ is diffeomorphic to \[ \left[ f^{-1}(f(b)-\varepsilon) \cap W^u(a,X^{t_0-\varepsilon}) \right] \sharp \pm \left[f^{-1}(f(b)-\varepsilon) \cap W^u(b,X^{t_0-\varepsilon})  \right] .\]
 \end{itemize}
 \end{definition}
 At time $t_0$, the pseudo-gradient $X^{t_0}$ is still adapted to $f$ but it is no longer Morse-Smale.

We have the following effect on the boundary operator $\partial^t$ associated to $X^t$:
\begin{align*}
\partial^{t_0+\varepsilon}_j & = \partial^{t_0-\varepsilon}_j & \text{ for $j\neq k,k+1$},\\ 
\partial^{t_0+\varepsilon}_{k+1} & =  (I+s E_{\ell,i}) \partial^{t_0-\varepsilon}_{k+1}, \\
\partial^{t_0+\varepsilon}_{k}   & =  \partial^{t_0-\varepsilon}_{k}(I-sE_{\ell,i}) . \\
\end{align*}
The matrix $E_{\ell,i}$ stands for the elementary matrix whose coefficients are all $0$ but the one in position $(\ell,i)$ which is $1$, and $I$ is the identity matrix.
In the equation, $s\in \{+1,-1\}$. 
When $s=+1$, we will speak of \emph{positive handle slide}, and if $s=-1$, we will speak of \emph{negative handle slide}.

Notice that the effect of a handle slide on the boundary operators is asymmetric in $a$ and $b$.
For that reason, when a handle slide involving two critical points $a$ and $b$ of same index occurs, we will always precise the order of the critical values of $a$ and $b$.
Moreover, we see that it is necessary to have a strictly descending line joining the level set of $a$ and the one of $b$ for a handle slide to be available.
Up to reparametrisation, it is a line $ u \mapsto \gamma (u)$ such that $f\left(\gamma(u)\right)=(1-u)f(a)+uf(b)$.

We have the two properties:

\begin{theorem}
\label{pggenericity}
Let $t\mapsto X^t$ be a path of pseudo-gradients adapted to $f$.
Then, we can always slightly modify $t\mapsto X^t$ to have a path of pseudo-gradients adapted to $f$ which is Morse-Smale for all but finitely many times for which an handle slide occurs.
\end{theorem}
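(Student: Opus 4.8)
The plan is to run the classical transversality theory for pseudo-gradients, as in Milnor \cite[Sec.~7]{Miln} and Laudenbach \cite{Laud4}, inside the space of pseudo-gradients adapted to $f$. First I would fix Morse charts around the points of $\mathcal{C}(f)$ and set $\mathcal{P}=\mathcal{P}(f)$ to be the set of pseudo-gradients adapted to $f$ that coincide with the prescribed linear model inside these charts; this is a convex open subset of an affine subspace of the space of smooth vector fields on $M$, and among the admissible perturbation directions are all sufficiently small vector fields supported away from the charts. Since $t\mapsto X^t$ takes values in a compact subset of $\mathcal{P}$, every path $C^{\infty}$-close enough to it still lies in $\mathcal{P}$, so the modification will be produced in such a neighborhood (keeping the path unchanged near $t\in\{0,1\}$, and perturbing there as well only if the endpoints are not already Morse--Smale). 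I would also record the dimension count: for $X\in\mathcal{P}$ and critical points $c,c'$ of indices $p,p'$, a transverse intersection $W^u(c,X)\cap W^s(c',X)$ is a flow-invariant submanifold of dimension $p-p'$, so the space of connecting orbits has dimension $p-p'-1$; moreover any connecting orbit is strictly $f$-decreasing, hence $f(c)>f(c')$, and in the Morse--Smale case such an orbit exists only when $p>p'$.

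Next I would analyze the non-Morse--Smale locus $\Sigma\subset\mathcal{P}$. By the parametrized Thom/Sard--Smale argument, realized concretely through bump vector fields supported near prospective connecting orbits (which are admissible perturbation directions in $\mathcal{P}$), one gets that $\mathcal{P}\setminus\Sigma$ is dense and that $\Sigma$ is stratified: the top stratum $\Sigma_1$ consists of those $X$ for which a single pair $\{a,b\}$ of critical points of a common index $k$ is joined by exactly one orbit, this configuration being cut out cleanly so that $\Sigma_1$ is a codimension-one submanifold of $\mathcal{P}$, while all other pairs of stable and unstable manifolds meet transversally; the minimal defect of transversality indeed occurs precisely for same-index pairs, for which the orbit space has expected dimension $-1$. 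Everything in $\Sigma\setminus\Sigma_1$ --- a second coincidence, a single non-transverse one, or an orbit between critical points of strictly increasing index --- is contained in submanifolds of codimension at least two. A path transverse to this stratification therefore misses $\Sigma\setminus\Sigma_1$ altogether (being one-dimensional) and meets $\Sigma_1$ transversally in a discrete, hence finite by compactness of $[0,1]$, set of times $t_1<\dots<t_N$; such paths are residual in the neighborhood above, so I may replace $t\mapsto X^t$ by one of them, which is Morse--Smale off $\{t_1,\dots,t_N\}$.

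It then remains to identify each crossing with a handle slide, after which the effect on the boundary operator $\partial^t$ is the formula recalled just before the statement. At a time $t_i$ let $a,b$ be the two index-$k$ critical points joined by the orbit $\gamma$, ordered so that $f(a)>f(b)$ (forced, as noted, by $f$-monotonicity along $\gamma$). In a tubular neighborhood of $\gamma$ I would carry out the local normal-form analysis of \cite[Sec.~7]{Miln}: for $t$ just below $t_i$ the manifold $W^u(a,X^t)$ is disjoint from $W^s(b,X^t)$; at $t_i$ it touches $W^s(b,X^{t_i})$ exactly along $\gamma$; and for $t$ just above $t_i$ a collar of $W^u(a,X^t)$ is swept past $b$, so that its slice $f^{-1}(f(b)-\varepsilon)\cap W^u(a,X^{t})$ acquires, by connected sum, a summand diffeomorphic to $\pm\bigl[f^{-1}(f(b)-\varepsilon)\cap W^u(b,X^{t})\bigr]$, the sign $s\in\{+1,-1\}$ recording whether the chosen orientations agree. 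This is precisely clause (ii) in the definition of a handle slide of $a$ over $b$, clause (i) being $\gamma$ itself; hence each $t_i$ is a handle-slide time and $\partial^t$ changes as stated, which proves the theorem.

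I expect this last step to be the only genuine difficulty. The transversality and stratification bookkeeping of the second step is routine once one knows it is driven by localized bump perturbations inside $\mathcal{P}$, whereas the local model identifying a generic transverse crossing of $\Sigma_1$ with a handle slide --- the connected-sum description of the unstable slice together with the correct sign $s$ --- is where Milnor's computation must actually be reused and checked in the present adapted-pseudo-gradient setting.
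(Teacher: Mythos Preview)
The paper does not actually prove this theorem: it is stated as a recalled result, with the surrounding subsection pointing to \cite{Laud4} and \cite[Sec.~7]{Miln} for the material. Your outline is precisely the standard transversality/stratification argument from those references, so there is nothing to compare --- your proposal and the paper's implicit proof coincide.
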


We will also use \cite[Theorem 7.6]{Miln}, in Section $4$ and $5$:

\begin{theorem}
\label{change}
If there is a strictly descending line connecting the level sets $f^{-1}(f(a))$ and $f^{-1}(f(b))$, and if $f(a)>f(b)$, any handle slide of $a$ over $b$ is possible.
In other words, for any Morse-Smale pseudo-gradient $X^0$ adapted to $f$, there is a Morse-Smale pseudo-gradient $X^1$ linked by a generic path of pseudo-gradients to $X^0$ whose boundary operator $\partial^1$ is obtained from $\partial^0$ by the following equations:
\begin{align*}
\partial^1_k &= \partial^0_k (I -sE_{\ell,i}),\\
\partial^1_{k+1} &= (I + sE_{\ell,i})\partial^0_{k+1},
\end{align*}
for $s=1$ or $s=-1$ depending on whether the handle slide is positive or negative.   
\end{theorem}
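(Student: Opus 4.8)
This is Milnor's handle-sliding theorem \cite[Theorem 7.6]{Miln}, and the plan is to recall the geometric construction behind it. First I would rearrange if necessary and then restrict attention to a neighborhood of the strictly descending line $\gamma$ together with $a$ and $b$: since $\gamma$ is strictly descending it meets no critical point, and a generic choice keeps the finger we are about to push disjoint from the stable and unstable manifolds of the other critical points, so they play no role. Fix a small $\varepsilon>0$ with no critical value of $f$ in $(f(b)-\varepsilon,f(b))\cup(f(b),f(b)+\varepsilon)$, and set $S_a:=W^u(a,X^0)\cap f^{-1}(f(b)+\varepsilon)$, a descending $(k-1)$-sphere, and $\Sigma_b:=W^s(b,X^0)\cap f^{-1}(f(b)+\varepsilon)$, the ascending $(n-k-1)$-sphere of $b$, where $W^s(b,X^0)$ is the stable manifold. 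Using $\gamma$ and segments of $X^0$-trajectories one obtains an embedded arc in the level set $f^{-1}(f(b)+\varepsilon)$ joining a point of $S_a$ to a point of $\Sigma_b$ and otherwise disjoint from both; as noted after the definition of a handle slide, the existence of such a line is exactly what makes the slide available.

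The heart of the matter is a local modification. I would push a thin finger of $S_a$ along this arc until it crosses $\Sigma_b$ transversally exactly once; this is an isotopy of $S_a$ inside the level set, supported in an arbitrarily small neighborhood of the arc, and since the descending disc of $a$ is a union of flow lines it is realized by a compactly supported modification of $X^0$ in a collar of $f^{-1}(f(b)+\varepsilon)$. The resulting field $X^{t_0}$ is still adapted to $f$ --- in particular the Morse coordinates near each critical point, and near $a$ and $b$ in particular, are left untouched --- but it is no longer Morse--Smale, as the finger now meets $\Sigma_b$ and hence there is an $X^{t_0}$-orbit from $a$ to $b$. Following the finger downward past the level of $b$, in Morse coordinates where $X(y)=(2y_1,\dots,2y_k,-2y_{k+1},\dots,-2y_n)$ the part of the finger near $\Sigma_b=\{y_1=\dots=y_k=0\}$ is swept toward $b$ and then out along $W^u(b,X^0)=\{y_{k+1}=\dots=y_n=0\}$, so that at the level just below $b$ one gets
\[ f^{-1}(f(b)-\varepsilon)\cap W^u(a,X^{t_0+\varepsilon}) \;\cong\; \left[ f^{-1}(f(b)-\varepsilon)\cap W^u(a,X^0)\right]\,\sharp\,\pm\left[ f^{-1}(f(b)-\varepsilon)\cap W^u(b,X^0)\right], \]
the sign being $-s$, fixed by the orientation chosen for the transverse crossing. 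A last small perturbation away from $a$ and $b$ restores the Morse--Smale condition and yields $X^1$; by Theorem \ref{pggenericity} the whole deformation is a generic path of adapted pseudo-gradients along which this one handle slide of $a$ over $b$ occurs.

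It then remains to read off the effect on $\partial$. Counting with signs the points of $W^u(a,X^1)\cap W^s(c,X^1)$ in $f^{-1}(f(b)-\varepsilon)$ for $c\in\mathcal{C}_{k-1}(f)$ and using the connected-sum formula, the $a$-column of $\partial_k$ becomes its old value minus $s$ times the $b$-column, i.e.\ $\partial^1_k=\partial^0_k(I-sE_{\ell,i})$ with $a$ the $i$-th and $b$ the $\ell$-th point of $\mathcal{C}_k(f)$; dually the slide re-routes trajectories from index-$(k+1)$ critical points that ran into $b$ so that they also see $a$, so the coefficient of $b$ in $\partial e$ absorbs $s$ times the coefficient of $a$ for every $e\in\mathcal{C}_{k+1}(f)$, giving $\partial^1_{k+1}=(I+sE_{\ell,i})\partial^0_{k+1}$, and all other $\partial_j$ are unchanged since the deformation is supported near $\gamma$ away from the remaining critical levels. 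I expect the main obstacle to be precisely the local model computation: checking that pushing the finger across $\Sigma_b$ genuinely produces the connected sum with the correct orientation while keeping the vector field adapted to $f$ at every instant. The rest is the isotopy extension theorem together with orientation bookkeeping.
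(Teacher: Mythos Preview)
The paper does not give its own proof of this statement: it is quoted verbatim as \cite[Theorem 7.6]{Miln} and used as a black box. Your proposal is therefore not competing with any argument in the paper, and what you wrote is a faithful outline of Milnor's construction --- isotope the descending sphere $S_a$ across the ascending sphere $\Sigma_b$ by a finger move along an arc in the intermediate level set, realize this isotopy by a compactly supported modification of the pseudo-gradient, and read off the connected-sum formula and the resulting change of basis on $\partial$.

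Two small points worth tightening. First, the hypothesis ``there is a strictly descending line connecting the level sets'' is doing real work: it guarantees that $S_a$ and $\Sigma_b$ lie in the \emph{same connected component} of $f^{-1}(f(b)+\varepsilon)$, which is what you need to find the embedded arc; you use this implicitly but it deserves a sentence, since Remark~\ref{groupremark} in the paper emphasizes exactly this obstruction. Second, your sign bookkeeping (``the sign being $-s$'') is slightly garbled --- the point is simply that by choosing on which side the finger crosses $\Sigma_b$ you can arrange either sign of the single transverse intersection, hence either $s=+1$ or $s=-1$, and this choice is what the theorem asserts is available. With those clarifications the sketch is correct.
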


We also recall \cite[Corollary 2.2]{Laud5} about handle crossings, that we slightly modify in order to adapt it to our needs:
\begin{proposition}
Let $t\mapsto f^t$ be a generic path of functions between two Morse functions $f^0$ and $f^1$.
Assume that the only bifurcations occuring during the path are handle crossings and that two different points only cross once during the path.
Then, there is a vector field $X$ which is a Morse-Smale pseudo-gradient adapted to $f^0$ and $f^1$.
\end{proposition}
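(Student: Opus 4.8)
The statement is essentially \cite[Corollary 2.2]{Laud5} in the special case at hand, and the plan is to reproduce the argument of that corollary; it proceeds by normalising the path, making one structural observation, and then performing the construction, the genuine difficulty being concentrated in the last step. Write $0<t_1<\dots<t_N<1$ for the crossing times, each $t_j$ being the crossing of a single pair $\{a_j,b_j\}$ of critical points of $f^{t_j}$ (and, by hypothesis, each unordered pair crossing at most once).

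\emph{Normalisation.} Since $t\mapsto f^t$ is generic and has neither birth nor death bifurcations, the critical points of $f^t$ sweep out finitely many pairwise disjoint embedded arcs over $[0,1]$; by the isotopy extension theorem there is an ambient isotopy $(\phi^t)_{t\in[0,1]}$ of $M$ with $\phi^0=\mathrm{id}$ carrying $\mathcal{C}(f^0)$ onto $\mathcal{C}(f^t)$, and replacing $f^t$ by $f^t\circ\phi^t$ I may assume $\mathcal{C}(f^t)=\mathcal{C}$ is constant in $t$. Applying the Morse lemma with parameters near each point of $\mathcal{C}$ and shrinking the charts, I may also assume that every $f^t$ has, in one fixed chart around each critical point, the same quadratic normal form; in particular the index and the label of each point of $\mathcal{C}$ are independent of $t$, and the only data that vary are the critical values, which swap in pairs exactly at $t_1,\dots,t_N$.

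\emph{Key observation and construction.} Once $\mathcal{C}$ and the Morse charts are fixed, whether a vector field $X$ is in the required normal form near $\mathcal{C}$ no longer refers to any particular $f^t$, and whether $X$ is Morse--Smale — transversality of the stable and unstable manifolds of the flow of $X$ at its hyperbolic zeros — is a property of $X$ alone; the only condition that still mentions $f^t$ is $df^t(X)<0$ off $\mathcal{C}$. I therefore start from a pseudo-gradient $X^0$ adapted to $f^0$, perturb it slightly away from $\mathcal{C}$ so that it is Morse--Smale and in addition satisfies $W^u(a,X^0)\cap W^s(b,X^0)=\varnothing$ for every pair $a,b$ of equal index — a further codimension-one condition that can be imposed simultaneously — and then extend $X^0$ to a family $X^t$ of pseudo-gradients adapted to $f^t$, keeping $X^t$ equal to $X^0$ and modifying it only in small balls disjoint from $\mathcal{C}$ to correct the sign of $df^t(X)$ as $t$ increases. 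The point is that this family can be kept Morse--Smale for every $t$, i.e.\ no handle slide is ever forced: at each crossing time the two critical points involved lie at the same critical value and hence cannot be joined by a trajectory of any pseudo-gradient, which is exactly the feature distinguishing crossings from births and deaths. Using the handle-reordering technology of \cite[Sec. 7]{Miln} together with Theorems \ref{pggenericity} and \ref{change}, one checks that the local corrections stay inside a single Morse--Smale component, and (absorbing a final ambient isotopy of $f^1$ into the normalisation) one arrives at a single $X$ that is a pseudo-gradient adapted to and Morse--Smale for both $f^0$ and $f^1$.

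\emph{Main obstacle.} I expect the only real difficulty to be the assertion in the previous paragraph that the path of pseudo-gradients can be carried from $f^0$ to $f^1$ without ever leaving the Morse--Smale locus — equivalently, that no handle slide is forced by a crossing — and, relatedly, that the two sign conditions $df^0(X)<0$ and $df^1(X)<0$ off $\mathcal{C}$ can be met by one and the same vector field after the normalisation. This is precisely the technical heart of \cite[Corollary 2.2]{Laud5}, and the plan is to quote (or reprove using \cite[Sec. 7]{Miln} and Theorems \ref{pggenericity}, \ref{change}) exactly that statement; everything else above is routine.
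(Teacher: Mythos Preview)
The paper itself gives no proof of this proposition: it is stated as a recollection of \cite[Corollary 2.2]{Laud5} (slightly rephrased) and used as a black box. Your proposal is therefore aligned with the paper in spirit --- you, too, ultimately defer the hard step to that reference --- so there is no divergence of approach to discuss.

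That said, your attempt to flesh out the argument has a genuine gap. You construct a \emph{path} $t\mapsto X^t$ of Morse--Smale pseudo-gradients adapted to $f^t$, and you correctly observe that crossings never force a handle slide (two points at equal critical value cannot be joined by an $X$-trajectory). But this only shows that $X^1$ is Morse--Smale for $f^1$; it does not show that $X^0=X^1$, nor that any single vector field is simultaneously a pseudo-gradient for $f^0$ and for $f^1$. The clause ``absorbing a final ambient isotopy of $f^1$ into the normalisation'' does not bridge this: an isotopy of $f^1$ changes $f^1$, not $X^1$, and after your normalisation $f^1$ is already fixed. What actually has to be argued is that, once the critical set and the Morse charts are frozen, the two open convex conditions $df^0(X)<0$ and $df^1(X)<0$ away from $\mathcal{C}$ admit a common solution --- equivalently, that at no regular point are $df^0$ and $df^1$ negatively proportional after normalisation --- and that a generic such $X$ is Morse--Smale. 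This is the content of \cite[Corollary 2.2]{Laud5}, and it is not recovered by your path argument. Since the paper simply cites that corollary, the cleanest fix is to do the same and drop the path construction, or else to argue directly for the simultaneous sign condition rather than for a one-parameter family.
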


\subsubsection{Independent bifurcations}
Finally, we recall results about independent bifurcations, for birth/death singularities. 
 The definition comes from \cite[Lemma 6.1]{HatWag} related to the notion of \emph{independent birth-death singularities}.
 \begin{definition}
 Let $X$ be a pseudo-gradient adapted to a Morse function $f$.
 Two critical points $a$ and $b$ of a function $f$ are independent for $X$ when we have:
 \[\left(W^u(a,X)\cup W^s(a,X) \right)\cap \left(W^u(b,X)\cup W^s(b,X) \right) = \emptyset .\]
 \end{definition}
We have from \cite[Lemma 6.1]{HatWag}:
\begin{lemma} [Independent singularity]
\label{independent}
 If $(f^t,X^t)$ is a generic path of functions and adapted pseudo-gradients, then $X^t$ can be deformed to a path of adapted pseudo-gradients such that all birth/death bifurcations of pairs of indices different from $(1,0)$ or $(n,n-1)$ are independent from points of indices comprised between $1$ and $n-1$.
 \end{lemma}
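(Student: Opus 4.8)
The plan is to treat the finitely many birth/death bifurcations one at a time and, for each, to bring the invariant manifolds of the cancelling pair into general position by a deformation of the pseudo-gradient that is localized in time near that bifurcation and supported away from all critical points. Since $(f^t,X^t)$ is generic, birth/death bifurcations occur at finitely many times, which we may take pairwise distinct and distinct from the handle-slide times. Fix one of them, say $t_0$, at which a pair $(a,b)$ with $\ind(a)=k$, $\ind(b)=k+1$ and $(k,k+1)\neq(0,1),(n-1,n)$ is born or dies; then $k$ and $k+1$ both lie in $[1,n-1]$, and, by time-reversal, we may treat the birth and the death cases symmetrically. Choosing a small ball $B\subset M$ around the degenerate point $p$ carrying a standard Cerf model, we may arrange that, outside $B$, neither $f^t$ nor $X^t$ changes for $t\in(t_0-\delta,t_0+\delta)$, and we restrict attention to the side of $t_0$ on which the critical points $a_t$ and $b_t$ exist. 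It then suffices to make $a_t$ and $b_t$ independent, for $t$ close to $t_0$, from every critical point $c$ with $\ind(c)\in[1,n-1]$.

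The main reduction uses flow-invariance: since $W^u(c,X^t)$ and $W^s(c,X^t)$ are invariant under the flow of $X^t$, every orbit is either contained in one of them or disjoint from it. Hence $W^u(a_t,X^t)\cap W^u(c,X^t)=\emptyset$ and $W^s(a_t,X^t)\cap W^s(c,X^t)=\emptyset$ automatically (two distinct critical points cannot be the common $\alpha$- or $\omega$-limit of a single orbit), and the same for $b_t$; the only possible intersections between the invariant manifolds of $\{a_t,b_t\}$ and those of $c$ therefore come from orbits running from $a_t$ or $b_t$ down to $c$, or from $c$ down to $a_t$ or $b_t$. Such an orbit must cross a regular level separating $c$ from the relevant pair point, so the intersection is empty precisely when, in suitable regular levels, the descending spheres $W^u(a_t,X^t)\cap V$ and $W^u(b_t,X^t)\cap V$ avoid $W^s(c,X^t)$ there, and the ascending spheres $W^s(a_t,X^t)\cap V$ and $W^s(b_t,X^t)\cap V$ avoid $W^u(c,X^t)$ there. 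Now $(a_t,b_t)$ is a cancelling pair of handles, so each of these four spheres is standardly embedded in its level set, bounding an embedded disk there, and is hence ambient-isotopic inside that level set to an arbitrarily small round sphere. The condition $(k,k+1)\neq(0,1),(n-1,n)$ — these being exactly the pairs for which an extremal index would force one of the four spheres to have codimension zero in its level set — guarantees that all four spheres have positive codimension; for the finitely many $c$ with $\ind(c)\in[1,n-1]$ the sets $W^s(c,X^t)$, $W^u(c,X^t)$ also have positive codimension, the union of the $W^s(c,X^t)$ over $\ind(c)\geq 1$ (resp. of the $W^u(c,X^t)$ over $\ind(c)\leq n-1$) being a nowhere-dense closed set whose complement is the union of the basins of the local minima (resp. of the basins of the local maxima for the reverse flow). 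We may therefore isotope each descending sphere into a basin of a local minimum and each ascending sphere into a basin of a local maximum, making it disjoint from all the relevant sets at once; by the flow-invariance observation this renders $a_t$ and $b_t$ independent from every $c$ with $\ind(c)\in[1,n-1]$.

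It remains to realize these isotopies by an admissible deformation of the pseudo-gradient. Each isotopy of a descending or ascending sphere inside a regular level set is realized by tilting the flow in a neighbourhood of that level (and, near $\partial B$, of the corresponding orbit segments), carried out entirely away from the critical points, so that $X^t$ stays adapted; the deformation is supported in a small time-neighbourhood of $t_0$, so the rest of the path remains generic. Since disjointness from the closed sets $W^u(c,X^t)\cup W^s(c,X^t)$ is an open condition and no other bifurcation occurs near $t_0$, the birth/death bifurcation at $t_0$ is now independent from all critical points of index in $[1,n-1]$; performing this localized deformation near each of the finitely many birth/death times, with pairwise disjoint supports, yields the desired path. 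I expect the main obstacle to be the two technical ingredients behind the reduction in the previous paragraph: that the descending and ascending spheres of a cancelling pair are standardly embedded (bound embedded disks) in the relevant level sets, and that the required ambient isotopies can genuinely be effected by a deformation of the adapted pseudo-gradient while keeping the rest of the path generic. These are precisely the points handled by \cite[Lemma 6.1]{HatWag}, whose argument we adapt.
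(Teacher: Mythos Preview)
The paper does not prove this lemma; it simply records it as \cite[Lemma~6.1]{HatWag}. Your sketch is essentially a reconstruction of that argument, and the strategy---localize near each bifurcation, use that the invariant manifolds of the cancelling pair can be isotoped into the basin of a local extremum because the complementary stable/unstable sets have positive codimension, and realize the isotopy by tilting the pseudo-gradient in a regular slice---is the correct one.

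One point deserves tightening. You speak of \emph{four} spheres (the descending spheres of $a_t,b_t$ and the ascending spheres of $a_t,b_t$) and assert that each bounds an embedded disk in its level. For the ``outer'' two (descending sphere of the lower-index point $a_t$, ascending sphere of the higher-index point $b_t$) this is clear from the local birth/death model. For the ``inner'' two (descending sphere of $b_t$ below $a_t$, ascending sphere of $a_t$ above $b_t$) the picture is less transparent, since these traces interact with the other member of the pair. The cleaner formulation---and the one Hatcher--Wagoner use, and which the paper reproduces for the extremal indices in Lemma~\ref{independentextended}---is to work with the two \emph{combined} objects
\[
\bigl(W^s(a_t,X^t)\cup W^s(b_t,X^t)\bigr)\cap f^{-1}(f(b_t)+\eta)
\quad\text{and}\quad
\bigl(W^u(a_t,X^t)\cup W^u(b_t,X^t)\bigr)\cap f^{-1}(f(a_t)-\eta),
\]
each of which is a single embedded disk (of dimension $n-k-1$ and $k$ respectively, hence of positive codimension precisely when $(k,k+1)\neq(0,1),(n-1,n)$). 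Shrinking each of these disks into a basin of a local maximum (resp.\ minimum) by an ambient isotopy of the level set simultaneously disposes of all the connecting orbits you need to kill, and avoids having to coordinate two separate isotopies on each side. With this adjustment your argument is the Hatcher--Wagoner proof.
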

 We adapt \cite[Lemma 6.1]{HatWag} and its proof to the case of a $(1,0)$ or $(n,n-1)$ birth/death bifurcation:
 \begin{lemma}[Independent singularity for extremal indices]
 We assume that a birth/death bifurcation occurs during a generic path, and that the pair $(a,b)$ which dies or appears is of index $(1,0)$ or $(n,n-1)$.
 We have the following, where $\partial^t$ denotes the boundary operator associated to $X^t$:
 \label{independentextended}
 \begin{itemize}

 \item \textbf{Death bifurcation of a pair $(a,b)$ of index $(1,0)$.} There is a Morse-Smale pseudo-gradient $X^{t_0-\varepsilon}$ adapted to $f^{t_0-\varepsilon}$ such that $\partial^{t_0-\varepsilon} d$ has no component along $a$ or $b$ for any critical point ${d \in \mathcal{C}_k(f^{t_0-\varepsilon}) \setminus \{a\} }$ with $k \in \{1,2\}$.
 \item \textbf{Death bifurcation of a pair $(a,b)$ of index $(n,n-1)$.} There is a Morse-Smale pseudo-gradient $X^{t_0-\varepsilon}$ adapted to $f^{t_0-\varepsilon}$ such that $\partial^{t_0-\varepsilon} a=\pm b$ and $\partial^{t_0-\varepsilon}b=0$.
 \item \textbf{Birth bifurcation of a pair $(a,b)$ of index $(1,0)$.} There is a Morse-Smale pseudo-gradient $X^{t_0+\varepsilon}$ adapted to $f^{t_0+\varepsilon}$ such that $\partial^{t_0+\varepsilon} d$ has no component along $a$ or $b$ for any critical point ${d \in \mathcal{C}_k(f^{t_0+\varepsilon}) \setminus \{a\} }$ with $k \in \{1,2\}$.
 \item \textbf{Birth bifurcation of a pair $(a,b)$ of index $(n,n-1)$.} There is a Morse-Smale pseudo-gradient $X^{t_0+\varepsilon}$ adapted to $f^{t_0+\varepsilon}$ such that $\partial^{t_0+\varepsilon} a=\pm b$ and $\partial^{t_0+\varepsilon}b=0$.
 
 \end{itemize} 
 \end{lemma}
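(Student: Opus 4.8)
\textbf{Reductions.} I would first cut the four assertions down to one. A birth bifurcation of a pair $(a,b)$ at time $t_0$ for the path $t\mapsto f^t$ is a death bifurcation of the same pair for the time--reversed path $t\mapsto f^{1-t}$, and since each individual boundary operator $\partial^t$ depends only on the pair $(f^t,X^t)$ and not on the parametrisation, the two ``birth'' statements are exactly the two ``death'' statements read on the reversed path. Moreover the involution $f\mapsto -f$ sends a critical point of index $\lambda$ to one of index $n-\lambda$, reverses every gradient trajectory and replaces $\partial$ by its (co)dual; under it a $(n,n-1)$ death becomes a $(1,0)$ death, and one checks directly that the conclusion ``$\partial^{t_0-\varepsilon}a=\pm b$ and $\partial^{t_0-\varepsilon}b=0$'' translates precisely into the conclusion asserted in the $(1,0)$ case. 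Hence it is enough to establish the first bullet.

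\textbf{Cancelling position.} Let $a\in\mathcal{C}_1(f^{t_0-\varepsilon})$ and $b\in\mathcal{C}_0(f^{t_0-\varepsilon})$ be the dying pair. Note that $b$ is not the only index-$0$ critical point, since $f^{t_0+\varepsilon}$ still has a minimum, which is then also a critical point of $f^{t_0-\varepsilon}$. Because $(a,b)$ genuinely cancels at $t_0$, one may, after a deformation of the path among adapted pseudo-gradients (Theorem \ref{pggenericity}) introducing no new bifurcation, choose $X^{t_0-\varepsilon}$ so that $a$ and $b$ sit in cancelling position near $t_0$; in particular there is a single orbit from $a$ to $b$, so after orienting $W^u(a)$ one has $\langle\partial^{t_0-\varepsilon}_1 a,b\rangle=\pm1$ (Milnor's first cancellation theorem, cf.\ \cite[Sec. 5]{Miln}, or Cerf's local model of the death \cite[p. 24--25]{Cerf}). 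Here $\langle\partial d,c\rangle$ denotes the coefficient of $c$ in $\partial d$.

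\textbf{Handle slides and $\partial^2=0$.} Starting from such an $X^{t_0-\varepsilon}$, the plan is to clear the $b$--row of $\partial_1$ using the $a$--column, whose $b$--entry is a unit, as a pivot: for every $d\in\mathcal{C}_1\setminus\{a\}$ with $\langle\partial_1 d,b\rangle\neq 0$, perform, with appropriate signs, $|\langle\partial_1 d,b\rangle|$ handle slides of the handle of $d$ over the handle of $a$, which is licit by Theorem \ref{change} as soon as a strictly descending line joins the level of $d$ to that of $a$. By the formulas for the effect of a handle slide, these moves only modify the $d$--columns of $\partial_1$ and add rows of $\partial_2$ to one another, hence they leave $\langle\partial_1 a,b\rangle=\pm1$ untouched and do not affect $\partial_j$ for $j\geq 3$; after finitely many of them one reaches a Morse--Smale adapted pseudo-gradient $X^{t_0-\varepsilon}$ with $\langle\partial^{t_0-\varepsilon}_1 d,b\rangle=0$ for all $d\in\mathcal{C}_1(f^{t_0-\varepsilon})\setminus\{a\}$. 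This is the index-$1$ part of the claim (an index-$1$ critical point has no $a$--component in its boundary for degree reasons). The index-$2$ part is then automatic: for $d\in\mathcal{C}_2$, expanding $(\partial_1\partial_2)_{b,d}=0$ and using that the $b$--row of $\partial_1$ has $\pm1$ in the $a$--slot and $0$ elsewhere gives $\langle\partial_2 d,a\rangle=0$, while $\partial_2 d$ has no $b$--component for degree reasons. This proves the first bullet, hence the lemma.

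\textbf{Where the real work is.} The delicate point is the legitimacy of the handle slides above, i.e.\ arranging that the descending spheres of the handles of the offending $d$'s can be slid over the handle of $a$ --- equivalently, positioning $(a,b)$ and the remaining index-$1$ handles so that those descending spheres can be pushed off the stable manifold (basin) of $b$. In the middle-index situation of Lemma \ref{independent} one argues by general position, the relevant manifolds having enough codimension; here $W^s(b)$ is $n$--dimensional and $W^u(a)$ is a curve, so separation by dimension is impossible, and one must import the explicit isotopy-and-slide argument of \cite[Lemma 6.1]{HatWag} together with the rearrangement results of \cite[Sec. 4]{Miln}, using the second minimum (which exists by the remark above) as the room into which the offending descending spheres are pushed; dually, in the $(n,n-1)$ case the top-dimensional manifold is $W^u(a)$ and one uses the extra maximum. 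This positioning step, rather than the bookkeeping above, is the main obstacle.
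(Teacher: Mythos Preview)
Your reductions (time reversal for birth $\leftrightarrow$ death, and $f\mapsto -f$ for $(n,n-1)\leftrightarrow(1,0)$) are exactly what the paper does; the core argument for the $(1,0)$ death is genuinely different, and both work.

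The paper does not perform handle slides at all. It observes that in the level set $f^{-1}(\alpha+\eta)$ just above $a$, the set $W=f^{-1}(\alpha+\eta)\cap\big(W^s(a)\cup W^s(b)\big)$ is a closed $n$--disk (its boundary sphere is the stable sphere of $a$), while the unstable manifolds of all index-$1$ and index-$2$ points meet this level set in a set of dimension at most $1$. By dimension count there is a point $x\in W$ missed by all of them, and one isotopes the level set so as to shrink $W$ into a small neighbourhood of $x$. This single isotopy produces the desired pseudo-gradient in one stroke and handles indices $1$ and $2$ simultaneously.

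Your route instead clears the $b$--row of $\partial_1$ by sliding each offending $d\in\mathcal{C}_1\setminus\{a\}$ over $a$, using $\langle\partial_1 a,b\rangle=\pm1$ as a pivot, and then reads off the index-$2$ statement from $\partial_1\partial_2=0$. The algebra is clean, and the $\partial^2=0$ shortcut is a nice way to get the index-$2$ conclusion for free. The point you flag as ``the real work'' --- guaranteeing those slides are available --- is precisely where your argument must appeal to an isotopy of the kind the paper performs directly; so in effect you have traded one global isotopy for several local slides whose justification leads back to the same geometric input. Note also that invoking the rearrangement results of \cite[Sec.~4]{Miln} would change $f^{t_0-\varepsilon}$, which the statement forbids; fortunately you do not really need rearrangement, since $a$ and $b$ have consecutive critical values near the death, so any $d$ with $\langle\partial_1 d,b\rangle\neq 0$ already lies above $a$.
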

 \begin{proof}[Proof of Lemma \ref{independentextended}]
 We refer to the proof of \cite[Lemma 6.1]{HatWag} for details.
 
 Assume we are in the case of the death bifurcation of a pair $(a,b)$ of index $(1,0)$.
 Let $\alpha$ be the critical value of $a$.
 Then at time $t_0-\varepsilon$, in the level set $f^{-1}(\alpha+\eta)$, for any adapted pseudo-gradient $X$, the manifold \[W:= f^{-1}(\alpha+\eta)\cap\left[W^s(a,f^{t_0-\varepsilon},X^{t_0-\varepsilon})\cup W^s(b,f^{t_0-\varepsilon},X^{t_0-\varepsilon})\right]\] is a closed disk of dimension $n$.
 The sphere bounding the disk is $W^s(a,f^{t_0-\varepsilon},X^{t_0-\varepsilon})$. 
 By a dimensional argument, there is a point $x \in W$ which is not in
 \[\bigcup\limits_{c\in \mathcal{C}_1(f^{t_0-\varepsilon})\cup\mathcal{C}_2(f^{t_0-\varepsilon})} W^u(c,f^{t_0-\varepsilon},X).\]

 By an isotopy of $f^{-1}(\alpha +\eta )$ we can deform $X^{t_0-\varepsilon}$, and shrink $W$ into a small neighborhood of $x$.
 This done, the new adapted pseudo-gradient is such that $\partial^{t_0-\varepsilon} d$ has no component along $a$ or $b$ for any critical point ${d \in \mathcal{C}_k(f^{t_0-\varepsilon}) \setminus \{a\} }$ with $k \in \{1,2\}$.
 
 In the same manner, by trading "$W^u$" and "$W^s$" above, we get the second item.
 By inverting time, we get the two last items. 
\end{proof}

With an abuse of notation, we will say that a birth/death singularity of a pair of extremal index is \emph{independent} if the path $t\mapsto(f^t,X^t)$ is as in Lemma \ref{independentextended}. 

\subsection{Trivial germ}

\label{trivialsec}

\begin{definition}[Trivial germ]

A non-critical germ $\tilde{f}$ defined along the sphere is trivial if the function $f$ has only two critical points: a maximum and a minimum, and if the maximum is labeled $+$ and the minimum is labeled $-$. 
\end{definition}

\begin{proposition}[Trivial extension\footnote{ I am deeply indebted to Fran\c{c}ois Laudenbach for the hints of the proof of this proposition.}]
\label{trivial2}
A trivial germ can be extended without critical points to the ball.
\end{proposition}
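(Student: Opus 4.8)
\emph{Sketch of the strategy.} I would build $F$ on $\mathbb{D}^{n+1}$ as three pieces glued along concentric spheres: on an outer spherical shell $F$ is the given germ; on a central ball $F$ is an explicit affine function of $\mathbb{R}^{n+1}$, which obviously has no critical point; and on a transition shell in between, $F$ realizes a non-critical deformation of $f$ to the restriction of that affine function to a round sphere. Concretely, fix a representative $\tilde f$ on $\mathbb{S}^n\times[0,\varepsilon)$ and, identifying $\{1-\varepsilon\le|y|\le 1\}\subset\mathbb{D}^{n+1}$ with $\mathbb{S}^n\times[0,\varepsilon]$ by $y\mapsto(y/|y|,1-|y|)$, put $F:=\tilde f$ there. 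Write $f^t=\tilde f(\cdot,t)$; for $\varepsilon$ small each $f^t$ is $C^2$-close to $f$, hence Morse with exactly two critical points, a minimum $m_t$ of index $0$ and a maximum $M_t$ of index $n$, and by Lemma~\ref{fate} these keep their labels, so $t\mapsto f^t(M_t)$ is strictly decreasing and $t\mapsto f^t(m_t)$ strictly increasing.

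\emph{The core step.} I would then join $f^\varepsilon$ to $\lambda\,\ell|_{\mathbb{S}^n}+\kappa$, for a nonzero linear form $\ell$ on $\mathbb{R}^{n+1}$ and suitable constants $\lambda>0$, $\kappa$, by a \emph{non-critical} path of functions keeping the two extremal critical points with labels $+$ (for the maximum) and $-$ (for the minimum). First, by Cerf theory \cite{Cerf}, pick a generic path $(g^s)$ of Morse functions with these endpoints whose only bifurcations are births, deaths and crossings of critical points of index between $1$ and $n-1$; this can be arranged because both endpoints have a single minimum and a single maximum, so the unique $m_s$ and $M_s$ persist along the path (cf.\ Lemma~\ref{independentextended}). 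Now promote $(g^s)$ to an honest non-critical function: choose smooth strictly monotone functions $a$ (decreasing) and $b$ (increasing) matching, at the two ends of the parameter interval, the top and bottom critical values of $g^\varepsilon$ and of $\lambda\,\ell|_{\mathbb{S}^n}+\kappa$; since every Morse function in the path has distinct top and bottom critical values, choose a smooth family of increasing diffeomorphisms $\sigma_s$ of $\mathbb{R}$ with $\sigma_s(g^s(M_s))=a(s)$, $\sigma_s(g^s(m_s))=b(s)$, equal to the identity near the outer end and affine near the inner end; and set $\tilde g(x,s)=\sigma_s(g^s(x))$, adding near each birth a small term $c\,s$ with $c\neq0$ of the sign dictated by the common label of the nascent pair (as in Lemma~\ref{noncrit}). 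Away from births the critical points of $\tilde g(\cdot,s)$ are those of $g^s$, with $\partial_s\tilde g=a'(s)<0$ at the maximum, $b'(s)>0$ at the minimum, and nonzero derivative at the transient pairs; at a birth point the term $c\,s$ makes $\partial_s\tilde g\neq0$. Hence $\tilde g$ has no critical point.

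\emph{Assembling.} Transplant $\tilde g$ onto a transition shell $\{\delta\le|y|\le 1-\varepsilon\}$ via the radial coordinate so that it agrees with the outer germ on $\{|y|=1-\varepsilon\}$ and, near $\{|y|=\delta\}$, equals the germ of a rescaled $\lambda\,\ell+\kappa$; on the central ball $\{|y|\le\delta\}$ let $F$ be the corresponding affine function. The germs match along both gluing spheres, the affine piece has no critical point, and $\tilde g$ has none, so one obtains a smooth $F:\mathbb{D}^{n+1}\to\mathbb{R}$ with no critical point and $F=\tilde f$ near $\mathbb{S}^n$. For $n=1,2$ this can alternatively be replaced by the constructions of \cite{BL} and \cite{Cur}.

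\emph{Main obstacle.} The delicate point is the core step: realizing a Cerf path \emph{non-critically and with the right labels}. The naive lift $\tilde g(x,s)=g^s(x)$ fails twice — it has a critical point at any parameter where the top or the bottom critical value is momentarily stationary (forcing the monotone prescriptions $a,b$ and the reparametrizations $\sigma_s$), and it has a critical point at each birth, at the nascent degenerate point (forcing the ``$c\,s$'' twist, whose sign must be chosen so that the transient pair is born with equal labels, consistently with Lemma~\ref{noncrit}). One must also justify that $(g^s)$ may be taken to keep the unique maximum and the unique minimum throughout, and it is precisely here that the hypothesis on $f$ — one local maximum, one local minimum — is used.
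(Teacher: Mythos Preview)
Your approach is genuinely different from the paper's --- the paper builds the extension directly by matching the Morse foliation of $\tilde f$ to that of the linear projection $pr_{n+1}$ via a pseudo-isotopy argument (Lemma~\ref{pseudoiso}), with no Cerf path in sight.

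However, your core step has a real gap. Suppose your Cerf path $(g^s)$ has a birth at time $s_0$, producing a transient pair $(a_s,b_s)$ that survives until a death at some $s_1>s_0$. For $\tilde g$ to be non-critical you need $\partial_s\tilde g(a_s,s)\neq 0$ and $\partial_s\tilde g(b_s,s)\neq 0$ for \emph{every} $s\in(s_0,s_1)$. Neither of your devices guarantees this. The reparametrizations $\sigma_s$ are pinned only at the two extremal values $g^s(M_s)$ and $g^s(m_s)$, so $s\mapsto\sigma_s(g^s(a_s))$ is otherwise uncontrolled and can perfectly well be stationary somewhere. The local term $c\,s$ is supported near the birth point and the birth time; once the pair has drifted away, it contributes nothing. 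Your assertion of ``nonzero derivative at the transient pairs'' is exactly the unjustified step.

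The obvious fix is to take $(g^s)$ with \emph{no} births or deaths, staying inside the class of Morse functions on $\mathbb{S}^n$ with exactly one maximum and one minimum. But proving this class is path-connected is precisely a pseudo-isotopy statement: after standardizing near the poles by the Morse lemma, connecting two such functions amounts to connecting two product structures on $\mathbb{S}^{n-1}\times[-1+\delta,1-\delta]$, i.e., connecting a pseudo-isotopy of $\mathbb{S}^{n-1}$ to the identity. That is exactly what the paper establishes with Lemma~\ref{pseudoiso} (together with Cerf's deformation-retract result quoted there). So closing your gap essentially forces you through the paper's argument.
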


We need the following lemma before the proof of the proposition.
Recall that a pseudo-isotopy between two diffeomorphisms $g_0$ and $g_1$ of a manifold $M$ is a diffeomorphism of $M\times [0,1]$ which restricts to $g_0$  on $M \times \{0\}$ and to $g_1$ on $M \times \{1\}$. 

\begin{lemma}
\label{pseudoiso}
If a diffeomorphism $g$ defined on a manifold $M$ is pseudo-isotopic to the identity via $\phi : M \times [0,1] \rightarrow M \times [0,1]$, then $\phi$ is pseudo-isotopic to the identity.
\end{lemma}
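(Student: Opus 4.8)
The plan is to build the pseudo-isotopy of $N:=M\times[0,1]$ by hand. Write points of $N\times[0,1]$ as $(x,s,t)$ with $x\in M$, $s\in[0,1]_s$ (the interval $\phi$ acts on, so $N=M\times[0,1]_s$) and $t\in[0,1]_t$ the pseudo-isotopy parameter; I must produce a diffeomorphism $\Psi$ of $N\times[0,1]_t$ which is the identity on $N\times\{0\}$ and equals $\phi$ on $N\times\{1\}$. First I would normalise $\phi$ inside its isotopy class in $\mathrm{Diff}(N)$ — which affects neither the hypothesis nor the conclusion, an isotopy being in particular a pseudo-isotopy — so that $\phi$ is a genuine product near the two ends: $\phi=\mathrm{id}$ on a collar $M\times[0,\varepsilon)$ of $M\times\{0\}$ and $\phi=g\times\mathrm{id}$ on a collar $M\times(1-\varepsilon,1]$ of $M\times\{1\}$. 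This is the standard collaring of a pseudo-isotopy: along $M\times\{0\}$ the differential of $\phi$ is block upper triangular with the identity on $TM$ and a positive normal derivative, hence joined to the identity through bundle automorphisms, and along $M\times\{1\}$ it is likewise joined to the differential of $g\times\mathrm{id}$; straightening along collars then gives the normalisation.

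Next I would record the product case: the diffeomorphism $g\times\mathrm{id}_{[0,1]}$ of $N$ is pseudo-isotopic to $\mathrm{id}_N$. For this, consider $A:=\phi\times\mathrm{id}_{[0,1]_t}$ on $M\times[0,1]_s\times[0,1]_t$, which is the identity on $M\times\{s=0\}\times[0,1]_t$, and conjugate it by $\mathrm{id}_M\times\rho$, where $\rho$ is a quarter-turn of the square $Q:=[0,1]_s\times[0,1]_t$ carrying the edge $\{t=0\}$ to $\{s=0\}$ and the edge $\{t=1\}$ to $\{s=1\}$. One then reads off from the normalisation that the resulting diffeomorphism of $N\times[0,1]_t$ is the identity on $N\times\{0\}$ (as $\phi$ is the identity on $M\times\{0\}$) and equals $g\times\mathrm{id}_{[0,1]_s}$ on $N\times\{1\}$ (as $\phi$ equals $g$ on $M\times\{1\}$). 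Composing pseudo-isotopies, it now suffices to pseudo-isotope $\phi$ to $g\times\mathrm{id}_{[0,1]}$.

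To do the last step I would identify $N\times[0,1]_t=M\times Q\cong M\times D^2$ (rounding the corners of $Q$) and prescribe the desired diffeomorphism on the four faces $M\times(\text{edge of }Q)$: the identity on $M\times\{s=0\}$ and $M\times\{t=0\}$, and $\phi$ — realised as two perpendicular copies, matched at the corner $(1,1)$, where both equal $g\times\mathrm{id}$ by the normalisation — on $M\times\{s=1\}$ and $M\times\{t=1\}$. After checking that these agree at all four corners of $Q$, the task becomes to extend this diffeomorphism of $M\times\partial Q\cong M\times S^1$ over $M\times Q\cong M\times D^2$. This extension is the heart of the matter and the step I expect to be the main obstacle: since $g$ need not be isotopic to the identity — only pseudo-isotopic, which is exactly our hypothesis — there is no naïve one-parameter deformation available, and any conjugation of $\phi\times\mathrm{id}$ by a diffeomorphism of $M\times Q$ respecting the product structure can only ever reproduce $g\times\mathrm{id}$, not $\phi$. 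My approach would be to cut $M\times Q$ along a collar of the face carrying one of the two copies of $\phi$ and recognise the remaining extension problem as that of gluing two copies of the product-normalised $\phi$ along their $M\times\{1\}$-ends, the product structure near $M\times\{1\}$ being precisely what makes this gluing smooth; this is where the normalisation of the first paragraph is used in an essential way.
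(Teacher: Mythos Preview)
Your normalisation step is right and matches the paper's use of Cerf's deformation-retract result. The divergence comes afterwards. You split the problem into two pieces: first a quarter-turn conjugation showing $g\times\mathrm{id}$ is pseudo-isotopic to $\mathrm{id}_N$, and then a separate extension problem to pass from $\phi$ to $g\times\mathrm{id}$ (or, in your boundary-value formulation, directly from $\mathrm{id}$ to $\phi$). The second piece is where you run into trouble: the ``gluing two copies of $\phi$ along their $M\times\{1\}$-ends'' outline does not visibly produce a diffeomorphism of $M\times Q$ with the prescribed boundary values, and as written it is not a proof.

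The paper dispenses with this decomposition entirely. After normalising so that $\phi$ equals $g\times\mathrm{id}$ near one end and $\mathrm{id}$ near the other, it foliates the square $[0,1]^2$ by rays through the corner $(0,0)$ and applies $\phi$ \emph{radially}: in polar coordinates $(r,\theta)$ centred at that corner, the map is $(x,r,\theta)\mapsto(\phi(x,r),\theta)$. This is smooth at the origin precisely because $\phi$ is the product $g\times\mathrm{id}$ near $r=0$, and since $\phi$ is the identity for $r$ near $1$ it extends by the identity wherever $r\geq 1$. On the edge $\{u=0\}$ the ray \emph{is} the edge, so the restriction is $\phi$ itself; on the far edge $\{u=1\}$ one has $r\geq 1$ everywhere, so the restriction is the identity. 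That is the whole pseudo-isotopy in one stroke.

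Your quarter-turn conjugation is the linear cousin of this fan construction, but because it moves the $\{t=0\}$ face to the single slice $\{s=0\}$ rather than to a full ray, it only sees the boundary value $\phi|_{s=0}=\mathrm{id}$ and outputs $g\times\mathrm{id}$ on the other end, not $\phi$. Replacing the rigid rotation by the polar foliation is exactly the missing idea that closes your gap.
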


\begin{proof}[Proof of Lemma \ref{pseudoiso}]
The proof is inspired from Hatcher and Wagoner \cite{HatWag}, citing the following result of Cerf \cite[Theorem 5 p.293]{Cerf2}.
 Let $g$ be diffeomorphism of a manifold $M$ pseudo-isotopic to the identity.
  Consider the set of pseudo-isotopies constant on the neighborhood of the boundaries of the cylinder $M\times [0,1]$, that is, pseudo-isotopies which are $(x,t)\mapsto (g(x),t)$ on $M\times [0,\varepsilon]$ and $(x,t)\mapsto (x,t)$ on $M\times [1-\varepsilon ,1]$.
   The result of Cerf states that this set is a deformation retract of the set of pseudo-isotopies from $g$ to the identity. 

Consider $\phi$, a pseudo-isotopy from $g$ to $id_M$ being constant on the neighborhood of the boundaries.
\begin{figure}
\begin{center}
\includegraphics[scale=0.5]{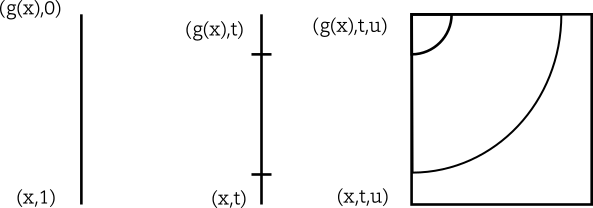}
\caption{Extension of the pseudo-isotopy at $x$ in $M$ fixed}
\label{coord}
\end{center}
\end{figure}
We denote by $(x,t,u)$ points of the double cylinder $M\times[0,1]\times [0,1]$ in Cartesian coordinates.

 We also use polar coordinates $ (x,r,\theta)$ on $\left(M\times[0,1]\times[0,1]\right)\setminus \left(M \times \{0\}\times \{0\}\right)$, with $r$ being $\sqrt{t^2+u^2}$ and $\tan (\theta) = \frac{u}{t}$. 
We define the pseudo-isotopy of $M\times ]0,1]\times [0,\frac{\pi}{2}] $ to be $(x,\theta,r) \mapsto (\phi(x,r),\theta)$, that is, we apply $\phi$ on each cylinder $\{\theta = constant\}$. 

 One can see that it is defined everywhere, since $\phi$ is locally constant on neighborhoods of $M\times \{0\}$ and $M\times \{1\}$. It is a diffeomorphism of $M\times[0,1]\times[0,1]$. 
 It coincides with $\phi$ on $M\times[0,1]\times \{0\}$ and $id_{M\times[0,1]}$ on $M\times[0,1]\times\{1\}$.
It defines a pseudo-isotopy from $\phi$ to the identity.  
\end{proof}

\begin{proof}[Proof of Proposition \ref{trivial2}]
 Let $\tilde{f}$ be a Morse germ.
  The problem remains the same when we consider $h\circ \tilde{f}$ instead of $\tilde{f}$, where $h$ is a diffeomorphism of $\RR$.
  Thus, we can use germs with their maximum (resp. minimum) being 1 (resp. $-1$). The point on which a function takes its minimum (resp. maximum) will be the \emph{south pole} (resp. \emph{north pole}), that we denote by $S$ (resp. $N$).
  Consider the germ given by the ($n+1$)-st coordinate of the ball $\Dn$ embedded in $\RR ^{n+1}$, that is the projection $pr_{n+1}: \Dn \to \RR$.
  It is the simplest example of a germ that extends without critical point. 
  Morse's lemma gives two diffeomorphisms 
  \[\phi_S : \mathcal{N}(S) \rightarrow pr_{n+1}^{-1} ( [-1,-1+ \delta])\]
   and  
  \[\phi_N : \mathcal{N}(N) \rightarrow pr_{n+1}^{-1} ( [1- \delta, 1])\]
   such that $\mathcal{N}(S)$ and $\mathcal{N}(N)$ are neighborhoods of the south pole and the north pole in the collar neighborhood of the sphere, with 
   \[\tilde{f}^{-1}(\{-1+\delta\})=\overline{\partial (\mathcal{N}(S)) \setminus \Sn}\] and 
   \[\tilde{f}^{-1}(\{1-\delta\})=\overline{\partial (\mathcal{N}(N)) \setminus \Sn}.\]
   In other words, the boundaries interior to the ball of those neighborhoods are level sets of the germ $\tilde{f}$. 
  See Figure~\ref{trivial}. 
 
 We have a level preserving diffeomorphism: 
\[\phi : \mathcal{D} \rightarrow \Sn \cap pr_{n+1}^{-1}([-1+\delta,1-\delta])\]  where $\mathcal{D}$ is $f^{-1}([-1+\delta,1-\delta])$.
Notice that $\Sn \cap pr_{n+1}^{-1}([-1+\delta,1-\delta])$ is diffeomorphic to $\mathbb{S}^{n-1}\times [-1+\delta , 1-\delta]$. We can assume $\phi: \mathcal{D} \cap f^{-1}(\{-1+\delta\}) \rightarrow \mathbb{S}^{n-1}$ is equal to $\phi_S$ restricted to $ f^{-1}(\{-1+\delta\})$.
 
 We choose pseudo-gradients $X(\tilde{f})$ and $X(pr_{n+1})$ for $\tilde{f}$ and $pr_{n+1}$ such that their flows preserve the respective Morse foliations away from the poles.
 It is possible up to renormalisation of the vector fields away from neighborhoods of the poles because the only critical points of $f$ or $pr_{n+1}$ are their respective extrema. 
 We also denote by $G_s(\tilde{f})$ the map that sends any point in $\tilde{f}^{-1}([-1+\delta,1-\delta])$ to $\tilde{f}^{-1}(\{s\})$ by the flow of the pseudo-gradient.
  In other words, we send a point $p$ in $\mathcal{D}$ to the point in $\tilde{f}^{-1}(\{s\})$ which is in the orbit of $p$ by the flow of $X(\tilde{f})$.
 We use an equivalent notation for $pr_{n+1}$. Notice that these maps are diffeomorphisms when restricted to a level set. 
 The diffeomorphism
  \[\Phi : (x,s) \mapsto G_s(pr_{n+1}) \circ \phi_N \circ G_{1-\delta}(\tilde{f}) \circ \phi ^{-1} (x,s)\]
 is a pseudo-isotopy on $\mathbb{S}^{n-1}\times [-1+\delta, 1-\delta]$. 
 Using Lemma \ref{pseudoiso}, we can define a pseudo-isotopy $\tilde{\Phi} : (x,s,t)\mapsto \Phi(x,s,t)$  on $\mathbb{S}^{n-1}\times [-1+\delta, 1-\delta]\times [0,\varepsilon]$ such that it is $\Phi$ on $\mathbb{S}^{n-1}\times [-1+\delta, 1-\delta]\times \{0\}$ and the identity on $\mathbb{S}^{n-1}\times [-1+\delta, 1-\delta]\times \{\varepsilon\}$.
 Then, we can smoothly glue a disk $\mathbb{D}^{n}$ to each level set of $\tilde{f}$ 
 using the extension of $\phi$.
We finally get a Morse function $F$ without critical point, as there is no topological changes on its level sets, defined on a manifold with boundary denoted by $W$, which is diffeomorphic to $\mathbb{D}^{n} \times [-1+\delta, 1-\delta]$.
 It is easy to glue $W$ to $\mathcal{N}(S)$, since the isotopy is the identity near the south pole.
Using a diffeomorphism of the disk $\mathbb{D}^{n}$, we can extend $F$ to $\mathcal{N}(N)$, and finally to a whole $\Dn$, without critical point.
 We obtain a non-critical function $F$ whose restriction to a collar neighborhood of the sphere is $\tilde{f}$, as desired.
\end{proof}

\begin{figure}

\begin{center}
\includegraphics[scale=0.4]{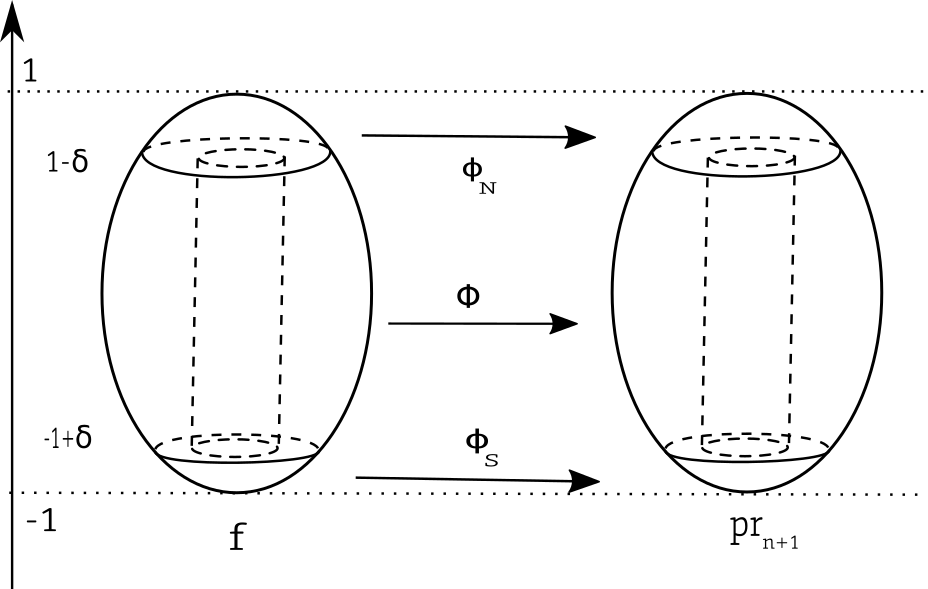}
\caption{Linking the Morse foliations}
\label{trivial}
\end{center}
\end{figure}

\subsection{Non-critical cancellation lemma}
Let $f$ be a Morse function on a closed manifold $M$ and $X$ a pseudo-gradient which is adapted to $f$.
 In this section, we do not suppose $f$ is excellent. 
 Let $a$ and $b$ be two critical points with the following assumptions: 
\begin{itemize}
\item $a$ is of index $k+1$ and $b$ is of index $k$.
\item $a$ and $b$ have consecutive critical values, that is, $f$ has no critical value in $]f(a),f(b)[$.
However, $f$ is not assumed excellent and $f(a)$ or $f(b)$ may correspond to several critical points.
\item There is one and only one transverse gradient line $\gamma$ from $a$ down to $b$, where transverse means that the unstable manifold of $a$ and the stable manifold of $b$ intersect transversely along this gradient line. 
\end{itemize}

With such assumptions, we say that $a$ and $b$ are in \emph{position of mutual cancellation}.
If a germ $\tilde{f}$ extends $f$ in a neighborhood of $\Sn$, and $a$ and $b$ are both of label $+$ (resp. $-$), we also assume that all gradient lines of $a$ (resp. $b$) except $\gamma$ reach $f^{-1}(\{f(b)-\delta\})$ (resp. $f^{-1}(\{f(a)+\delta\})$) with $\delta$ a positive number as small as wanted.
 We call this hypothesis the \emph{local excellence hypothesis}.

The following lemma deals with the cancellation of two such critical points.
 It is basically an adaptation of the cancellation lemma of Smale.
  The proof and the version of the lemma is taken from \cite{Laud3} and uses Cerf's methods. 

\begin{lemma}[Non-critical cancellation lemma][Laudenbach]
\label{cancellation}
Let $\tilde{f}$ be a Morse germ on $\Sn$ with $f$ not necessarily excellent. We choose an adapted pseudo-gradient $X$ for $f$. 
We suppose that there are $a \in \mathcal{C}^+_{k+1}(\tilde{f})$ and $b \in \mathcal{C}^+_{k}(\tilde{f})$ in position of mutual cancellation. 
We also suppose the local excellence hypothesis.
 Then, there is a path of functions $(f^t)_{t\in [0,1]}$ such that:
\begin{itemize}
 \item $\tilde{f}$ is represented by $(x,t)\mapsto f^t(x)$ for $0\leq t \leq \varepsilon$. 
 \item $\mathcal{C}_*(\tilde{f}_1)$ is naturally identified with $\mathcal{C}_*(\tilde{f})\setminus \{a,b\}$.
 \item $(d_x\tilde{f}^t (x,t),\partial_t\tilde{f}^t(x,t))\neq (0,0)$, for all $(x,t)$ in $\Sn \times [0,\varepsilon)$.
\end{itemize}
\end{lemma}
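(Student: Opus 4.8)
# Proof Proposal for the Non-critical Cancellation Lemma

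The plan is to adapt Smale's cancellation lemma in the parametric (Cerf-theoretic) setting, keeping track of the normal derivative $\partial_t$ throughout so that the germ stays non-critical. First I would work near the unique transverse gradient line $\gamma$ joining $a$ to $b$: by the hypotheses, $W^u(a,X)$ and $W^s(b,X)$ meet transversely along $\gamma$, and because $a$ and $b$ have consecutive critical values with no other critical value in between, we are in the exact local model of \cite[Sec. 5--6]{Miln} (or \cite{Laud3}). One first modifies $X$ (not $f$) so that near $\gamma$ the function and the pseudo-gradient take the standard product form, in which the cancellation is visibly realizable by a path $(f^t)$ that creates no new critical points and deletes the pair $\{a,b\}$ at a single death bifurcation at $t=1$. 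Outside a neighborhood of $\gamma$, the path is stationary ($f^t = f$), so the natural identification $\mathcal{C}_*(\tilde f_1) \simeq \mathcal{C}_*(\tilde f)\setminus\{a,b\}$ is immediate.

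The main new content — and the reason the "local excellence hypothesis" is imposed — is controlling the collar behavior so that $(d_x\tilde f^t, \partial_t \tilde f^t)\neq(0,0)$ holds on all of $\Sn\times[0,\varepsilon)$ during the deformation. The representative $\widetilde{f^t}:(x,s)\mapsto f^t(x,s+t)$-style extension must be modified consistently with the path on the base. Since $a$ and $b$ are both labeled $+$, Lemma \ref{fate} tells us $a$ descends and $b$ descends relative to the collar parameter, and the death must be non-critical in the sense of Lemma \ref{noncrit}: two points of the same label cancel. The point is to build the extension so that as $t\to 1$, along the shrinking pair of paths $a_t, b_t$, the derivative $\partial_t\widetilde{f^t}$ stays strictly negative (bounded away from $0$) up to and through the death time; the local excellence hypothesis guarantees that no \emph{other} critical point's unstable manifold interferes near the relevant level sets, so the Morse chart straightening near $\gamma$ can be performed without disturbing the collar data, and the product normal form can be chosen with the $t$-derivative a fixed negative constant there.

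Concretely, the steps I would carry out are: (1) use the transversality and consecutiveness hypotheses to put $(f,X)$ into the Milnor normal form in a neighborhood $U$ of $\gamma$; (2) invoke the local excellence hypothesis to ensure $U$ can be taken disjoint from the unstable manifolds of all other index-$k$ and index-$(k+1)$ critical points at the relevant levels, so that $\partial$ has no interference and the chart straightening is supported away from those; (3) in the normal form, write down the explicit Cerf path $(f^t)_{t\in[0,1]}$ performing the birth/death cancellation in reverse, supported in $U$; (4) extend this to a path of germs by prescribing $\widetilde{f^t}$ to equal $\tilde f$ outside $U\times[0,\varepsilon)$ and, inside, to interpolate using a collar normal form in which $\partial_s(\cdot)$ is a strictly negative constant — this is where being label $+$ for both points is used, exactly as in Lemma \ref{noncrit}, to keep $\partial_t$ one-signed; (5) verify $(d_x\tilde f^t,\partial_t\tilde f^t)\neq (0,0)$: away from $U$ it is inherited from $\tilde f$, and inside $U$ it holds because either $d_x$ is nonzero (off the critical locus of $f^t$) or, at a critical point of $f^t$, $\partial_t$ is the prescribed negative constant.

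The hard part will be step (4)–(5): ensuring that the collar extension of the \emph{path} of functions, not just of the endpoint functions, never develops a point where both the tangential and normal derivatives vanish simultaneously — in particular at the death time $t=1$ where $a$ and $b$ merge. This is precisely the obstruction that the local excellence hypothesis is designed to remove, and I would handle it by appealing to Cerf's methods as in \cite{Laud3}: the death singularity, being that of a same-label pair, can be realized by a one-parameter family whose normal derivative along the vanishing cycle is controlled, so that the limiting point $c_l=\lim a_t=\lim b_t$ inherits $\partial_t\widetilde{f^1}(c_l)<0$ rather than $=0$, avoiding the contradiction of Lemma \ref{noncrit}. Everything else is a routine localization-and-normal-form argument.
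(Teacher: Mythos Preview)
Your outline captures the overall shape of the argument, but there is a genuine gap at step (4)--(5), and it is exactly the point the paper's proof is designed to handle. You write that inside the neighbourhood $U$ of $\gamma$ you will ``interpolate using a collar normal form in which $\partial_s(\cdot)$ is a strictly negative constant''. But the first item of the lemma requires that for $0\leq t\leq \varepsilon$ the path $(x,t)\mapsto f^t(x)$ \emph{is} a representative of the given germ $\tilde f$. The germ is data: it is only at the two critical points $a$ and $b$ that $\partial_t\tilde f(x,0)<0$ is guaranteed by the label-$+$ hypothesis. At a generic point $x$ of $U\setminus\{a,b\}$ (e.g.\ along the interior of $\gamma$) there is no reason at all for $\partial_t\tilde f(x,0)$ to be negative, and you are not free to change it. So you cannot simply prescribe the normal derivative in $U$ to be a negative constant without violating the requirement that the path begins at $\tilde f$.

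The paper's proof resolves this with a two-stage construction. First, the cancellation result of \cite{Laud3} is invoked as a black box: it produces a path realizing the death with $\partial_t f^t(x)<0$ for all $x$ in a neighbourhood $\mathcal W$ of $\gamma$. The remaining work is to manufacture a \emph{preliminary} non-critical path $H:(x,t)\mapsto H(x,t)$, $t\in[0,\delta]$, which starts as the given $\tilde f$ and ends at a function whose $t$-derivative is strictly negative on $\mathcal W$. This is done by an explicit quadratic-in-$t$ interpolation
\[
G(x,t)=\tilde f(x,\varepsilon)+\Bigl(t-\tfrac{t^2}{2\delta'}\Bigr)\partial_t\tilde f(x,\varepsilon)+\tfrac{t^2}{2\delta'}\,g(x),
\]
where $g$ is a bump function with $g<0$ on $\mathcal W$; one checks $\partial_t G(x,0)=\partial_t\tilde f(x,\varepsilon)$ and $\partial_t G(x,\delta')=g(x)<0$ on $\mathcal W$, so concatenating $\tilde f$ with $G$ and then with the Laudenbach path gives a $\mathcal C^1$ non-critical path with the right initial germ, which one finally smooths. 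This preliminary ``normal-derivative adjustment'' phase is the idea missing from your proposal. A smaller point: your reading of the local excellence hypothesis as controlling interference from \emph{other} critical points' unstable manifolds is not how the paper uses it --- it is there to make the Laudenbach-style local cancellation apply cleanly to the pair $(a,b)$, not to protect the germ extension.
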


\begin{proof}
Let $\gamma$ be the orbit of the pseudo-gradient flow joining $a$ and $b$ and let $\mathcal{W}$ be a neighborhood of $\gamma$ in $\Sn$.
In \cite{Laud3}, the author finds a path of functions $t\mapsto f^t$ realizing the cancellation, and the path can be chosen such that 
 $\partial_t f^t(x) <0$ for all $x$ in $\mathcal{W}$.
 In particular, the path seen as a function of $(x,t) \in \Sn \times [0,1]$ has no critical point.
If the germ $\tilde{f}$ is such that $\partial_t \tilde{f}(x,0) <0$ for points $x$ in $\mathcal{W}$, applying the method of Laudenbach gives a path of functions $(t,x)\mapsto F(t,x)$ realizing the cancellation, such that $F$ restricts to $\tilde{f}$ for $t$ small and $dF \neq 0$, that is $F$ has no critical point.
 In order to get a germ with such property on its derivative, we will find a non-critical $\mathcal{C}^1$-path $H:(x,t)\mapsto H(x,t)$ for $t$ from $0$ to a real number $\delta$ such that:
 \begin{itemize}
 \item $H(x,t)$ gives a representative of $\tilde{f}$ for $t$ small,
 \item $H(x,\delta)=g(x)$ for all $x$ in $\Sn$,
 \item $\partial_t H(x,\delta) <0$ for $x$ in $\mathcal{W}$.
 \end{itemize}
  Consider the function 
  \[G :(x,t) \mapsto \tilde{f}(x,\varepsilon)+(t-\frac{t^2}{2\delta'})\partial_t \tilde{f}(x,\varepsilon) + \frac{t^2}{2 \delta'} g(x)\]
   with $t$ going from $0$ to a small $\delta '$, and with $g$ a function from $\Sn$ to $\mathbb{R}$ with $-m<g(x)<0$ on $\mathcal{W}$ for small $m$, and which is $0$ outside a small neighborhood of $\mathcal{W}$. 
  Notice that $\partial_t G(x,0)= \partial_t \tilde{f}(x,\varepsilon)$ and $\partial_t G(x,\delta')=g(x)<0$ for $x$ in $\mathcal{W}$.
  Concatenating the path $(t,x) \mapsto \tilde{f}(x,t)$ for $t$ from $0$ to $\varepsilon$ and the path $G$, we get the desired $\mathcal{C}^1$-path $H$.
  Concatenating $H$ with the path used to realize the cancellation, we get a $\mathcal{C}^1$ path realizing the cancellation with no critical point. 
  The path is smooth everywhere except at the junction of the two paths where it is just $\mathcal{C}^1$.
  However, it is always possible to smooth it keeping the property that it realizes the cancellation non-critically.
  
  The lemma is then proved.
\end{proof}

\section{Necessary homological condition for a non-critical extension }

 \label{nec}
We give in this section the main theorem of the article, which is a necessary condition for a germ to extend non critically. 
The proof will be based on Morse theory and Cerf theory. 
All of what is needed can be found in \cite{Laud4}.

Let $\tilde{f}$ be a non-critical Morse germ and $X$ an adapted pseudo-gradient which is Morse-Smale.
We assume $f$ excellent.
As we saw, if there is a non-critical extension, we have a generic path of functions $(f^t)_{t\in [0,1]}$ that extends the germ.
 The function $F:(x,t)\mapsto f^t(x)$ has no critical point, and $f^1$ has only two critical points, one maximum $max$ and one minimum $min$ such that $\partial_t F(max,1)<0$ and $\partial_t F(min,1)>0$.
In order to avoid conflicts of notation with the index and the label, the parameter $t$ will be used as a superscript to denote the dependency on time of the considered objects.
During this path of functions, all the critical points of $f$ and those born during the path get killed except two local extrema, one minimum and one maximum.
Notice that the two local extrema that remain at the end of the path are not necessarily some extrema of the initial function.
Indeed, births of pairs of critical points of indices $(1,0)$ or $(n,n-1)$ may happen during the path.

As we saw in Lemma \ref{noncrit}, two critical points of $f$ can get killed during the path $(f^t)_{t\in[0,1]}$ only if they have the same label.
For $k$ between $0$ and $n$, denote by $p_k$ the rank of $\mathbb{Z}\mathcal{C}_k^+(\tilde{f})$ and by $q_k$ the rank of $\mathbb{Z}\mathcal{C}_k^-(\tilde{f})$.
We also choose an order on $\mathcal{C}^{\ell}_k(f)$ for all $0\leq k \leq n$ and $\ell \in \{+,-\}$.
With such orders, given a boundary operator $\partial$ on $\mathbb{Z}\mathcal{C}(f)$, we can use matricial notations for all $\partial_{\ell_1 \ell_2 , k}$ for all labels $\ell_1$ and $\ell_2$, and all index $k$, such that $\partial_{\ell_1 \ell_2, k} :\mathbb{Z}\mathcal{C}^{\ell_2}_k(\tilde{f}) \to \mathbb{Z}\mathcal{C}^{\ell_1}_k(\tilde{f}) $. We have: \nopagebreak
\begin{center}
$\partial_{k+1} =\begin{pmatrix}
     \partial_{++,k+1} & \partial_{+-,k+1} \\
     \partial_{-+,k+1} & \partial_{--,k+1}
  \end{pmatrix} \in \mathcal{M}_{(p_k+q_k)\times (p_{k+1}+q_{k+1})}(\mathbb{Z})$,  
\end{center}

The next subsection introduce the key object of the article, namely the group of isomorphisms $G(\tilde{f})$ and how it is modified during a generic non-critical path of functions.
Subsection \ref{opposite} expose the links between $\tilde{f}$ and $-\tilde{f}$.
Subsection \ref{main} finally proves the theorem in two steps.
First we show that the hypotheses of Theorem \ref{th1}, which may seem to depend on the adapted pseudo-gradient, in fact only depend on the germ.
Second, we prove the theorem with a descending induction on the number of bifurcations occurring during the generic path of functions linking the germ that extends $\tilde{f}$ to a trivial germ $\tilde{h}$.

\subsection{ \texorpdfstring{$G(\tilde{f})$}{Lg}}
 \subsubsection{Definition}
  Let $k$ be an integer between $1$ to $n-1$.
 Denote by $(a_j)_{1\leq j \leq p_k}$ (resp. $(b_i)_{1\leq i \leq q_k}$) the points of $\mathcal{C}_{k}^+(\tilde{f})$ (resp. $\mathcal{C}_{k}^-(\tilde{f})$).
 Recall that if $f(a_j)<f(b_i)$, no handle slide of $a_j$ over $b_i$ is available.
 Moreover, as $F(a^t_j,t)$ is decreasing and $F(b^t_i,t)$ is increasing, we will not be able to perform such a handle slide at any time $t$.
 We then need to define a group of graded isomorphisms of $\mathbb{Z}\mathcal{C}(\tilde{f})$ representing the allowed handle slides between points of different labels, that is, the handle slides we are able to perform at time $0$ of points $(a_j)_{1\leq j \leq p_k}$ over points $(b_i)_{1\leq i \leq q_k}$. 
 It is the purpose of the groups $G_k(\tilde{f})$.

The following definition also consider $k=0$ and $k=n$. 
It is because we will indeed need to use groups $G_0(\tilde{f})$ and $G_n(\tilde{f})$ even if they do not have a geometric realization.
We denote by $N_{(i,j)}$ the coefficient in place $(i,j)$ of the matrix $N$.
\begin{definition}
\label{group}
Let $\tilde{f}$ be a Morse germ along $\Sn$.
We assume $f$ is excellent.
Let $0\leq k \leq n$.
We denote by $G_{k}(\tilde{f})$ the following group of automorphisms of the $\mathbb{Z}$-module $\mathbb{Z}\mathcal{C}_k(f)$.
An element of this group is a $(p_{k}+q_{k})\times (p_{k}+q_{k})$ invertible matrix $M_k$, such that 
\[M_k=\begin{pmatrix}
     I_{p_k} & 0 \\
      N_k & I_{q_k}
  \end{pmatrix} \in GL_{p_{k}+q_{k}}(\mathbb{Z})\]
   where ${N_k \in \mathcal{M}_{q_{k},p_{k}}(\mathbb{Z})}$ and  
   ${(N_k)_{(i,j)}=0}$
  if ${f(a_j)\leq f(b_{i}).}$
\end{definition}

Notice that 
\[M_k M'_k = \begin{pmatrix}
I_{p_k} & 0 \\
N_k + N'_k & I_{q_k} \\
\end{pmatrix},\]
thus the imposed nullity property of some coefficients is conserved under multiplication and the group is abelian.
The matrix \[N_k : \mathbb{Z}\mathcal{C}_k^+(\tilde{f}) \to \mathbb{Z}\mathcal{C}^-_k(\tilde{f})\] sends a point $a_j$ of label $+$ into the module generated by points which are below $a_j$ and have the same index and label $-$.

We define the global group $G(\tilde{f})$.
\begin{definition}
We define $G(\tilde{f})$ to be the group of graded isomorphisms $M:\mathbb{Z} \mathcal{C}(f)\rightarrow \mathbb{Z} \mathcal{C}(f)$ such that each restriction of $M$ to $\mathbb{Z}\mathcal{C}_{k}(f)$ for any $k$ between $0$ and $n$ is in the group $G_{k}(\tilde{f})$.
\end{definition} 
In the same way, this abelian group acts by conjugation on $\partial$. 
We use the notation $N_k$ for the down left submatrix of a matrix $M_k$ coming from an element of $G_k(\tilde{f})$.
 
If one conjugates a boundary operator $\partial$ by an element $M$ in $G(\tilde{f})$ then its restriction to each $\mathbb{Z}\mathcal{C}_{k+1}(f)$ reads:
\[ (M\partial M^{-1})_{k+1}= M_k\partial_{k+1} M_{k+1}^{-1}.\]
It leads to the four equations:
\begin{eqnarray*}
(M\partial M^{-1})_{++,k+1} & = & \partial_{++,k+1}-\partial_{+-,k+1}N_{k+1}, \\
(M\partial M^{-1})_{--,k+1} & = & \partial_{--,k+1}+N_k\partial_{+-,k+1}, \\
(M\partial M^{-1})_{-+,k+1} & = & \partial_{-+,k+1}-N_k\partial_{+-,k+1}N_{k+1}-\partial_{--,k+1}N_{k+1}+N_k\partial_{++,k+1}, \\ 
(M\partial M^{-1})_{+-,k+1} & = & \partial_{+-,k+1}.
\end{eqnarray*}

The following remark is important.

\begin{remark}[Difference between algebra and geometry]
\label{groupremark}
    For germs whose functions have more than two local extrema, the action of $G(\tilde{f})$ on the boundary operator is purely algebraic and does not necessarily correspond to the results of handle slides (which are of geometrical nature).
    It is due to several things. 
    First, we saw that to perform a handle slide of $a$ over $b$, there must be a descending line joining the level set of $a$ to the one of $b$.
    But the presence of other local extrema induces apparitions of connected components in the level sets, and for general functions with numerous local extrema, such lines between points may not exist.
     For example, the group $G(\widetilde{f_1})$ of the germ $\widetilde{f_1}$ whose Reeb graph is pictured on Figure \ref{afaire3} is not reduced to $0$.
    However, we cannot operate handle slides between the points of index $n-1$, as there is no descending line joining the level sets of those two points.

  \begin{figure}[!ht]
    \begin{center}
    
    \includegraphics[scale=0.6]{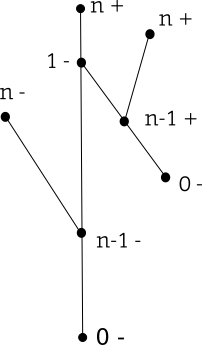}
    \caption{Reeb graph of $\tilde{f_1}$}
    \label{afaire3}
    \end{center}
    \end{figure}
    
    Second, handle slides between points of index $n$ or index $0$ are not defined.
    Thus, if $G_n(\tilde{f})$ or $G_0(\tilde{f})$ are not reduced to the identity, there is not necessarily a geometric interpretation to the conjugation of $\partial$ by an element of $G(\tilde{f})$.
    
    If $f$ has only two extrema, we have in particular that $G_n(\tilde{f})=\{I_{p_n+q_n}\}$ and $G_0(\tilde{f})=\{I_{p_0+q_0}\}$.
   If there is no points of indices $1$ and $n-1$ either, any action of an element $M\in G(\tilde{f})$ on a boundary operator $\partial$ given by a Morse-Smale pseudo-gradient adapted to $f$ corresponds to results of handle slides.
    It is mainly because of Proposition \ref{reebprop} and Theorem \ref{change}.
    
    As a conclusion, we emphasize that:
    \begin{itemize}
    \item we will often consider boundary operators $\partial$ given by Morse-Smale pseudo-gradients;
    \item if $\partial^1=M \partial M^{-1}$ and $M$ is an element in $G(\tilde{f})$ corresponding to actual handle slides, then there is a Morse-Smale pseudo-gradient adapted to $f$ whose associated boundary operator is $\partial^1$;
    \item however, in all generality, if $\partial^1=M \partial M^{-1}$ and $M \in G(\tilde{f})$, then there is no reason that there is a Morse-Smale pseudo-gradient adapted to $f$ whose associated boundary operator is $\partial^1$ (except if $f$ has only one local maximum, one local minimum, and no points of index $1$ or $n-1$). 
    \end{itemize}
      
    \end{remark}

\subsubsection{Modifications of \texorpdfstring{$G(\tilde{f})$}{Lg} through non-critical path of functions}
Let $(f^t)_{t\in[0,1]}$ be a non-critical path of functions continuing some Morse germ $\widetilde{f^0}$.
 Assume that one and only one bifurcation occurs during this path.
We describe the modification of the group $G(\widetilde{f^t})$ between times $t=0$ and $t=1$ according to the kind of bifurcation occurring.  
It will be used to prove Theorem \ref{th1}.

\begin{itemize}

\item  \textbf{Crossing of critical points of same label.}\\
 \textbf{Crossing of critical points of different labels and different indices.}
 
 In these cases, we have ${G(\widetilde{f^{1}})=G(\widetilde{f^0}).}$

 \item \textbf{Crossing of critical points of different labels and same index.}

 Let $k$ be the index of the points.
If there is a crossing between two points of different labels, then it is a point of label $+$ that goes below a point of label $-$, as the opposite is not possible.
After the crossing, we cannot make handle slides of the point labeled $+$ over the point labeled $-$ anymore, thus we have that $G(\widetilde{f^1})$ is a strict subgroup of $G(\widetilde{f^0})$.
 Precisely, if the point labeled $+$ is the $j$-th point of $\mathbb{Z}\mathcal{C}(f)$, and the point labeled $-$ is the $i$-th point, then $G_k(\widetilde{f^1})$ is the subgroup of matrices $M$ in $G(\tilde{f})$ whose restrictions $M_k$ have their $(i,j)$ coefficients being zero.
 It is isomorphic to $G_k(\widetilde{f^0})/ (I_{p_k+q_k}+\mathbb{Z} E_{i,j})$.

 \item \textbf{Death bifurcation.} 

 We first describe the modification when the canceled pair is of label $+$.

Let $k$ be such that the pair of critical points $(a,b)$ getting killed is of index $(k+1,k)$ and label $+$.
We have \[{\mathbb{Z}\mathcal{C}^+_j(\tilde{f}^0)\simeq \mathbb{Z}\mathcal{C}^+_j(\tilde{f}^1)\oplus \mathbb{Z},}\]
for $j\in \{k,k+1\}.$
\textit{A priori}, $\partial^1$ and $\partial^0$ do not act on isomorphic modules, but we have a natural injection
\[{ \mathbb{Z}\mathcal{C}(f^1) \hookrightarrow \mathbb{Z}\mathcal{C}(f^0).}\]
We will then identify $\mathbb{Z}\mathcal{C}(f^1)$ as a submodule of $\mathbb{Z}\mathcal{C}(f^0)$.
It induces a group injection
 \[{G(\widetilde{f^1})\hookrightarrow G(\widetilde{f^0}) ,}\]
 where a matrix $M^1$ in $G(\widetilde{f^1})$ is sent to a matrix $M^0$ restricting to $M^1$ on $\mathbb{Z}\mathcal{C}(f^1)$ and being the identity on $\mathbb{Z}\{a\}$ and $\mathbb{Z}\{b\}$.
 Thus we have $M^0 c =M^1 c$ if $c\in \mathbb{Z}\mathcal{C}(f^1)$, using the injection of modules above, and $M^0 c=c$ if $c\in \{a,b\}$.
 We then have for $j=k$, or $j=k+1$:
 \begin{equation}
 \label{eqmatrice}
 M^0_j=\begin{pmatrix}
  &           &  & 0      &  &   & \\
  & I_{p^1_j} &  & \vdots &  & 0 & \\
  &           &  & 0      &  &   & \\
 0& \hdots    & 0& 1      & 0& \hdots  & 0 \\
  &           &  & 0      &  &   & \\ 
  &  N^1_j    &  & \vdots &  & I_{q^1_j} & \\
  &           &  & 0      &  &           & \\
 \end{pmatrix}, 
 \end{equation}
 where $N^1_j$ is the down-left submatrix of $M^1_j$. 
  For the sake of notation, instead of taking the natural order given by the critical values of the points, we chose here to put the point $a$ (resp. $b$) after the critical points of $\mathcal{C}^+_{k+1}(\widetilde{f^1})$ (resp. $\mathcal{C}^+_{k}(\widetilde{f^1})$).
 We have $M^0_j=M^1_j$ for any index $j \neq k$, and $j\neq k+1$.
 
 If the label of the pair is $-$, we still have an injection 
 \[{G(\widetilde{f^1})\hookrightarrow G(\widetilde{f^0}).}\]
 Now, in matricial notation, we have: 
  \begin{equation}
 \label{eqmatriceb}
 M^0_{j}=\begin{pmatrix}
   &                      &  &  &                     & & 0 \\
   & I_{p^1_{j}}          &  &  & 0                   & & \vdots \\
   &                      &  &  &                     & & 0 \\
   &                      &  &  &                     & & 0 \\
   & N^1_j                &  &  &  I_{q^1_j}          & & \vdots \\
   &                      &  &  &                     & & 0 \\
 0 &    \hdots            & 0& 0&  \hdots             &0& 1 \\
\end{pmatrix},
 \end{equation}
for $j=k$ or $j=k+1$ and $M^0_j=M^1_j$ in other degrees.

 \item \textbf{Birth bifurcation.}
 
  The description is similar, intertwining $t=0$ and $t=1$ in the superscripts.
 There are injections:
 \begin{align*}
  \mathbb{Z}\mathcal{C}(f^0)& \hookrightarrow \mathbb{Z}\mathcal{C}(f^1),\\
  G(\widetilde{f^0})&\hookrightarrow G(\widetilde{f^1}).
 \end{align*}

  Inverting $t=0$ and $t=1$ in the equation \ref{eqmatrice} above, we have an injection $G(\widetilde{f^0})\hookrightarrow G(\widetilde{f^1})$ given by the equation, for $j=k$, or $j=k+1$:
 \begin{equation}
 \label{eqmatrice2}
 M^1_j=\begin{pmatrix}
  &           & 0      &  &   & \\
  & I_{p^0_j} & \vdots &  & 0 & \\
 0& \hdots    & 1      & 0 & \hdots & 0 \\
  &           & 0      &  &   & \\ 
  &  N^0_j    & \vdots &  & I_{q^0_j} & \\
  &           & 0      &  &           & \\
 \end{pmatrix}, 
 \end{equation}
 where $N^1_j$ is the down-left submatrix of $M^1_j$.
 
  If the label is $-$, we have injections:
  \begin{align*}
  \mathbb{Z}\mathcal{C}(f^0)& \hookrightarrow \mathbb{Z}\mathcal{C}(f^1),\\
  G(\widetilde{f^0})&\hookrightarrow G(\widetilde{f^1}).
 \end{align*}
 We also have equations:
 \begin{equation}
 \label{eqmatrice2b}
 M^1_{j}=\begin{pmatrix}
   &                      &  &  &                     & & 0 \\
   & I_{p^0_{j}}          &  &  & 0                   & & \vdots \\
   &                      &  &  &                     & & 0 \\
   &                      &  &  &                     & & 0 \\
   & N^0_j                &  &  &  I_{q^0_j}          & & \vdots \\
   &                      &  &  &                     & & 0 \\
 0 &    \hdots            & 0& 0&  \hdots             &0& 1 \\
\end{pmatrix},
 \end{equation}
for $j=k$ or $j=k+1$ and $M^0_j=M^1_j$ in other degrees.
 \end{itemize}

\subsection{The main theorem} 
 \label{main}
 \subsubsection{Property (\texorpdfstring{$\mathcal{P}$}{Lg}) and statement of the theorem}
  In this subsection, we give and prove the main theorem of the article.
   We use the notation described in the previous subsection. 
  If $M$ is a matrix in $G(\tilde{f})$, the matrix $M_k$ will be its restriction to $G_k(\tilde{f})$.
   The matrix $N_k$ will denote the down-left submatrix of $M_k$.
   Our theorem and the proof of it will mainly focus on points of label $+$, but we show in Subsection \ref{opposite} that an equivalent theorem can be stated focusing on points of label $-$. 
 However, we show that a theorem using data about points of label $-$ would be strictly equivalent to Theorem \ref{th1}.
   
   Let $(\tilde{f},X)$ be a couple such that $\tilde{f}$ is a Morse germ, and $X$ is a Morse-Smale pseudo-gradient which is adapted to $f$.
   
   \begin{definition}[Property ($\mathcal{P}$)]
   \label{defP}
   We say that $(\tilde{f},X)$ has \emph{property $(\mathcal{P})$} when
   there is a matrix $M$ in $G(\tilde{f})$ such that:
  \begin{itemize}
  \item $ (M \partial M^{-1} )_{-+} =0$, that is, for all $k$ between $0$ and $n$
  \[ \partial_{-+,k} - \partial_{--,k}N_k + N_{k-1}\partial_{++,k} - N_{k-1}\partial_{+-,k} N_k = 0 . \]
  \item  ${ (\mathcal{C}^+_k(\tilde{f}),\partial_{++,k}+\partial_{+-,k}N_k)_{0\leq k \leq n} }$ is a chain complex. Its homology vanishes in degree $k<n$ and is $\mathbb{Z}$ in degree $n$.
  
   \end{itemize}  
\end{definition}

\begin{remark}
The first item implies that $(M\partial M^{-1})_{++}$ is a chain complex, as we get:
\[(M\partial M^{-1})_k=\begin{pmatrix}
(M\partial M^{-1})_{++,k} & (M\partial M^{-1})_{+-,k} \\
0                         & (M\partial M^{-1})_{--,k} \\
\end{pmatrix}\]
in all degree $k$.
Thus, we get $(M\partial M^{-1})_{++}^2=0$ as $(M\partial M^{-1})^2=0$. 
\end{remark}

The theorem is:
\begin{theorem}
\label{th1}
Let $\tilde{f}$ be a Morse germ along $\Sn$.
Let $X$ be an adapted pseudo-gradient which is Morse-Smale and let $\partial$ be the associated boundary operator.

  If the germ $\tilde{f}$ has a non-critical extension then $(\tilde{f},X)$ has property ($\mathcal{P}$).
\end{theorem}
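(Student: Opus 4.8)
The plan is to follow the two-stage strategy announced just before the statement: first show that property $(\mathcal{P})$ for $(\tilde f,X)$ in fact depends only on the germ $\tilde f$, and then, given a non-critical extension, transport property $(\mathcal{P})$ backwards along a generic path of functions running from a trivial germ to $\tilde f$, by a descending induction on the number of Cerf bifurcations.

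\emph{Stage 1: independence of the adapted pseudo-gradient.} Two Morse--Smale pseudo-gradients adapted to the fixed function $f$, together with their orientation data, are joined by a generic path of adapted pseudo-gradients (Theorem \ref{pggenericity}), so it suffices to show that property $(\mathcal{P})$ is unchanged by one handle slide of a point $x$ over a point $y$ of the same index $k$ with $f(x)>f(y)$, and by an orientation reversal. Such a move replaces $\partial$ by $\partial''=Q\partial Q^{-1}$ with $Q$ graded, equal to the identity in all degrees but $k$, where $Q_k=I\pm E_{y,x}$ (respectively a $\pm1$ diagonal matrix). I would split into cases according to the labels of $x$ and $y$. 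If $x$ is labeled $+$ and $y$ is labeled $-$ then $Q\in G(\tilde f)$, and from a witness $M$ for $\partial$ the matrix $MQ^{-1}\in G(\tilde f)$ is a witness for $\partial''$. In every other case ($+$ over $+$, $-$ over $-$, $-$ over $+$, and orientation reversals) $Q_k$ is block triangular for the $(+,-)$-splitting of $\mathbb{Z}\mathcal{C}_k(f)$ with unimodular diagonal blocks; from a witness $M$ I would construct a $(+,-)$-block upper triangular, unimodular $Z$ (explicitly computable, with integer entries forced by $(N_k)_{y,x}=0$, which holds precisely because $f(x)>f(y)$) so that $M'':=ZMQ^{-1}$ again lies in $G(\tilde f)$; the only nontrivial point is that the prescribed vanishings of the entries of the lower-left block $N''_k$ survive, which follows from combining the ordering $f(x)>f(y)$ with the vanishings already imposed on $N_k$. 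Then $M''\partial''(M'')^{-1}=Z(M\partial M^{-1})Z^{-1}$, so its $-+$-block still vanishes and its $++$-part is conjugate to that of $M\partial M^{-1}$, so the $+$-complex has the same homology. Hence the conclusion of the theorem depends only on $\tilde f$, and from now on $X$ may be chosen freely among Morse--Smale adapted pseudo-gradients.

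\emph{Stage 2: descending induction.} A non-critical extension $F$ yields, as in Section \ref{cobord}, a generic non-critical path $(f^t)_{t\in[0,1]}$ with $f^0=f$ and $f^1$ the function of a trivial germ, and (using Theorem \ref{pggenericity} and the independence Lemmas \ref{independent} and \ref{independentextended}) a path $X^t$ of adapted pseudo-gradients, Morse--Smale away from finitely many handle slides. I claim $(\tilde f^t,X^t)$ has property $(\mathcal{P})$ for every $t$; for $t=0$ this is the theorem. At $t=1$ it holds with $M=I$: the trivial germ has $\partial=0$, with $\mathcal{C}^+$ consisting only of the index-$n$ maximum and $\mathcal{C}^-$ only of the index-$0$ minimum, so $G(\tilde f^1)=\{I\}$ and the $+$-complex is $\mathbb{Z}$ in degree $n$ and zero elsewhere. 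Decreasing $t$ from $1$ to $0$, property $(\mathcal{P})$ is preserved across handle slides by Stage 1, and across crossings because there $\partial$ can be taken unchanged while $G(\tilde f^t)$ only increases as $t$ decreases, so a witness persists. It remains to propagate property $(\mathcal{P})$ across a single birth or death. By Lemma \ref{noncrit} together with continuity of $\partial_t F$ near a birth point, the pair $(a,b)$ created or cancelled is monochromatic. Using the independence lemmas I would choose $X$ near the bifurcation so that, on the side carrying the extra critical points, $\partial$ is the direct sum of the boundary operator on the other side with the acyclic complex $\langle a\rangle\xrightarrow{\pm1}\langle b\rangle$ (after a unitriangular change of basis in the degree of $b$). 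The explicit inclusions $G(\widetilde{f^{\mathrm{small}}})\hookrightarrow G(\widetilde{f^{\mathrm{large}}})$ and $\mathbb{Z}\mathcal{C}(f^{\mathrm{small}})\hookrightarrow\mathbb{Z}\mathcal{C}(f^{\mathrm{large}})$ described before the statement then transport a witness both ways: restricting a witness $M^{\mathrm{large}}$ (delete the $a,b$ rows and columns of the $N_k$) gives an element of $G(\widetilde{f^{\mathrm{small}}})$, and extending a witness $M^{\mathrm{small}}$ by the identity on $\langle a\rangle,\langle b\rangle$ gives an element of $G(\widetilde{f^{\mathrm{large}}})$; in either direction one checks, using that the off-split components of $\partial$ vanish by independence and that $a$ and $b$ share a label, that $(M\partial M^{-1})_{-+}=0$ is preserved and that the $+$-complex only gains or loses one acyclic direct summand, hence keeps the same homology.

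The step I expect to be the real work is this last one, and inside it the bookkeeping imposed by the identity $\partial_{-+,k}-\partial_{--,k}N_k+N_{k-1}\partial_{++,k}-N_{k-1}\partial_{+-,k}N_k=0$: one must track each of its four terms as the $(a,b)$ block is inserted or deleted, and it is there that monochromaticity of $(a,b)$ and independence of the bifurcation are used. The genuinely delicate sub-case is a pair of extremal index $(1,0)$ or $(n,n-1)$, for which Lemma \ref{independentextended} does not furnish a clean direct-sum splitting of $\partial$; there one must first dispose of the surplus incidence numbers carried by $a$ (by handle slides, or by a further base change) before the same argument goes through.
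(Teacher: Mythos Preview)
Your proposal is correct and follows essentially the same two-stage architecture as the paper: Lemma \ref{invariance} for independence of the pseudo-gradient, then Lemma \ref{induction} for the backward induction across a single bifurcation, with the extremal-index birth/death handled separately exactly as you anticipate. The only organizational difference is that the paper halves the casework by systematically reducing all label-$-$ situations (handle slides among $-$ points, births/deaths of $-$ pairs) to the corresponding label-$+$ ones via the duality $\tilde f\leftrightarrow -\tilde f$ of Subsection~\ref{opposite} and Proposition~\ref{takingopp}; your more uniform treatment via a block-upper-triangular $Z$ in Stage~1 is fine and, in the $-$ over $+$ case, recovers precisely the paper's witness $N_k(I-EN_k)^{-1}$, whose membership in $G_k(\tilde f)$ is secured by Lemma~\ref{mult}.
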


\begin{remark}
Notice that if the germ extends non-critically, the theorem implies that $M\partial M^{-1}$ becomes the \emph{mapping cone} of the chain map 
\[\partial_{+-} : (\mathbb{Z}\mathcal{C}^+(\tilde{f}) , (M\partial M^{-1})_{++}) \to (\mathbb{Z}\mathcal{C}^-(\tilde{f}) , (M\partial M^{-1})_{--}).\]
The definition of the mapping cone of a map between chain complexes can be found in \cite[Sec. 1.5]{Wei}.
\end{remark}

\subsubsection{Taking the opposite}
\label{opposite}
In this subsection, we expose the algebraic relations between $\tilde{f}$ and $-\tilde{f}$.
Besides its own interest, it will simplify the proof of Theorem \ref{th1}.

The following lemma is in fact just a description of the homological algebra of $-\tilde{f}$ with respect to the one of $\tilde{f}$. If $\prec$ is an order on a set $\mathcal{S}$, then the opposite order $\prec^{op}$ is defined such that:
\[a\prec^{op}b \iff b \prec a.\]
We shall not give a proof to this lemma.
\begin{lemma}
\label{oppbound}
We have:
\begin{itemize}
\item for all $0\leq k \leq n$, if $a\in \mathcal{C}^+_k(\tilde{f})$ then $a\in \mathcal{C}^-_{n-k}(-\tilde{f})$. Thus $\mathcal{C}^+_k(\tilde{f})=\mathcal{C}^-_{n-k}(-\tilde{f})$,
\item in the same way, $\mathcal{C}^-_k(\tilde{f})=\mathcal{C}^+_{n-k}(-\tilde{f})$,
\item if $X$ is a Morse-Smale pseudo-gradient adapted to $f$ then $-X$ is a Morse-Smale pseudo-gradient adapted to $-f$.
 If we denote $\partial_k(f)=\begin{pmatrix}
\partial_{++,k}(\tilde{f}) & \partial_{+-,k}(\tilde{f}) \\
\partial_{-+,k}(\tilde{f}) & \partial_{--,k}(\tilde{f}) \\
\end{pmatrix}$ the boundary operator associated to $X$ where an order $\prec$ is given to the critical points of $f$, then, the  boundary operator associated to $-X$ is
$\partial_k(-f)=\begin{pmatrix}
{}^t \partial_{--,n-k+1}(\tilde{f}) & {}^t \partial_{+-,n-k+1}(\tilde{f}) \\
{}^t \partial_{-+,n-k+1}(\tilde{f}) & {}^t \partial_{++,n-k+1}(\tilde{f}) \\
\end{pmatrix}$, where the opposite order $\prec^{op}$ is given to the critical points of $-f$.
\end{itemize}

\end{lemma}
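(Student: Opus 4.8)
The plan is to dispose of the three items in turn: the first two are pointwise and elementary, while the third is the chain-level incarnation of Poincaré duality and is the only place where real care is needed.

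First I would establish the critical-set correspondence. If $a\in\mathcal{C}(f)$ then the Hessian of $-f$ at $a$ is the negative of the Hessian of $f$ at $a$, so its maximal negative subspace has dimension $n$ minus the index of $f$ at $a$; hence $\mathcal{C}_k(f)=\mathcal{C}_{n-k}(-f)$. For the labels, observe that $\partial_t(-\tilde f)(a,0)=-\partial_t\tilde f(a,0)$, so the sign of the normal derivative flips: $a\in\mathcal{C}^+_k(\tilde f)$, i.e. $\partial_t\tilde f(a,0)<0$, is equivalent to $\partial_t(-\tilde f)(a,0)>0$, i.e. $a\in\mathcal{C}^-_{n-k}(-\tilde f)$, and symmetrically with the labels exchanged. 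This yields the first two bullet points, together with the identification of the underlying graded free modules $\mathbb{Z}\mathcal{C}_k(-f)=\mathbb{Z}\mathcal{C}_{n-k}(f)$ under which the label-$+$ summand for $-\tilde f$ is the label-$-$ summand for $\tilde f$, equipped with the order induced by $\prec^{op}$.

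Next I would check that $-X$ is a Morse-Smale pseudo-gradient adapted to $-f$. The descending inequality is immediate: for $x\notin\mathcal{C}(f)$ one has $d(-f)_x(-X(x))=df_x(X(x))<0$. In a Morse chart $(y_1,\dots,y_n)$ adapted to $X$ at a point of index $k$, where $f=f(a)-\sum_{j\le k}y_j^2+\sum_{j>k}y_j^2$ and $X=(2y_1,\dots,2y_k,-2y_{k+1},\dots,-2y_n)$, permuting the coordinates so that $y_{k+1},\dots,y_n$ come first exhibits $-f$ in the standard index-$(n-k)$ normal form and $-X$ in the corresponding standard adapted form; so $-X$ is adapted to $-f$. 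Since the flow of $-X$ is the time-reversal of the flow of $X$, we have $W^u(a,-X)=W^s(a,X)$ and $W^s(a,-X)=W^u(a,X)$ for every critical point $a$; hence the family of transversality conditions defining the Morse-Smale property is literally unchanged, and $-X$ is Morse-Smale.

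Finally, the boundary operator. Let $a\in\mathcal{C}_{k+1}(f)$ and $b\in\mathcal{C}_k(f)$; then for $-f$ we have $b\in\mathcal{C}_{n-k}(-f)$ and $a\in\mathcal{C}_{n-k-1}(-f)$, so $b$ lies one index above $a$ for $-f$. Reversing time identifies the moduli space of unbroken $-X$-orbits from $b$ to $a$ with that of $X$-orbits from $a$ to $b$, a canonical bijection of finite sets; hence $\langle\partial(-f)\,b,a\rangle=\pm\langle\partial(f)\,a,b\rangle$. To fix the signs I would orient $\Sn$ and, given the chosen orientations of the unstable manifolds $W^u(\cdot,X)$, orient each $W^u(a,-X)=W^s(a,X)$ so that the splitting $T_a\Sn=T_aW^u(a,X)\oplus T_aW^s(a,X)$ is orientation-preserving; with this normalisation the signed counts agree, which is the chain-level statement that the Morse complex of $-f$ is the Morse cochain complex of $f$ with degrees reflected. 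Passing this equality through the dictionary of the previous steps — the index shift $k\mapsto n-k$, the exchange of labels $+\leftrightarrow-$, and the reversal of the order on critical points — expresses $\partial_k(-f)$ as the transpose of $\partial_{n-k+1}(\tilde f)$ written in the block decomposition with the $+$ and $-$ summands interchanged, which is exactly the matrix displayed in the statement. The main obstacle is precisely this last sign bookkeeping: the time-reversal bijection between moduli spaces of gradient lines is obvious, but checking that the Poincaré-dual normalisation of the orientations of the $W^u(a,-X)$ is the one that upgrades ``$\pm$'' to ``$+$'' requires the usual careful comparison of coherent orientation conventions; everything else in the lemma is formal.
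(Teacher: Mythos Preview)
The paper explicitly declines to prove this lemma (``We shall not give a proof to this lemma''), treating it as a standard description of the Morse-theoretic data of $-\tilde f$ in terms of that of $\tilde f$. Your argument is correct and supplies exactly the details one would expect: the Hessian and normal-derivative computations for the index and label swaps, the verification that $-X$ satisfies the adapted pseudo-gradient axioms in permuted Morse coordinates, the exchange $W^u(\cdot,-X)=W^s(\cdot,X)$ for the Morse--Smale property, and the time-reversal bijection on moduli spaces of gradient lines together with the Poincar\'e-dual orientation convention for the transpose formula. So there is no divergence of approach to report --- you have simply filled in what the author chose to omit; your honest flag that the sign normalisation is the only place requiring genuine care is also accurate.
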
  
Maybe it is worth noticing that there is no link between $\partial_{+-,n-k+1}(\tilde{f})$ and $\partial_{-+,k}(-\tilde{f})$.

From the previous lemma, we also have:
\begin{lemma}
\label{oppbirth}
If $t \mapsto f^t$ is a non-critical path of function, denote by $\widetilde{f^t}$ the germ whose representative is $(x,s)\mapsto f^{s+t}(x)$ for $x\in \Sn$ and $s\in [0,\varepsilon)$, the positive real number $\varepsilon$ being as small as wanted. We also consider a generic path $X^t$ of adapted pseudo-gradients.
We have:
\begin{itemize}
\item  a birth (resp. death) bifurcation of two points of label $+$ and index $k$ during the path $t\mapsto f^t$, correspond to a birth (resp. death) bifurcation of two points of label $-$ and index $n-k$ occurs during the path $t \mapsto -f^t$,
\item a handle slide of a point $a$ of label $+$ over a point $b$ of label $-$ occurs during the path $t \mapsto (f^t, X^t)$, corresponds to a handle slide of $b$ over $a$ during the path $t\mapsto (-f^t,-X^t)$.
\end{itemize}
\end{lemma}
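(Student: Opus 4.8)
The plan is to deduce both items directly from Lemma~\ref{oppbound} and from the local normal forms for the relevant bifurcations, using one elementary remark: replacing $f^t$ by $-f^t$ is post-composition of the path with the fixed diffeomorphism $s\mapsto -s$ of $\RR$, so $t\mapsto -f^t$ is again a generic path and its bifurcation times are exactly those of $t\mapsto f^t$; similarly, applying the last item of Lemma~\ref{oppbound} for each fixed $t$ (and using that $W^u(c,X)$ meets $W^s(c',X)$ transversally exactly when $W^s(c,-X)$ meets $W^u(c',-X)$ transversally), $t\mapsto -X^t$ is a generic path of pseudo-gradients adapted to $-f^t$ whose handle-slide times coincide with those of $t\mapsto X^t$. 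Under this correspondence the underlying set of critical points is unchanged, a point of index $j$ for $f^t$ being a point of index $n-j$ for $-f^t$ and a point of label $+$ becoming one of label $-$, and conversely.

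For the first item, recall that a birth or death bifurcation at a time $t_0$ is witnessed by a single degenerate critical point of $f^{t_0}$ which, in suitable coordinates centred at it, has a normal form
\[ c \pm y_1^3 - \sum_{2\le j\le k+1} y_j^2 + \sum_{k+2\le j\le n} y_j^2, \]
the two Morse critical points involved being present only on one side of $t_0$ --- for $t>t_0$ in the case of a birth, for $t<t_0$ in the case of a death --- and forming a cancelling pair of indices $k$ and $k+1$. Negating this function and relabelling the $y_j$ yields again a cubic normal form, but the associated Morse pair now has indices $n-k-1$ and $n-k$; since the $t$-direction is untouched, a birth stays a birth and a death stays a death. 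By Lemma~\ref{oppbound} the pair, of label $+$ for $f^t$, is of label $-$ for $-f^t$. Hence a birth (resp.\ death) of a label-$+$ pair of indices $(k,k+1)$ along $t\mapsto f^t$ is precisely a birth (resp.\ death) of a label-$-$ pair of indices $(n-k-1,n-k)$ along $t\mapsto -f^t$, which is the first assertion.

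For the second item, suppose a handle slide of $a$ over $b$ occurs at $t_0$ for $(f^t,X^t)$. By definition $\ind(a)=\ind(b)=k$, one has $f(a)>f(b)$, there is an orbit $\gamma$ of $X^{t_0}$ connecting $a$ and $b$, and the connected-sum relation of the definition holds for the slices in $f^{-1}(f(b)-\varepsilon)$ of the unstable manifolds of $X^{t_0\pm\varepsilon}$. For $(-f^t,-X^t)$ we have $\ind(a)=\ind(b)=n-k$ (with respect to $-f$) by Lemma~\ref{oppbound}, and $-f(a)<-f(b)$. Since reversing a pseudo-gradient exchanges stable and unstable manifolds, $\gamma\subset W^u(a,X^{t_0})\cap W^s(b,X^{t_0})=W^s(a,-X^{t_0})\cap W^u(b,-X^{t_0})$, so $\gamma$ is an orbit of $-X^{t_0}$ running from $b$ down to $a$ (indeed $-f(b)>-f(a)$). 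Transporting the connected-sum relation by the flow of $X^{t_0}$ onto the level set $f^{-1}(f(a)+\varepsilon)=(-f)^{-1}(-f(a)-\varepsilon)$ and using $W^u(\,\cdot\,,-X)=W^s(\,\cdot\,,X)$ then gives the relation required for a handle slide of $b$ over $a$ relative to $(-f^t,-X^t)$.

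The only step requiring genuine care is this last one: one must check that, after exchanging $W^u$ and $W^s$ and reversing the flow, the asymmetric local model of ``$a$ slides over $b$ from above'' becomes the local model of ``$b$ slides over $a$ from below'' --- a direct but slightly delicate unwinding of the definition of a handle slide and of the asymmetry noted just after it. If desired one can push the bookkeeping further and match, via Lemma~\ref{oppbound}, the sign of the resulting handle slide with the transpose appearing in the formula for $\partial(-f)$, but this refinement is not needed here. Everything else is a formal consequence of Lemma~\ref{oppbound} and of the normal forms for a birth/death singularity.
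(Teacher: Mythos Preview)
Your proposal is correct and follows the same approach the paper implicitly takes: the paper does not actually prove Lemma~\ref{oppbirth} at all, stating only ``From the previous lemma, we also have'' and relying on the reader to unwind Lemma~\ref{oppbound} together with the standard normal forms. Your argument spells out precisely this unwinding---the cubic normal form for birth/death and the exchange $W^u\leftrightarrow W^s$ under $X\mapsto -X$ for handle slides---so you have supplied the details the paper omits, with no substantive difference in strategy.
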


In the definition of property ($\mathcal{P}$), we only take care of points of label $+$, and seem to forget the existence of points of label $-$.
The transcription of property $(\mathcal{P})$ for points of label $-$ would be:
\begin{definition}[Property ($\mathcal{P}-$)]
\label{P-}
We say that the couple $(\tilde{f},X)$ has property ($\mathcal{P}-$) when:
\begin{itemize}
\item For all $k$ between $0$ and $n$
  \[ \partial_{-+,k} - \partial_{--,k}N_k + N_{k-1}\partial_{++,k} - N_{k-1}\partial_{+-,k} N_k = 0 . \]

 \item  ${(\mathbb{Z}\mathcal{C}^-_k(\tilde{f}),\partial_{--,k}+N_{k-1}\partial_{+-,k})_{0\leq k \leq n} }$ is a chain complex and its homology vanishes in degree $k>0$ and is $\mathbb{Z}$ in degree $0$,
  
\end{itemize} 
\end{definition}
We have:
\begin{proposition}
\label{equivp-}
Property ($\mathcal{P}-$) is strictly equivalent to property $(\mathcal{P})$.
\end{proposition}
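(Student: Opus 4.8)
The plan is to use the duality between $\tilde f$ and $-\tilde f$ encoded in Lemma \ref{oppbound} and Lemma \ref{oppbirth}, and to show that property $(\mathcal P)$ for the couple $(-\tilde f, -X)$ translates exactly into property $(\mathcal P-)$ for $(\tilde f, X)$, and vice versa. Concretely, I would first observe that the subgroup $G(-\tilde f)$ of graded isomorphisms of $\mathbb Z\mathcal C(-f)$ is, via the identifications $\mathcal C_k^+(\tilde f)=\mathcal C_{n-k}^-(-\tilde f)$ and $\mathcal C_k^-(\tilde f)=\mathcal C_{n-k}^+(-\tilde f)$, naturally anti-isomorphic to $G(\tilde f)$: an element with down-left block $N_k$ for $\tilde f$ corresponds to an element with down-left block $\,{}^t N_k$ (up to sign) in degree $n-k$ for $-\tilde f$, because reversing the order $\prec$ to $\prec^{op}$ swaps the roles of ``above'' and ``below'' and transposition swaps the $(+,-)$ block with the $(-,+)$ block. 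One checks that the nullity constraints ``$(N_k)_{(i,j)}=0$ if $f(a_j)\le f(b_i)$'' go over precisely to the nullity constraints defining $G_{n-k}(-\tilde f)$.

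Next I would unwind what property $(\mathcal P)$ says for $(-\tilde f,-X)$. Using the block description of $\partial(-f)$ from Lemma \ref{oppbound} — namely $\partial_{++,k}(-f)={}^t\partial_{--,n-k+1}(\tilde f)$, $\partial_{+-,k}(-f)={}^t\partial_{+-,n-k+1}(\tilde f)$, $\partial_{-+,k}(-f)={}^t\partial_{-+,n-k+1}(\tilde f)$, $\partial_{--,k}(-f)={}^t\partial_{++,n-k+1}(\tilde f)$ — and the correspondence of conjugating matrices above, the first item of $(\mathcal P)$ for $(-\tilde f,-X)$, i.e. $(M'\partial(-f)M'^{-1})_{-+}=0$ in every degree, becomes, after transposing, exactly the vanishing equation
\[ \partial_{-+,k} - \partial_{--,k}N_k + N_{k-1}\partial_{++,k} - N_{k-1}\partial_{+-,k} N_k = 0 \]
of property $(\mathcal P-)$ (this equation is manifestly ``self-transpose'' up to renaming blocks, which is why it is literally the same in both definitions). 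For the second item: property $(\mathcal P)$ asks that $(\mathbb Z\mathcal C_k^+(-\tilde f), \partial_{++,k}(-f)+\partial_{+-,k}(-f)N'_k)$ be a chain complex with homology $0$ except $\mathbb Z$ in degree $n$; transposing and relabelling via $\mathcal C_k^+(-\tilde f)=\mathcal C_{n-k}^-(\tilde f)$, a bounded chain complex $(C_\bullet,d)$ is sent to its dual cochain complex, and $H_j$ of the dual in degree $j$ equals $\mathrm{Hom}(H_{n-j},\mathbb Z)$ together with an $\mathrm{Ext}$-term from $H_{n-j-1}$ (universal coefficients). Since all the homology groups in question are free (either $0$ or $\mathbb Z$), the $\mathrm{Ext}$ terms vanish, and ``homology $0$ except $\mathbb Z$ in top degree'' for the $(-f)$-complex is equivalent to ``homology $0$ except $\mathbb Z$ in degree $0$'' for the $\mathbb Z\mathcal C^-_\bullet(\tilde f)$-complex with differential $\partial_{--,k}+N_{k-1}\partial_{+-,k}$ — which is precisely the second item of $(\mathcal P-)$.

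Putting these two translations together: $(\tilde f,X)$ has property $(\mathcal P-)$ if and only if $(-\tilde f,-X)$ has property $(\mathcal P)$. Symmetrically, applying the same argument to $-\tilde f$ in place of $\tilde f$ and using $-(-\tilde f)=\tilde f$, we get that $(\tilde f,X)$ has property $(\mathcal P)$ if and only if $(-\tilde f,-X)$ has property $(\mathcal P-)$. Combining, property $(\mathcal P)$ for $(\tilde f,X)$ and property $(\mathcal P-)$ for $(\tilde f,X)$ are both equivalent to a property of $(-\tilde f,-X)$, hence equivalent to each other. I expect the main obstacle to be purely bookkeeping: carefully matching the ordered bases and the signs under transposition so that the down-left block $N_k$ of an element of $G_k(\tilde f)$ really does correspond to an admissible element of $G_{n-k}(-\tilde f)$, and verifying that the homology-degree reindexing $j\leftrightarrow n-j$ is consistent with the convention that the chain complex of a Morse function and that of its negative have differentials that are transposes up to a shift; once the dictionary is fixed, each item of $(\mathcal P)$ and $(\mathcal P-)$ maps onto the other term by term, with the universal coefficient / freeness remark handling the homology clause.
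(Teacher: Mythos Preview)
Your argument has a genuine circularity in the final step. You establish, via the $\tilde f \leftrightarrow -\tilde f$ dictionary, that
\[
(\mathcal P)\text{ for }(\tilde f,X)\ \Longleftrightarrow\ (\mathcal P-)\text{ for }(-\tilde f,-X)
\quad\text{and}\quad
(\mathcal P-)\text{ for }(\tilde f,X)\ \Longleftrightarrow\ (\mathcal P)\text{ for }(-\tilde f,-X).
\]
But these link $(\mathcal P)$ for $\tilde f$ and $(\mathcal P-)$ for $\tilde f$ to \emph{two different} statements about $-\tilde f$, namely $(\mathcal P-)$ and $(\mathcal P)$ respectively. Concluding that these two statements about $-\tilde f$ agree is exactly the proposition you are trying to prove (for $-\tilde f$ in place of $\tilde f$). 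If you chase the implications, you only get $(\mathcal P)(\tilde f)\Leftrightarrow(\mathcal P-)(-\tilde f)\Leftrightarrow(\mathcal P)(\tilde f)$, a tautology. The duality you set up is in fact what the paper uses \emph{later} to prove Proposition~\ref{takingopp}, and that proof relies on the present proposition; you have reversed the dependency.

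The paper's argument is much more direct and does not pass through $-\tilde f$ at all. The first item is literally the same in both definitions; assume it holds for some $M\in G(\tilde f)$. Then $M\partial M^{-1}$ is block upper triangular, so
\[
0\ \to\ \bigl(\mathbb Z\mathcal C^+(\tilde f),(M\partial M^{-1})_{++}\bigr)\ \to\ \bigl(\mathbb Z\mathcal C(f),M\partial M^{-1}\bigr)\ \to\ \bigl(\mathbb Z\mathcal C^-(\tilde f),(M\partial M^{-1})_{--}\bigr)\ \to\ 0
\]
is a short exact sequence of chain complexes. The middle complex has the homology of $\Sn$. The long exact sequence in homology then forces: $H_k(\mathbb Z\mathcal C^+)\cong H_k(\mathbb Z\mathcal C^-)=0$ for $0<k<n$, and in degrees $0$ and $n$ the short exact sequences
\[
0\to H_n(\mathbb Z\mathcal C^+)\to\mathbb Z\to H_n(\mathbb Z\mathcal C^-)\to 0,
\qquad
0\to H_0(\mathbb Z\mathcal C^+)\to\mathbb Z\to H_0(\mathbb Z\mathcal C^-)\to 0
\]
show that $H_n(\mathbb Z\mathcal C^+)=\mathbb Z,\ H_0(\mathbb Z\mathcal C^+)=0$ is equivalent to $H_n(\mathbb Z\mathcal C^-)=0,\ H_0(\mathbb Z\mathcal C^-)=\mathbb Z$. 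This is the equivalence of the second items. The key input your approach is missing is the use of the known homology of the full complex $\mathbb Z\mathcal C(f)$ via this short exact sequence.
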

\begin{proof}
Notice that the first item is unchanged. 
The second item is implied by the definition of $(\mathcal{P})$ for label $+$ points. 
Indeed, if a germ $\tilde{f}$ has property $(\mathcal{P})$, we have a short exact sequence of chain complexes:
\[ 0 \to \mathbb{Z}\mathcal{C}^+(\tilde{f}) \to \mathbb{Z}\mathcal{C}(f) \to \mathbb{Z}\mathcal{C}^-(\tilde{f})\to 0.\]

Recall that the complex $\mathbb{Z}\mathcal{C}(f)$ has the homology of the sphere, that is, all homology groups vanish except in degree $0$ and $n$ for which it is $\mathbb{Z}$.
The long exact sequence in homology for this sequence then reduces to two non-trivial short exact sequences:
\[0 \to H_n(\mathbb{Z}\mathcal{C}^+(\tilde{f})) \to H_n(\mathbb{Z}\mathcal{C}(f)) \to H_n(\mathbb{Z}\mathcal{C}^-(\tilde{f}))  \to 0\]
and 
\[0 \to H_0(\mathbb{Z}\mathcal{C}^+(\tilde{f})) \to H_0(\mathbb{Z}\mathcal{C}(f)) \to H_0(\mathbb{Z}\mathcal{C}^-(\tilde{f}))  \to 0.\]
The other sequences for $k\neq 0$,or $k\neq n$ directly show that $H_k(\mathbb{Z}\mathcal{C}^-(\tilde{f}))=0$.
If $\tilde{f}$ has property $(\mathcal{P})$, the module $H_n(\mathbb{Z}\mathcal{C}^+(\tilde{f}))$ is $\mathbb{Z}$ and the module $H_0(\mathbb{Z}\mathcal{C}^+(\tilde{f}))$ vanishes.
Thus $\tilde{f}$ has property $(\mathcal{P}^-)$.
If $\tilde{f}$ has property $(\mathcal{P}^-)$, the module $H_n(\mathbb{Z}\mathcal{C}^-(\tilde{f}))$ vanishes and the module $H_0(\mathbb{Z}\mathcal{C}^-(\tilde{f}))$ is $\mathbb{Z}$.
Thus $\tilde{f}$ has property $(\mathcal{P})$.
We then see that the properties $(\mathcal{P})$ and $(\mathcal{P}^-)$ are equivalent.
\end{proof}

We also have the lemma:
\begin{lemma}
Let $\prec$ be an order on $\mathcal{C}_k(f)$.
We give $\mathcal{C}_{n-k}(f)$ the opposite order.
We have:
\[G_{k}(-\tilde{f})=\left\lbrace \begin{pmatrix}
I_{q_{n-k}} & 0 \\
{}^t N_{n-k} & I_{p_{n-k}} \\
\end{pmatrix}\textit{ such that } 
\begin{pmatrix}
I_{p_{n-k}} & 0 \\
 N_{n-k} & I_{q_{n-k}} \\
\end{pmatrix}\in G_{n-k}(\tilde{f})\right\rbrace.\]
\end{lemma}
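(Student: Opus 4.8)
The plan is to verify the claimed set equality by unwinding Definition \ref{group} for the germ $-\tilde{f}$ and translating every ingredient into data about $\tilde{f}$ through Lemma \ref{oppbound}. First I would record the dictionary. A critical point of index $k$ for $-f$ is one of index $n-k$ for $f$, and by Lemma \ref{oppbound} the label flips, so $\mathcal{C}^+_k(-\tilde{f})=\mathcal{C}^-_{n-k}(\tilde{f})$ and $\mathcal{C}^-_k(-\tilde{f})=\mathcal{C}^+_{n-k}(\tilde{f})$; hence the ranks satisfy $p_k(-\tilde{f})=q_{n-k}$ and $q_k(-\tilde{f})=p_{n-k}$, which already matches the sizes of the identity blocks $I_{q_{n-k}}$ and $I_{p_{n-k}}$ appearing in the statement. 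Equipping $\mathcal{C}_k(-f)=\mathcal{C}_{n-k}(f)$ with the opposite order (as prescribed, in accordance with Lemma \ref{oppbound}), Definition \ref{group} says an element of $G_k(-\tilde{f})$ is a block matrix $\begin{pmatrix} I_{q_{n-k}} & 0 \\ N' & I_{p_{n-k}}\end{pmatrix}$, where $N'$ is a $p_{n-k}\times q_{n-k}$ integer matrix carrying $\mathbb{Z}\mathcal{C}^+_k(-\tilde{f})$ (the label-$-$ points of $\tilde{f}$ of index $n-k$) into $\mathbb{Z}\mathcal{C}^-_k(-\tilde{f})$ (the label-$+$ points of $\tilde{f}$ of index $n-k$), subject to a nullity constraint governed by the order of the critical values of $-f$.

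Next I would match the nullity constraints. For $G_{n-k}(\tilde{f})$, the lower-left block $N_{n-k}$ (size $q_{n-k}\times p_{n-k}$) has its entry at position $(b,a)$ forced to $0$ exactly when $f(a)\le f(b)$, for $a$ a label-$+$ and $b$ a label-$-$ point of index $n-k$. For $G_k(-\tilde{f})$, the entry of $N'$ at position $(a,b)$ — now $a$ being a label-$-$ point of $-\tilde{f}$ and $b$ a label-$+$ point of $-\tilde{f}$ — is forced to $0$ exactly when $b$ fails to lie strictly above $a$ with respect to $-f$, i.e. when $(-f)(b)\le(-f)(a)$, which is the same inequality $f(a)\le f(b)$. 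Thus the pattern of forced zeros of $N'$, as a pattern indexed by pairs (label-$+$ point, label-$-$ point) of index $n-k$, is exactly the transpose of that of $N_{n-k}$; with the orders laid out as above this says literally $N'={}^t N_{n-k}$. Combining this with the swap of the two blocks induced by the label flip, every matrix $\begin{pmatrix} I_{p_{n-k}} & 0 \\ N_{n-k} & I_{q_{n-k}}\end{pmatrix}$ of $G_{n-k}(\tilde{f})$ yields the matrix $\begin{pmatrix} I_{q_{n-k}} & 0 \\ {}^t N_{n-k} & I_{p_{n-k}}\end{pmatrix}$, and conversely, so the two sets coincide. I would then note that the correspondence $N\mapsto {}^t N$ is a group isomorphism: both $G_{n-k}(\tilde{f})$ and $G_k(-\tilde{f})$ are abelian with multiplication given by $N\mapsto N+N'$ on the off-diagonal block, and transposition is additive, so invertibility and the group law transport automatically; this confirms that the right-hand side of the statement is genuinely the subgroup $G_k(-\tilde{f})$ and not merely a set.

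The only delicate point — and the one I would present most carefully — is the bookkeeping of orderings: the forced-zero patterns of $N_{n-k}$ and of $N'$ are transposes of one another only as abstract patterns indexed by pairs of critical points, and to turn this into the literal matrix identity $N'={}^t N_{n-k}$ one must index rows and columns consistently on the two sides, which is precisely the reason the opposite order $\prec^{op}$ is imposed on the critical points of $-f$. No homology, pseudo-gradient, or Cerf-theoretic input is needed here; everything reduces to the combinatorics of labels and critical values already encoded in Lemma \ref{oppbound} and in the definition of $G_k$.
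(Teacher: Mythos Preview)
Your proposal is correct and follows essentially the same approach as the paper: both arguments reduce to the observation that the nullity condition defining $G_{n-k}(\tilde{f})$ at a pair $(a,b)$ (with $a$ of label $+$, $b$ of label $-$, index $n-k$) is the inequality $f(a)\le f(b)$, which is precisely the nullity condition defining $G_k(-\tilde{f})$ at the transposed pair $(b,a)$ once labels and indices are flipped via Lemma~\ref{oppbound}. The paper states this equivalence in two lines; you unwind the dictionary more explicitly and add the remarks on the group law and on why the opposite order is needed to turn the abstract transpose of zero-patterns into a literal matrix transpose, but the content is the same.
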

\begin{proof}
We only need to notice that the two following items are equivalent:
\begin{itemize}
 \item $a$ is a critical point of $f$ of label $+$ and $b$ a critical point of $f$ of label $-$, both of index $k$ such that $f(b)>f(a)$,
 \item  $a$ is a critical point of $-f$ of label $-$ and $b$ a critical point of $-f$ of label $+$, both of index $n-k$ such that $-f(a)>-f(b)$ .
\end{itemize}

\end{proof}

Finally we state the not surprising proposition:
\begin{proposition}
$(\tilde{f},X)$ has property $(\mathcal{P})$ if and only if $(-\tilde{f},-X)$ has property $(\mathcal{P})$. 
\end{proposition}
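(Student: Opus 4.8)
The plan is to deduce the statement from Proposition~\ref{equivp-} together with the ``opposite'' dictionary of Lemma~\ref{oppbound} and the lemma computing $G_k(-\tilde{f})$; no new geometric input is needed. First I would observe that it suffices to prove one implication: applying it with $-\tilde{f}$ in place of $\tilde{f}$ and using $-(-\tilde{f})=\tilde{f}$, $-(-X)=X$ gives the converse. So suppose $(\tilde{f},X)$ has property~$(\mathcal{P})$. By Proposition~\ref{equivp-} it then also has property~$(\mathcal{P}-)$: there is $M\in G(\tilde{f})$ with $(M\partial M^{-1})_{-+}=0$ such that the label-$-$ complex $\big(\mathbb{Z}\mathcal{C}^-_k(\tilde{f}),\,\partial_{--,k}+N_{k-1}\partial_{+-,k}\big)_{0\le k\le n}$ has homology $\mathbb{Z}$ in degree $0$ and $0$ in all other degrees.

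Next I would build from $M$ a witness $M'\in G(-\tilde{f})$ of property~$(\mathcal{P})$ for $(-\tilde{f},-X)$. Writing $\sigma=\left(\begin{smallmatrix}0&I\\ I&0\end{smallmatrix}\right)$ for the label-exchange, Lemma~\ref{oppbound} reads $\partial_k(-f)=\sigma\,{}^t\partial_{n-k+1}(\tilde{f})\,\sigma$. I would then let $M'$ be the element of $G(-\tilde{f})$ whose lower-left block in degree $k$ is $-\,{}^tN_{n-k}$ --- this lies in $G(-\tilde{f})$ by the lemma computing $G_k(-\tilde{f})$ (that group being closed under transposition of blocks and under inversion), and the minus sign is the one that makes the conjugation formula for the ``$(-+)$ block'' come out right. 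Since $(M\partial M^{-1})_{-+}=0$, the matrix $M\partial M^{-1}$ is block upper-triangular; transposing it and conjugating by $\sigma$ again yields a block upper-triangular matrix, and a short computation shows this matrix, in degree $k$, is exactly $(M'\partial(-f)(M')^{-1})_k$. Hence $(M'\partial(-f)(M')^{-1})_{-+}=0$, and reading off the $(+,+)$ block, the label-$+$ complex of $-f$ attached to $M'$ is $\Hom(-,\mathbb{Z})$ applied to the label-$-$ complex of $\tilde{f}$ attached to $M$, with the grading reversed via $k\leftrightarrow n-k$.

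Finally I would transfer the homology statement. A bounded complex of finitely generated free $\mathbb{Z}$-modules whose homology is $\mathbb{Z}$ in one degree and $0$ elsewhere has, by the universal coefficient theorem (the $\mathrm{Ext}$ terms vanish since the homology is free), a $\Hom(-,\mathbb{Z})$-dual whose cohomology is $\mathbb{Z}$ in that same degree and $0$ elsewhere; under the regrading $k\leftrightarrow n-k$ this is homology $\mathbb{Z}$ in degree $n$ and $0$ elsewhere, which is precisely the homology demanded by property~$(\mathcal{P})$. Therefore $(-\tilde{f},-X)$ has property~$(\mathcal{P})$. I expect the one genuinely delicate point to be the bookkeeping in the second step: tracking the index shifts ($k\leftrightarrow n-k+1$ for boundary operators, $k\leftrightarrow n-k$ for chain groups) and, above all, pinning down the sign in $N\mapsto-\,{}^tN$ so that the first item of the two properties is actually preserved --- with that in hand the argument is just Proposition~\ref{equivp-}, Lemma~\ref{oppbound}, and the homological-algebra fact quoted above.
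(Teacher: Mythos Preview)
Your proof is correct and follows essentially the same route as the paper: both build the witness in $G(-\tilde{f})$ with lower-left block $-{}^tN_{n-k}$, use the transpose dictionary of Lemma~\ref{oppbound} to carry the vanishing of the $(-+)$ block across, and invoke Proposition~\ref{equivp-}. The only cosmetic difference is the order --- you apply Proposition~\ref{equivp-} at the start (passing to $(\mathcal{P}-)$ for $\tilde{f}$ and then dualizing the $--$ complex to get the $++$ complex of $-\tilde{f}$), whereas the paper dualizes first (relating the $--$ complex of $-\tilde{f}$ to the $++$ complex of $\tilde{f}$) and applies Proposition~\ref{equivp-} at the end; your UCT justification of the homology transfer is in fact a bit cleaner than the paper's cokernel/coimage identity.
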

\begin{proof}
\label{takingopp}
Let $(\tilde{f},X)$ be a couple of a Morse germ and a Morse-Smale adapted pseudo-gradient which has property $(\mathcal{P})$.
Denote by $\partial (\tilde{f})$ the boundary operator associated to $X$. 
Denote by $\partial(-\tilde{f})$ the boundary operator associated to $-X$.
We first show that $(-\tilde{f},-X)$ has property $(\mathcal{P}-)$.
Let $M\in G(\tilde{f})$ such that $(M\partial M^{-1})_{-+}=0$.
Then, for all $k$ between $0$ and $n$ we get:
\[\partial_{-+,k}(\tilde{f}) - \partial_{--,k}(\tilde{f})N_k + N_{k-1}\partial_{++,k}(\tilde{f}) - N_{k-1}\partial_{+-,k}(\tilde{f}) N_k = 0 .\]
Taking the transpose, we get, with Lemma \ref{oppbound}:
\[\partial_{-+,n-k+1}(-\tilde{f}) - {}^t N_k \partial_{++,n-k+1}(-\tilde{f})+ \partial_{--,n-k+1}(-\tilde{f}){}^t N_{k-1} - {}^t N_{k}\partial_{+-,n-k+1}(-\tilde{f}) {}^t N_{k-1}=0.\]
Taking $M^{op}$ to be such that in degree $k$ 
\[ M^{op}_k:=\begin{pmatrix}
I_{q_{n-k}} & 0 \\
-{}^t N_{n-k} & I_{p_{n-k}} \\
\end{pmatrix},\]
we see that $M^{op} \in G(-\tilde{f})$ and that $(M^{op}\partial(-\tilde{f})(M^{op})^{-1})_{-+}=0$.
Moreover, we have that ${}^t(M\partial(\tilde{f})M^{-1})_{++,k}=(M^{op}\partial(-\tilde{f})(M^{op})^{-1})_{--,n-k+1}$.
Thus, inverting the arrows in the chain complex:
\[0\to \mathbb{Z}\mathcal{C}^+_n(\tilde{f})\to ... \to \mathbb{Z}\mathcal{C}^+_0(\tilde{f})\to 0,\]
where the boundary operator is  $(M\partial(\tilde{f})M^{-1})_{++,k}$,
we get the chain complex:
\[0\leftarrow \mathbb{Z}\mathcal{C}^-_0(-\tilde{f})\leftarrow ... \leftarrow \mathbb{Z}\mathcal{C}^-_n(-\tilde{f})\leftarrow 0,\]
where the boundary operator is $(M^{op}\partial(-\tilde{f})(M^{op})^{-1})_{--,k}$.
Denote  $(M\partial(\tilde{f})M^{-1})_{++}$ by $\partial'$ and $(M^{op}\partial(-\tilde{f})(M^{op})^{-1})_{--}$ by $\partial''$.
In homology we have:
\begin{align*}
H_k(\mathbb{Z}\mathcal{C}^-(-\tilde{f})) = & \ker (\partial''_k)/\Ima (\partial''_{k+1}) \\
					= & \coker (\partial'_{n-k+1}) / \coIma (\partial'_{n-k}) \\
					\simeq & H_{n-k}(\mathbb{Z}\mathcal{C}^+(\tilde{f})). \\
\end{align*}
We then have that: \[H_k((M^{op}\partial(-\tilde{f})(M^{op})^{-1})_{--})=H_{n-k}((M\partial(\tilde{f})M^{-1})_{++}).\]
Thus, $(-\tilde{f},-X)$ has property $(\mathcal{P}-)$ and by Proposition \ref{equivp-}, the couple has property $(\mathcal{P})$.
\end{proof}
\subsubsection{Proof of Theorem \ref{th1}}

The next lemma shows that property ($\mathcal{P}$) is a property only depending on the germ $\tilde{f}$ and not on the pseudo-gradient adapted to $f$:

\begin{lemma}[Invariance of property $(\mathcal{P})$]
\label{invariance}
Let $\tilde{f}$ be a Morse germ.
Assume there is a Morse-Smale pseudo-gradient $X^0$ associated to $f$ such that the couple $(\tilde{f},X^0 )$ has property $(\mathcal{P})$.
Then, any other Morse-Smale pseudo-gradient $X^1$ adapted to $f$ has property $(\mathcal{P})$.
\end{lemma}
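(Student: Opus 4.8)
The plan is to join $X^0$ to $X^1$ by a generic path of adapted pseudo-gradients and to show that property $(\mathcal{P})$ survives each elementary move along it. By Theorem~\ref{pggenericity} there is a path $t\mapsto X^t$ of pseudo-gradients adapted to $f$ from $X^0$ to $X^1$ that is Morse--Smale except at finitely many times, each of which is a handle slide. A handle slide of $a$ (the $i$-th point of $\mathcal{C}_k(f)$) over $b$ (the $\ell$-th point), with $f(a)>f(b)$, replaces the boundary operator $\partial$ by $\partial'=Q^{-1}\partial Q$, where $Q$ is the graded automorphism of $\mathbb{Z}\mathcal{C}(f)$ equal to $I-sE_{\ell,i}$ in degree $k$ and to the identity in all other degrees (this is exactly the pair of formulas recorded before Theorem~\ref{pggenericity}, read as a conjugation: $Q_k^{-1}\partial_{k+1}Q_{k+1}=(I+sE_{\ell,i})\partial_{k+1}$ and $Q_{k-1}^{-1}\partial_kQ_k=\partial_k(I-sE_{\ell,i})$). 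By induction on the number of handle slides, it is enough to prove: if $(\tilde f,X)$ has property $(\mathcal{P})$ via some $M\in G(\tilde f)$, and $\partial'=Q^{-1}\partial Q$ for such a $Q$, then $(\tilde f,X')$ has property $(\mathcal{P})$.

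Set $\partial^{*}:=M\partial M^{-1}$, so that $\partial^{*}$ is block upper triangular, i.e. $(\partial^{*})_{-+}=0$, and $(\mathbb{Z}\mathcal{C}^{+}(\tilde f),(\partial^{*})_{++})$ has homology $\mathbb{Z}$ in degree $n$ and $0$ otherwise. For any $M'\in G(\tilde f)$ one has
\[M'\partial'(M')^{-1}=T\,\partial^{*}\,T^{-1},\qquad T:=M'Q^{-1}M^{-1}.\]
Hence it suffices to find $M'\in G(\tilde f)$ for which $T$ is block upper triangular (equivalently, $T$ preserves the subcomplex $\mathbb{Z}\mathcal{C}^{+}(\tilde f)$): then $T\partial^{*}T^{-1}$ is again block upper triangular, so its $(-+)$ block is zero, and its $(++)$ block equals $T_{++}(\partial^{*})_{++}(T_{++})^{-1}$, which is conjugate to $(\partial^{*})_{++}$ by a graded automorphism of $\mathbb{Z}\mathcal{C}^{+}(\tilde f)$ and therefore has the same homology.

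The construction of $M'$ depends on the labels of $a$ and $b$. If $a$ has label $+$ and $b$ has label $-$, then, because $f(a)>f(b)$, $Q$ already belongs to $G(\tilde f)$, and $M':=MQ\in G(\tilde f)$ gives $T=I$. In the remaining cases ($a,b$ of the same label, or $a$ of label $-$ and $b$ of label $+$) one keeps $M'$ equal to $M$ in every degree $j\neq k$ and replaces the block $N_{k}$ by a modified block $N_{k}'$ making the degree-$k$ part of $T$ block triangular. This is a single explicit linear equation for $N_{k}'$ in each case: it forces $N_{k}'=N_{k}(I-sE)$ for a same-label-$+$ slide, $N_{k}'=(I+sE)N_{k}$ for a same-label-$-$ slide, and $N_{k}'=N_{k}(I+s\bar E N_{k})$ for a slide of a $-$-point over a $+$-point (here $E,\bar E$ denote the relevant elementary matrices, and in the last case the series $(I-s\bar E N_{k})^{-1}$ terminates after one term because $(\bar E N_{k})^{2}=0$, the coefficient of $N_{k}$ in the row and column of the sliding pair being zero since $f(b)\le f(a)$). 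One then checks that $N_{k}'$ still satisfies the support condition of Definition~\ref{group}, i.e. that each of its ``forbidden'' coefficients vanishes: such a coefficient equals a coefficient of $N_{k}$ (zero because $M\in G(\tilde f)$) plus one or two products of coefficients of $N_{k}$, and each such product vanishes by combining the support condition for $N_{k}$ with the strict inequality $f(a)>f(b)$ and transitivity of the ordering of the critical values.

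The delicate point — and the one place where the hypotheses on the ordering really enter — is this last verification that $N'_{k}$ still lies in the prescribed support, so that $M'$ genuinely belongs to $G(\tilde f)$: handle slides occurring along a generic path of pseudo-gradients need not themselves be realized inside $G(\tilde f)$ (same-label slides, or a $-$-point sliding over a $+$-point, are not), so the compensating element of $G(\tilde f)$ must be produced by hand and its support controlled. Granting that, the second paragraph finishes the proof.
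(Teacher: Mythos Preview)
Your proof is correct and follows the same overall strategy as the paper: connect $X^0$ to $X^1$ by a generic path (Theorem~\ref{pggenericity}), reduce to a single handle slide, and treat the four label cases separately by constructing a new $M'\in G(\tilde f)$.

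The organization is somewhat different and, in one respect, cleaner. You package all cases into the single criterion ``$T=M'Q^{-1}M^{-1}$ is block upper triangular'', which immediately gives $(M'\partial'(M')^{-1})_{-+}=0$ and shows that the $(++)$ complex is conjugated by the graded isomorphism $T_{++}$; this lets you bypass the explicit matrix computations the paper carries out in the ``$-$ over $+$'' case. You also treat the same-label-$-$ case directly via $N_k'=(I+sE)N_k$, whereas the paper appeals to the duality of Subsection~\ref{opposite}. Your support-condition checks are exactly the content of Lemma~\ref{mult} (right multiplication by a lower-triangular matrix) together with its left-multiplication analogue, both of which reduce to the transitivity argument you sketch; and your observation $(\bar E N_k)^2=0$ is a sharpening of the paper's remark that $EN_k$ is strictly lower triangular, valid because $EN_k$ has a single nonzero row whose diagonal entry is the forbidden coefficient $(N_k)_{l,i}$.
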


For all index $k$, we choose the following natural order on points in $\mathcal{C}_k(f)$:
\begin{center}
$a\prec b$ if and only if the label of $a$ is $+$ and the label of $b$ is $-$, and, when the points have same label, $f(a)>f(b)$.
\end{center}
With this order, the first point of $\mathcal{C}_k(f)$ is the point of label $+$ with highest critical value and the last point is the point of label $-$ with lowest critical value.
We have the lemma:
\begin{lemma}
\label{mult}
For all $k$, let $T_k$ be a lower triangular matrix of size $p_k \times p_k$.
  Let $M$ be in $G(\tilde{f})$.
  Then the matrix $M'$ which is defined in degree $k$ by the equation
  \[\begin{pmatrix}
  I_{p_k} & 0 \\
  N_k T_k & 0 \\
  \end{pmatrix}\]
  is in $G(\tilde{f})$.
\end{lemma}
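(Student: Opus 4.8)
The plan is to reduce the claim to a degreewise combinatorial check. Since $G(\tilde f)$ is defined degree by degree, it suffices to show that for each $k$ the block $M'_k$, whose down-left submatrix is $N_k T_k$, belongs to $G_k(\tilde f)$. Two things must be verified: that $M'_k$ is an invertible integer matrix, and that $(N_k T_k)_{(i,j)} = 0$ whenever $f(a_j) \le f(b_i)$. The first is immediate, since $M'_k$ is lower block-triangular with identity blocks on the diagonal (I would also remark that the bottom-right block in the displayed formula must be read as $I_{q_k}$, so that $M'$ is genuinely a graded isomorphism and thus eligible to lie in $G(\tilde f)$). The heart of the proof is the vanishing condition on $N_k T_k$.

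First I would fix $k$ and unwind the two order hypotheses. With the natural order chosen just before the statement, the points of $\mathcal C^+_k(\tilde f)$ are indexed $a_1,\dots,a_{p_k}$ with $f(a_1) > \cdots > f(a_{p_k})$, using that $f$ is excellent. Lower-triangularity of $T_k$ then says that $(T_k)_{(m,j)} \ne 0$ forces $m \ge j$, hence $f(a_m) \le f(a_j)$. The hypothesis $M \in G(\tilde f)$, i.e. $M_k \in G_k(\tilde f)$, says that $(N_k)_{(i,m)} \ne 0$ forces $f(a_m) > f(b_i)$.

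Then I would expand $(N_k T_k)_{(i,j)} = \sum_{m} (N_k)_{(i,m)} (T_k)_{(m,j)}$ and note that any index $m$ contributing a nonzero summand satisfies both implications above, so $f(a_j) \ge f(a_m) > f(b_i)$, hence $f(a_j) > f(b_i)$. Contrapositively, if $f(a_j) \le f(b_i)$ then every summand vanishes and $(N_k T_k)_{(i,j)} = 0$. This is exactly the defining condition of $G_k(\tilde f)$, so $M'_k \in G_k(\tilde f)$ for every $k$, and therefore $M' \in G(\tilde f)$.

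I do not expect any real obstacle here: the content is a one-line manipulation of two compatible orderings. The only care needed is bookkeeping — keeping straight that the "first index" among $+$-points means "highest critical value", and disposing of the degenerate cases $p_k = 0$ or $q_k = 0$, where the matrices involved are empty and the statement is trivially true.
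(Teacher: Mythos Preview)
Your proof is correct and follows essentially the same approach as the paper: both verify the nullity condition on $N_kT_k$ by exploiting that, with the chosen ordering, the vanishing pattern of $N_k$ is ``right-hereditary'' along each row, which is exactly what lower-triangularity of $T_k$ preserves under multiplication. Your observation that the lower-right block in the displayed matrix must be read as $I_{q_k}$ rather than $0$ is also correct.
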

\begin{proof}[Proof of Lemma \ref{mult}]
We need to prove that the nullity condition imposed on the coefficients by the definition of $G(\tilde{f})$ is respected.
If the $l$-th critical point of index $k$ and label $+$ is below the $i$-th critical point of index $k$ and label $-$, so is the $l'$-th critical point of index $k$ and label $+$.
Thus, with the given order on the set of critical points, if $(N_k)_{(i,l)}=0$, then $(N_k)_{(i,l')}=0$ for $l'>l$, as the order is the one of the critical values.
A straightforward computation then shows that if $(N_k)_{(i,l)}=0$, then $(N_k T_k)_{(i,l)}=0$ also, when $T$ is lower triangular.
Then $M' \in G(\tilde{f})$.
\end{proof}

\begin{proof}[Proof of Lemma \ref{invariance}]
Let $t\mapsto X^t$ be a generic path of pseudo-gradients adapted to $f$ between $X^0$ and $X^1$.

To prove the lemma, it is sufficient to consider a path with only one bifurcation.
Then, we assume that there is one and only one handle slide occurring during the path $t\mapsto X^t$.
As $(\tilde{f},X^0)$ has property ($\mathcal{P}$), there is $M\in G(\tilde{f})$ such that $M\partial^0 M^{-1}$ has the properties described in Definition \ref{defP}. 
The proof consists in finding an element $M'\in G(\tilde{f})$ such that conjugating the boundary operator by $M'$ undoes the effect of the handle slide. 
 We can restrict to the following cases:
\begin{itemize}
\item \textbf{Handle slide between two points of label $+$} 
Assume there is a handle slide of a point $a$ of index $k$ over a point $b$ of the same index.
 Both have label $+$.
 The modification of the boundary operator is given by:
 \[\partial^1 = U \partial^0 U^{-1}, \]
 where $U_j = I_{p_j+q_j}$ if $j \neq k$ and 
 \[U_k:= \begin{pmatrix}
 T & 0 \\
 0   & I_{q_k} \\
\end{pmatrix} , \]
where $T$ is some lower triangular matrix of type $I_{p_k}+E_{i,l}$, with $l<i$.

Let $M' = U M U^{-1}$.
We show that $M' \in G(\tilde{f})$.
We have that $M'_j=M_j$ for all $j\neq k$.
In degree $k$, we have after a simple computation:
\[M'_k= \begin{pmatrix}
I_{p_k}    & 0       \\
N_k T^{-1} & I_{q_k} \\
\end{pmatrix}.\]
As $T$ is lower triangular, $T^{-1}$ is also lower triangular, and by Lemma \ref{mult}, we have that $M'\in G(\tilde{f})$.

We have 
\[M' \partial^1 (M')^{-1} = U (M \partial^0 M^{-1}) U^{-1}. \]
As $U$ acts isomorphically on $\mathbb{Z}\mathcal{C}^+(\tilde{f})$ and acts as the identity on $\mathbb{Z}\mathcal{C}^-(\tilde{f})$, it implies that $(M' \partial^1 (M')^{-1})_{-+}=0$, and that $(M' \partial^1 (M')^{-1})_{++}$ and $(M \partial^0 M^{-1})_{++}$ have same homology.
This is what is required.

\item \textbf{Handle slide between two points of label $-$.}
Consider the path $t\mapsto - X^t$.
From Proposition \ref{takingopp}, we know that $(-\tilde{f},-X^0)$ has property $(\mathcal{P})$.
From Lemma \ref{oppbirth}, a handle slide between critical points of $f$ of label $-$ during $t\mapsto X^t$ corresponds to a handle slide between critical points of $-f$ of label $+$ during $t \mapsto -X^t$.
From what we did just above, we have that the couple $(-\tilde{f},-X^1)$ has property $(\mathcal{P})$. From Proposition \ref{takingopp}, we have that the couple $(\tilde{f},X^1)$ has property $(\mathcal{P})$.

\item \textbf{Handle slide of a point of label $+$ over a point of label $-$.} 
The pseudo-gradient $\partial^1$ is given from $\partial^0$ by some equation:
\[\partial^1= M' \partial^0 (M')^{-1},\]
where $M'$ is the matrix corresponding to the handle slide. 
From the very definition of $G(\tilde{f})$, the matrix $M'$ is then an element of $G(\tilde{f})$.
We know that there is a matrix $M$ such that $M\partial^0 M^{-1}$ is as wanted.
We can then take $M^1 = M(M')^{-1}$ to be the matrix in $G(\tilde{f})$ leading to property $(\mathcal{P})$ for $X^1$.
Indeed, we get:
\begin{align*}
M^1 \partial^1 (M^1)^{-1} & = (M (M')^{-1})M' \partial^0 (M')^{-1} (M' M^{-1}) \\
						& = M \partial^0 M^{-1}   \\
\end{align*}
This is what is required.

\item \textbf{Handle slide of a point of label $-$ over a point of label $+$.} 
This is the most tedious case.
We denote by $k$ the index of the two points. 
We call $a$ the point of label $-$ involved in the handle slide and $b$ the point of label $+$.
We have $f(a)>f(b)$. 
We assume that $\partial^1$ is obtained from $\partial^0$ by the equation $\partial^1=U \partial^0 U^{-1}$, where $U_j=I_{p_j+q_j}$ for all $j\neq k$ and 
\[U_k:= \begin{pmatrix}
I_{p_k} & E \\
0       & I_{q_k}
\end{pmatrix}\]
where $E$ is a matrix with only one non-zero coefficient $E_{i,l}$ such that $a$ is the $l$-th point of $\mathcal{C}^-_k(\tilde{f})$ and $b$ is the $i$-th point of $\mathcal{C}^+_k(\tilde{f})$.
Let $M \in G(\tilde{f})$ such that $(M\partial^0 M^{-1})_{-+}=0$ and $(M \partial^{0}M^{-1})_{++}$ is a boundary operator with the desired homology.
Notice that the matrix $EN_k$ is a lower triangular matrix in $\mathcal{M}_{p_k}(\mathbb{Z})$ whose diagonal coefficients are all $0$. 
The matrix $I_{p_k}-EN_k$ is then invertible and lower triangular.

Let $M'$ be the matrix such that $M'_j=M_j$ in every degree except in degree $k$ for which it is:
\[ M'_k:= \begin{pmatrix}
I_{p_k} & 0 \\
 N_k(I_{p_k}-EN_k)^{-1}  & I_{q_k} \\ 
\end{pmatrix}. \]
By Lemma \ref{mult}, it is in $G(\tilde{f})$.
Consider the boundary operator $\partial'$ given by  
\[\partial' := M' \partial^1 (M')^{-1}.\]

We prove that $\partial'_{-+}=0$, which implies that $\partial'_{++}$ is a boundary operator, and that $\partial'_{++}$ has same homology as $M\partial^0 M^{-1}$.
As $\partial^1_j\neq \partial^0_j$ only if $j\in\{k,k+1\}$, and that $M'_j=M_j$ in these degrees, we only need to look at what happens in degree $k$ and $k+1$.
It gives straightforward --- but tedious --- computations that the reader may skip.
\begin{enumerate}
\item We begin in degree $k$, the letter $I$ denotes the identity matrix, whatever the rank:
\begin{align*}
\partial'_k & =  \begin{pmatrix}
I & 0 \\
N_{k-1} & I \\
\end{pmatrix}
\begin{pmatrix}
\partial^0_{++,k} & \partial^0_{+-,k} \\
\partial^0_{-+,k} & \partial^0_{--,k} \\
\end{pmatrix}
\begin{pmatrix}
I & -E \\
0 & I \\
\end{pmatrix}
\begin{pmatrix}
I & 0 \\
-N_k (I-EN_{k})^{-1} & I \\
\end{pmatrix} \\
 & =  \begin{pmatrix}
I & 0 \\
N_{k-1} & I \\
\end{pmatrix}
\begin{pmatrix}
\partial^0_{++,k} & \partial^0_{+-,k}-\partial^0_{++,k}E \\
\partial^0_{-+,k} & \partial^0_{--,k}- \partial^0_{-+,k}E\\
\end{pmatrix}
\begin{pmatrix}
I & 0 \\
-N_k (I-EN_{k})^{-1} & I \\
\end{pmatrix} \\
 & = \begin{pmatrix}
\partial^0_{++,k} & \partial^0_{+-,k}-\partial^0_{++,k}E \\
N_{k-1}\partial^0_{++,k}+\partial^0_{-+,k} & N_{k-1}(\partial^0_{+-,k}-\partial^0_{++,k}E)+\partial^0_{--,k}- \partial^0_{-+,k}E\\
\end{pmatrix} \\
& \times
\begin{pmatrix}
I & 0 \\
-N_k (I-EN_{k})^{-1} & I \\
\end{pmatrix}\\
& = \begin{pmatrix}
\star_1 & \star_3 \\
\star_2 & \star_4 \\
\end{pmatrix}
\end{align*}
From definition of property ($\mathcal{P}$), we are only interested in computing $\star_1$ and $\star_2$.
For the sake of clarity, we will not indicate the superscript $0$ or the subscript $k$ in the matrices $\partial^0_{\ell_1 \ell_2,k}$.
We have:
\begin{align*}
\star_1 & = \partial_{++} - (\partial_{+-}-\partial_{++}E)N_k(I-EN_k)^{-1}\\
        & = \partial_{++}(I-EN_k)(I-EN_k)^{-1}+\partial_{++}EN_k(I-EN_k)^{-1} \\
        & - \partial_{+-}N_k(I-EN_k)^{-1} \\
        & = (\partial_{++}-\partial_{+-}N_k)(I-EN_k)^{-1}  
\end{align*}
And:
\begin{align*}
\star_2 & = N_{k-1}\partial_{++}- \left[ N_{k-1}(\partial_{+-}-\partial_{++}E)+\partial_{--}-\partial_{-+}E\right]N_k(I-EN_k)^{-1} \\
       & = N_{k-1}\partial_{++}\left[I+EN_k(I-EN_k)^{-1}\right] + \partial_{-+}\left[I+EN_k(I-EN_k)^{-1}\right] \\
       & - N_{k-1}\partial_{+-}N_k(I-EN_k)^{-1} - \partial_{--}N_k(I-EN_k)^{-1} \\
\end{align*}
As $EN_k(I-EN_k)^{-1}=(I-EN_k)^{-1}-I$, we get:
\begin{align*}
\star_2 & = \left(N_{k-1}\partial_{++}+\partial_{-+}-N_{k-1}\partial_{+-}N_k-\partial_{--}N_k\right)(I-EN_k)^{-1}
\end{align*}
But, since $(M\partial^0 M^{-1})_{-+}=0$, from Equation \ref{eq-+} we get that $\star_2=0$, which is what we want.

\item We have, in degree $k+1$:
\begin{align*}
\partial'_{k+1} & = 
\begin{pmatrix}
I & 0 \\
N_k (I-EN_{k})^{-1} & I \\
\end{pmatrix}
\begin{pmatrix}
I & E \\
0 & I \\
\end{pmatrix}
\begin{pmatrix}
\partial^0_{++,k+1} & \partial^0_{+-,k+1} \\
\partial^0_{-+,k+1} & \partial^0_{--,k+1} \\
\end{pmatrix}
 \begin{pmatrix}
I & 0 \\
-N_{k+1} & I \\
\end{pmatrix}\\
& = 
\begin{pmatrix}
I & 0 \\
N_k (I-EN_{k})^{-1} & I \\
\end{pmatrix}
\begin{pmatrix}
I & E \\
0 & I \\
\end{pmatrix}
\begin{pmatrix}
\partial^0_{++,k+1}-\partial^0_{+-,k+1}N_{k+1} & \partial^0_{+-,k+1} \\
\partial^0_{-+,k+1}-\partial^0_{--,k+1}N_{k+1} & \partial^0_{--,k+1} \\
\end{pmatrix}\\
&= 
\begin{pmatrix}
I & 0 \\
N_k (I-EN_{k})^{-1} & I \\
\end{pmatrix}\\
& \times
\begin{pmatrix}
\partial^0_{++,k+1}-\partial^0_{+-,k+1}N_{k+1}+E(\partial^0_{-+,k+1}-\partial^0_{--,k+1}N_{k+1}) & \partial^0_{+-,k+1}+E\partial^0_{--,k+1} \\
\partial^0_{-+,k+1}-\partial^0_{--,k+1}N_{k+1} & \partial^0_{--,k+1} \\
\end{pmatrix}\\
& = 
\begin{pmatrix}
\star_1 & \star_3 \\
\star_2 & \star_4 \\
\end{pmatrix}
\end{align*}
 As in degree $k$, we will not indicate the superscript $0$ and the subscript $k+1$ when writing the matrices $\partial^0_{\ell_1 \ell_2,k+1}$.
 We have:
 \begin{align*}
 \star_1 & = \partial_{++}-\partial_{+-}N_{k+1}+E(\partial_{-+}-\partial_{--}N_{k+1})
 \end{align*}
 From Equation \ref{eq-+}, we get:
 \begin{align*}
  \star_1 & = \partial_{++}-\partial_{+-}N_{k+1}+E(N_k\partial_{+-}N_{k+1}+N_{k}\partial_{++})\\
          & = (I-EN_k)(\partial_{++}-\partial_{+-}N_{k+1})
 \end{align*}
 And:
 \begin{align*}
 \star_2 & = \partial_{-+}-\partial_{+-}N_{k+1}+N_k(I-EN_k)^{-1}\left[ \partial_{++}-\partial_{+-}N_{k+1}+E(\partial_{-+}-\partial_{--}N_{k+1})\right] 
 \end{align*}
 Here again, using Equation \ref{eq-+} we get:
 \begin{align*}
 \star_2 & = N_k\partial_{+-}N_{k+1}+N_{k}\partial_{++}+N_k(I-EN_k)^{-1}\left[ \partial_{++}-\partial_{+-}N_{k+1}+E(N_k\partial_{+-}N_{k+1}+N_{k}\partial_{++})\right] \\
 & = -N_k(\partial_{++}-\partial_{+-}N_{k+1})+N_k(I-EN_k)^{-1}(I-EN_k)(\partial_{++}-\partial_{+-}N_{k+1})\\
 & = 0
 \end{align*} 
\end{enumerate}

We then have $\partial'_{-+}=0$.
Denote by $U'$ to be the matrix such that $U'_j$ is the identity in all degree except in degree $k$ for which we have:
\[U'_k:=\begin{pmatrix}
I_{p_k}-EN_k & 0 \\
0            & I_{q_k} \\
\end{pmatrix}.\]
It is an invertible matrix, acting non-trivially only on $\mathbb{Z}\mathcal{C}^+(\tilde{f})$.
We notice that we have proved:
\[\partial'_{++}= \left[ U' (M\partial^0 M^{-1})(U')^{-1}\right]_{++}. \]
Thus, $\partial'_{++}$ and  $(M\partial^0 M^{-1})_{++}$ have same homology, which is what is required.
With this last case, the proof is complete.
\end{itemize}
\end{proof}

The previous lemma shows that property $(\mathcal{P})$ only depends on $\tilde{f}$ and not on the adapted pseudo-gradient adapted to $f$.
We shall then say that the germ $\tilde{f}$ has property $(\mathcal{P})$ without specifying the adapted pseudo-gradient.
To prove Theorem \ref{th1}, if $\tilde{f}$ extends non-critically, we just have to find one Morse-Smale pseudo-gradient adapted to $f$ which has property $(\mathcal{P})$.
 
If $\tilde{h}$ is trivial, such a pseudo-gradient adapted to $h$ exists in a trivial way.
To prove Theorem \ref{th1}, we will build a pseudo-gradient adapted to $f$ from a pseudo-gradient adapted to a trivial germ by an induction on the number of bifurcations occurring during the generic path of functions linking $\tilde{f}$ to a trivial germ $\tilde{h}$.

Recall that time dependency is written as a superscript.
We have:
\begin{lemma}
\label{induction}
Let $(f^t)_{t\in [0,1+\varepsilon)}$ be a non critical generic path of functions extending a germ $\tilde{f}^0$. 
Assume there is one and only one bifurcation occurring during this path.
We also consider a path of adapted pseudo-gradients $X^t$ such that if the bifurcation is a birth or a death bifurcation, then this bifurcation is independent.
Let $\tilde{f}^1$ be the germ represented by the function 
\[(x,t)\in \Sn \times [1,1+\varepsilon) \mapsto f^t(x).\]
If $\widetilde{f^1}$ has property $(\mathcal{P})$, then $\widetilde{f^0}$ does too. 
\end{lemma}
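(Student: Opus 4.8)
The plan is to analyse the single bifurcation case by case according to its type, mirroring the subsection on the modifications of $G(\widetilde{f^t})$, and always reducing to critical points of label $+$. Two preliminary reductions. First, by Lemma \ref{invariance} property $(\mathcal{P})$ depends only on the germ, so it suffices to exhibit, for $\widetilde{f^0}$, one Morse--Smale adapted pseudo-gradient and one witnessing matrix $M$ as in Definition \ref{defP} (and we are free, at each stage, to replace $X^t$ by a convenient deformation). Second, if the bifurcation involves critical points of label $-$, we apply the whole argument to the path $t\mapsto(-f^t,-X^t)$: the independence hypothesis is preserved, being symmetric in stable and unstable manifolds; Lemma \ref{oppbirth} turns the bifurcation into one of label $+$; and Proposition \ref{takingopp} together with Lemma \ref{invariance} let us pass between property $(\mathcal{P})$ for $\widetilde{f^i}$ and for $-\widetilde{f^i}$. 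From now on every point born, killed, crossed or slid has label $+$. Also, since there is no other bifurcation, $\widetilde{f^0}$ (resp. $\widetilde{f^1}$) may be identified with the germ just before (resp. just after) the bifurcation.

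It is cleanest to read property $(\mathcal{P})$ invariantly. With the notation of Definition \ref{group}, to a witnessing $M$ one associates the graded submodule $V=\bigoplus_k V_k$, $V_k:=M^{-1}(\mathbb{Z}\mathcal{C}^+_k(\tilde{f}))\subseteq\mathbb{Z}\mathcal{C}_k(f)$, so that $V_k$ is spanned by the vectors $a_j-N_k(a_j)$, one for each $a_j\in\mathcal{C}^+_k(\tilde{f})$. Then $(\tilde{f},X)$ has property $(\mathcal{P})$ exactly when there is such a $V$ which is a subcomplex of $(\mathbb{Z}\mathcal{C}(f),\partial)$ (this encodes $(M\partial M^{-1})_{-+}=0$), which is complementary to $\mathbb{Z}\mathcal{C}^-_k(\tilde{f})$ in each degree and whose spanning vectors $a_j-N_k(a_j)$ have $N_k(a_j)$ in the span of the points of $\mathcal{C}^-_k(\tilde{f})$ lying below $a_j$ (this encodes $M\in G(\tilde{f})$), and whose homology is $\mathbb{Z}$ in degree $n$ and $0$ elsewhere (using the identification of $(M\partial M^{-1})_{++}$ with $\partial|_V$ via $M^{-1}$). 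In this language the goal is: a subcomplex $V^1$ of this type for $\widetilde{f^1}$ produces one, $V^0$, for $\widetilde{f^0}$.

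\textbf{Crossings.} By the result of \cite{Laud5} on handle crossings recalled in Section \ref{pseudograd}, the path of adapted pseudo-gradients can be taken constant through a crossing, so $\partial^0=\partial^1$. If the crossing is of two points of the same label, or of different labels and different indices, then $G(\widetilde{f^1})=G(\widetilde{f^0})$ and $V^0:=V^1$ works verbatim. If a $+$ point passes below a $-$ point of the same index, then $G(\widetilde{f^1})\subseteq G(\widetilde{f^0})$, so $V^1$ is a fortiori an admissible $V^0$ (the constraint on the spanning vectors only relaxes).

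\textbf{Birth and death.} Suppose the bifurcation is the birth or death of a pair $(a,b)$ of index $(k+1,k)$ and label $+$. By hypothesis it may be taken independent, so (Lemma \ref{independent} when $1\le k\le n-2$, Lemma \ref{independentextended} when $k\in\{0,n-1\}$) for whichever of the two functions carries the pair there is a Morse--Smale adapted pseudo-gradient whose boundary operator makes $\langle a,b\rangle$ a direct summand, with $\partial a=\pm b$, $\partial b=0$, the complementary summand being the boundary operator of the other function; moreover the two groups $G$ are identified via the injections of Equations \ref{eqmatrice} and \ref{eqmatrice2}. \emph{Death:} the pair-free function is $f^1$, and $V^0:=V^1\oplus\langle a,b\rangle$ is a subcomplex of the right form for $\widetilde{f^0}$ ($a$ and $b$ being $+$ points), with $H_*(V^0)=H_*(V^1)$ since $\langle a,b\rangle$ is acyclic. \emph{Birth:} the pair-free function is $f^0$, and $V^0:=V^1\cap\mathbb{Z}\mathcal{C}(f^0)$; the short exact sequence of complexes $0\to V^0\to V^1\to V^1/V^0\to 0$, whose quotient is the acyclic complex $\langle a,b\rangle$, shows simultaneously that $V^0$ has the right ranks in every degree, that it is spanned by the vectors $x-N^1_k(x)$ with $x$ an old $+$ point (hence of the right form), and that $H_*(V^0)=H_*(V^1)$. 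I expect the main obstacle to be the extremal cases $k\in\{0,n-1\}$: there the acyclic pair splits off only after a preliminary handle slide normalising the incidence of $a$ onto $\pm b$, an element of $G$ supported in degree $0$ or $n$ that must be shown compatible with the structure of $G_0$ and $G_n$, after which the argument above applies unchanged. Assembling the cases gives that $\widetilde{f^0}$ has property $(\mathcal{P})$; with Lemma \ref{invariance} and the trivial-germ base case this is precisely the inductive step needed for Theorem \ref{th1}.
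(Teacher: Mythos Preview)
Your proof is correct and takes a genuinely different route from the paper. The paper argues each case by explicit block-matrix computations: it writes $\partial^0$ and $\partial^1$ in the block form induced by the $+/-$ splitting, defines the witnessing matrix $M^0$ from $M^1$ via the injections of Equations~\ref{eqmatrice}--\ref{eqmatrice2b}, and then verifies by hand the identities $(M^0\partial^0(M^0)^{-1})_{-+}=0$ and the exactness of $(M^0\partial^0(M^0)^{-1})_{++}$ (Equations~\ref{eq++}--\ref{eq-+}). Your reformulation---replacing the pair $(M,\partial)$ by the subcomplex $V=M^{-1}(\mathbb{Z}\mathcal{C}^+)$ with its intrinsic ``below'' constraint on generators---lets you bypass these computations entirely: for a death you take $V^0=V^1\oplus\langle a,b\rangle$, for a birth $V^0=V^1\cap\mathbb{Z}\mathcal{C}(f^0)$, and the acyclicity of $\langle a,b\rangle$ does the homological bookkeeping via the obvious short exact sequences. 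This is cleaner and more conceptual for the non-extremal indices; the paper's matrix approach, on the other hand, makes the extremal cases $k\in\{0,n-1\}$ slightly more transparent, since the extra index-$0$ point $c$ appearing in $\partial^0 a=\pm b\pm c$ is visible directly in the coefficients. Your sketch for that case (``an element of $G$ supported in degree $0$ or $n$ that must be shown compatible'') is on target---one replaces $b$ by $b'=b\mp c$ (if $c$ is of label $-$) or $b'=b\mp N^1_0(c)$ (if $c$ is of label $+$), and the required ``below'' condition follows from the fact that $a,b$ have consecutive critical values at the death time, forcing $f^0(c)<f^0(b)$---but you should carry this out rather than flag it as an expectation.
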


\begin{proof}[Proof of the lemma]

We will prove the lemma with respect to the kind of bifurcation occurring.
We separate in the proof the death/birth of a pair of index $(1,0)$ or $(n,n-1)$ from the other death/birth bifurcations, as we saw that the notion of "being independent" is different in these cases.

A detailed proof would require some matricial computation which are not difficult, but technical as there is a lot of notation. 
For the sake of clarity, we will only do in the following proof some of the most complex computations, and leave the easier ones to the reader.

\begin{itemize}
\item \textbf{Crossing of critical points.}
We have that $G(\widetilde{f^1})\subset G(\widetilde{f^0})$, and from \cite[Corollary 2.2]{Laud5} we can even give $f^0$ and $f^1$ the same Morse-Smale pseudo-gradient $X$.
Identifying $\mathbb{Z}\mathcal{C}(\widetilde{f^1})$ with $\mathbb{Z}\mathcal{C}(\widetilde{f^0})$, we can then assume that $\partial^1=\partial^0$.
We know there is $M^1 \in G(\widetilde{f^1})$ such that $M^1\partial^1(M^1)^{-1}$ fulfills conditions of property ($\mathcal{P}$).
As $M^1 \in G(\widetilde{f^0})$, we have that $M^1 \partial^0 (M^1)^{-1}$ also fulfills property ($\mathcal{P}$). 
 From Lemma \ref{invariance}, property $(\mathcal{P})$ only depends on the germ and not on the pseudo-gradient.
 Thus $\widetilde{f^1}$ also has property $(\mathcal{P})$.

\item \textbf{Death bifurcation of a pair of index $(k+1,k)$, with $1 \leq  k \leq n-2$.} 

We denote by $(a,b)$ the pair of critical points of index $(k+1,k)$ dying during the path.
We saw that we can consider a pseudo-gradient $X^0$ such that $\partial^{1}=\partial^0$ on the points that still exist at time $1$, and such that $\partial^0 a= \pm b$ and $\partial^0 b =0$. 
We use here that this death bifurcation is independent.
Let $M^1$ in $G(\widetilde{f^1})$ such that $M^1 \partial^1 (M^1)^{-1}$ fulfills the hypotheses of the theorem.

Assume that the label of the pair is $+$.
In degree $k+1$, we have, from equation \ref{eqmatrice}:
\[\partial^0_{k+1}=\begin{pmatrix}
   &                      &  & 0      &  &                     &  \\
   & \partial^1_{++,k+1}  &  & \vdots &  & \partial^1_{+-,k+1} &  \\
   &                      &  & 0      &  &                     &  \\
 0 & \hdots               & 0 & \pm 1     & 0& \hdots              & 0 \\
   &                      &  & 0      &  &                     &  \\
   & \partial^1_{-+,k+1}   &  & \vdots &  &  \partial^1_{--,k+1}&  \\
   &                      &   & 0      & &                      &  \\
\end{pmatrix},\]
where the column with a lot of $0$ is the one of $a$ and the row with a lot of $0$ is the one of $b$.
As the bifurcation is independent, we also have in degree $k+2$:
\[\partial^0_{k+2}=\begin{pmatrix}
   &                      &   &  &                     &  \\
   & \partial^1_{++,k+2}  &   &  & \partial^1_{+-,k+2} &  \\
   &                      &   &  &                     &  \\
 0 & \hdots               & 0 & 0& \hdots              & 0 \\
   &                      &   &  &                     &  \\
   & \partial^1_{-+,k+2}  &   &  &  \partial^1_{--,k+2}&  \\
   &                      &   &  &                     &  \\
\end{pmatrix},\]
and in degree $k$:
\[\partial^0_{k}=\begin{pmatrix}
   &                      &  & 0      &  &                     &  \\
   & \partial^1_{++,k}  &  & \vdots &  & \partial^1_{+-,k} &  \\
   &                      &  & 0      &  &                     &  \\
   &                      &  & 0      &  &                     &  \\
   & \partial^1_{-+,k}   &  & \vdots &  &  \partial^1_{--,k}&  \\
   &                      &   & 0      & &                      &  \\
\end{pmatrix}.\]
In other degrees, the boundary operator is unchanged.
We show that $\widetilde{f^0}$ has property ($\mathcal{P}$) by showing it for $\partial^0$.
We verify the two points of the definition.

 We saw that there are natural injections of $G(\widetilde{f^1})$ in $G(\widetilde{f^0})$, and we denote by $M^0$ the image of $M^1$ by this injection, which is, in degree $j=k$ or $j=k+1$:
\[M^0_{j}=\begin{pmatrix}
   &                      &   & 0      &  &                     &   \\
   & I_{p^1_{j}}          &   & \vdots &  & 0                   &   \\
   &                      &   & 0      &  &                     &   \\
 0 & \hdots               & 0 & 1      & 0& \hdots              & 0 \\
   &                      &   & 0      &  &                     &   \\
   & N^1_{j}              &   & \vdots &  &  I_{q^1_{j}}        &   \\
   &                      &   & 0      &  &                     &   \\
\end{pmatrix},\]
where $N^1_j$ is the down-left submatrix of $M^1_j$.
We have $M^0_j=M^1_j$ in other degrees.
 
It leads to 
\[ (M^0\partial^0(M^0)^{-1})_{-+,j}=(M^1\partial^1(M^1)^{-1})_{-+,j}=0\]
and
\[(M^0\partial^0(M^0)^{-1})_{++,j}=(M^1\partial^1(M^1)^{-1})_{++,j}\]
for all $j\notin\{k,k+1,k+2\}$.
If $j=k+1$, from the equations above, a simple computation shows that \[ (M^0\partial^0(M^0)^{-1})_{-+,k+1}=0,\]
and 
\[ (M^0\partial^0(M^0)^{-1})_{++,k+1}=\begin{pmatrix}
 &                                    &  & 0 \\
 & (M^1\partial^1(M^1)^{-1})_{++,k+1} &  & \vdots \\
 &                                    &  & 0 \\
0& \hdots                             & 0& \pm 1 \\  
\end{pmatrix}.\]
The same kind of matricial computations show that 
\[ (M^0\partial^0(M^0)^{-1})_{-+,k}=0,\]
and
\[ (M^0\partial^0(M^0)^{-1})_{-+,k+2}=0.\]

Thus, we get a short exact sequence of chain complexes:
\[0\to \left(\mathbb{Z}\mathcal{C}^+(\widetilde{f^1}), (M^1 \partial^1 (M^1)^{-1})_{++}\right) \to \left(\mathbb{Z}\mathcal{C}^+(\widetilde{f^0}), (M^0 \partial^0 (M^0)^{-1})_{++}\right) \to (T, \pm 1)  \to 0,\]
where $(T, \pm 1 )$ is the acyclic chain complex: $0 \to \mathbb{Z}a \to \mathbb{Z}b \to 0$ such that $\partial_T a =\pm b$.
Using the long exact sequence in homology, we have that $(M^1 \partial^1 (M^1)^{-1})_{++}$ and $(M^1 \partial^1 (M^1)^{-1})_{++}$ have same homology.

Thus, the lemma is proved in the case of the death singularity of a pair $(a,b)$ of index $(k+1,k)$ and label $+$, with $1\leq k \leq n-2$.

If the label of the pair is $-$, then it corresponds to a death bifurcation of a pair of label $+$ for the path $t\mapsto -f^t$. 
We can then use results of Subsection \ref{opposite}.

\item \textbf{Birth bifurcation of a pair of index $(k+1,k)$, with $1 \leq k \leq n-2$.}

We call $(a,b)$ the pair of index $(k+1,k)$ which is born during the path.
The birth bifurcation is assumed independent for the path of pseudo-gradients $X^t$.
We have $\partial^1 a  = \pm b$, and the components of $\partial^1 d$ along $a$ or $b$ is $0$, for all critical point $d$ which is not $a$.
By assumption, there is a matrix $M^1$ in $G(\widetilde{f^1})$ such that \[\partial':= M^1 \partial^1 (M^1)^{-1}\] is a boundary operator with the properties displayed in the definition of property $(\mathcal{P})$.
Assume the label of $a$ and $b$ is $+$.

In matricial notation we have, inverting time in the equations arising for a death bifurcation:
\[\partial^1_{k+1}=\begin{pmatrix}
   &                      &  & 0      &  &                     &  \\
   & \partial^0_{++,k+1}  &  & \vdots &  & \partial^0_{+-,k+1} &  \\
   &                      &  & 0      &  &                     &  \\
 0 & \hdots               & 0 & \pm 1     & 0& \hdots              & 0 \\
   &                      &  & 0      &  &                     &  \\
   & \partial^0_{-+,k+1}   &  & \vdots &  &  \partial^0_{--,k+1}&  \\
   &                      &   & 0      & &                      &  \\
\end{pmatrix},\]
where the column with a lot of $0$ is the one of $a$ and the row with the lot of $0$ is the one of $b$.
As the bifurcation is independent, we also have in degree $k+2$:
\[\partial^1_{k+2}=\begin{pmatrix}
   &                      &   &  &                     &  \\
   & \partial^0_{++,k+2}  &   &  & \partial^0_{+-,k+2} &  \\
   &                      &   &  &                     &  \\
 0 & \hdots               & 0 & 0& \hdots              & 0 \\
   &                      &   &  &                     &  \\
   & \partial^0_{-+,k+2}  &   &  &  \partial^0_{--,k+2}&  \\
   &                      &   &  &                     &  \\
\end{pmatrix},\]
and in degree $k$:
\[\partial^1_{k}=\begin{pmatrix}
   &                      &  & 0      &  &                     &  \\
   & \partial^0_{++,k}    &  & \vdots &  & \partial^0_{+-,k} &  \\
   &                      &  & 0      &  &                     &  \\
   &                      &  & 0      &  &                     &  \\
   & \partial^0_{-+,k}    &  & \vdots &  &  \partial^0_{--,k}&  \\
   &                      &  &  0     &  &                      &  \\
\end{pmatrix}.\]
In other degrees, the boundary operator is unchanged.
Let us verify the items of the definition of property ($\mathcal{P}$).

 Let $M^0$ be the restriction of $M^1$ to $\mathbb{Z}\mathcal{C}(f^0)$.
That is, if for $j=k$ (resp. $j=k+1$)
\[M^1_j= \begin{pmatrix}
  &           &   & 0      &  &   & \\
  & I_{p^0_j} &   & \vdots &  & 0 & \\
  &           &   & 0      &  &   & \\
 0& \hdots    & 0 & 1      & 0 & \hdots  & 0 \\
  &  N_j      &   & L_j    &  & I_{q^0_j} & \\
 
 \end{pmatrix}, \]
  where $L_j$ is the column of the down-left submatrix of $M^1_j$ corresponding to $b$ (resp. $a$), then
\[M^0_j= \begin{pmatrix}
  I_{p^0_j} & & 0  \\
  N_j       & & I_{q^0_j}  \\
 
 \end{pmatrix}.\] 
It is an element of  $G(\widetilde{f^0})$.

In degree $k+1$ we have:
\begin{equation}
\label{eq-+}
(M^1\partial^1(M^1)^{-1})_{-+,k+1}=  
\begin{pmatrix}
(M^0\partial^0 (M^0)^{-1})_{-+,k+1} &  &  L_k-N_k \partial^0_{+-}L_{k+1} - \partial^0_{--,k+1}L_{k+1} \\
\end{pmatrix},
\end{equation}
where we have:
\[(M^0\partial^0 (M^0)^{-1})_{-+,k+1} = \partial^0_{-+,k+1} - \partial^0_{--,k+1}N_{k+1} - N_k\partial^0_{+-,k+1}N_{k+1} + N_k\partial^0_{--,k+1}\]

As ${(M^1\partial^1(M^1)^{-1})_{-+,k+1}=0}$, we easily see from Equation \ref{eq-+} and the definition of $M^0$ above that ${(M^0\partial^0(M^0)^{-1})_{-+,k+1}=0}$.
In degrees $k$ or $k+2$, we get the same kind of computation, which may seem quite technical, but are simple.
The fact that {$(M^0\partial^0(M^0)^{-1})_{-+,j}=0$} in degree $j$ different from $k$, $k+1$ and $k+2$ is immediate, as $M^0_j=M^1_j$.

  We get, using the descriptions of the matrices above:
\begin{equation}
\label{eq++}
(M^1\partial^1(M^1)^{-1})_{++,k+1}=\begin{pmatrix} 
  & \partial^0_{++,k+1}-\partial^0_{+-,k+1}N_{k+1} &   & &-\partial^0_{+-,k+1}L_{k+1} \\
0 & \hdots                                         & 0 & & 1  \\ 
\end{pmatrix}.
\end{equation}

We also have:
\begin{equation}
\label{eq+-}
(M^1\partial^1(M^1)^{-1})_{+-,k+1}=
\begin{pmatrix}
   & \partial^0_{+-,k+1} &   \\
 0 & \hdots            & 0 \\
\end{pmatrix}
\end{equation}
\begin{equation}
\label{eq--}
(M^1\partial^1(M^1)^{-1})_{--,k+1}= \partial^0_{--,k+1}+N_k\partial^0_{+-,k+1}.
\end{equation}

Thus, $\partial'^0_{++}:=\left( M^0 \partial^0 (M^0)^{-1} \right)_{++}$ defines a boundary operator.
For $t=0$ or $t=1$, denote by $\mathbb{Z}\mathcal{C}^t$ the acyclic chain complex with boundary operator $\partial'^t_{++}$:
\[0\to \mathbb{Z}\mathcal{C}^+_{n}(\widetilde{f^t}) \to \mathbb{Z}\mathcal{C}^+_{n-1}(\widetilde{f^t})\to...\to \mathbb{Z}\mathcal{C}^+_{1}(\widetilde{f^t})\to \mathbb{Z}\mathcal{C}^+_{0}(\widetilde{f^t})\to 0  .\]
We get a short exact sequence of chain complexes:
\[ 0 \rightarrow \mathbb{Z}\mathcal{C}^0 \hookrightarrow \mathbb{Z}\mathcal{C}^1 \rightarrow T \rightarrow 0, \]
where $(T,1)$ is the acyclic chain complex : \[ 0 \to \mathbb{Z} a \overset{1}{\to} \mathbb{Z} b  \to 0.\]
Using the long exact sequence in homology, we then show that $\mathcal{C}^0$ is acyclic. 

 Here again, proving the result for a pair of label $-$ simply uses results of Subsection \ref{opposite}.
 
\item \textbf{Death of a pair of index $(1,0)$ or $(n,n-1)$.}

We assume that the label of the pair $(a,b)$ of index $(k+1,k)$ which dies at the bifurcation is $+$.
Assume first that $k=0$.
In this case, there is a pseudo-gradient $\partial^0$ such that $\partial^0 d$ has no component along $b$ (resp. $a$) for any point $d$ of index $2$ (resp. $1$) and such that $\partial^0 a = \pm b \pm c$ where $c$ is another point of index $0$.

We verify the points of the definition of property ($\mathcal{P}$).
   We define $M^0$ to be as in Equation \ref{eqmatrice}.
  We have that $M^0 \in G(\widetilde{f^0})$.
  We verify that for all point $d$ in $\mathcal{C}^+(\widetilde{f^0})$ we have $(M^0 \partial^0 (M^0)^{-1})_{-+} d=0$ except when $d=a$ and $c \in \mathcal{C}^-(\widetilde{f^0})$ in which case we have $(M^0 \partial^0 (M^0)^{-1})_{-+} a=\pm c$.
  In this case, $f^0(c)<f^0(b)$ since $a$ and $b$ have consecutive critical values.
  Assume for example that $(M^0 \partial^0 (M^0)^{-1})_{-+} a= c$.
  Consider $M'_0$ to be a matrix $\begin{pmatrix}
  I_{p_0} & 0 \\
  N'_0    & I_{q_0} \\
    \end{pmatrix}$ where all coefficients of $N'_0$ vanish except the one whose column correspond to $b$ and whose row is the one of $c$, which is $-1$.
   We have that $M'_0 \in G_0(\widetilde{f^0})$. 
   Let $M'$ be the element in $G(\widetilde{f^0})$ which is the identity in every degree except in degree $0$ for which it is $M'_0$.
A straight computation shows that $((M'M^0)\partial^0(M'M^0)^{-1})_{-+}=0$.
It is then not difficult to see that $((M'M^0)\partial^0(M'M^0)^{-1})_{++}$ has the desired homology.

If the pair is of index $(n,n-1)$, it is simpler as the independency of the bifurcation directly gives $\partial^0 a =\pm b$, and $M^0 \in G(\tilde{f})$ is such that $(M^0 \partial^0 (M^0)^{-1})_{-+}=0$ and that $(M^0 \partial^0 (M^0)^{-1})_{++}$ has the desired homology.
 
If the pair is of label $-$, then results of Subsection \ref{opposite} lead to the conclusion.

\item \textbf{Birth of a pair of index $(1,0)$ or $(n,n-1)$.} 
We first assume that the pair appearing during the path, denoted $(a,b)$, is of label $+$ and of index $(1,0)$.
There is a Morse-Smale pseudo-gradient $X^1$ adapted to $f^1$ and a Morse-Smale pseudo-gradient $X^0$ adapted to $f^0$ such that:
\[\partial^1_2=\begin{pmatrix}
  &                      &   &  &                     &  \\
   & \partial^0_{++,2}  &   &  & \partial^0_{+-,2} &  \\
   &                      &   &  &                     &  \\
 0 & \hdots               & 0 & 0& \hdots              & 0 \\
   &                      &   &  &                     &  \\
   & \partial^0_{-+,2}  &   &  &  \partial^0_{--,2}&  \\
   &                      &   &  &                     &  \\
\end{pmatrix}
\]
where the row with $0$ corresponds to $a$.
Assume that $\partial^0 a = \pm b \pm c$ and that $c$ is a critical point of $f^1$ of label $-$ and index $0$.
We get:
\[\partial^1_{1}=\begin{pmatrix}
   &                      &  & 0      &  &                     &  \\
   & \partial^0_{++,1}  &  & \vdots &  & \partial^0_{+-,1} &  \\
   &                      &  & 0      &  &                     &  \\
 0 & \hdots               & 0 & \pm 1     & 0& \hdots              & 0 \\
   &                      &  & 0      &  &                     &  \\
   & \partial^0_{-+,1}   &  & \vdots &  &  \partial^0_{--,1}&  \\
   &                      &   & 0      & &                      &  \\
   &                      &  & \pm 1  & &                      &  \\
\end{pmatrix},\]
where the last row is the one of $c$, and the row in the middle with a lot of $0$ is the one of $b$.
If $M^1 \in G(\widetilde{f^1})$ is such that $(M^1 \partial^1 (M^1)^{-1})_{-+}=0$ and $(M^1 \partial^1 (M^1)^{-1})_{++}$ has the desired homology, we denote:
\[M^1_j= \begin{pmatrix}
  &           &   & 0      &  &   & \\
  & I_{p^0_j} &   & \vdots &  & 0 & \\
  &           &   & 0      &  &   & \\
 0& \hdots    & 0 & 1      & 0 & \hdots  & 0 \\
  &  N_j      &   & L_j    &  & I_{q^0_j} & \\
 
 \end{pmatrix}, \]
when $j=0$ and $j=1$.
We consider $M^0 \in G(\widetilde{f^0})$ to be such that $M^0_j=M^1_j$ in all degree different from $1$ and $0$, and to be 
\[M^0_j= \begin{pmatrix}
  & I_{p^0_j} &   & 0 & \\
  &  N_j      &   & I_{q^0_j} & \\
 
 \end{pmatrix}, \]
in degree $1$ and $0$.
We then have the exact same equations as for the birth bifurcation of a pair of index $(k+1,k)$ for $1\leq k \leq n-2$, that is, Equations \ref{eq++}, \ref{eq+-}, \ref{eq-+}, and \ref{eq--}.
The results follow.

If the index of the pair is $(n,n-1)$, then we can also do as in the case of the birth bifurcation of a pair of index $(k,k-1)$ with $1\leq k \leq n-2$.

If the pair is of label $-$, then results of Subsection \ref{opposite} lead to the conclusion.
\end{itemize}

\end{proof}

\begin{proof}[Proof of Theorem \ref{th1}.]
The proof of the theorem is implied by the union of Lemma \ref{induction} and Lemma \ref{invariance}.
\end{proof}

\subsection{Differences with Barannikov's work}
\label{barra}
As the starting point of this article is the same as Barannikov's in \cite{Bar}, we may wonder if the condition displayed in Theorem \ref{th1} is contained in \cite{Bar}.

We give in this subsection the example of a germ $\tilde{f_0}$ that does not extend non-critically, because it does not fulfill the condition of Theorem \ref{th1}.
However, we show that Barannikov's theorems cannot say if the germ does not extend non-critically. 
We will briefly introduce what are the objects defined in \cite{Bar} and recall the results which are proved, but first, we describe the germ $\tilde{f_0}$.
We will also use the germ $\tilde{f_0}$ as an example to introduce the objects defined in \cite{Bar}.

We refer to \cite{Bar} and \cite{Lau} for any further details.
\subsubsection{A germ \texorpdfstring{$\tilde{f_0}$}{Lg} that does not extend non-critically}

Let $2\leq k \leq n-2$.
Let $\tilde{f_0}$ be a Morse germ such that $f_0$ has $6$ critical points:
\begin{itemize}
\item One maximum $max$ of label $+$ and one minimum $min$ of label $-$,
\item Two critical points $a$ and $b$ of index $k+1$. One, say $a$, of label $+$ and the other, say $b$, of label $-$ such that $f_0(a)>f_0(b)$. Thus, for any integer $r$, the matrix 
\[\begin{pmatrix}
1 & 0 \\
r & 1 \\
\end{pmatrix}\] is in $G_{k+1}(\tilde{f_0})$.
\item Two critical points $c$ and $d$ of index $k$. One, say $c$, of label $+$ and the other, say $d$, of label $-$, such that $f_0(c)>f_0(d)$.
 Thus, for any integer $r$ the matrix 
 \[\begin{pmatrix}
1 & 0 \\
r & 1 \\
\end{pmatrix}\] is in $G_{k}(\tilde{f_0})$.
\end{itemize}
We assume that we are given a Morse-Smale pseudo-gradient $X$ adapted to $f_0$ such that:
\[\partial_{k+1}=\begin{pmatrix}
7 & 5 \\
-3 & -2 \\
\end{pmatrix}
\]
where:
\begin{align*}
\partial_{++,k+1}& =7,\\
\partial_{+-,k+1}& =5,\\
\partial_{--,k+1}&=-2,\\
\partial_{-+,k+1}&=-3.\\
\end{align*}
Notice that the homology of $\partial$ is the one of $\Sn$, as $\partial_{k+1}\in SL_2(\mathbb{Z})$.

Theorem \ref{th1} states that if $\tilde{f_0}$ extends non-critically, we would have matrices $M_{k+1}$ and $M_k$ in respectively $G_{k+1}(\tilde{f_0})$ and $G_k(\tilde{f_0})$ such that 
\[M_{k+1}\partial_{k+1} M_k = \begin{pmatrix}
\pm 1 & 5 \\
0 & \pm 1 \\
\end{pmatrix}.\]
A straight computation shows that it would imply $7 \equiv ~\pm 1~ [5]$, see also Theorem \ref{2indices} for details. 
But this is false.
Thus, $\tilde{f_0}$ does not extend non-critically.

\subsubsection{Theorems of Barannikov}

We briefly recall the main results of \cite{Bar}.
Details can also be found in \cite{Lau}.

Let $\tilde{f}$ be a Morse germ along $\Sn$ and $X$ a Morse-Smale pseudo-gradient adapted to $f$.
 The \emph{abstract Framed Morse Complex} (that we will denote \emph{FMC}) of $\tilde{f}$ associated to $X$ is a collection of $n$ vertical lines, one for each index, on which we place vertices corresponding to the critical points of $f$ as follows.
 If $b$ is a critical point of $f$ of index $k$, we place a vertex on the $k$-th line at height $f(b)$.
 Then we join a pair of critical points $(a,b)$ where $a$ is of index $k+1$ and $b$ is of index $k$ by a segment when $\partial a$ has a non-null component along $b$.
 That is, we link $a$ to $b$ if $\partial a = n_{a,b}b + \hdots$, where $n_{a,b}$ is in $\mathbb{Z} \setminus \{0\}$.
 We write the integer $n_{a,b}$ above the segment joining $a$ and $b$.
 If $a$ is of label $+$, an arrow pointing down is added to the vertex representing $a$, and if it is of label $-$, the arrow points up.
 As an example, the FMC of the germ $\tilde{f_0}$ described above is pictured in Figure \ref{afaire}. 
 The FMC of a trivial germ is also pictured in Figure \ref{afaire2}.
 A FMC is then just a way to picture the boundary operator associated to a Morse-Smale pseudo-gradient adapted to a Morse germ $\tilde{f}$.
 The main idea of \cite[Theorem 1]{Bar} is that if a germ $\tilde{f}$ extends non-critically, then any associated FMC can be linked through some allowed modifications to the FMC of a trivial germ.
 The modifications of the diagram are given by bifurcations of a generic path of functions starting at $\tilde{f}$ and ending at a trivial germ.
 We will not describe in details those modifications, but will describe below the modifications of another simpler object introduced by Barannikov.
 We just briefly say that the cancellation of a pair $(a,b)$ gives a new diagram where the vertices corresponding to $a$ and $b$ and all segments whose endpoints contain $a$ or $b$ disappear. 
 The birth of a pair gives a diagram with two new vertices, and segments whose endpoints contain $a$ or $b$ in accordance with the new boundary operator.
 A handle slide changes the segments between vertices and the integers placed above. 
 We point out that Subsection \ref{cobord} of the present article is a detailed version of \cite[Theorem 1]{Bar}.
 
 \begin{figure}[!ht]
 \begin{center}
 \includegraphics[scale=0.5]{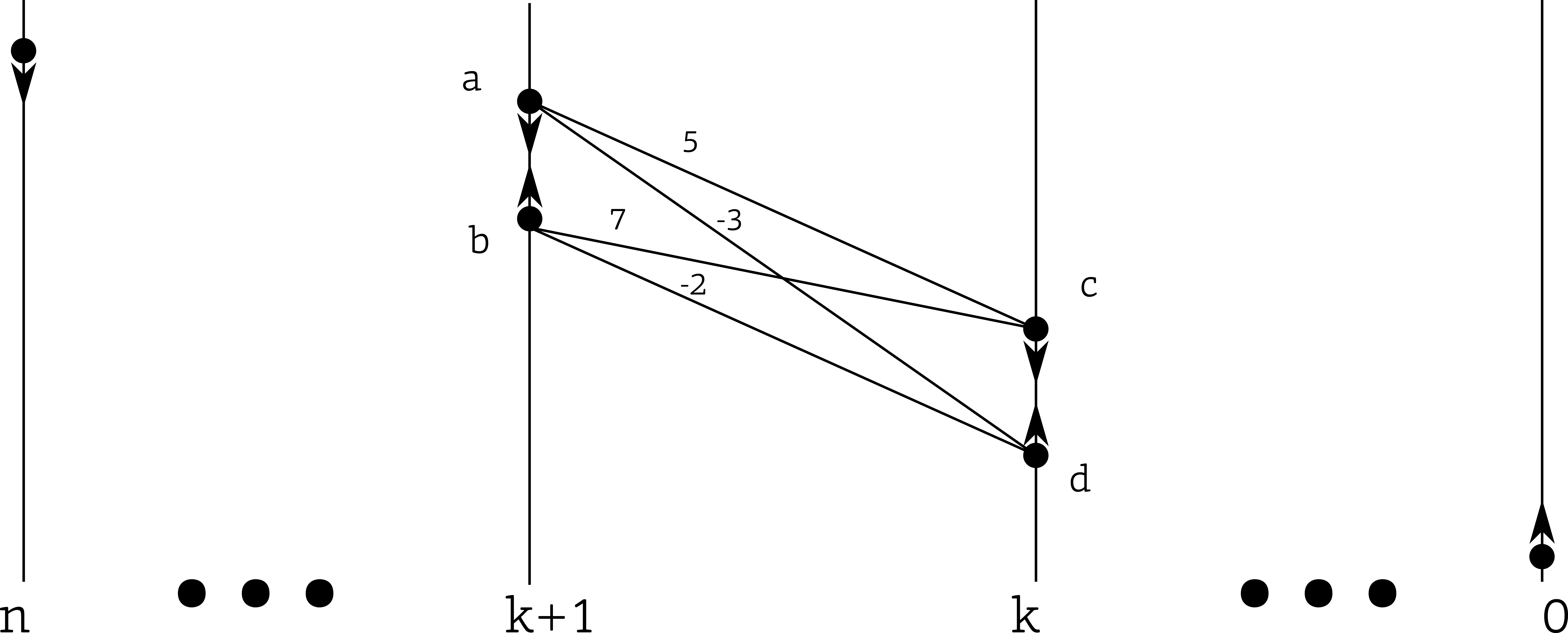}
 \caption{FMC of $\tilde{f_0}$}
  \label{afaire}
\end{center}  
 \end{figure}
 
  \begin{figure}[!ht]
 \begin{center}
 \includegraphics[scale=0.5]{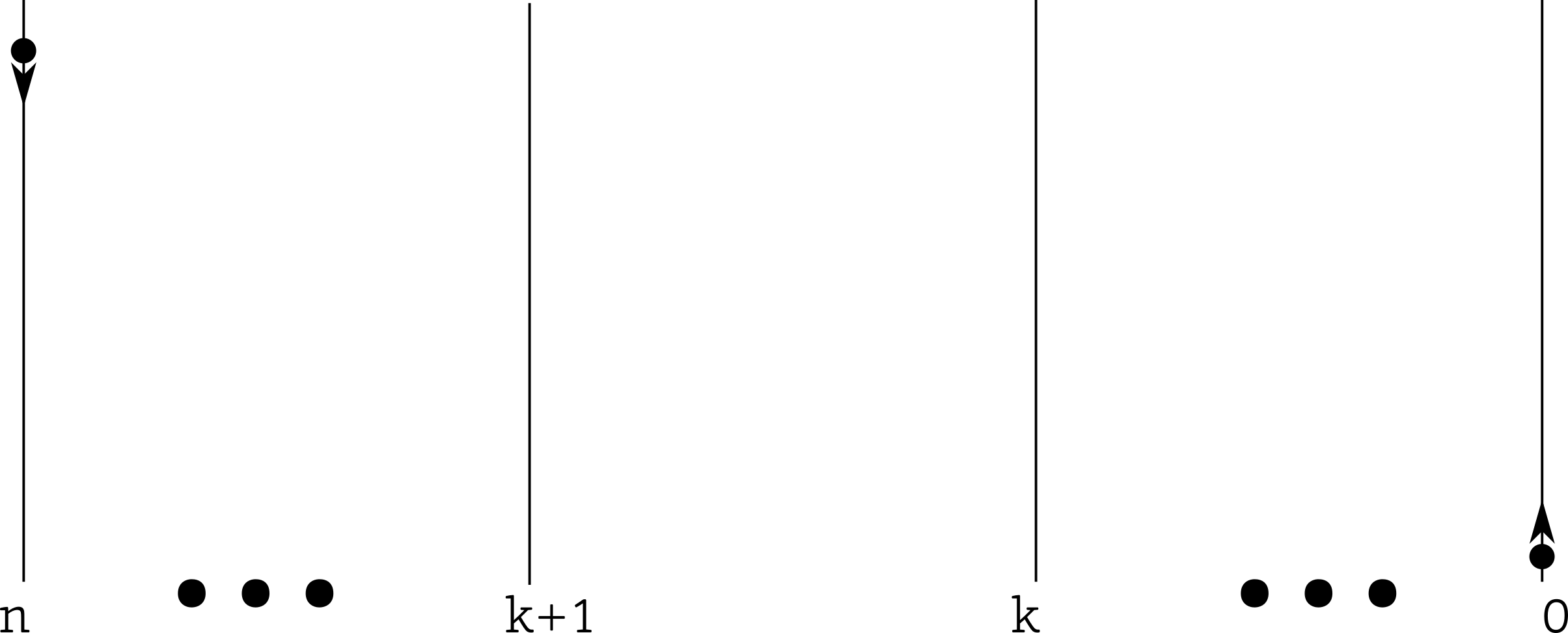}
 \caption{FMC of a trivial germ}
 \label{afaire2}
\end{center}  
 \end{figure}

 In order to give a condition of non-critical extension which is easier to handle, Barannikov considers complexes with coefficients in a field.
 Let $\mathbb{F}$ be a field such as $\mathbb{Q}$ or $\mathbb{Z}/(p)$ where $p$ is prime.
 Denote by $T(r,\mathbb{F})$ the group of invertible upper triangular matrices in $GL_r(\mathbb{F})$. 
 Given a permutation $\sigma$ in $\mathfrak{S}(r)$, the permutation matrix associated to $\sigma$ is a matrix $S$ such that $Se_j=e_{\sigma(j)}$, where $(e_j)_{1\leq j \leq r}$ is the canonical basis.
 We recall the following standard theorem:
 \begin{theorem}[Bruhat decomposition]
 Let $M \in GL_r(\mathbb{F})$. 
There exist a permutation matrix $S$ and two matrices $T$ and $T'$ in $T(r,\mathbb{F})$ such that
 \[M=TST'.\]
 \end{theorem}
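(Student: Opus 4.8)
The plan is to prove this by Gaussian elimination, with careful attention to which elementary operations are permitted. First I would record the dictionary between triangular multiplications and elementary operations: since the inverse of an upper triangular matrix is again upper triangular and the elementary matrix $I+cE_{i,j}$ is upper triangular precisely when $i\le j$, left multiplication of $M$ by an element of $T(r,\mathbb{F})$ amounts to replacing each row of $M$ by a linear combination of that row and the rows \emph{below} it (together with row rescalings), while right multiplication by an element of $T(r,\mathbb{F})$ replaces column $j$ by a linear combination of columns $1,\dots,j$ (together with column rescalings). Hence it suffices to exhibit a sequence of such row and column operations transforming $M$ into a permutation matrix.

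Then I would run the following elimination, processing the columns $j=1,\dots,r$ in order, maintaining the inductive hypothesis that after column $j-1$ has been treated, columns $1,\dots,j-1$ of the current matrix equal $e_{i_1},\dots,e_{i_{j-1}}$ for distinct indices $i_1,\dots,i_{j-1}$. For column $j$: using permitted column operations (adding multiples of columns $1,\dots,j-1$), clear the entries of column $j$ lying in rows $i_1,\dots,i_{j-1}$; this leaves the earlier columns untouched. Invertibility of the current matrix forces column $j$ to be nonzero outside those rows, so let $i_j$ be the \emph{largest} index, distinct from $i_1,\dots,i_{j-1}$, with a nonzero entry in column $j$; rescale row $i_j$ so that this entry is $1$, and use row $i_j$ to kill every entry of column $j$ strictly above row $i_j$ (a permitted upward row operation). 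Since $i_j\notin\{i_1,\dots,i_{j-1}\}$, row $i_j$ is already zero in columns $1,\dots,j-1$, so these row operations do not disturb the earlier columns; and by the maximality of $i_j$ together with the zeros just produced in rows $i_1,\dots,i_{j-1}$, column $j$ is now exactly $e_{i_j}$. This closes the induction, and after column $r$ the matrix has columns $e_{i_1},\dots,e_{i_r}$ with all $i_j$ distinct, i.e.\ it is the permutation matrix $S$ with $Se_j=e_{i_j}$.

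Finally I would reassemble: writing $L_1,L_2,\dots$ and $R_1,R_2,\dots$ for the successive elementary upper triangular factors applied on the left and on the right, the above shows $(L_m\cdots L_1)\,M\,(R_1\cdots R_{m'})=S$, whence $M=TST'$ with $T:=L_1^{-1}\cdots L_m^{-1}$ and $T':=R_{m'}^{-1}\cdots R_1^{-1}$, both in $T(r,\mathbb{F})$ because $T(r,\mathbb{F})$ is closed under products and inverses. The only part requiring real care — and the main thing to get right — is the bookkeeping showing that no elimination step destroys a zero created by an earlier step and that every operation used is of the allowed (upper triangular) type; this is exactly why one must clear \emph{upward} with each pivot and must choose, in each column, the bottom-most available nonzero entry as the pivot.
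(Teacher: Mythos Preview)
Your proof is correct. The paper, however, does not prove this statement at all: it is merely \emph{recalled} as a standard theorem (introduced with ``We recall the following standard theorem'') and then used as a black box to justify the existence of the canonical boundary operator $\partial^{\mathbb{F}}$. So there is no argument in the paper to compare yours against; you have simply supplied a complete proof where the paper offers none.

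For what it is worth, your argument is the standard constructive one via Gaussian elimination, and the bookkeeping is handled correctly: the choice of the \emph{bottom-most} available pivot in each column is precisely what guarantees that all eliminations in that column are upward (hence correspond to left multiplication by upper triangular matrices), and your observation that row $i_j$ is already zero in columns $1,\dots,j-1$ is exactly what ensures the row operations do not disturb the previously processed columns.
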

 Using this theorem, Barannikov proves that there is a boundary operator $\partial^{\mathbb{F}}$ defined in a canonical way on $\mathbb{F}\mathcal{C}(f)$.
  Let $f$ be a Morse function on a manifold of dimension $n$.
 Denote by $(a^k_i)_{1\leq i \leq r_k}$ its critical points of index $k$, given with the decreasing order of their critical values.
 That is, $a^k_1$ is a critical point of $f$ of index $k$ with highest critical value, and $a^k_{r_k}$ has the lowest critical value.
 Let $X$ be a Morse-Smale pseudo-gradient adapted to $f$.
 Let $\mathbb{F}$ be a field.
 Let $\partial(X)$ be the associated boundary operator with coefficients in $\mathbb{Z}$.
 Denote by $\partial^{\mathbb{F}}(X)$ the boundary operator we get after tensorization with $\mathbb{F}$.
 Let $pr_{\mathbb{F}} : x \mapsto x\otimes 1$ going from $\mathbb{Z}\mathcal{C}(f)$ to $\mathbb{Z}\mathcal{C}(f)\otimes \mathbb{F}$, then $\partial^{\mathbb{F}}(X)(pr_{\mathbb{F}}(x)) = pr_{\mathbb{F}}(\partial(x)).$ 
 \begin{theorem}
There are matrices $T_k \in T(r_k,\mathbb{F})$ such that the boundary operator denoted $\partial^{\mathbb{F}}$ defined by 
\[\partial_{k+1}^{\mathbb{F}} := T_k \partial^{\mathbb{F}}(X) (T_{k+1})^{-1}\]
presents the following properties.
 For any $k$ between $0$ and $n-1$ and any critical point $a^{k+1}_i$, one and only one of the following occurs:
 \begin{itemize}
 \item $\partial^{\mathbb{F}} a^{k+1}_j = a^{k}_{\sigma(j)} $ for some critical point $a^k_{\sigma(j)}$, and where $ a^{k}_{\sigma(j)} $ is not in the image of any other point,
 \item $\partial^{\mathbb{F}} a^{k+1}_j = 0$.
 \end{itemize}
 Moreover, this boundary operator is unique and does not depend on $X$.
 \end{theorem}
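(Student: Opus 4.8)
The plan is to recast the statement as a normal-form result for the complex $C := \mathbb{F}\mathcal{C}(f)$, equipped in each degree $k$ with the ordered basis $(a^k_1,\dots,a^k_{r_k})$ and with the complete flag $\mathcal{F}^\bullet_k$ on $C_k := \mathbb{F}\mathcal{C}_k(f)$ stabilised by $T(r_k,\mathbb{F})$. The abelian group $\mathcal{B} := \prod_k T(r_k,\mathbb{F})$ acts on the boundary operators of $C$: the element $(T_k)_k$ sends $\partial$ to the operator whose restriction to $C_{k+1}$ is $T_k\,\partial_{k+1}\,T_{k+1}^{-1}$, which is exactly the change of $\partial$ effected by graded changes of basis preserving the flags $\mathcal{F}^\bullet_k$. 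Call a boundary operator \emph{monomial} if, written in the bases $(a^k_i)_i$, each $\partial_k$ has at most one non-zero entry in each row and in each column, every non-zero entry being $1$. The theorem then reduces to two claims: (i) every $\mathcal{B}$-orbit of boundary operators contains exactly one monomial operator; (ii) for any two Morse--Smale pseudo-gradients $X^0,X^1$ adapted to $f$, the operators $\partial^{\mathbb{F}}(X^0)$ and $\partial^{\mathbb{F}}(X^1)$ lie in the same $\mathcal{B}$-orbit.

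For the existence half of (i) I would perform a filtered elimination, proceeding degree by degree and, inside a degree, through the generators in the order of their critical values: to clear the column of $a^{k+1}_j$ I only use the moves the flags allow --- adding to that column an $\mathbb{F}$-multiple of the column of an index-$(k+1)$ generator lying earlier in $\mathcal{F}^\bullet_{k+1}$ (right multiplication by an elementary element of $T(r_{k+1},\mathbb{F})$) and symmetric row clearings against earlier index-$k$ generators --- until $\partial a^{k+1}_j$ becomes $0$ or a single basis vector $a^k_{\sigma(j)}$ not already hit. The Bruhat decomposition drives each step: the relevant block of the current boundary operator, that a flag subquotient of $C_{k+1}$ maps into the not-yet-paired part of $C_k$, is written $TST'$ with $T,T'$ triangular and $S$ a permutation matrix, from which one reads the new pairing off $S$ and absorbs $T,T'$ into the $T_k$'s. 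Since $\partial^2=0$, the row clearings at any stage cannot revive entries killed at an earlier column, so the operations accumulated over all degrees and generators assemble into one well-defined element of $\mathcal{B}$.

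For the uniqueness half of (i), the integers $\dim\!\big(\Ima\partial_{k+1}\cap F\big)$ and $\dim\!\big(\ker\partial_{k}\cap F\big)$, with $F$ ranging over the terms of the flags $\mathcal{F}^\bullet_k$, are invariant under $\mathcal{B}$ because each $T_k$ preserves each such $F$; equivalently, the ranks of all ``corner'' submatrices of all $\partial_k$ in the standard ordering are $\mathcal{B}$-invariant. For a monomial operator these ranks determine the pairing $\sigma$ and the set of unpaired generators in every degree --- each $a^{k+1}_j$ is paired with the first index-$k$ generator in the flag order that lies in $\Ima\partial_{k+1}$ and is not already paired with an earlier $a^{k+1}_{j'}$ --- so two monomial operators with the same invariants coincide, and the $\mathcal{B}$-orbit and its monomial representative determine each other.

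Claim (ii) is then immediate: by Theorem \ref{pggenericity}, $X^0$ and $X^1$ are joined by a generic path along which $\partial$ changes only by handle slides, and a handle slide involves two critical points $a,b$ of one index with $f(a)>f(b)$; by Theorem \ref{change} (tensored with $\mathbb{F}$) its effect on $\partial^{\mathbb{F}}$ is conjugation by an elementary matrix supported on those two generators, one above the other in the critical-value order, hence, for the chosen ordering, an element of $\mathcal{B}$. Therefore $\partial^{\mathbb{F}}(X^0)$ and $\partial^{\mathbb{F}}(X^1)$ are $\mathcal{B}$-equivalent and so share the same monomial form, which is the asserted independence of $X$. I expect the main obstacle to be the bookkeeping in the existence argument --- arranging the successive Bruhat steps so that they truly combine into a single element of $\mathcal{B}$ without a later step destroying the monomiality already achieved in lower degrees --- together with pinning down the corner-rank invariants sharply enough to force the pairing uniquely.
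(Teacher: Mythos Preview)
Your proposal is correct and follows essentially the same route the paper sketches: the paper does not give a detailed proof here but attributes existence and uniqueness of the monomial form to Barannikov via the Bruhat decomposition it recalls just before the statement, and explains the independence from $X$ exactly as you do, namely that handle slides act as elementary triangular matrices in the critical-value ordering so that Theorem~\ref{pggenericity} places $\partial^{\mathbb{F}}(X^0)$ and $\partial^{\mathbb{F}}(X^1)$ in the same $\mathcal{B}$-orbit. Your filtered-elimination existence argument and your corner-rank uniqueness argument are the standard way to unpack the Bruhat step in this filtered chain-complex setting, so there is no genuine divergence from the paper's (outsourced) approach.
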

If $\partial^{\mathbb{F}} a^{k+1}_j = 0$, either $a^{k+1}_j$ is the image of another point of index $k+2$, or it represents a non-null homology class of $\partial^{\mathbb{F}}$.
 As the homology of the sphere is $\mathbb{F}$ in degree $0$ and $n$, there are only one maximum $max$ and one minimum $min$ for which $\partial^{\mathbb{F}}max = 0$ and $\partial^{\mathbb{F}}min = 0$ and which are not the image of any point.
 
 The fact that this boundary operator does not depend on the pseudo-gradient vector field comes from that $T(r_k,\mathbb{F})$ represents in this setting the group of algebraic results of handle slides between points of index $k$.
 Thus the independence is a corollary of Theorem \ref{pggenericity}.
 
 Another interpretation of how are paired the points $(a^{k+1}_j, a^{k}_{\sigma(j)})$ is given in \cite{Lau}.
 
  For the example of $\tilde{f^0}$, it is sufficient to know that $\partial^{\mathbb{F}} b = c$ if $5 \neq 0$ in $\mathbb{F}$.
  In this case, $\partial^{\mathbb{F}} a =d$.
  If the characteristic of the field $\mathbb{F}$ is $5$, then $\partial^{\mathbb{F}} b = d$ and $\partial^{\mathbb{F}} a = c$. 
  
 One can define the diagram of the complex with coefficient in $\mathbb{F}$ in the exact same way as the construction of the FMC with coefficients in $\mathbb{Z}$.
 It is called the \emph{canonical form} of the FMC of $\tilde{f}$ with respect to the field $\mathbb{F}$, and it only depends on $\tilde{f}$ and $\mathbb{F}$.
 We give Barannikov's theorem about non-critical extensions of Morse germs:
 \begin{theorem}
 If a germ $\tilde{f}$ defined along the sphere $\Sn$ extends non-critically, then for any field $\mathbb{F}$, the canonical form of its FMC can be reduced to the canonical form of the FMC of a trivial germ, through the modifications pictured on Figures \ref{cross1}, \ref{cross1b}, \ref{cross2} and \ref{death}.
 Some explanations are needed to understand the figures:
 \begin{itemize}
 \item each of the figure is meant to be the graphic retranscription of a bifurcation occuring during a generic non-critical path of function. However, it is purely combinatoric and do not represent actual bifurcations;
 \item the figures only represent the pairs of vertices on which the modifications apply, but of course the canonical forms of the FMCs may display many others vertices;
 \item in all of the figures, the arrows (i.e. the labels) of the vertices are not represented, but the vertices with an arrow pointing down can only go down and the vertices with an arrow pointing up can only go up;
 \item the small vertical double arrows between the vertices in Figure \ref{cross1}, \ref{cross1b} and \ref{cross2} mean that either the highest vertex of the two go below the other one (and thus the highest vertex has its arrow pointing down), or that the lowest vertex of the two go above the other one (and thus its arrow points up);
 \item in Figures \ref{cross1}, \ref{cross1b} and \ref{cross2}, the big diagonal double arrows between the diagrams indicate that the modifications are reversible, with taking care of the sense of moving of the vertices indicated by their respective labels;
 \item in Figure \ref{death}, the arrows of the two vertices disappearing must have the same direction (thus the points have same labels) and the heights of the vertices must be consecutive.
 \end{itemize}
 Up to the rules described above, all modifications of the canonical form of the FMC are allowed.
 
 \end{theorem}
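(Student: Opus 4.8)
This is Barannikov's theorem; its proof is given in \cite{Bar} (see also \cite{Lau}), and I only indicate here how it fits the framework of Subsection \ref{cobord}. Suppose $\tilde f$ extends to a non-critical function $F:\Dn\to\RR$. Removing a small regular ball as in Subsection \ref{cobord} produces a generic non-critical path of functions $(f^t)_{t\in[0,1]}$ with $f^0=f$ and $f^1$ trivial, together with the path of germs $(\widetilde{f^t})$; its only bifurcations are births, deaths and crossings, two critical points can be born or die together only if they have the same label (Lemma \ref{noncrit}), and by Lemma \ref{fate} the label of each persistent critical point is constant, which fixes whether its critical value increases or decreases. The plan is to follow the canonical form of the FMC of $\widetilde{f^t}$ along this path and to check that it changes only through the moves of Figures \ref{cross1}, \ref{cross1b}, \ref{cross2}, \ref{death} (and the inverse of the last, which is a birth).

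\emph{Step 1: the canonical form is locally constant.} On an interval of the parameter containing no bifurcation, the critical points persist, their indices are fixed, and the order of their critical values is fixed; since $\partial^{\mathbb F}$ depends only on the function --- the dependence on the adapted pseudo-gradient being absorbed into the triangular groups $T(r_k,\mathbb F)$, by Theorem \ref{pggenericity} and the uniqueness statement recalled above --- the canonical form does not change on such an interval.

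\emph{Step 2: analysis of the bifurcations.} At a crossing, two critical values are exchanged: if the two points share an index this transposes the filtration order used to define the canonical form and produces exactly one of the moves of Figures \ref{cross1}, \ref{cross1b}, \ref{cross2}, the admissible direction of the exchange being imposed by the labels via Lemma \ref{fate}; if the indices differ, the canonical pairing can be affected only through the Bruhat-type reduction, again yielding one of the listed moves. At a birth of a pair $(a,b)$ of index $(k+1,k)$ with common label, the independence lemmas (Lemma \ref{independent}, Lemma \ref{independentextended}) let us choose the adapted pseudo-gradient so that $\partial a=\pm b$ and no other generator hits $a$ or $b$; in the canonical form this appends two freshly paired vertices with consecutive heights, i.e.\ the move of Figure \ref{death} read backwards. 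A death is the same move read forwards, and Lemma \ref{noncrit} guarantees that its two vertices carry the same label, as demanded by the figure.

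\emph{Step 3: concatenation.} Applying to the canonical form of $\widetilde{f^0}=\tilde f$ the moves dictated by the successive bifurcations of the path, we reach the canonical form of the FMC of the trivial germ $\widetilde{f^1}$ (Figure \ref{afaire2}), which proves the theorem. The part requiring the most care is Step 2: one must verify that an adjacent transposition of the filtration order alters the canonical pairing $\sigma$ in no way other than those recorded in the figures, equivalently that the persistence-type reduction computing $\partial^{\mathbb F}$ is compatible with such a transposition exactly up to those moves, while correctly propagating the arrows (labels) supplied by Lemmas \ref{fate} and \ref{noncrit}.
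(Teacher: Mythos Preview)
Your sketch correctly identifies the source and the overall strategy --- follow the canonical form along the generic non-critical path produced in Subsection~\ref{cobord} --- and the paper itself does not prove this theorem either, merely citing \cite{Bar} and \cite{Lau}. However, your Step~2 does not match the statement: you treat a birth bifurcation as ``the move of Figure~\ref{death} read backwards'', but the theorem asserts reducibility using \emph{only} the moves of Figures~\ref{cross1}, \ref{cross1b}, \ref{cross2} and \ref{death}, the last of which is one-directional. As the paper notes right after the statement, ``the main part of the proof is that if the canonical form can be reduced to the canonical form of a trivial germ, then it can be done \emph{without making appear new pairs of vertices}. This is why the big horizontal arrow between the FMCs in Figure~\ref{death} goes only from left to right.'' This is precisely what makes the criterion decidable in finitely many steps for a given field.

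So the substantive content of Barannikov's argument --- the combinatorial lemma showing that any birth occurring along the path can be absorbed and eliminated at the level of canonical forms --- is missing from your proposal. Steps~1 and~3 are fine, and the crossing analysis in Step~2 is on the right track, but without the birth-elimination step your argument only proves the weaker statement that allows the inverse of Figure~\ref{death} as an admissible move, which is not the theorem as stated.
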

 \begin{figure}[!ht]
 \begin{center}
 \includegraphics[scale=0.7]{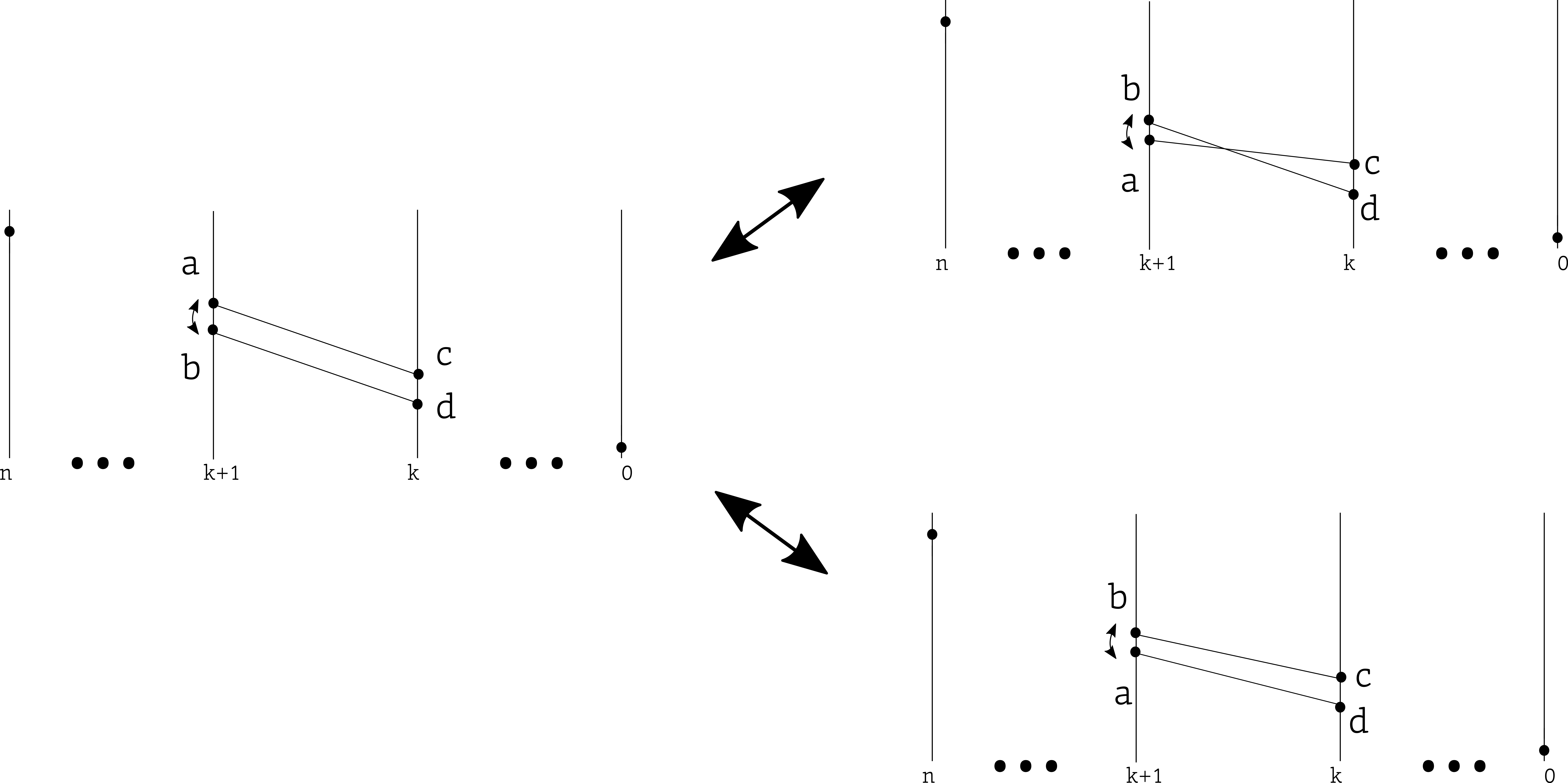}
 \caption{Crossing of two vertices of same index}
 \label{cross1}
\end{center}  
 \end{figure}
  \begin{figure}[!ht]
 \begin{center}
 \includegraphics[scale=0.7]{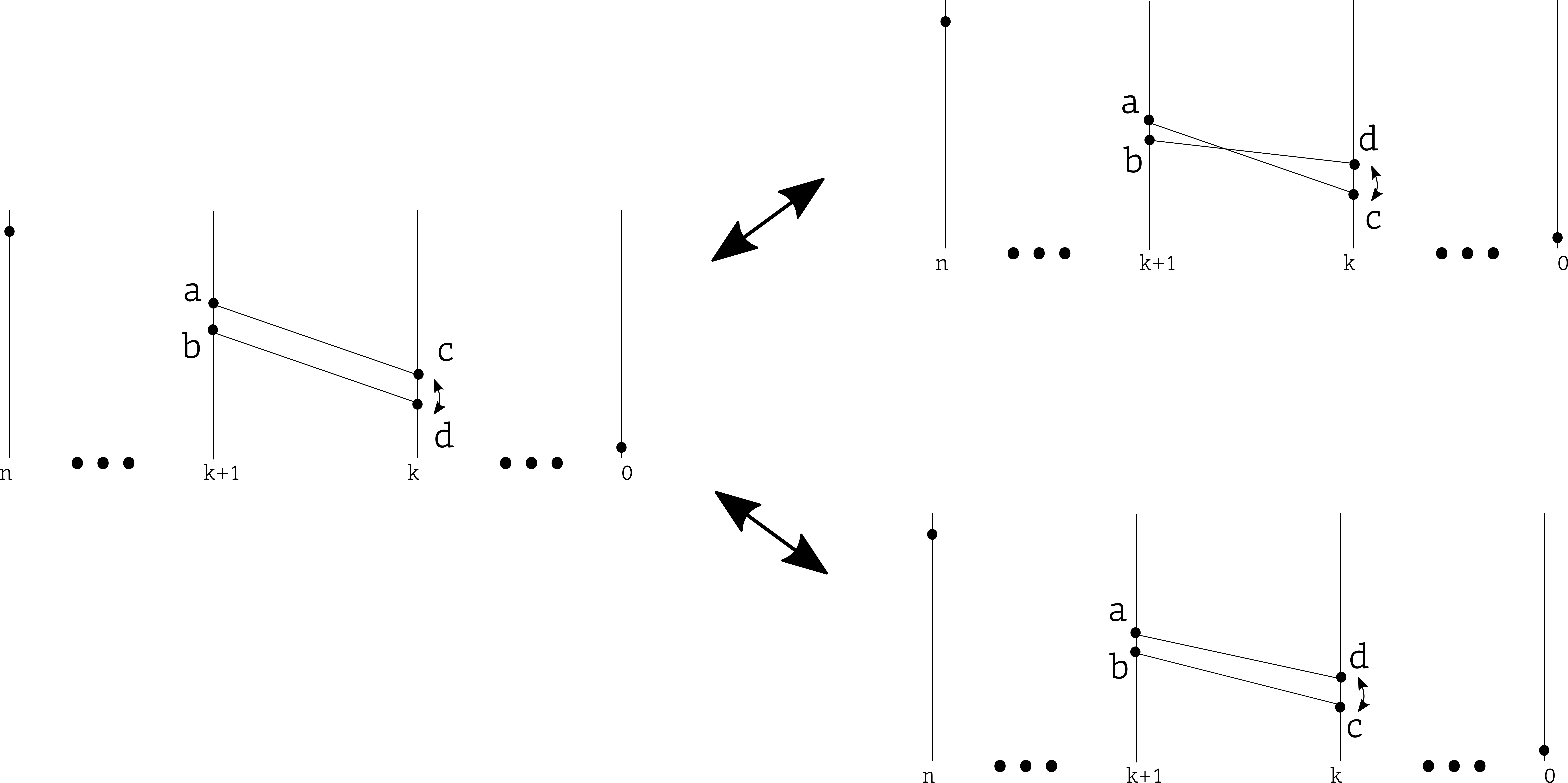}
 \caption{Crossing of two vertices of same index}
 \label{cross1b}
\end{center}  
 \end{figure}
  \begin{figure}[!ht]
 \begin{center}
 \includegraphics[scale=0.7]{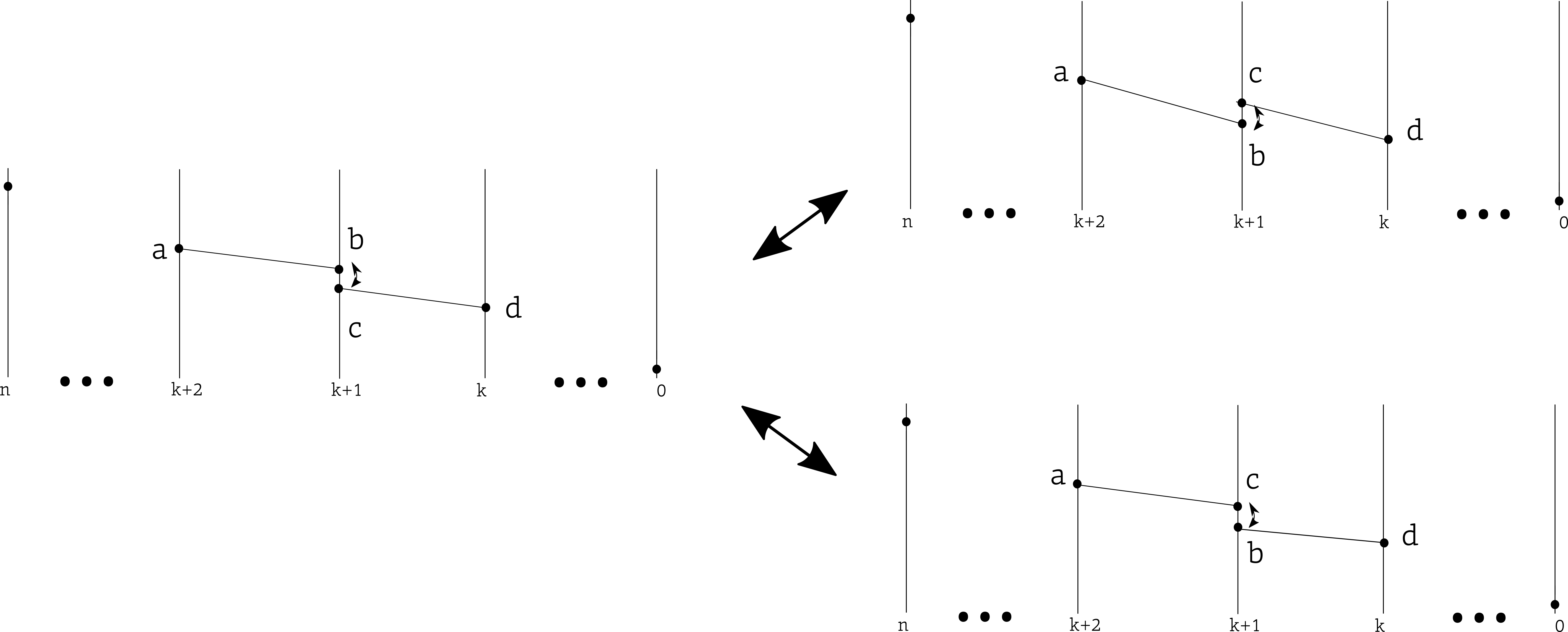}
 \caption{Crossing of two vertices of same index}
 \label{cross2}
\end{center}  
 \end{figure}
  \begin{figure}[!ht]
 \begin{center}
 \includegraphics[scale=0.7]{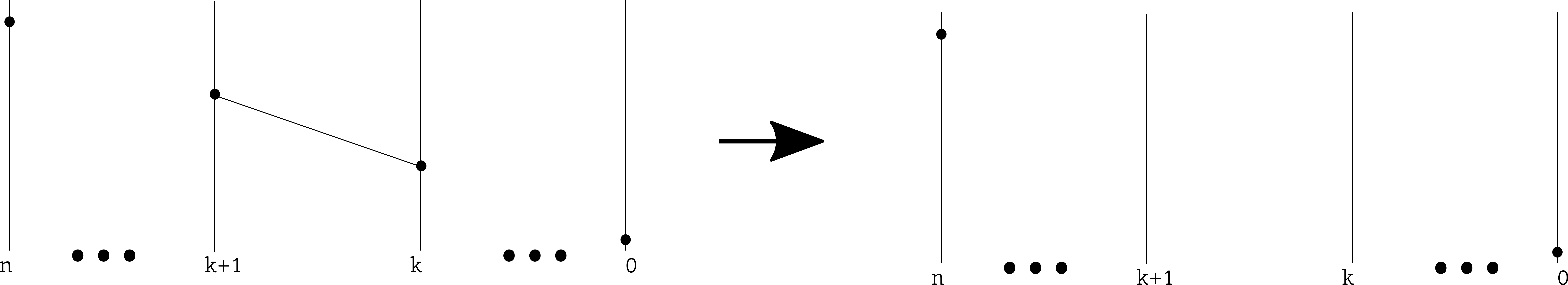}
 \caption{Death of a pair of vertices}
 \label{death}
\end{center}  
 \end{figure}

We insist that the theorem is of purely cominatoric nature.
 The main part of the proof is that if the canonical form can be reduced to the canonical form of a trivial germ, then it can be done without making appear new pairs of vertices.
 This is why the big horizontal arrow between the FMCs in Figure \ref{death} goes only from left to right. 
 Thus, given a field, we can always verify in a finite amount of steps if it reduces to the canonical form of a trivial germ.
 We also emphasize that the ability to be reduced to the canonical form of the FMC of a trivial germ depends on the field in consideration.

\subsubsection{Canonical forms of FMC of \texorpdfstring{$\tilde{f_0}$}{Lg}}

 \begin{figure}[!ht]
 \begin{center}
 \includegraphics[scale=0.5]{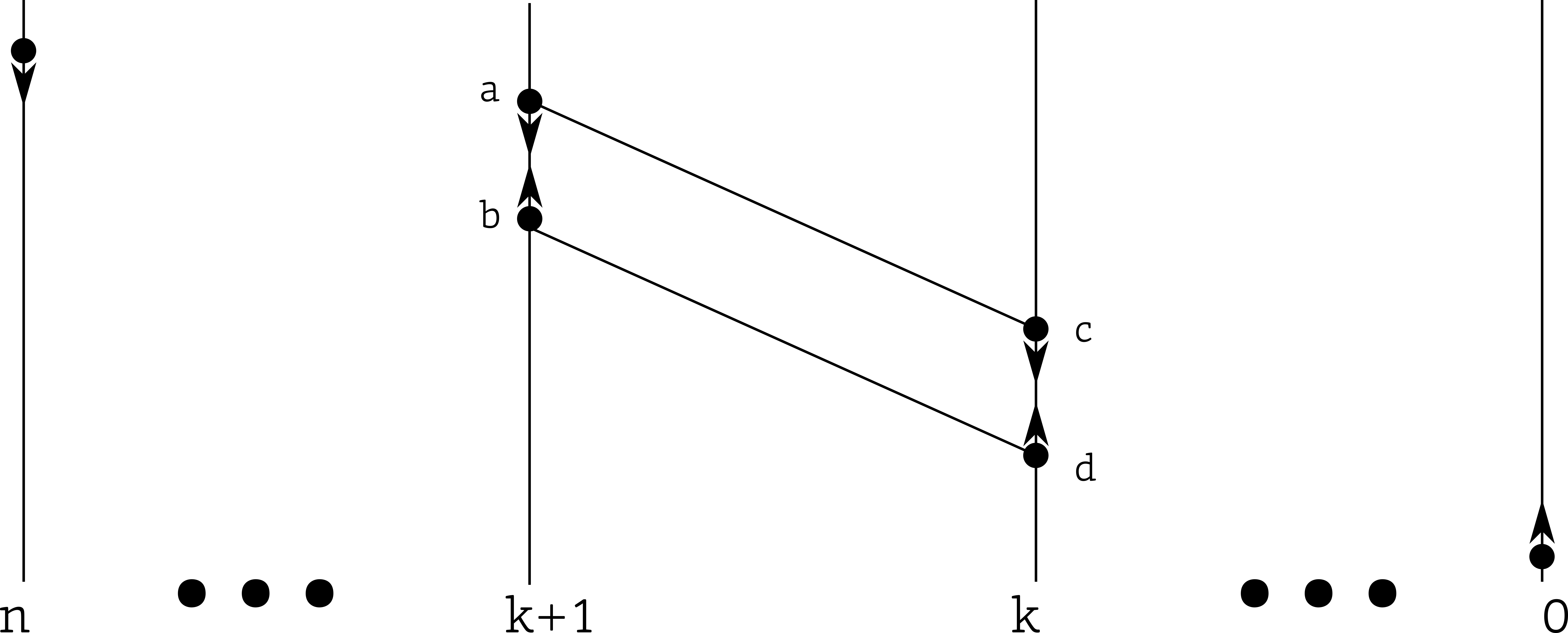}
 \caption{Canonical form of the FMC of $\tilde{f_0}$, when $\Char(\mathbb{F})=5$}
 \label{tatatitata}
\end{center}  
 \end{figure}
 
  \begin{figure}[!ht]
 \begin{center}
 \includegraphics[scale=0.5]{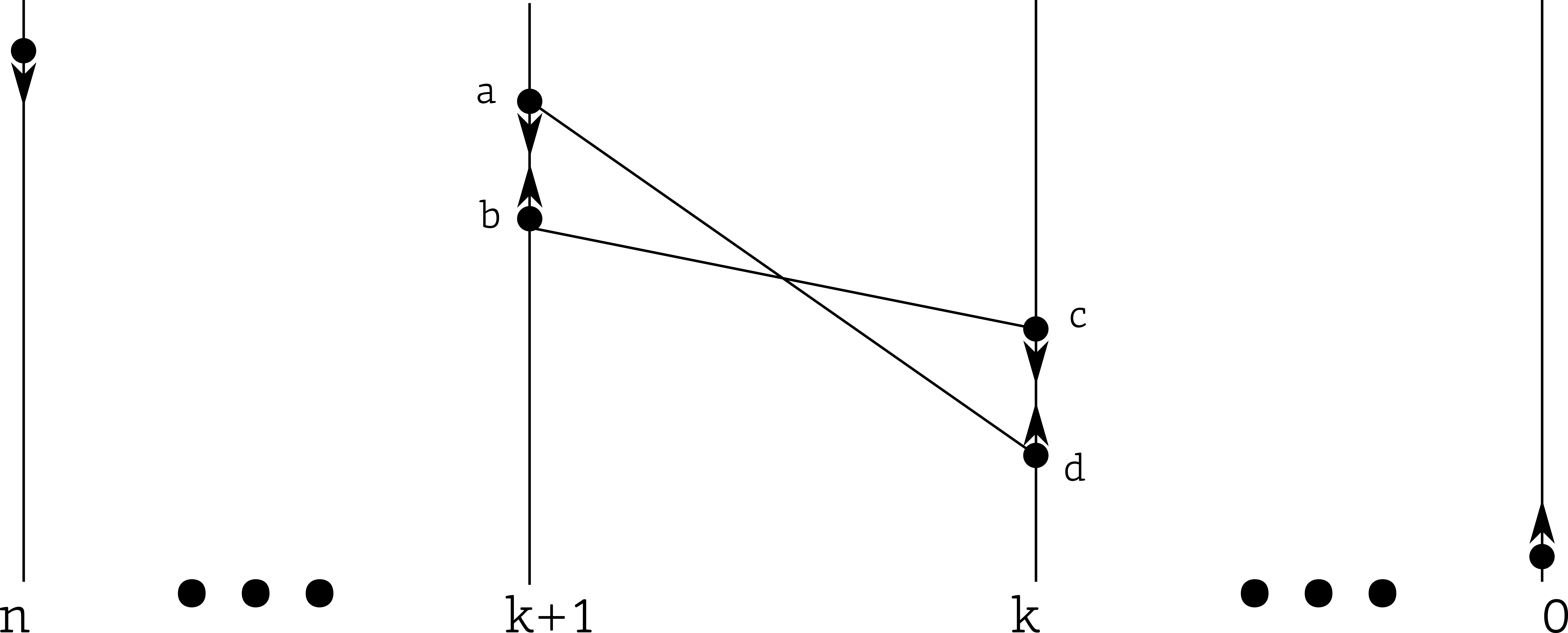}
 \caption{Canonical form of the FMC of $\tilde{f_0}$, when $\Char(\mathbb{F})\neq 5$}
 \label{tatatitata2}
\end{center}  
 \end{figure}

 For the germ $\tilde{f_0}$, the canonical form of the FMC with respect to $\mathbb{F}$ depends on the field, more precisely on its characteristic.
 \begin{itemize}
 \item \textbf{Case 1: $\mathbb{F}$ is of characteristic $5$.}
 
 Then, the canonical form of the FMC is as on Figure \ref{tatatitata}. We can modify the FMC in order to have the one on Figure \ref{oooo} which can be reduced to a trivial FMC, by cancelling the two pairs of vertices.
 
 \item \textbf{Case 2: the characteristic of $\mathbb{F}$ is different from $5$.}
 
 Then, the canonical form of the FMC is as on Figure \ref{tatatitata2}.
 We can move down the vertex representing the point $a$ below the vertex representing $b$.
 If we do so, two different canonical forms are allowed and are pictured on Figures \ref{oooo} and \ref{eeee}, depending on the boundary operator of the function after the crossing.
We can move down $a$ with respect to Figure \ref{oooo}, which can be reduced to a trivial FMC.
\end{itemize}

In conclusion, the condition given by Barannikov, using fields instead of the integers, is weaker than Theorem \ref{th1} in this case.

 \begin{figure}[!ht]
 \begin{center}
 \includegraphics[scale=0.5]{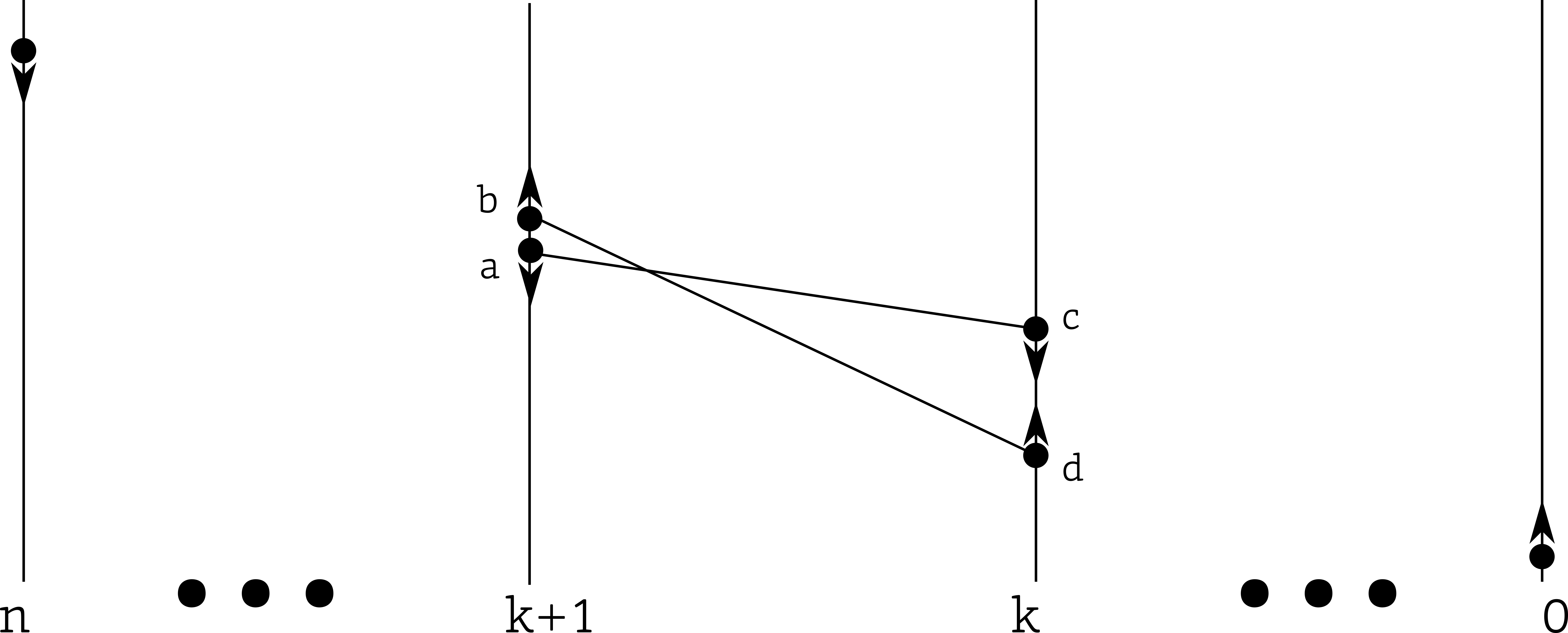}
 \caption{A possible modification of the canonical form of the FMC of $\tilde{f_0}$}
 \label{oooo}
\end{center}  
 \end{figure}
 
  \begin{figure}[!ht]
 \begin{center}
 \includegraphics[scale=0.5]{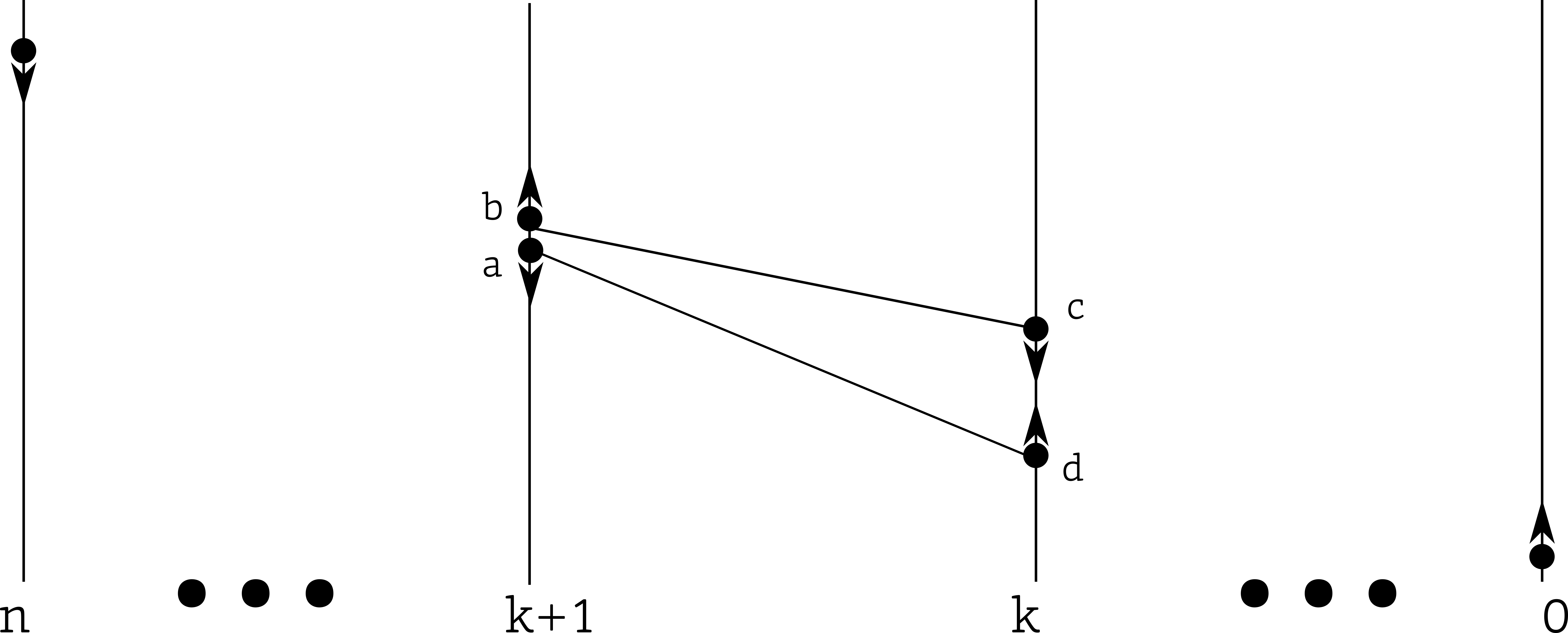}
 \caption{The other possible modification of the canonical form of the FMC of $\tilde{f_0}$}
  \label{eeee}
\end{center}  
 \end{figure}

 \section{From homology to homotopy}
  \label{homo}
  \subsection{Some germs for which condition of Theorem \ref{th1} is sufficient}
  In the previous section, we found a necessary condition for a germ to have a non-critical extension that deals with the Morse chain complex.
  We show now that given a germ $\tilde{f}$ which has property $\mathcal{P}$ and boundary operator $\partial$, there is always a germ $\widetilde{f^1}$ such that:
  \begin{itemize}
  \item $f^1$ and $f$ share the same adapted Morse-Smale pseudo-gradient. Thus they have same boundary operator through identification of the critical points of $f$ to those of $f^1$;
  \item $\widetilde{f^1}$ has property $\mathcal{P}$;
  \item $\widetilde{f^1}$ extends non-critically.
  \end{itemize}
  We will see the reason why $\widetilde{f^1}$ extends non-critically.
 
  We begin with a lemma.
    
  \begin{lemma}[Construction of a germ]
  \label{usefullemma}
  Let $n$ be an integer higher than $0$. Let $f$ be a Morse function defined on $\Sn$ and $\mathcal{C}(f)$ the set of critical points of $f$.
   For any map \[\phi : \mathcal{C}(f) \to \{-,+\}\] there is a Morse germ $\tilde{f}_{\phi}$ such that the label of a critical point of $f$ for $\tilde{f}_{\phi}$ is given by its image by $\phi$. 
  \end{lemma}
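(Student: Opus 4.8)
The plan is to build $\tilde{f}_\phi$ as a first-order normal perturbation of $f$. Recall the sign convention: for a Morse germ $\tilde{f}$, a critical point $x$ of $f$ has label $+$ exactly when $\partial_t \tilde{f}(x,0) < 0$, and label $-$ exactly when $\partial_t \tilde{f}(x,0) > 0$. So it suffices to exhibit a representative $\tilde{f}$ restricting to $f$ on $\Sn \times \{0\}$ whose normal derivative at each point of $\mathcal{C}(f)$ has the sign dictated by $\phi$.

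Since $f$ is Morse on the closed manifold $\Sn$, the set $\mathcal{C}(f) = \{x_1,\dots,x_N\}$ is finite. First I would choose any smooth function $g : \Sn \to \RR$ with $g(x_i) < 0$ when $\phi(x_i) = +$ and $g(x_i) > 0$ when $\phi(x_i) = -$ (for instance a sum of disjointly supported bump functions, one near each $x_i$, with the appropriate sign), and set
\[\tilde{f}(x,t) := f(x) + t\, g(x), \qquad (x,t)\in \Sn \times [0,\varepsilon),\]
with $\varepsilon > 0$ to be fixed. Then $\tilde{f}(x,0) = f(x)$ and $\partial_t \tilde{f}(x,0) = g(x)$, which has the prescribed sign at each $x_i$; hence, once we know $\tilde{f}$ is non-critical near $\Sn \times \{0\}$, its critical set is exactly $\mathcal{C}(f)$ with labels given by $\phi$.

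It remains to check non-criticality for $t$ small. A point $(x,t)$ is critical iff $g(x)=0$ and $d_x f(x) + t\, d_x g(x) = 0$; on $\Sn \times \{0\}$ this forces $x \in \mathcal{C}(f)$ together with $g(x) = 0$, which cannot happen by the choice of $g$. A compactness argument upgrades this to a whole collar: if no $\varepsilon$ worked, there would be $(x_m,t_m)$ with $t_m \to 0$, $g(x_m) = 0$ and $d_x f(x_m) + t_m\, d_x g(x_m) = 0$; passing to a subsequence with $x_m \to x_\infty$ gives $x_\infty \in \mathcal{C}(f)$ and $g(x_\infty) = 0$, a contradiction. Restricting to such a collar $\Sn \times [0,\varepsilon)$ yields the Morse germ $\tilde{f}_\phi := \tilde{f}$. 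I do not anticipate a real obstacle; the only mild subtlety is this last compactness step, which passes from non-criticality on the sphere to non-criticality on an entire collar neighborhood.
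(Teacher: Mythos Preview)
Your proof is correct and follows essentially the same approach as the paper: both build $g$ as a sum of bump functions supported near the critical points with the sign prescribed by $\phi$, then take $\tilde{f}(x,t)=f(x)+t\,g(x)$. Your compactness argument for non-criticality on a small collar is more explicit than the paper's, which simply asserts that ``for $\varepsilon$ small enough, it is non-critical.''
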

\begin{proof}
For any critical point $x$ of $f$ there is an open set $\mathcal{U}_x$ of $\Sn$ such that $x$ is the only critical point of $f$ in $\mathcal{U}_x$.
 For any $x$ there is a smooth function $g_x$ from $\Sn$ to $\mathbb{R}$ which is $1$, respectively $-1$, on $x$ if $\phi(x)=-$, respectively if $\phi(x)=+$, and $0$ outside $\mathcal{U}_x$. 
 Let \[g:= \sum\limits_{x \in \mathcal{C}(f)} g_x.\]
 A representative of $\tilde{f_{\phi}}$ is given by any function on $\Sn\times [0,\varepsilon)$ which is $f$ on $\Sn$ and whose derivative along $t$ is $g$.
 Notice that for $\varepsilon$ small enough, it is non-critical. 
\end{proof}

\begin{lemma}
\label{Freed+}
Let $\tilde{f}$ be a germ on $\Sn \times [0, \varepsilon)$ such that $f$ is excellent.
Let $X$ be an adapted Morse-Smale pseudo-gradient and let $\partial$ be the associated boundary operator.
Let $\mathcal{S}$ be a subset of $\mathcal{C}^+_k(\tilde{f})$ with $2\leq k \leq n-2$ such that all critical points in 
\[f^{-1}\left([\min \left(f\left(\mathcal{S}\right)\right),\max \left( f \left( \mathcal{S}\right) \right) ] \right)\] are in $\mathcal{S}$.

Then, for any isomorphism 
\[P:\mathbb{Z}\mathcal{C}(f)\to \mathbb{Z}\mathcal{C}(f) \]
 restricting to the identity on $\mathbb{Z}\left(\mathcal{C}(f)\setminus \mathcal{S}\right)$, there is a non-critical generic path of functions $F:\Sn\times[0,1]\to \RR$ continuing $\tilde{f}$ with no birth or death bifurcation such that $g:=F(\centerdot,1)$ is a Morse function and:
\begin{itemize}
\item the order of the critical points of $g$ with respect to the critical values is the same than the one of $f$,
\item there is a Morse-Smale pseudo-gradient $X_1$ adapted to $g$ such that, if $\partial_1$ is its associated boundary operator, we have  \[ \partial_1=P \partial P^{-1}. \]
\end{itemize}
\end{lemma}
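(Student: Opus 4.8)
The plan is to build $F$ and $X_1$ by decomposing the conjugation $\partial\mapsto P\partial P^{-1}$ into finitely many elementary moves, each realised either by a handle slide (via Theorem \ref{change}) or by a reorientation of an unstable manifold, and performed along a path of functions whose only bifurcations are crossings. Since $\mathcal{S}\subseteq\mathcal{C}_k(f)$ and $P$ is graded and fixes every generator outside $\mathcal{S}$, $P$ is the identity in every degree $\neq k$, and in degree $k$, with respect to the splitting $\mathbb{Z}\mathcal{C}_k(f)=\mathbb{Z}\mathcal{S}\oplus\mathbb{Z}\big(\mathcal{C}_k(f)\setminus\mathcal{S}\big)$, it has matrix $\left(\begin{smallmatrix}A&0\\ B&I\end{smallmatrix}\right)$ with $A\in GL(\mathbb{Z}\mathcal{S})$. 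Using that $GL_m(\mathbb{Z})$ is generated by the transvections $I\pm E_{i,j}$ together with the single reflection $\mathrm{diag}(-1,1,\dots,1)$, I would write this matrix as a product of factors of two kinds: transvections $I\pm E_{d,c}$ whose active column $c$ indexes a point of $\mathcal{S}$ (and whose row $d$ indexes a point of $\mathcal{C}_k(f)$), and sign changes on a single basis vector of $\mathbb{Z}\mathcal{S}$. It then suffices to realise the conjugation by each factor by an admissible geometric move and to concatenate the corresponding sub-paths. (In the applications $P$ preserves $\mathbb{Z}\mathcal{S}$, so $B=0$ and only transvections and reflections internal to $\mathcal{S}$ occur.)

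Next I would realise the factors geometrically. A sign change on the basis vector of a critical point $c$ is realised by reversing the chosen orientation of $W^u(c)$: the function $f$ and the vector field $X$ are untouched, while the corresponding row and column of $\partial$ change sign, which is exactly the conjugation by the reflection. A transvection $I\pm E_{d,c}$ with $c\in\mathcal{S}$ is, by Theorem \ref{change}, the effect on $\partial$ of a handle slide of the critical point $c$ over the critical point $d$, provided $f(c)>f(d)$ and there is a strictly descending line joining their level sets. When $f(c)<f(d)$, I would first insert a crossing bifurcation letting $c$ descend below $d$, then perform the slide, then a second crossing restoring the initial ordering; since $c$ — and in the cases that actually occur also $d$ — has label $+$, each critical point moves strictly downwards throughout, so the path of functions stays non-critical and the two crossings cancel one another as far as the order of critical values is concerned. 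Carrying a generic path of adapted pseudo-gradients along the whole path (Theorems \ref{pggenericity} and \ref{change}) produces, at time $1$, a Morse--Smale pseudo-gradient $X_1$ adapted to $g=F(\cdot,1)$ whose boundary operator is the iterated conjugate of $\partial$, namely $P\partial P^{-1}$.

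The delicate point, and the main obstacle, is the existence of the strictly descending lines demanded by Theorem \ref{change}: sliding $c$ over $d$ requires the descending sphere of $c$ and the ascending sphere of $d$ — which are $\mathbb{S}^{k-1}$ and $\mathbb{S}^{n-k-1}$, hence \emph{connected} because $2\leq k\leq n-2$ — to end up, after flowing down, in a common component of an intermediate level set. The hypothesis $2\leq k\leq n-2$ also makes each passage of a critical point of $\mathcal{S}$ a surgery that deletes a connected $\mathbb{S}^{k-1}\times\mathbb{D}^{n-k}$ and inserts a connected $\mathbb{D}^{k}\times\mathbb{S}^{n-k-1}$, so that $\pi_0$ of the level sets is constant across the window $f^{-1}\big([\min f(\mathcal{S}),\max f(\mathcal{S})]\big)$. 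Combining this with a preliminary rearrangement that pushes the critical points of index $\leq 1$ below the window and those of index $\geq n-1$ above it — carried out compatibly with the labels and undone at the end, so that the final order of critical values is unchanged — one obtains, by the argument of Proposition \ref{reebprop} (embed the Reeb graph, use $\pi_1(\Sn)=\{1\}$), that the level sets meeting $\mathcal{S}$ are connected, and hence that the required descending lines exist.

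Finally one checks the two asserted conclusions. The path $F$ has no birth or death and moves only label-$+$ critical points, all of them downwards, so it is a non-critical generic path continuing $\tilde f$ and $g=F(\cdot,1)$ is Morse; the auxiliary crossings we introduced occur in cancelling pairs, so under the natural identification $\mathcal{C}(g)\simeq\mathcal{C}(f)$ the critical points of $g$ are ordered by critical value exactly as those of $f$; and, as explained above, the pseudo-gradient $X_1$ reached at the end of the concatenated path of adapted pseudo-gradients has boundary operator $\partial_1=P\partial P^{-1}$. This completes the proof.
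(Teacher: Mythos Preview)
Your approach is essentially the paper's: reduce to elementary matrices, realise each by a handle slide via Theorem~\ref{change}, and insert crossings among the label-$+$ points of $\mathcal{S}$ (all moving downward, at different speeds) to obtain whatever ordering is needed and then restore it. The paper's proof is terser --- it simply notes that since all points of $\mathcal{S}$ have label $+$ one can push the whole window down by an arbitrarily small amount while freely reordering the points inside it, and then invokes \cite[Theorem 7.6]{Miln}.

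Two small remarks. First, your sentence ``letting $c$ descend below $d$'' when $f(c)<f(d)$ is backwards; what you mean (and what the paper does) is that \emph{both} $c$ and $d$ descend but $d$ descends faster, so that afterwards $f(c)>f(d)$. Second, your extra paragraph on connectivity --- pushing points of index $\leq 1$ below the window and of index $\geq n-1$ above it --- is not in the paper and is not clearly feasible non-critically (such points need not have the right label to move in the required direction); the paper simply does not address this point, relying implicitly on the fact that in every application the function has a single local maximum and minimum, so Proposition~\ref{reebprop} guarantees connected level sets and hence the descending lines required by Theorem~\ref{change}. Your observation that $GL_m(\mathbb{Z})$ needs a reflection in addition to transvections is correct and more precise than the paper, which writes ``elementary matrices'' where ``transvections and a sign change'' is meant; realising the sign change by reorienting an unstable manifold, as you do, is the right move.
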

\begin{proof}
We only need to consider $P=I+E_{i,j}$ to be an elementary matrix, result of a handle slide between two points of $\mathcal{S}$, as $GL(\mathbb{Z}\mathcal{S})$ is generated by the elementary matrices.
If $a\in \mathcal{S}$ and $b \in \mathcal{S}$ such that $f(a)<f(b)$, we can find a non-critical path continuing $\tilde{f}$ without birth or death bifurcations such that the endpoint $f^1$ of this path is a Morse function with $f^1(b^1)<f^1(a^1)$.
Indeed, it is possible with a minor modification of the proof of Lemma 2.1 about crossing bifurcation in \cite{Laud5}, which is inspired by results of Cerf \cite[p. 40-59]{Cerf}.
Moreover, it can be done such that 
\[ [\min \left(f^1\left(\mathcal{S}\right)\right),\max \left( f^1 \left( \mathcal{S}\right) \right) ] \subset [\min \left(f\left(\mathcal{S}\right)\right)-\varepsilon,\max \left( f \left( \mathcal{S}\right) \right)-\varepsilon ], \]
for $\varepsilon$ as small as wanted, by playing on the speed of the points of $\mathcal{S}$ during the path.
With such a path, we can perform a handle slide of $a^1$ over $b^1$, that we could not do with $f$.
We use \cite[Theorem 7.6]{Miln} here.
Finally, we can then reset the critical points of $\mathcal{S}$ in the same order as before without performing any handle slide, which will keep the same boundary operator.
\end{proof}
  We have the theorem:
\begin{theorem}
\label{best}
Let $n$ be an integer higher than $6$.
Let $\tilde{f}$ be a Morse germ such that $f$ has no critical points of index $0$, $1$, $n-1$ and $n$ different from its global maximum and its global minimum.
Assume that $\tilde{f}$ has property ($\mathcal{P}$).
There exists a germ $\widetilde{f^1}$ such that $G_k(\widetilde{f^1})=G_k(\tilde{f})$ for all $k$, and such that $\widetilde{f^1}$ extends non-critically to the ball $\Dn$.
\end{theorem}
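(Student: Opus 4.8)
First, since $f$ has a unique local maximum, a unique local minimum and no critical point of index $1$ or $n-1$, Remark \ref{groupremark} applies: conjugating a Morse--Smale boundary operator by an element of $G(\tilde f)$ is realised by genuine handle slides. So I would begin by using property $(\mathcal P)$ --- together with its symmetric form $(\mathcal P-)$, via Proposition \ref{equivp-} --- to fix $M\in G(\tilde f)$ and a Morse--Smale pseudo-gradient $X'$ adapted to $f$ whose boundary operator $\partial':=M\partial M^{-1}$ satisfies $\partial'_{-+}=0$, with $(\mathbb{Z}\mathcal{C}^+(\tilde f),\partial'_{++})$ acyclic in degrees $<n$ and equal to $\mathbb{Z}$ in degree $n$, and $(\mathbb{Z}\mathcal{C}^-(\tilde f),\partial'_{--})$ acyclic in degrees $>0$ and equal to $\mathbb{Z}$ in degree $0$. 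Note that the maximum must be labelled $+$ and the minimum labelled $-$, since otherwise these homology modules would be wrong.

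Next I would move $f$ and $X'$ into a convenient position without changing any $G_k$. Using that the level sets of $f$ are connected (Proposition \ref{reebprop}), so that Theorem \ref{change} is available, and using paths of functions whose only bifurcations are crossings of critical points of \emph{different} indices or of the \emph{same} index and label, I would (i) reorder the critical points so that $f$ becomes ordered, and (ii) perform handle slides among the $+$-labelled critical points of each fixed index so as to put $\partial'_{++}$ in pairing form: its non-extremal $+$-points split into pairs $(a_i,b_i)$ with $\ind(a_i)=\ind(b_i)+1\in\{3,\dots,n-2\}$, $\langle\partial' a_i,b_i\rangle=\pm1$, and $a_i$ having no other $+$-component, while the maximum remains a cycle. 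The computations of Subsection \ref{main} show that such handle slides preserve $\partial'_{-+}=0$. None of the moves in (i) or (ii) ever pushes a $+$-point below a $-$-point of the same index, so every $G_k$ --- which depends only on the labels of the index-$k$ points and on which of the $+$-points lie above which of the $-$-points --- is unchanged; and none of them is a birth or a death, so following the germ along the path produces $\widetilde{f^1}$ with $G_k(\widetilde{f^1})=G_k(\tilde f)$ for all $k$, the same labels, and boundary operator $\partial'$.

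It then remains to prove that $\widetilde{f^1}$ extends non-critically. Running the construction of Subsection \ref{cobord} backwards and appealing to Proposition \ref{trivial2} for the trivial germ, it suffices to build a non-critical generic path of functions from $\widetilde{f^1}$ to a trivial germ. I would do this by cancelling the pairs $(a_i,b_i)$ one at a time with Lemma \ref{cancellation}, and then doing the same with the $-$-points. For a fixed pair I would first make $b_i$ the highest and $a_i$ the lowest $+$-point of its index (crossings of same-label, same-index points only). Then, since $2\le\ind(b_i)\le n-3$ and the level sets of $f^1$ are simply connected of dimension $\ge 5$, I would invoke the Whitney-trick machinery of \cite{Miln} to deform $X'$ so that $W^u(a_i)$ meets $W^s(b_i)$ transversally in a single point and is disjoint from the stable manifolds of all other critical points of index $\ind(b_i)$ --- this is possible because $\partial'_{-+}=0$ forbids any trajectory from $a_i$ to a $-$-point, and a dimension count shows that the stable manifolds of the $-$-labelled critical points lying between the levels $f^1(b_i)$ and $f^1(a_i)$ cannot meet $W^u(a_i)$ either, so that every trajectory out of $a_i$ except the cancelling one descends past $f^1(b_i)-\delta$. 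This is exactly position of mutual cancellation together with local excellence, so Lemma \ref{cancellation} cancels the pair non-critically; since $\partial' a_i=\pm b_i$ the cancellation is clean, leaving $\partial'_{-+}=0$ and the pairing form of the remaining $+$-points intact, so one iterates over all $i$. Once all non-extremal $+$-points are gone, the surviving $-$-points form the acyclic-except-degree-$0$ complex $(\mathbb{Z}\mathcal{C}^-,\partial'_{--})$ and have no $+$-point left to flow into, so the same argument removes them, leaving the trivial germ.

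The main obstacle is the third step: verifying, and above all maintaining through the whole sequence of cancellations, the two hypotheses of Lemma \ref{cancellation} while never crossing a $+$-point below a $-$-point of the same index (which would shrink a $G_k$). Concretely, one must control the effect on the unstable manifolds of the other $+$-points of the Whitney-trick deformations of $X'$ needed to isolate one cancelling pair, and check that the $-$-labelled critical points lying between the two relevant levels are genuinely harmless. This is where the hypotheses $n\ge 6$ and $2\le\ind\le n-2$ are used, and where the vanishing $\partial'_{-+}=0$ is essential, since it is what decouples the $+$- and the $-$-cancellations.
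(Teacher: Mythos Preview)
Your construction of $\widetilde{f^1}$ in steps (i)--(ii) has a genuine gap. You propose to make $f$ ordered ``using paths of functions whose only bifurcations are crossings of critical points of different indices or of the same index and label'' and then to ``follow the germ along the path''. Following the germ forces the path to be non-critical, so by Lemma~\ref{fate} $+$-points may only move down and $-$-points only up. But then $f$ cannot in general be made ordered: if $f$ has a $+$-point $a$ of index $k+1$ lying \emph{below} a $-$-point $c$ of index $k$, neither can be pushed across the other non-critically, yet ordering requires $f^1(a)>f^1(c)$. So the germ $\widetilde{f^1}$ you want need not arise from $\tilde f$ by a non-critical path at all.

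The paper sidesteps this by decoupling the two constructions. It reorders $f$ to an ordered $f^1$ by a generic path that is explicitly \emph{not} required to be non-critical (``We emphasize that we do not ask the path to be non-critical''), arranging only that no same-index crossings occur so that the same Morse--Smale pseudo-gradient serves both endpoints via \cite[Corollary~2.2]{Laud5}. It then \emph{independently} imposes the desired labels on the critical points of $f^1$ using Lemma~\ref{usefullemma}. This yields $\widetilde{f^1}$ with $G_k(\widetilde{f^1})=G_k(\tilde f)$ and the same boundary operator, without any non-critical path linking $\tilde f$ to $\widetilde{f^1}$. The extension of $\widetilde{f^1}$ then proceeds much as you outline, but organised index-by-index: move all $+$-points of the minimal index $k_0$ down, use Lemma~\ref{Freed+} to put $\partial_{++,k_0+1}$ in the form $(0\ \ I)$, cancel via Lemma~\ref{cancellation} and \cite[Theorem~6.4]{Miln}, and iterate.

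A smaller imprecision in your step 3: ``$\partial'_{-+}=0$ forbids any trajectory from $a_i$ to a $-$-point'' is false as stated --- it only makes the algebraic count vanish. What the hypotheses $n\ge 6$ and $2\le\ind(b_i)\le n-3$ actually buy you is the Whitney trick of \cite{Miln}, which converts algebraic intersection zero into geometric intersection zero; you should invoke it explicitly rather than a ``dimension count''.
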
  
  \begin{proof}
  We build the germ $\widetilde{f^1}$ step by step from $\tilde{f}$.
  
  By classical Morse theory, see \cite[Sec. 4]{Miln}, there is a generic path of function $(H_t)_{t\in [0,1]}$ from $f$ to a function $f^1$ without birth or death bifurcations such that if $a^1$ and $c^1$ are two critical points of $f^1$ of respective indices $k+1$ and $k$, then $f^1(a^1)>f^1(c^1)$.
  We emphasize that we do not ask the path to be non-critical.
  We can even assume that if $a^0$ and $b^0$ are two critical points of $f$ of same index such that $f(a^0)>f(b^0)$, then the corresponding points $a^1$ and $b^1$ for $f^1$ verify $f^1(a^1)>f^1(b^1)$. 
  Moreover, this path can be such that no crossing of points of same index happens, by first lowering the critical values of the non-extrema points of minimal index, and then lowering the critical values of the other critical points by ascending induction on the index.
  Doing so, we see that no pair of critical points crosses twice.
  Using \cite[Corollary 2.2]{Laud5}, we can consider the same adapted Morse-Smale pseudo-gradient for $f$ and $f^1$. 
  Thus, we can also assume that $\partial(f^1)=\partial(f)$ by picking right Morse-Smale pseudo-gradients, where $\partial(f^1)$ (resp. $\partial(f)$) is the boundary operator associated to $f^1$ (resp. $f$) and where we identify critical points of $f^1$ with critical points of $f$ via the path between $f$ and $f^1$.
  
  Using Lemma \ref{usefullemma}, we can consider a germ $\widetilde{f^1}$ such that the label of a critical point $a^1$ of $g$ is the same as the one of the corresponding point $a^0$ of $f$.
  As the order of the critical points of same index is the same for $f^1$ and $f$, the groups $G_k(\widetilde{f^1})$ and $G_k(\tilde{f})$ are also the same.
  
  We prove that $\widetilde{f^1}$ extends to a non-critical Morse function $F^1$ on $\Dn$.
  From now on, all path of functions will be non-critical.
  As $G_k(\widetilde{f^1})=G_k(\tilde{f})$ for all $k$ and $\partial(\widetilde{f^1})=\partial(\tilde{f})$ there is an adapted pseudo-gradient for $f^1$ and its associated boundary operator $\partial(\widetilde{f^1})$ such that $\partial_{-+,k}(\widetilde{f^1})=0$ and $\partial_{++}(\widetilde{f^1})$ defines an acyclic boundary operator on modules generated by critical points of index between $2$ and $n-2$.
  We can move down every point of label $+$ and minimal index, say $k_0$, to a level just above the global minimum.
  We also move all points in $\mathcal{C}^+_{k_0+1}(\widetilde{f^1})$ below points in $\mathcal{C}^-_{k_0+1}(\widetilde{f^1})$, but still above points of $\mathcal{C}_{k_0}(f^1)$. 
As  $H_{k_0}(\partial(\widetilde{f^1}))=0$, and as $\partial_{k_0}(\widetilde{f^1})=0$ since $k_0\geq 2$, we have that  $\partial_{k_0+1}(\widetilde{f^1})$ is surjective.
As $\partial_{-+,k_0+1}(\widetilde{f^1})=0$, we thus have that $\partial_{++,k_0+1}(\widetilde{f^1})$ surjects on $\mathbb{Z}\mathcal{C}^+_{k_0}(\widetilde{f^1})$.  
   We can operate handle slides between points in $\mathcal{C}^+_{k_0+1}$ such that $\partial_{++,k_0+1}(\widetilde{f^1})$ is of the form $\begin{pmatrix}
  0 & P \\
  \end{pmatrix}$ where $P$ is unimodular, thanks to Lemma \ref{Freed+}.
  Up to handle slides between points $\mathcal{C}^+_{k_0}(\widetilde{f^1})$, we can suppose that $P=I_{p_{k_0}}$.
  
  Nothing prevents the points of label $+$ and index $k_0+1$ from going down to the points of index $k_0$ and label $+$, since we have $\partial_{-+,k_0+1}(\widetilde{f^1})=0$.
  We can cancel points in $\mathcal{C}^+_{k_0}(\widetilde{f^1})$ with the points generating the cokernel of $\partial_{++,k_0+1}(\widetilde{f^1})$ in $\mathcal{C}^+_{k_0+1}(\widetilde{f_1})$, as $k_0\geq 2$, using \cite[Theorem 6.4]{Miln} and Lemma \ref{cancellation}.
  We get a new germ $\widetilde{f^2}$ for which $\mathcal{C}^+_{k_0}(\widetilde{f^2})=\emptyset$.
  
  We can cancel every point of label $+$ and index $k_0+1$, by moving down all points in $\mathcal{C}^+_{k_0+1}(\widetilde{f^2})$ just above points in $\mathcal{C}^-_{k_0}(\widetilde{f^2})$.
  Using the same techniques, we can consider a pseudo-gradient for which $\partial_{-+,k}(\widetilde{f^2})=0$ for all index $k$, and such that \[\partial_{++,k_0+2}(\widetilde{f^2})=\begin{pmatrix}
  0 & I_{p_{k_0+1}-p_{k_0}} \\
  \end{pmatrix}.\]
  We can then kill all points in $\mathcal{C}^+_{k_0+1}(\widetilde{f^2})$ through a non-critical path of function using \cite[Theorem 6.4]{Miln} and Lemma \ref{cancellation}.
  By induction, we can kill successively and non-critically all points in $\mathcal{C}^+_k(\widetilde{f})$.
  
  Only points of label $-$ remain, but now there is no problem to kill them all, then again thanks to Lemma \ref{Freed+}.
  We get a trivial germ that has been proved to extend non-critically, using Lemma \ref{trivial}.
  
  \end{proof}
  
  Through identifications of the critical points of $f^1$ with those of $f$ via the paths of functions between them, we also showed that there are Morse-Smale pseudo-gradients such that $f^1$ and $f$ have the same boundary operators.
  From the proof of the theorem, we even have:
\begin{proposition}   
    $f$ and $f^1$ can be given the same adapted pseudo-gradient.
    \end{proposition}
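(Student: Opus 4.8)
The plan is to observe that this proposition has, in effect, already been established inside the proof of Theorem \ref{best}, and to make that observation explicit. Recall that in that proof the function $f^1$ is produced from $f$ by a generic path of functions $(H_t)_{t\in[0,1]}$ coming from classical Morse theory: one first lowers the critical values of the non-extremal critical points of minimal index, and then, by ascending induction on the index, lowers the critical values of the remaining critical points. Two features of this path are the ones that matter here: it presents no birth and no death bifurcation, so that its only bifurcations are handle crossings; and, as noted in the construction, it can be arranged so that no pair of distinct critical points crosses more than once.

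First I would check that these two features really do hold for $(H_t)$ as it was built. The absence of births and deaths is part of the construction. The ``no double crossing'' property follows from the fact that the reordering of critical values is performed one index at a time and monotonically: a critical point of index $j$ is pushed past a critical point of index $j'$ at most once during the whole process (in particular no two points of the same index ever cross, and no extremum is crossed). Then I would invoke \cite[Corollary 2.2]{Laud5}: for a generic path of functions between two Morse functions whose only bifurcations are handle crossings and along which two distinct critical points cross at most once, there is a single vector field $X$ which is a Morse-Smale pseudo-gradient adapted to both endpoints. Applying this to the path $(H_t)$, with endpoints $H_0=f$ and $H_1=f^1$, produces a Morse-Smale pseudo-gradient adapted simultaneously to $f$ and to $f^1$, which is exactly the assertion. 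Under the natural identification of $\mathcal{C}(f)$ with $\mathcal{C}(f^1)$ given by the path, this common $X$ is precisely the one realizing the equality of boundary operators $\partial(f)=\partial(f^1)$ recorded just before the proposition.

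I do not expect any genuine obstacle here; the statement is essentially a bookkeeping consequence of the construction already carried out. The only point deserving a line of care is the verification that the path of functions selected in the proof of Theorem \ref{best} can indeed be taken with the ``no pair crosses twice'' property, and this is guaranteed by the ordering procedure described above together with the cited corollary.
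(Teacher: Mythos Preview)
Your proposal is correct and follows essentially the same approach as the paper: the paper does not give a separate proof but states that the proposition follows ``from the proof of the theorem,'' where exactly the argument you outline appears---the path $(H_t)$ has only crossing bifurcations, no pair of points crosses twice, and \cite[Corollary 2.2]{Laud5} then yields a common Morse-Smale pseudo-gradient for $f$ and $f^1$.
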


  \subsection{Surgery on manifolds with boundary}
\label{surgmb}
  In this section, we recall results of Borodzik, Nemethi and Ranicki \cite{BNR} who, among other things, study the topological changes of the level sets (with boundary) of the extension when one goes through a critical point on the boundary. Morse and Van Schaak \cite{M_VS} already knew about Morse inequalities for Morse functions on manifold with boundary. 
  The following results will only be used in the next section.
  First, we introduce some terminology taken from \cite{Kos}.
 
 Let $f$ be a Morse function on a closed manifold $M$. 
 If $a$ is a critical point of index $k$ of $f$ and of critical value $\alpha$, then $W^u(a)\cap f^{-1}(\{\alpha-\varepsilon\})$ is diffeomorphic to a standard sphere of dimension $k-1$.
We will always identify this sphere in $f^{-1}(\{\alpha-\varepsilon\})$ with its isotopy class, which does not depend on the adapted pseudo-gradient $X$.
We call it the \emph{attaching sphere} of $a$, and denote it by $\sigma_a$.
Up to renormalization of $X$, we can transport this sphere in a level set $f^{-1}(\{z\})$, with $z<\alpha$, as long as there is no critical point $a'$ of critical value $\alpha > \alpha' > z$ such that 
\[ W^s(a')\cap W^u(a) \neq \emptyset. \]
If there is no possible confusion, we will still use the notation $\sigma_a$ for any sphere which is the image of the one canonically defined in $f^{-1}(\{\alpha-\varepsilon\})$ by a reparametrization of the flow of $X$.
We will also use the notation $\sigma_a$ to denote the homotopy class or the homology class of this sphere in the level set where it can be defined.
Dually, we can define the \emph{belt sphere}, denoted by $\sigma_a^*$ which is the attaching sphere of $a$ for the Morse function $-f$.
The notation can also be extended, as for the attaching sphere.

Let now $M$ be a manifold with boundary $\partial M$.
In \cite{BNR}, Borodzik, Nemethi and Ranicki consider Morse functions $F$ on manifolds with boundary that can have non-degenerate critical points on the boundary, that is points $a\in \partial M$ such that
 \[(d_x F (a),\partial_t F (a))=(0,0),\]
  where $x$ is the coordinate on the boundary and $t$ the coordinate going inside the manifold, and $a$ is a boundary critical point of $F$ of critical value $\alpha$.
In a neighborhood of $a$ in the manifold with boundary, we have:
\[F=\alpha\pm t^2 +\sum \pm x_i^2 \] in a chart, with $t\geq 0.$
 
 Throughout the previous sections, we considered critical points for the induced Morse function $f$ which are not critical for $\tilde{f}$.
 For one of this critical point, there is a chart $(t,x)$ around it for which \[F=\pm t +\sum \pm x_i^2 \]
 in this chart.
 But, as the map $t\mapsto t^2$ is a continuous reparametrization of the map $t\mapsto t$ on $\mathbb{R}_+$ (modulo the homeomorphism of $\mathbb{R}_+$ $t\mapsto \sqrt{t}$), the \emph{topological} modifications of the level sets with boundary are the same if one considers a point $a$ critical for the function restricted to the boundary but not critical for $F$ or if one considers critical points on the boundary in the sense of Borodzik, Nemethi and Ranicki.

Let $F$ be a Morse function on $M$ and assume $M$ is of dimension $n+1$. 
  Denote by $f$ the restriction of $F$ to the boundary of $M$. 
Let $a$ be a critical point of $f$ of index $k$ and critical value $\alpha$.
 One can find the following result in \cite{BNR} directly using Lemma 2.20 or Lemma 2.21 and Theorem 2.27:

\begin{theorem}
\label{pointmoins}
If $a$ is labeled $-$ then 
$ F^{-1}(\alpha+\varepsilon)$ is homotopy equivalent to $F^{-1}(\alpha-\varepsilon)\cup_\varphi \Dkk $
 where $\varphi$ is an embedding of the belt sphere of the critical point of $F|_{\partial M}$.
\end{theorem}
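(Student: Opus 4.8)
The plan is to obtain the statement by importing the corresponding result of \cite{BNR}, where Morse functions on a manifold with boundary are allowed genuine non-degenerate critical points on $\partial M$, and then translating it into the present framework, where $a$ is critical only for $f = F|_{\partial M}$ and not for $F$ itself. The translation is exactly the one foreshadowed in the paragraphs preceding the statement.

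First I would put $F$ in normal form near $a$. In a collar chart $(t,x)$ around $a$ with $t \geq 0$ the inward coordinate, the germ of $F$ is $F = \alpha + t + Q(x)$ with $Q$ a non-degenerate quadratic form of index $k$; here one uses that $a$ is labeled $-$, i.e.\ $\partial_t F(a) > 0$, so the coefficient of $t$ is $+1$. Composing with the homeomorphism $t \mapsto \sqrt{t}$ of $\RR_{\geq 0}$, which changes neither the homeomorphism type of any sub-level or level set nor the isotopy classes of the attaching and belt spheres, this becomes the \cite{BNR} normal form $F = \alpha + t^2 + Q(x)$ of a boundary-unstable critical point of index $k$. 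Since $\alpha$ is the only critical value of $F$ in $[\alpha-\varepsilon,\alpha+\varepsilon]$, the belt sphere $\sigma_a^*$ of $a$ seen as a critical point of $f$ can be transported, along the flow of a gradient-like vector field for $F$, to a sphere embedded in the relevant level set, and this is the sphere that plays the role of BNR's belt sphere in what follows.

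Next I would quote the local computation of \cite[Lemma 2.20 or Lemma 2.21]{BNR}: crossing the value $\alpha$ upward replaces a half-handle neighbourhood of $a$ inside the level set by the complementary half-handle, and, after the standard deformation retraction of $F^{-1}([\alpha-\varepsilon,\alpha+\varepsilon])$ provided by \cite[Theorem 2.27]{BNR}, this amounts up to homotopy to attaching a single cell $\Dkk$ to $F^{-1}(\alpha-\varepsilon)$ along an embedding $\varphi$ whose image one reads off, from the normal form above, as (an embedded copy of) the belt sphere $\sigma_a^*$. Patching the local picture into the global one via \cite[Theorem 2.27]{BNR} then yields the homotopy equivalence $F^{-1}(\alpha+\varepsilon) \simeq F^{-1}(\alpha-\varepsilon)\cup_\varphi \Dkk$. (Alternatively, one could avoid citing precise lemma numbers and argue directly: the descending manifold of $a$ for $F$ lies entirely in $\partial M$, so a gradient-like vector field lets one build the half-handle attachment and the retraction by hand; this is standard Morse theory, but the \cite{BNR} route is cleaner.)

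The main obstacle is not Morse-theoretic but notational: one must check carefully that the ``topological equivalence'' between the linear model $\pm t$ and the quadratic model $\pm t^2$ announced before the statement really does preserve everything used (level sets, their boundaries $f^{-1}(\alpha\pm\varepsilon)$, and the embedded spheres), and that BNR's belt sphere genuinely corresponds to the $\sigma_a^*$ defined here — in particular identifying in which level set the attaching sphere lives and, if framings or orientations are tracked later, matching those as well. Once this dictionary between \cite{BNR}'s boundary-critical-point formalism and the pseudo-critical-point formalism of the present paper is in place, the rest is a direct citation.
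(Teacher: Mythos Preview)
Your approach is exactly the paper's: the paragraphs preceding the statement set up the $t\mapsto t^2$ reparametrization dictionary with \cite{BNR}, and the paper then simply cites Lemma 2.20 or 2.21 together with Theorem 2.27 of \cite{BNR} without further argument. Your proposal spells out that citation with a bit more care about the normal form and the identification of spheres, but the route is identical.
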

	
 	Notice the following homological changes, suppose $\varphi \neq 0 $ in $H_{k-1}(F^{-1}(\alpha-\varepsilon))$:
 \[H_{k-1}(F^{-1}(\alpha+\varepsilon))\simeq H_{k-1}(F^{-1}(\alpha-\varepsilon))/(\langle\varphi \rangle)\]
	and
\[H_j(F^{-1}(\alpha+\varepsilon))\simeq H_j(F^{-1}(\alpha-\varepsilon))\] if $j< k-1$.
	If $\varphi=0$, we have:
\[H_j(F^{-1}(\alpha+\varepsilon))\simeq H_j(F^{-1}(\alpha-\varepsilon))\] if $j< k$,
	and
\[H_k(F^{-1}(\alpha+\varepsilon))\simeq H_k(F^{-1}(\alpha-\varepsilon))\oplus \mathbb{Z}.\]

We also have a description of the topological modifications of the level sets of $F$ when one passes above a boundary critical point of label $+$.
For two manifolds with boundary $M$ and $N$, the symbol $M \setminus_{(\phi, \varphi)} N$ denotes the operation of cutting off a manifold diffeomorphic to $N$ from $M$, where $\varphi$ embeds $\partial N$ in $\partial M$ and $\phi$ embeds $N$ in $M$.
We denote by $\mathring{\mathbb{D}}^d$ the open ball of dimension $d$. 
\begin{theorem}
\label{nullhomo}
If $a$ is labeled $+$ and is of index $k$ for the restriction $F|_{\partial M}$ of $F$ to the boundary $\partial M$, then 
$F^{-1}(\alpha+\varepsilon)$ is diffeomorphic to \[F^{-1}(\alpha-\varepsilon)\setminus_{(\phi,\varphi )} (\Dkk\times \mathring{\mathbb{D}}^{n-k}) \] where $\varphi$ is an embedding of $\mathbb{S}^{k-1}\times \mathring{\mathbb{D}}^{n-k}$ in $\partial M$, and $\phi$ embeds $\Dkk\times\mathring{\mathbb{D}}^{n-k}$ in $ F^{-1}(\alpha -\varepsilon)$.  
\end{theorem}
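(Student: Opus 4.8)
The plan is to reduce the statement to the local study of boundary critical points carried out in \cite{BNR} and then read off how the level set changes.

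First I would localize. As noted just before the statement, $a$ is a critical point of $f=F|_{\partial M}$ which is regular for $F$, and near $a$ there is a chart $(t,x_1,\dots,x_n)$ with $t\ge 0$ in which
\[F=\alpha-t-(x_1^2+\cdots+x_k^2)+(x_{k+1}^2+\cdots+x_n^2),\]
the sign $-t$ of the normal term recording the label $+$, i.e. $\partial_t F(a)<0$. Substituting $t=s^2$, which is a homeomorphism of the collar fixing $\partial M$ and compatible with $F$, brings $F$ into the boundary-stable quadratic normal form $F=\alpha-s^2-|x^-|^2+|x^+|^2$ of \cite{BNR}, with $x^-=(x_1,\dots,x_k)$ and $x^+=(x_{k+1},\dots,x_n)$. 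Since this reparametrization is a homeomorphism carrying level sets of $F$ to level sets of $F$ and fixing $\partial M$, it preserves the diffeomorphism type of the pairs $\bigl(F^{-1}(c),f^{-1}(c)\bigr)$ and of the region $F^{-1}[\alpha-\varepsilon,\alpha+\varepsilon]$, so it suffices to treat the quadratic model.

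Next I would run the usual flow argument. I would fix a pseudo-gradient-like vector field $X$ for $F$ that is tangent to $\partial M$ — so that $\partial M$ is $X$-invariant and $X|_{\partial M}$ is adapted to $f$ — and that equals the model field in the chart. As $F^{-1}[\alpha-\varepsilon,\alpha+\varepsilon]$ contains no critical point of $F$ outside the chart, the flow of $X$ yields a diffeomorphism between the two level sets away from $a$; inside the chart one checks, as in \cite[Lemmas 2.20, 2.21 and Theorem 2.27]{BNR}, that $F^{-1}(\alpha+\varepsilon)$ is obtained from $F^{-1}(\alpha-\varepsilon)$ by deleting a regular neighborhood of the descending disk of $a$, this neighborhood being diffeomorphic to $\Dkk\times\mathring{\mathbb{D}}^{n-k}$ — the factor $\Dkk$ spanning the negative directions $x^-$ and $\mathring{\mathbb{D}}^{n-k}$ the positive ones $x^+$ — and meeting the rest of $F^{-1}(\alpha-\varepsilon)$ precisely along $\mathbb{S}^{k-1}\times\mathring{\mathbb{D}}^{n-k}\subset f^{-1}(\alpha-\varepsilon)=\partial F^{-1}(\alpha-\varepsilon)$, which is a tubular neighborhood in $\partial M$ of the attaching sphere $\sigma_a$ of $a$ for $f$. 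Splicing this local description with the flow diffeomorphism outside the chart gives $F^{-1}(\alpha+\varepsilon)\cong F^{-1}(\alpha-\varepsilon)\setminus_{(\phi,\varphi)}\bigl(\Dkk\times\mathring{\mathbb{D}}^{n-k}\bigr)$, with $\varphi$ the tubular neighborhood of $\sigma_a$ in $\partial M$ and $\phi$ the induced embedding into $F^{-1}(\alpha-\varepsilon)$.

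The hard part is the flow step, which is exactly the content of the cited results of \cite{BNR}: one must arrange $X$ to be everywhere tangent to $\partial M$ so that no trajectory leaves $M$ while the level sets of $F$ stay transverse to $X$, control the trajectories that run into $\partial M$ and then travel within it, and check that the resulting identification is smooth across the interface between the chart and its complement. Once this is in hand, matching the abstract operation ``cut off $\Dkk\times\mathring{\mathbb{D}}^{n-k}$'' with the geometry of $\sigma_a$ is bookkeeping. This is the $+$-labelled counterpart of Theorem \ref{pointmoins}, and, as there, once the diffeomorphism type is known the homological consequences are read off from the long exact sequence of the pair.
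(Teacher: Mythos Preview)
The paper does not supply its own proof of this theorem: it is stated as a result quoted from \cite{BNR}, preceded only by the remark that the reparametrization $t\mapsto t^2$ makes the two notions of boundary critical point (the one used here versus the one in \cite{BNR}) yield the same topological modifications of level sets. Your sketch is precisely the argument the paper leaves implicit, invoking the same Lemmas~2.20--2.21 and Theorem~2.27 of \cite{BNR}; the only point to tighten is that $t=s^2$ is merely a homeomorphism at $t=0$, so the preservation of the \emph{diffeomorphism} type of $F^{-1}(c)$ deserves a word --- either observe that for $c\neq\alpha$ the level set meets $\{t=0\}$ transversally so the reparametrization is harmless there, or simply run the flow argument directly in the linear model $F=\alpha-t-|x^-|^2+|x^+|^2$ without passing through the quadratic one.
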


Implicitly, the theorem states that any sphere $\mathbb{S}^{k-1}\times \{\star\}$ represented by $\varphi$ bounds a ball in $F^{-1}(\alpha - \varepsilon)$, and so is trivial in homology and homotopy. 
This sphere is also the attaching sphere of $a$ for the restriction $F|_{\partial M}$.
More generally, for any representative $\tilde{f}$ of a germ, if $a$ is of label $+$, the attaching sphere $\sigma_a$ is null homotopic in some $\tilde{f}^{-1}(f(a)-\varepsilon)$ for $\varepsilon$ small enough.
Let us consider $-F$.
 Critical points of index $k$ for $F|_{\partial M}$ and labeled $+$ are turned into critical points of index $n-k$ and label $-$ for $-F|_{\partial M}$, and the same property stands for critical points labeled $-$.
  We get that $F^{-1}(\alpha-\varepsilon)$ is homotopy equivalent to 
  \[F^{-1}(\alpha+\varepsilon)\cup_\varphi \mathbb{D}^{n+1-k}\]
 where $\varphi$ is an embedding of the attaching sphere of the critical point for $-F|_{\partial M}$ corresponding to $a$, that is, the belt sphere of $a$.
 
  We have, still with $\varphi\neq 0$: 
\[H_{n-k-1}(F^{-1}(\alpha-\varepsilon))\simeq H_{n-k-1}(F^{-1}(\alpha+\varepsilon))/(\langle\varphi \rangle)\]
and 
\[H_j(F^{-1}(\alpha-\varepsilon))\simeq H_j(F^{-1}(\alpha+\varepsilon))\]
 for $j< n-k-1$. If $\varphi=0$, we have:
\[H_j(F^{-1}(\alpha-\varepsilon))\simeq H_j(F^{-1}(\alpha+\varepsilon))\]
 if $j< n-k$, and
 \[H_{n-k}(F^{-1}(\alpha-\varepsilon))\simeq H_{n-k}(F^{-1}(\alpha+\varepsilon))\oplus \mathbb{Z}.\]

  \subsection{A germ with right homological assumptions that do not extend non-critically }
  \label{theexample}
   
   We exhibit in this section a Morse germ extending a Morse function which has proerty ($\mathcal{P}$) but does not extend non-critically.
   The critical points which are not local extrema of the induced Morse function are distributed on 3 indices. 
 Before building the germ, we will need the following fact, where $\sharp$ denote the connected sum of two manifolds defined in \cite{KM}:
 \begin{lemma}
 \label{trivialsurg}
Let $M$ be a closed manifold of dimension $n$. 
 There are surgeries on the trivial homotopy class $0\in \pi_{k-1}(M)$ in $M$ such that the produced manifold $M'$ is diffeomorphic to $ M \sharp \left( \Sk \times \mathbb{S}^{n-k}\right)$. 
 \end{lemma}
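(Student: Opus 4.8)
The statement is essentially the geometric observation that surgery along a trivially embedded sphere produces a connected-sum with a sphere bundle, and that the bundle is trivial when the embedding together with its framing is standard. The plan is to reduce the problem to a purely local computation inside a ball.

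First I would recall what a surgery on a class $0\in\pi_{k-1}(M)$ means: one chooses an embedding $\varphi:\Skk\times\Dnn^{\,n-k+1}\hookrightarrow M$ of a tubular neighborhood of an embedded $(k-1)$-sphere which is null-homotopic, removes the interior of $\varphi(\Skk\times\Dnn^{\,n-k+1})$, and glues in $\Dkk\times\Snn^{\,n-k}$ along the common boundary $\Skk\times\Snn^{\,n-k}$. Since the class is $0$, the sphere $\varphi(\Skk\times\{0\})$ bounds an embedded disk, hence bounds a ball; more precisely, by the tubular neighborhood theorem I can arrange (for a suitable choice of framing) that the whole image $\varphi(\Skk\times\Dnn^{\,n-k+1})$ lies inside an embedded ball $B\cong\Dn$ in $M$, and that the pair $(B,\varphi)$ is the standard model, namely the standard $\Skk\times\Dnn^{\,n-k+1}\subset\mathbb R^{n}=\mathbb R^{k}\times\mathbb R^{n-k+1}$ thickened inside the unit ball. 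This is the step where the hypothesis that the class is trivial is genuinely used: it is what lets me replace an arbitrary surgery datum by the standard one inside a ball. The key technical point, and the part I expect to be the main obstacle, is to justify that the framing can be taken standard — a priori surgery on the class $0$ allows any of the (possibly several) framings, but the statement only claims that \emph{there are} surgeries producing $M\sharp(\Sk\times\mathbb S^{n-k})$, so I only need to exhibit one good choice, namely the framing inherited from the embedding $\Skk\subset\mathbb R^{k}\subset\mathbb R^{n}$, which extends over a ball.

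With that reduction in hand, the surgery becomes entirely local: performing the standard surgery on the standardly embedded, standardly framed $\Skk\times\Dnn^{\,n-k+1}$ inside a ball $\Dn$ replaces $\Dn$ by a new compact manifold $W$ with $\partial W=\Snn$. I would identify $W$ explicitly: removing $\mathrm{int}(\Skk\times\Dnn^{\,n-k+1})$ from $\Dn$ and gluing $\Dkk\times\Snn^{\,n-k}$ yields exactly $\big(\Sk\times\mathbb S^{n-k}\big)$ minus an open ball. This is the classical computation that surgery on $S^{k-1}\subset S^{n}$ (standardly embedded and framed) yields $S^{k}\times S^{n-k}$; restricting to a ball, surgery on $S^{k-1}\subset D^{n}$ yields $(S^{k}\times S^{n-k})\setminus\mathring{\mathbb D}^{n}$, with boundary $S^{n-1}$. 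I would spell this out by the standard decomposition $S^{k}\times S^{n-k}=(\Dkk\times\Snn^{\,n-k})\cup(\Skk\times\Dnn^{\,n-k+1})$ along $\Skk\times\Snn^{\,n-k}$, from which cutting a ball out of the interior of the second piece is visibly the reverse of the gluing.

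Finally I would assemble the global picture: the surgered manifold $M'$ is obtained from $M$ by cutting out the ball $B$ and gluing back $W$ along the identification of $\partial B\cong\Snn$ with $\partial W\cong\Snn$. Since $W\cong (\Sk\times\mathbb S^{n-k})\setminus\mathring{\mathbb D}^{n}$ and $M\setminus\mathrm{int}(B)$ is $M$ with an open ball removed, gluing them along their common $\Snn$ boundary is by definition the connected sum, so $M'\cong M\sharp(\Sk\times\mathbb S^{n-k})$. One should check that the boundary identification used in the surgery matches (up to isotopy, and the orientation conventions of \cite{KM}) the one defining the connected sum; this is routine since both are the standard sphere and the relevant mapping class group issue either does not arise or is absorbed into the freedom "there are surgeries". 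The only genuinely delicate point remains the framing normalization in the first paragraph; everything after it is bookkeeping with standard models.
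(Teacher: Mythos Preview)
Your plan is correct and is precisely the standard argument: localise the surgery datum inside a ball using the triviality of the class and a suitable framing, identify the local result of surgery on the standardly embedded $S^{k-1}\subset D^n$ as $(S^k\times S^{n-k})\setminus\mathring{\mathbb D}^n$ via the decomposition $S^k\times S^{n-k}=(D^k\times S^{n-k})\cup_{S^{k-1}\times S^{n-k}}(S^{k-1}\times D^{n-k+1})$, and read off the connected sum. The only point worth tightening is purely expository: since the lemma asserts only the existence of \emph{some} surgery with the desired output, you are free to pick the bounding disk and the framing that extends over it, so no separate ``framing normalisation'' argument is needed---just declare the standard model from the outset.

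As for the comparison with the paper: the paper does not prove this lemma at all. It states that the result is a standard fact of surgery theory and refers the reader to \cite[Example~4.17]{Ran}. Your write-up is therefore more than the paper provides, and the argument you sketch is exactly the one that reference contains.
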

 We do not prove this lemma which is a standard fact of surgery theory. See for example \cite[Example 4.17]{Ran}.
  Notice that the homotopy class of the factor $\Sk$ in $M'$ depends on the diffeomorphism between $M'$ and $M \sharp \left( \Sk \times \mathbb{S}^{n-k} \right)$. 
   Now, if we get $M'$ from $M$ as a level set by passing above a critical point $a$ of index $k$, we then will denote by $\sigma_a^+$ a sphere $\Sk \times \{\star\}$ in $M'$, or its homotopy class in $\pi_k(M')$.
 
  The Curley graph of our example that can not extend non-critically is pictured on Figure \ref{contrexemple1}. We will assume that $n$ is really large in order to have (at least) $2(k+1)< n-1$, and $k\geq 3$, in order to have $\pi_{k+1}(\mathbb{S}^k)\simeq \mathbb{Z}/(2)$.
  We will use the lemma:
\begin{lemma}
If $n$ is large enough, then $\pi_{k+1}\left( (\mathbb{S}^k\times\mathbb{S}^{n-k-1}) \sharp (\mathbb{S}^{k+1} \times \mathbb{S}^{n-k-2})\right)$ is isomorphic to $\mathbb{Z}/(2)\oplus \mathbb{Z}.$
\end{lemma}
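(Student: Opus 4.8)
The plan is to reduce everything to the low-dimensional skeleton of the connected sum. Set $M := (\mathbb{S}^k\times\mathbb{S}^{n-k-1})\ \sharp\ (\mathbb{S}^{k+1}\times\mathbb{S}^{n-k-2})$, a closed manifold of dimension $n-1$. Each summand carries its product CW structure, with cells only in dimensions $\{0,k,n-k-1,n-1\}$, respectively $\{0,k+1,n-k-2,n-1\}$. The connected sum is formed by deleting an open disk lying in the interior of each top cell and gluing along the resulting boundary spheres, so it does not disturb the cells of dimension $\le n-2$: for every $j\le n-3$ one has $M^{(j)}\simeq \big(\mathbb{S}^k\times\mathbb{S}^{n-k-1}\big)^{(j)}\vee\big(\mathbb{S}^{k+1}\times\mathbb{S}^{n-k-2}\big)^{(j)}$. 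Taking $n$ large enough that $n-k-2> k+2$ (e.g. $n\ge 2k+5$), the $(k+2)$-skeleta of the two summands are $\mathbb{S}^k$ and $\mathbb{S}^{k+1}$ respectively — note the $(k+1)$-cell of the second summand is attached at the base point — hence $M^{(k+2)}\simeq \mathbb{S}^k\vee\mathbb{S}^{k+1}$. Since $\pi_{k+1}$ of a CW complex depends only on its $(k+2)$-skeleton, this yields $\pi_{k+1}(M)\simeq \pi_{k+1}(\mathbb{S}^k\vee\mathbb{S}^{k+1})$.

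I would then compute $\pi_{k+1}(\mathbb{S}^k\vee\mathbb{S}^{k+1})$. The pair $\big(\mathbb{S}^k\times\mathbb{S}^{k+1},\ \mathbb{S}^k\vee\mathbb{S}^{k+1}\big)$ is $2k$-connected, its first relative cell being the product cell of dimension $2k+1$. Since $k\ge 3$, both $k+1$ and $k+2$ are at most $2k$, so the long exact sequence of the pair forces the inclusion-induced map $\pi_{k+1}(\mathbb{S}^k\vee\mathbb{S}^{k+1})\to \pi_{k+1}(\mathbb{S}^k\times\mathbb{S}^{k+1})$ to be an isomorphism. Finally $\pi_{k+1}(\mathbb{S}^k\times\mathbb{S}^{k+1})\simeq \pi_{k+1}(\mathbb{S}^k)\oplus\pi_{k+1}(\mathbb{S}^{k+1})$, where $\pi_{k+1}(\mathbb{S}^{k+1})\simeq\mathbb{Z}$ and, because $k\ge 3$ puts us in the stable range, $\pi_{k+1}(\mathbb{S}^k)\simeq\mathbb{Z}/(2)$. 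Combining these identifications gives $\pi_{k+1}(M)\simeq\mathbb{Z}/(2)\oplus\mathbb{Z}$. Alternatively, one may apply the Hilton--Milnor theorem directly to $\mathbb{S}^k\vee\mathbb{S}^{k+1}$: all basic-product summands of length $\ge 2$ sit on spheres of dimension $\ge 2k>k+1$ and contribute nothing, leaving exactly $\pi_{k+1}(\mathbb{S}^k)\oplus\pi_{k+1}(\mathbb{S}^{k+1})$.

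The only genuinely delicate point is the identification $M^{(k+2)}\simeq \mathbb{S}^k\vee\mathbb{S}^{k+1}$: one must check that performing the connected sum inside the top cells leaves the low skeleta untouched, that the relevant cell dimensions are indeed all $>k+2$ once $n$ is large, and that the $(k+1)$-cell is attached trivially. Everything after that is routine: the connectivity of the pair $\big(\mathbb{S}^k\times\mathbb{S}^{k+1},\mathbb{S}^k\vee\mathbb{S}^{k+1}\big)$ and the splitting of $\pi_{k+1}$ of a product are standard, and the value $\pi_{k+1}(\mathbb{S}^k)\simeq\mathbb{Z}/(2)$ for $k\ge 3$ is precisely the reason the hypothesis $k\ge 3$ was imposed, since at $k=2$ this group is $\mathbb{Z}$ instead.
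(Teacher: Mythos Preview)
Your proof is correct, and the second half (reducing $\pi_{k+1}(\mathbb{S}^k\vee\mathbb{S}^{k+1})$ to $\pi_{k+1}(\mathbb{S}^k\times\mathbb{S}^{k+1})$ via the $2k$-connectivity of the pair, then splitting) coincides with the paper's argument. The first half, however, takes a different and somewhat more direct route. The paper does not look at skeleta of $M$; instead it builds an auxiliary $n$-manifold with boundary
\[
W=\mathbb{S}^k\times\mathbb{D}^{n-k}\ \cup\ \mathbb{D}^1\times\mathbb{D}^{n-1}\ \cup\ \mathbb{S}^{k+1}\times\mathbb{D}^{n-k-1},
\]
observes that $\partial W\cong M$, that $W$ retracts onto $\mathbb{S}^k\vee\mathbb{S}^{k+1}$, and that $W$ is obtained from $M$ by attaching cells of dimension $\ge n-k-1$. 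The long exact sequence of the triple $\mathbb{S}^k\vee\mathbb{S}^{k+1}\hookrightarrow M\hookrightarrow W$ then yields $\pi_{k+1}(M)\simeq\pi_{k+1}(\mathbb{S}^k\vee\mathbb{S}^{k+1})$.

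Your cellular argument bypasses $W$ entirely: once one knows $M$ admits a CW structure whose $(k+2)$-skeleton is $\mathbb{S}^k\vee\mathbb{S}^{k+1}$, the conclusion is immediate. This is shorter, but the point you rightly flag as delicate --- that the connected-sum gluing along $\mathbb{S}^{n-2}$ can be arranged so that the low skeleta are genuinely a wedge, with the $(k+1)$-cell attached at the basepoint --- deserves a word of justification (e.g.\ via a minimal CW structure on the simply connected $M$, reading off the attaching map from $H_k$ and $H_{k+1}$). The paper's $W$ trades that cellular bookkeeping for a concrete geometric model in which the retraction to the wedge is visible. Your Hilton--Milnor alternative for the final computation is also a nice shortcut not present in the paper.
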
  
\begin{proof}
Denote by $M$ the manifold $(\mathbb{S}^k\times\mathbb{S}^{n-k-1}) \sharp (\mathbb{S}^{k+1} \times \mathbb{S}^{n-k-2})$.
We first show that $\pi_{k+1}(M)$ is isomorphic to $\pi_{k+1}(\mathbb{S}^k \vee \mathbb{S}^{k+1})$, where $\Sk \vee \mathbb{S}^{k+1}$ is the wedge sum of two spheres.
Denote by $W$ the manifold \[\mathbb{S}^k \times \mathbb{D}^{n-k}\cup_{\psi_1} \mathbb{D}^1\times \mathbb{D}^{n-1} \cup_{\psi_2} \mathbb{S}^{k+1}\times \mathbb{D}^{n-k-1},\]
where $\psi_1$ and $\psi_2$ are gluing maps, embedding $\{0\}\times \mathbb{D}^{n-1}$ and respectively $\{1\}\times \mathbb{D}^{n-1}$ into $\mathbb{S}^k \times \mathbb{S}^{n-k-1}$ and respectively $\mathbb{S}^{k+1} \times \mathbb{S}^{n-k-2}$.
The manifold $W$ is a manifold with boundary, and its boundary is diffeomorphic to $M$.
As a CW-complex, $W$ is obtained from $M$ by gluing cells of dimensions $n$, $n-k$ and $n-k-1$.
We have an injection $M \hookrightarrow W$. 
As $W$ is obtained from $M$ by adding high dimensional cells, when $k<<n$, it induces equality for $j$-dimensional homotopy groups with $j$ small with respect to $n$.
See for example \cite[Cor. 4.12, p. 351]{Hat}.
We have a sequence of injections $\Sk \vee \mathbb{S}^{k+1} \hookrightarrow M \hookrightarrow W$.
Notice that $W$ retracts on $\Sk \vee \mathbb{S}^{k+1}$.
Thus, $\pi_j(W,\Sk \vee \mathbb{S}^{k+1})=0$ for all $j$.
The long sequence of homotopy groups and the fact that $k<<n$ shows that we have an isomorphism:
\[0 \to \pi_{k+2}(W,M) \to \pi_{k+1}(M, \Sk \vee \mathbb{S}^{k+1}) \to 0\]
 As $\pi_{k+2}(W,M)=0$ since $k<<n$, we then have $\pi_{k+1}(M,\Sk \vee \mathbb{S}^{k+1})=0$.
Hence, $\pi_{k+1}(\Sk \vee \mathbb{S}^{k+1})$ and $\pi_{k+1}(M)$ are isomorphic.

Finally, using the long exact sequence of homotopy groups for the sequence 
\[\emptyset \hookrightarrow \Sk \vee \mathbb{S}^{k+1} \hookrightarrow \Sk \times \mathbb{S}^{k+1},\] and for $k$ higher than $2$, we have that $\pi_{k+1} ( \Sk \times \mathbb{S}^{k+1})$ and $\pi_{k+1}(\Sk \vee \mathbb{S}^{k+1})$ are isomorphic.
But $\pi_{k+1}(\Sk \times \mathbb{S}^{k+1} )$ is isomorphic to $\pi_{k+1}(\Sk) \oplus \pi_{k+1}(\mathbb{S}^{k+1})$, which is isomorphic to $\mathbb{Z}/(2)\oplus \mathbb{Z}$.
\end{proof}
  
  \begin{figure}[!ht]
  \begin{center}
  \includegraphics[scale=0.6]{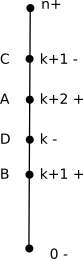}
  \caption{Curley graph of some germ that does not extend non-critically}
  \label{contrexemple1}
  \end{center}
  \end{figure}

We build the germ step by step, starting from a height function and deforming the induced Morse functions through generic paths.
We then get a germ using Lemma \ref{usefullemma} to get the labels we desire.
In order to avoid confusion, we will not indicate the time-dependency of the critical points and their respecting attaching and belt spheres.
 
  Consider a Morse function whose Reeb graph is the one of Figure \ref{contrexemple1b}.
  The level sets right above $b$ and $d$ respectively are obtained as in Lemma \ref{trivialsurg}.
  The homotopy classes of $\sigma_b$ and $\sigma_d$ are zero in their respective level sets, and the homotopy classes of $\sigma_a$ (resp. $\sigma_c$) in its level set is the one of $\sigma_b^+$ (resp. $\sigma_d^+$). 
   This Morse function is a Morse function that one gets from the height Morse function on the sphere $\Sn$ after the births of the two critical pairs $(a,b)$ and $(c,d)$.

\begin{figure}[!ht]
  \begin{center}
  \includegraphics[scale=0.6]{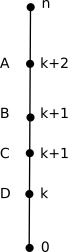}
  \caption{}
  \label{contrexemple1b}
  \end{center}
  \end{figure}  
  We first use a path of functions making $b$ move below $c$ and $d$.
  It is possible since $\sigma_b$ is null homotopic in the level set below $b$.
  We get a new Morse function whose Reeb graph is pictured on Figure \ref{function2}.
  We denote by $M_1$ the level set below $c$.
  It is diffeomorphic to ${(\Sk \times \mathbb{S}^{n-k-1}) \sharp (\mathbb{S}^{k+1}\times \mathbb{S}^{n-k-2})}$.
  As $k<<n$, the group $\pi_{k+1}(M_1)$ is isomorphic to $\mathbb{Z}/(2)\oplus \mathbb{Z}$.

\begin{figure}[!ht]
\begin{center}
\includegraphics[scale=0.6]{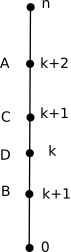}
\caption{}
\label{function2}
\end{center}
\end{figure}  
  
  Using $2(k+1)+1 < n-1$ and Whitney embeddings theorems, there is a representative $S$ of the class $(1,0)$ which is an embedded sphere into $M_1$.
  Moreover, since $k<<n$ and using \cite[Lemma 4.6]{Miln}, we can assume that $\sigma_c$ and $S$ are disjoint.
  Thus, using a reparametrization of the pseudo-gradient, there is a $(k+1)$-dimensional sphere $S'$ embedded into the level set below $a$ such that $S'$ is sent to $S$ by the flow of the pseudo-gradient.
  Notice that the level set in $\Sn$ below $a$ is diffeomorphic to $\mathbb{S}^{k+1}\times \mathbb{S}^{n-k-2}$.
 In this level set, $S'$ is null-homotopic. 
 As we kill the homotopy class represented by $\Sk \times \{\star\}$ in $M_1$ when passing above $c$, we also kill the class of $S$, leading to $S'=0$ in homotopy.
 Finally, $\sigma_a$ is isotopic to $\sigma_a + S'$.
 
 Let $X'$ be an adapted pseudo-gradient for which the attaching sphere of $a$, denoted $\sigma'_a$, is $\sigma_a+S'$, and such that $\sigma'_a$ and the belt sphere of $c$ are still disjoint. 
 The last assumption is possible due to the fact that the belt sphere of $c$ is null-homotopic in the level set below $a$, for any adapted pseudo-gradient.
There is no obstruction for $a$ to move below $c$ along the descending disk of $a$ for $X'$, see for example \cite[Lemma 2.1]{Laud5}.
      By doing that and defining the germs with labels of Figure $\ref{contrexemple1}$, we get, with the use of Lemma \ref{usefullemma}, a germ $\tilde{f}$. We now show:
      \begin{proposition}
      $\tilde{f}$ can not extend non-critically.
      \end{proposition}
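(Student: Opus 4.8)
The plan is to assume $\tilde f$ extends to a non-critical $F\colon\mathbb D^{n+1}\to\mathbb R$ and to derive a contradiction by tracking the level sets $V_c:=F^{-1}(c)$, compact $n$-manifolds with $\partial V_c=f^{-1}(c)$, as $c$ increases from below $\min f$ to above $\max f$. Since $F$ has no interior critical point, $V_c$ is empty for $c<\min f$ and for $c>\max f$ and changes only at the critical values of $f$, the changes being described by Theorems~\ref{pointmoins} and~\ref{nullhomo}: at a label-$-$ point of index $j$ a $j$-cell is attached (up to homotopy), and at a label-$+$ point of index $j$ a handle $\mathbb D^{j}\times\mathring{\mathbb D}^{n-j}$ is deleted, which is possible only if a tubular neighbourhood of the attaching sphere of that point, \emph{with the normal framing it inherits from $f$}, extends to an embedding of $\mathbb D^{j}\times\mathring{\mathbb D}^{n-j}$ in $V_c$.

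First I would run the sweep from the bottom up to the critical value of $a$. Reading the order of the critical points off the Curley graph of Figure~\ref{contrexemple1} — $a$ lies just above $b$ and below $c$ and $d$, with $a,b\in\mathcal C^{+}$ and $c,d\in\mathcal C^{-}$ (this is the relevant labelling: it is the one for which Theorem~\ref{nullhomo} governs the passage through $a$) — passing $\min$ produces a disk $\mathbb D^{n}$, and passing $b$ (label $+$, index $k+1$, with inessential, trivially framed attaching sphere $\sigma_b$) deletes a standardly embedded $(k+1)$-handle. Hence just below $a$ one has $V:=F^{-1}(f(a)-\varepsilon)\cong\mathbb D^{n}\setminus(\text{trivial }(k+1)\text{-handle})\simeq\mathbb S^{n-k-2}$, with $\partial V=f^{-1}(f(a)-\varepsilon)\cong\mathbb S^{k+1}\times\mathbb S^{n-k-2}$.

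The crux is that $F$ cannot be pushed past the critical value of $a$. Since $2(k+1)+1<n-1$ and $\pi_{k+1}(V)=\pi_{k+1}(\mathbb S^{n-k-2})=0$, the attaching sphere $\sigma'_a$ of $a$ does bound embedded disks in $V$, so the only way Theorem~\ref{nullhomo} can fail is through the framing: the $f$-framing of $\sigma'_a$ must not extend over any such disk. This is exactly what the construction of $\tilde f$ arranges. One has $\sigma'_a=\sigma_a+S'$ with $\sigma_a=\sigma_b^{+}$ the standard cancelling sphere of $b$, which in $V$ carries a bounding framing; but the corrective sphere $S'$ was built as a representative of the class $(1,0)\in\pi_{k+1}(M_1)\cong\pi_{k+1}(\mathbb S^{k})\oplus\pi_{k+1}(\mathbb S^{k+1})=\mathbb Z/(2)\oplus\mathbb Z$, i.e.\ of the suspended Hopf element $\eta$, and therefore carries a non-bounding normal framing. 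Tubing $\sigma_a$ with $S'$ thus twists the framing by this nonzero class, so $\sigma'_a$ has a framing that extends over no disk it bounds in $V$; the embedding demanded by Theorem~\ref{nullhomo} does not exist, $F$ cannot cross the level $f(a)$, and no non-critical extension exists. One should note that this obstruction is invisible to the Morse complex: $S'$ is null-homotopic in $f^{-1}(f(a)-\varepsilon)$, so $\sigma'_a$ and $\sigma_a$ are homologous and $a$ still cancels $b$ over $\mathbb Z$ — consistent with $\tilde f$ having property~$(\mathcal P)$.

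The step I expect to fight with is the last one: making precise, and independent of choices, that the $f$-framing of $\sigma'_a$ fails to extend over every disk it bounds in $V$. This needs (i) that the $f$-framing of $\sigma_a=\sigma_b^{+}$ in $V$ is a bounding framing — which should come from $(a,b)$ being born as a cancelling pair, with $\sigma_a$ the cancelling sphere of $b$ carrying its canonical framing, together with naturality of framings under transport along the gradient flow; (ii) that tubing with $S'$ changes the framing by the nonzero element, which amounts to translating "$S'$ generates the $\mathbb Z/(2)$ summand of $\pi_{k+1}(M_1)$" (how $S'$ is defined, via the lemma computing $\pi_{k+1}(M_1)$) into a statement about its normal framing; and (iii) controlling the indeterminacy in "bounding framing" coming from the $\pi_{k+2}$ of the structure group, i.e.\ from different bounding disks. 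The hypotheses $k\ge 3$ and $2(k+1)<n-1$, with $n$ large, are precisely what put all of this in the stable/Whitney range, so that the computation of $\pi_{k+1}(M_1)$ is valid, the relevant framing group reduces to the expected $\mathbb Z/(2)$, and homotopical statements can be upgraded to statements about embeddings.
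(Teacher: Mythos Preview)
Your reading of the Curley graph is off, and this undercuts the whole argument. In the final germ of Figure~\ref{contrexemple1} the order from below is $\min,\ b,\ d,\ a,\ c,\ \max$: in the construction $a$ is slid below $c$ but remains above $d$ (and the paper's own proof records that ``all the level sets $F^{-1}(\alpha)$ for $\alpha\in ]f(d),f(a)[$ are diffeomorphic''). Hence between the level of $b$ and that of $a$ one must also cross $d$, of label~$-$ and index~$k$.

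This changes the analysis completely. With the correct order, the level set $V=F^{-1}(f(a)-\varepsilon)$ is not $\Sigma\simeq\mathbb S^{n-k-2}$ but, by Theorem~\ref{pointmoins}, $\Sigma\cup_\varphi\mathbb D^k$; since $\Sigma$ is $(k+1)$-connected this is homotopy equivalent to $\Sigma\vee\mathbb S_2^k$ and $\pi_{k+1}(V)\cong\mathbb Z/(2)$. The paper's obstruction is then purely homotopical: the summand $S'$ of $\sigma'_a$, once transported into $V$, is the Hopf element on the new $k$-sphere $\mathbb S_2^k$ and hence nonzero in $\pi_{k+1}(V)$, while $\sigma_a=\sigma_b^+$ already dies in $\Sigma\subset V$. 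Thus $\sigma'_a$ is not null-homotopic in $V$, contradicting Theorem~\ref{nullhomo} directly. No framing considerations enter.

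With your ordering, by contrast, there is no obstruction at all, framing or otherwise. If $a$ sat just above $b$, the boundary level just below $a$ would be precisely the level set $\mathbb S^{k+1}\times\mathbb S^{n-k-2}$ in which, by the very construction of the germ, $\sigma'_a$ is \emph{isotopic} to $\sigma_a=\sigma_b^+$; since $(a,b)$ is a cancelling pair, $\sigma_a$ bounds the required thickened disk in your $V$, and so does $\sigma'_a$. Your proposed step~(ii) --- turning ``$S$ represents $(1,0)\in\pi_{k+1}(M_1)$'' into a twist of the normal framing of $S'$ in your $\partial V$ --- has no footing: $M_1$ is the level set between $d$ and $c$, a different manifold from your $\partial V$, and the $\mathbb Z/(2)$ lives in $M_1$ precisely because of the $\mathbb S^k$-factor created by $d$, which your ordering never sees below $a$.
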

      In the proof, we keep using the same notation introduced just above.
      \begin{proof} 
     Assume $\tilde{f}$ has a non-critical extension $F$.
      By the description given in Section \ref{surgmb}, the only topological modifications of the level sets of $F$ are given by the ones described in Theorems \ref{nullhomo} and \ref{pointmoins}, with respect to the labels of the critical points of $f$.
 Thus, all the level sets $F^{-1}(\alpha)$ for $\alpha \in ]f(d),f(a)[$ are diffeomorphic.
We then show that these topological modifications can not lead to a manifold with boundary $F^{-1}(f(a)-\varepsilon)$ in which $\sigma_a$, the attaching sphere of $a$, is null-homotopic, which is absurd.  
We successively have, using results of Section \ref{surgmb}:
\begin{itemize}
\item The level set of the extension above the minimum must be diffeomorphic to a disk $\mathbb{D}^n$.
\item The level set of the extension above $b$, denoted $\Sigma$, must be diffeomorphic to 
\[\mathbb{D}^{n} \setminus_{(\phi, \varphi)}(\mathbb{D}^{k+1}\times \mathring{\mathbb{D}}^{n-(k+1)}) \]
 where $\varphi$ embeds $\mathbb{S}^{k}\times\{\star\}$ in $\mathbb{S}^{n-1}=\partial \Dnn$.
We have that $H_j(\Sigma)=0$ for all $1 \leq j \leq k+1$ by the description of the modifications of the homology groups of the level sets of the extension, given in Section \ref{surgmb}, and the fact that $k+1 < \frac{n}{2}$. 
  We then have $\pi_{j}(\Sigma)=0$ for all $1 \leq j \leq k+1$ using the Hurewicz theorem on higher homotopy groups of a CW-complex.
  In particular, $\pi_{k+1}(\Sigma)=0$.
 \item The level set of the extension above $d$ is homotopy equivalent to $\Sigma\cup_{\varphi}\mathbb{D}^k $, where $\varphi$  embeds a sphere $\mathbb{S}^{k-1}$ in the boundary of $\Sigma$.
 Moreover, $\varphi$ is null-homotopic in $\Sigma$ and bounds a ball $\mathbb{D}_2^{k}$, as $\pi_k(\Sigma)=0$.
 We have that $S$ in this level set is the non-null ($k+1$)-dimensional homotopy class of the complex $\mathbb{D}_2^{k}\cup_{\varphi}\mathbb{D}^k$ homotopy equivalent to a sphere $\mathbb{S}_2^k$.
  It remains non-null homotopic in $\Sigma\cup_{\varphi}\mathbb{D}^k $ which is homotopy equivalent to $\Sigma\vee \mathbb{S}_2^k$.
  Hence, the homotopy class of $\sigma'_a$ is not null-homotopic in the level set of the extension. 
  We can now use Theorem \ref{nullhomo}, stating that it must be null homotopic if the germ extends non critically.

\end{itemize}      
\end{proof}
We now prove:
\begin{lemma}
The germ $\tilde{f}$ has property ($\mathcal{P}$).
\end{lemma}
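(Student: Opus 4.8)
The strategy is to write down the Morse complex of $\tilde{f}$ explicitly and to observe that already the identity matrix $I$, which always belongs to $G(\tilde{f})$, witnesses property $(\mathcal{P})$. First I would record the data produced by the construction. The critical points of $f$ are: the global minimum $min$, of index $0$ and label $-$; the point $d$, of index $k$ and label $-$; the points $b$ and $c$, both of index $k+1$, with $b$ of label $+$ and $c$ of label $-$; the point $a$, of index $k+2$ and label $+$; and the global maximum $max$, of index $n$ and label $+$. The successive isotopies of the construction leave the critical values with, in particular, $f(b)<f(d)$ (we pushed $b$ below $c$ and $d$) and $f(a)<f(c)$ (we pushed $a$ below $c$). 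Since $k\geq 3$ and $2(k+1)<n-1$, there are no critical points of index $k-1$ nor of index $n-1$; hence, for any Morse--Smale pseudo-gradient $X$ adapted to $f$ with boundary operator $\partial$, the only components of $\partial$ that can be non-zero are
\[ \partial a=\alpha b+\beta c,\qquad \partial b=\gamma d,\qquad \partial c=\delta d, \]
for some integers $\alpha,\beta,\gamma,\delta$, while $\partial d=\partial min=\partial max=0$.

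Next I would use the order of the critical values to kill two of these coefficients. A pseudo-gradient flow line is strictly decreasing along $f$, so there is no flow line from $a$ to $c$ (as $f(a)<f(c)$) and none from $b$ to $d$ (as $f(b)<f(d)$); hence $\beta=0$ and $\gamma=0$. Since $\mathbb{Z}\mathcal{C}(f)$ has the homology of $\Sn$, vanishing of $H_k$ gives $\coker(\partial_{k+1})=\mathbb{Z}d/\delta\mathbb{Z}d=0$, so $\delta=\pm1$; then $\ker(\partial_{k+1})=\mathbb{Z}b$, so vanishing of $H_{k+1}$ gives $\mathbb{Z}b/\alpha\mathbb{Z}b=0$, so $\alpha=\pm1$. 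The remaining homology conditions are then automatic.

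Finally I would check the two items of Definition \ref{defP} with $M=I$. The block $(\partial)_{-+}$ has at most two non-zero entries, the $d$-coefficient of $\partial b$ and the $c$-coefficient of $\partial a$, which are $\gamma$ and $\beta$; both vanish, so $(M\partial M^{-1})_{-+}=0$, and by the remark following Definition \ref{defP} the restriction $\partial_{++}$ of $\partial$ to $\mathbb{Z}\mathcal{C}^+(\tilde{f})$ is a chain complex. Concretely, $\mathbb{Z}\mathcal{C}^+(\tilde{f})$ is free on $b$ in degree $k+1$, $a$ in degree $k+2$ and $max$ in degree $n$, and $\partial_{++}$ sends $a\mapsto\alpha b=\pm b$ with all other components zero; its homology is therefore $0$ in every degree except degree $n$, where it is $\mathbb{Z}$, generated by the cycle $max$. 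This is exactly what property $(\mathcal{P})$ demands, so $(\tilde{f},X)$, and hence $\tilde{f}$, has property $(\mathcal{P})$.

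I do not expect a genuine obstacle here; the only point that needs care is reading off the labels and the order of the critical values correctly from Figure \ref{contrexemple1} and from the chain of isotopies, which is why I would fix this bookkeeping before starting the (otherwise immediate) computation.
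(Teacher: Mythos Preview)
Your proof is correct and follows essentially the same approach as the paper: both use the order of the critical values ($f(b)<f(d)$ and $f(a)<f(c)$) to force the off-diagonal coefficients of $\partial$ to vanish, then invoke the homology of $\Sn$ to pin down $\partial a=\pm b$ and $\partial c=\pm d$, and finally read off property $(\mathcal{P})$ with $M=I$. The paper is more terse and also remarks that one particular pseudo-gradient coming from the construction already has this boundary operator, but your direct argument for an arbitrary Morse--Smale pseudo-gradient is exactly the content of the paper's sentence ``regarding the critical values of the points, we see that the boundary operator of any pseudo-gradient $X$ adapted to $f$ is as $\partial$.''
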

\begin{proof}
     From the Curley graph pictured on Figure \ref{contrexemple1} and the construction of $\tilde{f}$, we see that there is a Morse-Smale psueod-gradient adapted to $f$ for which the boundary operator $\partial$ is such that:
     \begin{itemize}
     \item $\partial a = \pm b$
     \item $\partial c = \pm d$
     \end{itemize}
     In fact, as we are on the sphere whose homology vanishes in degree $k$ and $k+1$, and regarding the critical values of the points, we see that the boundary operator of any pseudo-gradient $X$ adapted to $f$ is as $\partial$. 
     With the labels, we see that $\tilde{f}$ has property ($\mathcal{P}$).
\end{proof}     
     
     As a conclusion we have proved:
     \begin{theorem}
     There are Morse germs which have property ($\mathcal{P}$) but that do not extend non-critically.
     \end{theorem}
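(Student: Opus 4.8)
The plan is simply to assemble the two results established in this section into the stated dichotomy. The witness is the Morse germ $\tilde{f}$ constructed above, whose Curley graph is Figure \ref{contrexemple1} and which is obtained from the height function on $\Sn$ by the two births $(a,b)$ and $(c,d)$ of indices $(k+1,k)$ followed by the handle modification that replaces the attaching sphere $\sigma_a$ by $\sigma'_a=\sigma_a+S'$, and finally by applying Lemma \ref{usefullemma} to impose the labels pictured in Figure \ref{contrexemple1}. I would first recall that $\tilde{f}$ has property $(\mathcal{P})$: this is exactly the content of the last lemma. The point is that the homological content of Theorem \ref{th1} is vacuous here, since for any adapted Morse--Smale pseudo-gradient the boundary operator satisfies $\partial a=\pm b$ and $\partial c=\pm d$ (forced by the vanishing of $H_k(\Sn)$ and $H_{k+1}(\Sn)$ together with the ordering of the critical values), so $(\mathbb{Z}\mathcal{C}^+(\tilde f),\partial_{++})$ is acyclic except in top degree and $\partial_{-+}=0$ already; one simply checks that the labels make $\tilde f$ satisfy Definition \ref{defP}.

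Second, I would invoke the proposition proved just above, namely that $\tilde{f}$ admits no non-critical extension. The argument there is the one to highlight as the heart of the matter: assuming an extension $F$ exists, the analysis of Borodzik--Nemethi--Ranicki (Theorems \ref{nullhomo} and \ref{pointmoins}) pins down the diffeomorphism type of the level sets $F^{-1}(\alpha)$ for $\alpha\in\,]f(d),f(a)[$, and forces $F^{-1}(f(a)-\varepsilon)$ to be homotopy equivalent to $\Sigma\vee\mathbb{S}_2^k$ with $\pi_{k+1}(\Sigma)=0$ (using $k+1<n/2$ and Hurewicz). Then Theorem \ref{nullhomo}, applied at the $+$-point $a$, says that $\sigma'_a$ must be null-homotopic in that level set; but by construction $\sigma'_a=\sigma_a+S'$ where $S'$ maps to a sphere $S$ representing the generator of the $\mathbb{Z}/(2)$ summand of $\pi_{k+1}(M_1)$ that is \emph{not} killed when passing the $+$-point $c$, so $\sigma'_a$ remains homotopically nontrivial --- a contradiction. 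Combining the two items proves the theorem.

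The step I expect to be the real obstacle --- and the one that justifies the length of the construction --- is not the bookkeeping of the final theorem but making the witness geometrically legitimate. One must exhibit a homology class that becomes trivial after passing a $+$-labeled critical point yet whose homotopy refinement does not, and this is possible precisely because $\pi_{k+1}(\mathbb{S}^k)\simeq\mathbb{Z}/(2)\neq 0$ for $k\geq 3$; then one must realize the modification $\sigma_a\mapsto\sigma_a+S'$ by an honest generic path of functions, which requires choosing $S$ embedded and disjoint from $\sigma_c$ (Whitney embedding, using $2(k+1)+1<n-1$, together with \cite[Lemma 4.6]{Miln}), lifting it by the flow to $S'$ below $a$, and checking that $\sigma'_a$ stays disjoint from the belt sphere of $c$ so that $a$ can still be slid below $c$ along its descending disk (\cite[Lemma 2.1]{Laud5}). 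All of these are arranged in the construction above, so in the proof of the final theorem itself there is nothing further to do: it is the formal conjunction of "has property $(\mathcal{P})$" and "does not extend non-critically", both already proved.
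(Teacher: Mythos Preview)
Your approach is correct and matches the paper's exactly: the theorem is simply the conjunction of the preceding proposition (the constructed germ $\tilde f$ does not extend non-critically) and the preceding lemma ($\tilde f$ has property $(\mathcal P)$), with $\tilde f$ serving as the witness. Two small slips in your recap, which do not affect the argument: the birth pairs $(a,b)$ and $(c,d)$ have indices $(k+2,k+1)$ and $(k+1,k)$ respectively (not both $(k+1,k)$), and $c$ is labeled $-$, not $+$; also, in the construction $S$ \emph{is} killed when passing above $c$ (that is precisely why $\sigma'_a\sim\sigma_a$ in the level below the original position of $a$), whereas in the hypothetical extension $F$ the point $a$ now sits \emph{below} $c$, so $\sigma'_a$ remains nontrivial in $F^{-1}(f(a)-\varepsilon)$.
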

  
Finally, in order to give a necessary and sufficient condition for a generic germ to get a non-critical extension, we need to care about the kind of obstruction displayed by the previous example, that is, of homotopical nature.
 Unfortunately, it seems to be very difficult to have a precise description of the homotopy classes of the attaching spheres of the critical points of a Morse function $f$ defined on $\Sn$.

  \section{Two indices}
\label{twoindices} 
We consider in this section some special case of germs for which Theorem \ref{th1} gives a sufficient condition when $n \geq 6$.
 For this class of germs, we also give a computable arithmetical condition assuming that points of label $+$ are all above points of label $-$ of same index.
 We use the notations of Theorem \ref{th1}.
 Let $\tilde{f}$ be a Morse germ, $G_k(\tilde{f})$ denotes the group introduced in Definition \ref{group}.
  If $M_k \in G_k(\tilde{f})$, then $N_k$ is its down-left submatrix. 
Recall $p_k$ denotes the number of critical points of index $k$ and label $+$.
 
 \begin{theorem}[Sufficiency when we have two indices]
 Let $n \geq 6$.
 Let $\tilde{f}$ be a non-critical germ such that $f$ is a Morse function with one local maximum, one local minimum, and such that the indices of all the other critical points can take two values, $k$ and $k+1$, with $2\leq k\leq n-3$.
 Let $X$ be an adapted pseudo-gradient which is Morse-Smale. 
 The germ $\tilde{f}$ extends non-critically if and only if $p_{k+1}=p_k$ and there are matrices $M_{k+1}$ in $G_{k+1}(\tilde{f})$ and $M_k$ in $G_k(\tilde{f})$ such that 
 \[\partial_{++,k+1}-\partial_{+-,k+1}N_{k+1}\] is invertible and 
 \[\partial_{-+,k+1}+N_k\partial_{--,k+1}-\partial_{++,k+1}N_{k+1}-N_k\partial_{+-,k+1}N_{k+1}=0,\]
  where $N_{k+1}$ is the down-left submatrix of $M_{k+1}$ (resp. $M_k$). 
 \end{theorem}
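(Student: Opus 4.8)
The plan is to check first that, in this two--index situation, the two displayed conditions are only a spelling--out of property $(\mathcal{P})$ of Definition \ref{defP}, so that the ``only if'' direction follows at once from Theorem \ref{th1}; and then to obtain the ``if'' direction by rerunning the proof of Theorem \ref{best}, the two--index hypothesis being exactly what lets one work with $\tilde f$ itself instead of passing to an ordered germ and relabelling.

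For the reduction: since every critical point of $f$ has index $0$, $k$, $k+1$ or $n$, the graded module $\mathbb{Z}\mathcal{C}^+(\tilde f)$ is non-zero only in degrees $0,k,k+1,n$, and for any $M\in G(\tilde f)$ the twisted differential $(M\partial M^{-1})_{++}$ has a single possibly non-zero component $\partial'_{k+1}=\partial_{++,k+1}-\partial_{+-,k+1}N_{k+1}\colon\mathbb{Z}\mathcal{C}^+_{k+1}(\tilde f)\to\mathbb{Z}\mathcal{C}^+_{k}(\tilde f)$. Its homology is $\mathbb{Z}$ in degree $n$ and vanishes in all degrees $\neq k,k+1$ automatically --- in particular the maximum must be labelled $+$ and the minimum $-$, or $H_n$, resp.\ $H_0$, would already be wrong --- and it vanishes in degrees $k,k+1$ exactly when $\partial'_{k+1}$ is injective and surjective, i.e.\ when $p_{k+1}=p_k$ and $\partial'_{k+1}$ is invertible. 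For index reasons the identity $(M\partial M^{-1})_{-+}=0$ collapses to its degree-$(k+1)$ instance, the second displayed identity. As $-N_k\in G_k(\tilde f)$ whenever $N_k\in G_k(\tilde f)$, the two displayed conditions are thus equivalent to ``$(\tilde f,X)$ has property $(\mathcal{P})$'', and Theorem \ref{th1} gives necessity.

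For sufficiency, assume $(\tilde f,X)$ has property $(\mathcal{P})$ and follow the proof of Theorem \ref{best}. Since $f$ has a single maximum, a single minimum and no point of index $1$ or $n-1$, Remark \ref{groupremark} lets one realise the conjugation of $\partial$ by the matrix $M$ of property $(\mathcal{P})$ through a generic path of adapted pseudo-gradients, without touching $f$ or $\tilde f$: thus there is a Morse--Smale $X_1$ adapted to $f$ whose boundary operator $\partial^1$ satisfies $\partial^1_{-+}=0$, has $\partial^1_{++,k+1}$ invertible, and --- $\mathbb{Z}\mathcal{C}^+(\tilde f)$ being then a subcomplex of the complex $(\mathbb{Z}\mathcal{C}(f),\partial^1)$, which has the homology of $\Sn$, with the right homology, so that $\mathbb{Z}\mathcal{C}^-(\tilde f)$ is the quotient complex, cf.\ Proposition \ref{equivp-} --- has $\partial^1_{--,k+1}$ invertible as well. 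Invertibility of $\partial^1_{++,k+1}$ produces a bijection matching each point of $\mathcal{C}^+_k(\tilde f)$ to a point of $\mathcal{C}^+_{k+1}(\tilde f)$ of strictly greater critical value, and likewise for the label-$-$ points. Using this matching, handle slides --- available between equal-index critical points whose values are ordered the right way, since by Proposition \ref{reebprop} the level sets are connected and Theorem \ref{change} applies --- and non-critical crossings that only ever lower a label-$+$ point, one brings the label-$+$ pairs successively into position of mutual cancellation with the local excellence hypothesis and cancels each non-critically by Lemma \ref{cancellation}; the pair being independent of the remaining critical points, $\partial^1_{--}$ is left intact, and $2\le k\le n-3$, $n\ge6$ supply the room \cite[Theorem 6.4]{Miln} needs to produce the single transverse connecting orbit. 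After $p_k$ cancellations one reaches a germ $\tilde f_2$ with $\mathcal{C}^+(\tilde f_2)=\{\max\}$, whose only non-extremal critical points are the label-$-$ points of indices $k$ and $k+1$, with $\partial_{--,k+1}(\tilde f_2)$ still invertible. Running the same procedure on $-\tilde f_2$ --- where these become label-$+$ points of indices $n-k,n-k-1\in\{2,\dots,n-2\}$, with relevant differential the invertible matrix ${}^t\partial_{--,k+1}(\tilde f_2)$, by Lemma \ref{oppbound} --- kills them all non-critically and yields a trivial germ. As a trivial germ extends non-critically by Proposition \ref{trivial2} and $\tilde f$ is joined to it by a non-critical path of functions, $\tilde f$ extends non-critically.

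I expect the bookkeeping of these non-critical cancellations to be the delicate point: one must arrange at once that each cancelled pair has consecutive critical values --- which forces some repositioning, legitimate only for label-$+$ points being lowered --- that the cancellation fits inside a non-critical path (the content of Lemma \ref{cancellation}, and the reason the label-$+$ points must all be removed before the label-$-$ ones, whence the passage to $-\tilde f_2$), and that the still-relevant part of $\partial^1$ is not disturbed (guaranteed by independence of the cancelled pair and by $\partial^1_{-+}=0$). Checking that the matching from invertibility and all the invoked handle slides and crossings are genuinely available in the two-index case --- and that none of them shrinks $G(\tilde f)$ enough to cost us $M$ --- is precisely where $2\le k\le n-3$, $n\ge6$, one maximum and one minimum enter.
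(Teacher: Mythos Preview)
Your approach is essentially the paper's: both directions identify the displayed conditions with property $(\mathcal{P})$ in this two-index setting, invoke Theorem \ref{th1} for necessity, and for sufficiency realise $M$ by genuine handle slides (via Remark \ref{groupremark}), then cancel the label-$+$ pairs, then the label-$-$ pairs, ending at a trivial germ and Proposition \ref{trivial2}.

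The one place your sketch is thinner than the paper is the cancellation step. Your ``matching from invertibility'' (a nonzero term in the Leibniz expansion of $\det\partial^1_{++,k+1}$) is correct but does not by itself put any pair in position of mutual cancellation: without repositioning you may only perform handle slides that are triangular with respect to the critical-value order, which need not reduce an arbitrary invertible $\partial^1_{++,k+1}$ to a permutation matrix. The paper makes this step explicit: first lower all points of $\mathcal{C}^+_k(\tilde f)$ just above the minimum, then---using $\partial^1_{-+,k+1}=0$, which kills the obstruction to crossing the label-$-$ index-$k$ points---lower all of $\mathcal{C}^+_{k+1}(\tilde f)$ just above them; now Lemma \ref{Freed+} applies to this block and lets one turn $\partial_{++,k+1}$ into $I_{p_k}$, after which each pair cancels by \cite[Theorem 6.4]{Miln} and Lemma \ref{cancellation}. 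Your phrase ``non-critical crossings that only ever lower a label-$+$ point'' is exactly this move, but it should come \emph{before} the handle slides, not alongside the matching. Also, you need not track $\partial^1_{--}$ through the label-$+$ cancellations: once all label-$+$ non-extrema are gone, the remaining label-$-$ complex computes $H_*(\Sn)$ in degrees $k,k+1$, hence is acyclic there, and $\partial_{--,k+1}$ is automatically invertible---this is how the paper proceeds, and it spares you the independence bookkeeping. Your passage to $-\tilde f_2$ for the label-$-$ step is a valid alternative to the paper's direct argument (same moves with label-$-$ points going \emph{up}).
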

 \begin{proof}
 The fact that it is necessary is a direct consequence of Theorem \ref{th1} in this special case.
 
 We show it is sufficient. 
 Let $\tilde{f}$ be such a germ and $X$ a Morse-Smale pseudo-gradient adapted to $f$, with the hypotheses of the theorem right above.
 We do all the handle slides corresponding to the matrices $M_{k+1}$ and $M_k$.
 We get a new pseudo-gradient $X^1$ such that $\partial^1_{++,k+1}$ is an invertible matrix and $\partial^1_{-+,k+1}=0$.
 Up to changing the orientations of the unstable manifolds of the critical points, we assume that $\partial^1_{++,k+1}$ is a matrix in $SL_{p_{k+1}}(\mathbb{Z})$.
 
 By a generic path of functions, we can move down all points of index $k$ and label $+$ below all the other points of index $k$, just above the minimum, such that the path, as a function on $\Sn \times [0,1]$, has no critical point.
 We can do that by keeping the same pseudo-gradient, identifying the critical points through the path, see \cite[Corollary 2.2]{Laud5}.
 
 Moreover, as $\partial^1_{-+,k+1}=0$, there is no gradient lines connecting points in $\mathcal{C}^+_{k+1}(\tilde{f})$ and points in $\mathcal{C}^-_k(\tilde{f}$.
  We can then move all points of index $k+1$ and label $+$ along their descending disks below all points of label $-$ and index $k$.
 We move them just above the points of index $k$ and label $+$.
 We get a Morse function $f^2$ which forms with $f$ the endpoints of a path of functions which has no critical points. 
 Using Theorem \ref{Freed+} and the fact that $SL_{p_{k+1}}(\mathbb{Z})$ is generated by transvections, we can make all the handle slides we want between points of index $k+1$ and label $+$ to get a pseudo-gradient $X^2$ with $\partial^2_{++,k+1}=I_{p_{k+1}}$. 
 We can kill non-critically all points of label $+$ but the maximum, due to $2\leq k \leq n-2$, using \cite[Theorem 6.4]{Miln} about cancellation of critical points with right indices.
  We get a Morse function $f^3$ on the sphere whose critical points which are not extrema all are of label $-$.
 In terms of Morse homology, we have a complex:
 \[0\rightarrow \mathbb{Z}\mathcal{C}^-_{k+1}(\widetilde{f^3}) \rightarrow \mathbb{Z}\mathcal{C}^-_{k}(\widetilde{f^3})\rightarrow 0,\]
 which is acyclic since we are on the sphere and that $2\leq k \leq n-3$.
 It implies that $\partial^3_{--,k+1}$ is an invertible matrix. 
 There again, using Theorem \ref{Freed+} -- but now with points of label $-$ --, we can assume that we have a pseudo-gradient vector field $X^4$ adapted to $f^3$ for which $\partial^4_{--,k+1}=I_{q_{k+1}}$.
 Using \cite[Theorem 6.4]{Miln}, we can kill all points of label $-$.
 We get a trivial germ, and we know from Theorem \ref{trivial} that we can extend it without critical points.
 The theorem is then proved.
 \end{proof}

Let $\tilde{f}$ be a Morse germ.
 From now on, we assume that all critical points of $f$ of index $k+1$ and label $+$ are above critical points of label $-$. Thus $G_{k+1}(\tilde{f})$ can be identified to $\Hom (\mathbb{Z}\mathcal{C}^+_{k+1}(\tilde{f}), \mathbb{Z}\mathcal{C}^-_{k+1}(\tilde{f}))$, via the identification of an element $M_{k+1}$ in $G_{k+1}(\tilde{f})$ to the down left submatrix of its restriction to $\mathbb{Z}\mathcal{C}(f)$.
 
In the rest of the article we will prove the following theorem:

\begin{nntheo}[Theorem \ref{2indices}]
With these hypotheses, $\tilde{f}$ extends non-critically to a function $F$ on the disk $\Dn$ if and only if the two following conditions hold:
\begin{itemize}
\item  $p_k$ is the rank of $\mathbb{Z}\mathcal{C}_{k+1}^+(\tilde{f})$, 
\item $\det (\partial_{++,k+1}) \equiv \pm 1 ~[d_1(\partial_{+-,k+1})].$
\end{itemize}
\end{nntheo}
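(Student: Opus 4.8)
The plan is to read the statement off from the sufficiency theorem proved at the start of this section, so that the whole question reduces to an arithmetic fact about the unimodular matrix $\partial_{k+1}$. Two preliminary observations. Since $f$ has one maximum, one minimum and all its other critical points of index $k$ or $k+1$, the Morse complex of $\Sn$ is concentrated in degrees $0,k,k+1,n$; as $\partial_n=0$ (because $H_n(\Sn)=\mathbb{Z}$ has full rank) and $H_k(\Sn)=H_{k+1}(\Sn)=0$, the differential $\partial_{k+1}$ is an isomorphism of $\mathbb{Z}$-modules, hence $\det\partial_{k+1}=\pm1$, $p_{k+1}+q_{k+1}=p_k+q_k$, and $\partial_{k+1}$ is a unimodular square matrix. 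Moreover, by hypothesis every $+$-point of index $k$ (resp. $k+1$) lies above every $-$-point of that index, so $G_k(\tilde f)$ and $G_{k+1}(\tilde f)$ are the groups of \emph{all} matrices $\left(\begin{smallmatrix}I&0\\ N&I\end{smallmatrix}\right)$ with $N$ an arbitrary integer matrix. Write $B:=\partial_{++,k+1}$, $C:=\partial_{+-,k+1}$. The sufficiency theorem then asserts that $\tilde f$ extends non-critically if and only if $p_k=p_{k+1}$ and there are integer matrices $N_{k+1},N_k$ with $A:=B-CN_{k+1}$ invertible over $\mathbb{Z}$ and with the $-+$-equation of Definition~\ref{defP} satisfied — the only instance of that equation which is not automatic here being the one in degree $k+1$. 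But that instance rearranges to $N_kA=\partial_{--,k+1}N_{k+1}-\partial_{-+,k+1}$, which is solved by the integer matrix $N_k:=(\partial_{--,k+1}N_{k+1}-\partial_{-+,k+1})A^{-1}$ as soon as $A$ is unimodular. Hence the criterion collapses to: $p_k=p_{k+1}$ and there is an integer matrix $N$ with $\det(B-CN)=\pm1$.

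Necessity of the two conditions of the theorem is then immediate: $p_k=p_{k+1}$ is needed for $B$ to be square, and since every entry of $CN$ is divisible by $d:=d_1(C)$ one has $\det(B-CN)\equiv\det B\pmod d$, so $\det(B-CN)=\pm1$ forces $\det B\equiv\pm1\pmod d$. For sufficiency I would isolate the arithmetic core as a lemma: \emph{if $\left(\begin{smallmatrix}B&C\\ \ast&\ast\end{smallmatrix}\right)$ is a unimodular matrix with $B$ a square $p\times p$ block, then $\{\det(B-CN):N\ \text{integer}\}$ equals precisely the coset $\det B+d_1(C)\,\mathbb{Z}$}. Applying this to the ambient unimodular matrix $\partial_{k+1}$, the hypothesis $\det B\equiv\pm1\pmod d$ says exactly that $\pm1$ lies in that coset, hence is attained by some $N$; together with the previous paragraph this proves the theorem. (For $p=q=1$ the lemma is nothing but the observation $7\equiv\pm1\pmod 5$ used for the germ $\tilde f_0$ in Subsection~\ref{barra}.)

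To prove the lemma, the inclusion $\subseteq$ is the congruence just used. For $\supseteq$: (i) put $C$ into Smith normal form, absorbing the two unimodular factors into the ambient matrix and into $N$ — this changes $\det B$ only by a sign and does not change $d_1(C)$, so one may assume $C=\mathrm{diag}(d_1,\dots,d_r)$ padded by zero rows and columns, with $d_1\mid\cdots\mid d_r$ and $r=\mathrm{rank}\,C$; (ii) the rows of $\left(\begin{smallmatrix}B&C\end{smallmatrix}\right)$ being the first $p$ rows of a unimodular matrix, they are primitive, and column operations on the first $p$ columns (which leave $C$ untouched and are harmless) bring $B$ into block form $\left(\begin{smallmatrix}B_1'&\ast\\ 0&I_{p-r}\end{smallmatrix}\right)$ with $B_1'$ square of size $r$; this reduces the lemma to showing that, perturbing the $i$-th row of $B_1'$ inside $d_i\mathbb{Z}^r$, one can reach every determinant congruent to $\det B_1'$ modulo $d_1$, the ambient unimodular matrix restricting to one of size $r+q$ and the same block shape; (iii) establish this last statement, showing the attainable determinants exhaust the full coset, the essential point being that the gcds of minors of $B_1'$ which would a priori obstruct reaching $\pm1$ are forced to be coprime to the relevant $d_i$ by the primitivity of the remaining rows and columns of the ambient unimodular matrix.

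Step (iii) is where the real work is, and I expect it to be the main obstacle. The obstruction is genuine: when $C$ has elementary divisors strictly larger than $d_1$, moving one row of $B_1'$ by a multiple of the corresponding $d_i$ changes the determinant by that multiple of $d_i$ times a gcd of cofactors of $B_1'$, and this gcd need not be $1$, so one cannot conclude naively that $\pm1$ is hit; one must genuinely re-use the unimodularity of $\partial_{k+1}$ (equivalently, the acyclicity of the Morse complex of $\Sn$ in degrees $k$ and $k+1$). It is exactly this that excludes the configuration $\partial_{+-,k+1}=\mathrm{diag}(1,5)$, $\partial_{++,k+1}=0$ — for which no $N$ achieves $\det(B-CN)=\pm1$ — from arising when $\partial_{k+1}$ is invertible. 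I expect (iii) to be carried out by an induction in which, at each stage, one normalises the first row of $B_1'$ to a standard basis vector (using primitivity of an appropriate row of the ambient matrix to arrange that this row of $B_1'$ has gcd $1$) and then peels off a row and a column; the delicate point is to keep control of which congruence the smaller unimodular block inherits.
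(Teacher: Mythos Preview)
Your reduction is exactly the paper's: via the two-index sufficiency theorem the question becomes whether there is an integer matrix $N$ with $\det(\partial_{++,k+1}-\partial_{+-,k+1}N)=\pm1$, and your observation that the $-+$-equation is then automatically solvable for $N_k$ (since $A$ is unimodular) is correct and matches how the paper finishes --- it makes $\partial_{++}$ the identity by handle slides and then reads $N_k$ off directly. Necessity is likewise the same.

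For sufficiency the paper isolates the same arithmetic core, but packages it as two statements rather than your single coset lemma. First, Lemma~\ref{unim}: with $B$ upper triangular and $C$ in Smith normal form with diagonal $(c_1,\ldots,c_p)$, there exists $N$ with $B+CN$ unimodular if and only if $\det B\equiv\pm1\pmod{c_1}$ \emph{and} $\gcd(b_j,c_j)=1$ for every $j$. Second, Proposition~\ref{simplprop}: that gcd condition is equivalent to the surjectivity of $(B\ \ C)$ --- which is automatic here because $(\partial_{++,k+1}\ \ \partial_{+-,k+1})$ is the top block-row of the unimodular matrix $\partial_{k+1}$. Combined, this is exactly your lemma, restricted to the conclusion ``$\pm1$ is attained'', which is all that is needed.

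The gap in your proposal is that step (iii) is only sketched. The paper does not use the induction you outline; its proof of Lemma~\ref{unim} is an explicit B\'ezout computation. Using $u_jb_j+v_jc_j=1$ row by row, one reduces $B$ (by column operations on $B$ and additions of $CY$) first to $\mathrm{diag}(b_1,\ldots,b_p)$, and then by a telescoping sequence of $2\times2$ row/column manipulations to $\mathrm{diag}(\pm\det B,1,\ldots,1)$; a final subtraction in the $(1,1)$ slot gives $\pm I$. This bypasses the bookkeeping of ``which congruence the smaller block inherits'': the coprimality $\gcd(b_j,c_j)=1$ supplied by surjectivity of $(B\ \ C)$ is precisely what makes each B\'ezout step available, and is the exact form in which unimodularity of $\partial_{k+1}$ is re-used --- the phenomenon you correctly anticipated in your discussion of step (iii).
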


Using Theorem \ref{th1}, $\tilde{f}$ extends non-critically if and only if there is a matrix $N$ such that $\partial_{++,k+1}+\partial_{+-,k+1}N$ is invertible (it implies in particular that $\partial_{++,k+1}$ is square and thus that $p_{k+1}=p_k$).
We then are interested in the following problem.

Let $B$ be an integral matrix of size $p\times p$ and $C$ be an integral matrix of size $p\times r$ where $r$ is any integer, even $0$, in which case the matrix is empty. 

\begin{center}\textbf{Problem $\Omega$:} \emph{When is there a matrix $N$ such that $B+CN$ is invertible ?}
\end{center}
The problem is invariant under the following actions on $(B,C)$.
 The matrices $U$, $V$ and $W$ in the following are understood to be matrices in, respectively, $SL_p(\mathbb{Z})$, $SL_p(\mathbb{Z})$ and $SL_r(\mathbb{Z})$:
 \begin{itemize}
 \item $(B,C)\to (UB,UC)$,
 \item $(B,C) \to (BV,C)$,
 \item $(B,C)\to (B,CW)$. 
 \end{itemize}

This invariance under unimodular matrices' actions allows us to consider the Smith normal form of $C$. 
We recall the definitions below, but we need to introduce the notion of determinantal divisors first.
 See \cite[p.25]{Morr}.
\begin{definition}[Determinantal divisors]
Let $M$ be an integral matrix of size $p\times q$. 
For an integer $r$, let $\mathcal{I}_k(r)$ be the set of $k$-tuples of $\{1,...,r\}$.
For $\omega \in \mathcal{I}_k(p)$ and $\tau \in \mathcal{I}_k(q)$, let $M(\omega , \tau)$ be the submatrix of $M$ whose column indices are in $\tau$ and row indices are in $\omega$. 
Let $m(\omega ,\tau)$ be the determinant of $M(\omega , \tau )$.
The \emph{$k$-th determinantal factor} $d_k(M)$ is the g.c.d. of the family $(m(\omega , \tau ))_{\omega \in \mathcal{I}_k(p), \tau \in \mathcal{I}_k(q)}$.
We set $d_0(M):=1$ as a convention, and $d_r(M)=0$ for $r$ higher than $\max(q,p)$.
\end{definition}

The following theorem-definition is taken from \cite[Theorem II.9]{Morr}: 
\begin{theorem}[Existence and definition of the Smith Normal Form (S.N.F.) of an integral matrix ]
Let $M$ be an integral matrix of size $p\times q$. There are unimodular matrices $U$ and $V$ such that the matrix $UMV$ is of the form:
\[\begin{pmatrix}
s_1(M) & 0      & \ldots & 0       \\
0      & s_2(M) &        & 0       \\
\vdots & 0      & \ddots & \vdots  \\
0      & 0      &   0    & s_q(M)  \\
0      & \ldots & \ldots & 0       \\
\vdots &        &        & \vdots  \\
0      & \ldots  & \ldots & 0       \\
\end{pmatrix}\]
if $p \leq q$ or 
\[\begin{pmatrix}
s_1(M) & 0      & \ldots & 0      & 0      & \ldots & 0      \\
0      & s_2(M) &        & 0      & \vdots &        & \vdots \\
\vdots & 0      & \ddots & \vdots & \vdots &        & \vdots \\
0      & 0      &   0    & s_p(M) & 0      & \ldots & 0      \\
\end{pmatrix}\]
if $q\leq p$, where $s_k(M):= \frac{d_k(M)}{d_{k-1}(M)}$ for $k\geq 1$.  
Moreover, $s_k (M)$ divides $s_{k+1}(M)$ for all $k$.
We say that $UMV$ is the (unique) \emph{Smith Normal Form} (abridged S.N.F.) of $M$.
\end{theorem}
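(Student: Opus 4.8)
The plan is to prove the statement in two independent halves: first \emph{existence} of some diagonal form $UMV = \operatorname{diag}(t_1,\dots,t_\ell,0,\dots,0)$ with $t_i > 0$ and $t_1 \mid t_2 \mid \cdots \mid t_\ell$ obtained by unimodular row and column operations, and then \emph{identification and uniqueness}: the diagonal entries of any such form are forced to equal $d_k(M)/d_{k-1}(M)$, which pins the form down completely.

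For existence I would argue by induction on $\min(p,q)$, the base case $\min(p,q)=0$ being the empty matrix, and I assume $M\neq 0$ (otherwise $U=I_p$, $V=I_q$ works). Among all matrices obtained from $M$ by left multiplication by elements of $GL_p(\mathbb{Z})$ and right multiplication by elements of $GL_q(\mathbb{Z})$, pick one, $M'$, having an entry of minimal positive absolute value, and move that entry to position $(1,1)$ by permuting rows and columns. Write $a := M'_{1,1}$. If $a$ does not divide some entry $M'_{1,j}$ of the first row, Euclidean division $M'_{1,j} = ca + s$ with $0 < s < |a|$ together with the column operation $C_j \mapsto C_j - cC_1$ produces an entry of absolute value $s<|a|$, contradicting minimality; hence $a$ divides the whole first row, and symmetrically the whole first column. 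Integral column and row operations then clear these, putting $M'$ in block form $\begin{pmatrix} a & 0 \\ 0 & M'' \end{pmatrix}$ with $M''$ of size $(p-1)\times(q-1)$. If $a$ fails to divide some entry $M''_{i,j}$, adding the row containing $M''_{i,j}$ to the first row brings $M''_{i,j}$ into the first row; since $a\nmid M''_{i,j}$, the first-row reduction above now strictly decreases the minimal positive absolute value, so after finitely many iterations $a$ divides every entry of the lower block. Applying the inductive hypothesis to $M''$ and observing that all further operations only recombine rows and columns of $M''$ (so $a$ keeps dividing every entry that appears) yields a diagonal form with $a=t_1 \mid t_2 \mid \cdots$.

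For the identification, recall $d_k(M)$ (and $d_0=1$) from the statement, and note that $d_k$ is invariant under each of the three unimodular operations. This is the Cauchy--Binet formula: every $k\times k$ minor of $UM$, resp.\ of $MV$, is a $\mathbb{Z}$-linear combination of $k\times k$ minors of $M$, so $d_k(M)\mid d_k(UMV)$; applying this to $U^{-1}(UMV)V^{-1}=M$ gives the reverse divisibility, hence $d_k(UMV)=d_k(M)$ as both are taken nonnegative. Now for $D:=UMV=\operatorname{diag}(t_1,\dots,t_\ell,0,\dots,0)$ as produced above, a $k\times k$ submatrix of $D$ has nonzero determinant only if it is, up to sign, $\operatorname{diag}(t_{i_1},\dots,t_{i_k})$ with $i_1<\dots<i_k\le\ell$, and then its determinant $t_{i_1}\cdots t_{i_k}$ is a multiple of $t_1\cdots t_k$ by the divisibility chain, with equality attained at $(i_1,\dots,i_k)=(1,\dots,k)$; hence $d_k(D)=t_1 t_2 \cdots t_k$. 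Therefore $t_k = d_k(D)/d_{k-1}(D) = d_k(M)/d_{k-1}(M) =: s_k(M)$, which at once gives the formula for $s_k$, the divisibility $s_k \mid s_{k+1}$ (inherited from $t_k \mid t_{k+1}$), and uniqueness: the diagonal of the normal form is an invariant of $M$, so the form itself, for the fixed ordering of the diagonal, is unique.

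The main obstacle is setting up the termination of the reduction in the existence step: one must use as monovariant the minimal positive absolute value of an entry taken over all unimodular transforms of the current matrix, and verify that each of the three kinds of reduction step — clearing the first row, clearing the first column, killing a divisibility failure in the lower block — either leaves this quantity unchanged or strictly decreases it, so that the process halts with a pivot dividing every remaining entry. The Cauchy--Binet bookkeeping is routine but must be carried out over $\mathbb{Z}$, so that one genuinely obtains divisibility of the gcd's rather than merely of individual minors.
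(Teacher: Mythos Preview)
Your proof is correct and follows the standard textbook argument for Smith Normal Form. The paper itself does not give a proof of this theorem: it simply cites \cite[Theorem II.9]{Morr} (Newman, \emph{Integral Matrices}) for the existence of $U$ and $V$, so there is no in-paper argument to compare against. Your approach --- minimal-entry pivot with Euclidean reduction for existence, Cauchy--Binet invariance of the $d_k$ for uniqueness --- is essentially the classical proof one finds in that reference.

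One small presentational wrinkle: you choose $M'$ with \emph{globally} minimal positive entry among all unimodular transforms, in which case the failure of $a$ to divide some entry of $M''$ is an outright contradiction rather than the start of an iterative descent; the phrase ``after finitely many iterations'' belongs to the alternative argument where one only takes a local minimum and repeats. Either version works, but as written the two are slightly conflated. This is cosmetic and does not affect correctness.
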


The proof of the existence of the unimodular matrices $U$ and $V$ can be found in \cite[Theorem II.9]{Morr}.
From the last definition, we have the following proposition:

\begin{proposition}
Let $p$ and $q$ be two integers.
A matrix $B$ of size $p\times q$ is surjective if and only if $d_p(B)=1$. 

\end{proposition}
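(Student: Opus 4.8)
The plan is to reduce everything to the Smith normal form of $B$. This is legitimate because both properties in the statement --- being surjective as a $\mathbb{Z}$-linear map $\mathbb{Z}^q \to \mathbb{Z}^p$, and having $d_p(B) = 1$ --- are invariant under replacing $B$ by $UBV$ with $U \in GL_p(\mathbb{Z})$ and $V \in GL_q(\mathbb{Z})$: for surjectivity because $U$ and $V$ are automorphisms of $\mathbb{Z}^p$ and $\mathbb{Z}^q$, and for the determinantal factor because such a change of basis only permutes and $\mathbb{Z}$-recombines the $k \times k$ minors, leaving their g.c.d.\ unchanged. First I would dispose of the degenerate case: if $q < p$, then $B(\mathbb{Z}^q)$ has rank at most $q < p$ over $\mathbb{Q}$, so $B$ is not surjective, while $d_p(B) = 0$ since $B$ has no $p \times p$ submatrix; hence the equivalence holds trivially in that case, and I may assume $q \geq p$ from now on.

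Next I would apply the existence theorem for the S.N.F.\ recalled above: there are unimodular $U$ and $V$ with $UBV = D$, where $D$ is the $p \times q$ diagonal-type matrix whose diagonal entries are $s_1(B), s_2(B), \dots, s_p(B)$ (the remaining $q - p$ columns being zero) and with $s_1(B) \mid s_2(B) \mid \cdots \mid s_p(B)$. Since $U$ and $V$ are isomorphisms, $B$ is surjective if and only if $D$ is, and the image of $D$ is the submodule $s_1(B)\mathbb{Z} \oplus \cdots \oplus s_p(B)\mathbb{Z} \subseteq \mathbb{Z}^p$. This submodule is all of $\mathbb{Z}^p$ exactly when each $s_k(B)$ is a unit of $\mathbb{Z}$, i.e., equal to $1$ once the elementary divisors are normalized to be nonnegative, as in the cited reference.

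Finally I would translate ``all $s_k(B) = 1$'' into ``$d_p(B) = 1$'': from $s_k(B) = d_k(B)/d_{k-1}(B)$ and $d_0(B) = 1$, a telescoping product gives $d_p(B) = s_1(B) \cdots s_p(B)$, so $d_p(B) = 1$ precisely when this product of nonnegative integers equals $1$, that is, when every $s_k(B) = 1$ (equivalently, since $s_1(B) \mid \cdots \mid s_p(B)$, when $s_p(B) = 1$). Chaining the two equivalences yields the proposition. I do not foresee a real obstacle here; the only points requiring a line of care are the non-square case $q < p$ and the harmless sign ambiguity in the $d_k$'s, both handled above.
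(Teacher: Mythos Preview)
Your proof is correct and follows essentially the same approach as the paper's: reduce to the Smith normal form, observe that the diagonal-form matrix is surjective if and only if every $s_j(B)=1$, and relate this to $d_p(B)=1$ via $d_p(B)=s_1(B)\cdots s_p(B)$. The paper's argument is much terser and does not explicitly treat the case $q<p$ or spell out the telescoping product, so your version is in fact more complete, but the route is the same.
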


\begin{proof}
$B$ is surjective if and only if its Smith normal form is surjective. 
It is the case if and only if $s_j(B)=1$ for all $j$ from $1$ to $p$.
It implies in particular that $d_j(B)=1$ for all $j$ between $1$ and $p$.
\end{proof}

We also recall the following proposition:
\begin{theorem}[Hermite normal form of an integral matrix]
Let $B$ be an integral matrix in $\Mp$. There is a unimodular matrix $U$ such that $BU$ is upper triangular.
\end{theorem}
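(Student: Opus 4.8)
The plan is to prove the statement by induction on $p$, using only column operations on $B$; these correspond to right multiplication by elementary matrices of $GL_p(\mathbb{Z})$, and any product of such matrices is unimodular. The case $p=1$ is immediate, since every $1\times 1$ matrix is (trivially) upper triangular, so one may take $U=1$.

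For the inductive step I would first deal with the last row $(b_{p1},\dots,b_{pp})$ of $B$. Column operations act on this row exactly like the natural right action of $GL_p(\mathbb{Z})$ on a row vector in $\mathbb{Z}^p$, and by the Euclidean algorithm such a row vector can be carried by a unimodular column transformation $U_1$ to $(0,\dots,0,d)$, where $d=\gcd(b_{p1},\dots,b_{pp})$ (and $d=0$ when the row vanishes). Concretely, one repeatedly replaces a pair of entries $(b_i,b_j)$ with $(b_i,\,b_j-q\,b_i)$ obtained from one division-with-remainder step, always operating on the nonzero entry of least absolute value, which is a finite descent; a final transposition of columns moves the surviving entry into column $p$. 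After this step
\[ BU_1=\begin{pmatrix} B' & v \\ 0 & d \end{pmatrix}, \]
with $B'\in\mathcal{M}_{p-1}(\mathbb{Z})$, $v\in\mathbb{Z}^{p-1}$, and last row $(0,\dots,0,d)$.

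Next I would apply the inductive hypothesis to $B'$, getting a unimodular $U'\in GL_{p-1}(\mathbb{Z})$ with $B'U'$ upper triangular, and set $U_2$ to be the block-diagonal matrix with blocks $U'$ and $1$, which lies in $GL_p(\mathbb{Z})$. Since $U_2$ recombines only the first $p-1$ columns, each of which already carries a zero in the last row, the last row of $BU_1U_2$ is still $(0,\dots,0,d)$, and
\[ BU_1U_2=\begin{pmatrix} B'U' & v \\ 0 & d \end{pmatrix} \]
is upper triangular because $B'U'$ is. Taking $U=U_1U_2$ completes the induction. I do not expect a genuine obstacle here; the only points requiring care are the reduction of a single row vector to $(0,\dots,0,d)$ by a unimodular transformation and the observation that the column operations fixing the top-left block do not destroy the zeros just produced in the last row. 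Both are routine, and this is essentially why the Hermite normal form is easier to obtain than the Smith normal form already established above.
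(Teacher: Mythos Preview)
Your argument is correct and entirely standard: reduce the last row to $(0,\dots,0,d)$ by column operations implementing the Euclidean algorithm, then recurse on the top-left $(p-1)\times(p-1)$ block. The only minor point to watch is that column transpositions have determinant $-1$, but since unimodular means invertible over $\mathbb{Z}$ (determinant $\pm 1$), this is harmless.

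As for comparison, the paper does not actually give a proof of this theorem; it simply cites \cite[Theorem II.2]{Morr} and moves on, since the Hermite normal form is a well-known classical result used here only as a tool in the proof of Lemma~\ref{unim}. Your self-contained inductive argument is exactly the sort of proof one finds in such a reference, so there is no substantive difference in approach---you have just filled in what the paper delegates to the literature.
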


See \cite[Theorem II.2]{Morr} for a proof.
The theorem also gives relations between the coefficients of $BU$, but we do not need that here. 

If $M$ is a matrix in $\Mp$, we denote by $M_k$ its $k$-th row, and we use the notation \[M= \begin{pmatrix}
M_1 \\
\vdots \\
M_p \\
\end{pmatrix}.\]

We give a first answer to Problem $\Omega$ which will be improved later:
\begin{lemma}
\label{unim}
Let $B$ and $C$ be two integral matrices, both in $\Mp$.
Assume $B$ is upper triangular and $C$ is in S.N.F.
Denote by $b_j$ (resp. $c_j$) the $j$-th diagonal coefficient of $B$ (resp. $C$).
Thus $d_1(C)=c_1$.
 There is a matrix $N$ such that $B+CN$ is invertible in $\Mp$ if and only if \[\det (B) \equiv \pm 1 ~[c_1],\]
and \[\gcd (b_j, c_j)=1 \text{ for all }j.\]
\end{lemma}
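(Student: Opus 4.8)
I would prove the two implications separately, working throughout with the $p \times p$ matrix $B+CN$ and trying to make it unimodular by choosing $N$ row-by-row. Since $C$ is in S.N.F., writing $C = \mathrm{diag}(c_1,\dots,c_p)$ (with possibly some $c_j=0$), the $j$-th row of $B+CN$ is $B_j + c_j N_j$, where $N_j$ is the $j$-th row of $N$. So the reachable set of matrices is exactly $\{A : A_j \equiv B_j \pmod{c_j} \text{ for each } j\}$, i.e. row $j$ can be modified arbitrarily modulo $c_j$. The question ``is there a unimodular matrix in this coset'' is the heart of the argument, and I expect the Hermite/Smith normal form machinery recalled just above to be exactly the right tool.

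\textbf{Necessity.} Suppose $N$ exists with $\det(B+CN) = \pm 1$. Expanding the determinant along the rows and using $c_1 \mid c_j$ for all $j$ (a property of the S.N.F.), every term in $\det(B+CN) - \det(B)$ that involves at least one entry $c_j N_{j,\ell}$ is divisible by $c_1$; more carefully, reducing mod $c_1$ one gets $\det(B+CN) \equiv \det(B) \pmod{c_1}$ since each row is congruent to $B_j$ mod $c_j$ and hence mod $c_1$. This gives $\det(B) \equiv \pm 1 \pmod{c_1}$. For the gcd condition: fix $j$ and reduce the whole matrix $B+CN$ modulo $\gcd(b_j,c_j)=:g_j$. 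Row $j$ becomes $\equiv B_j \pmod{g_j}$ (as $g_j \mid c_j$), and since $B$ is upper triangular with diagonal $b_j$, the entry $(B_j)_j = b_j \equiv 0 \pmod{g_j}$ while $(B_j)_\ell = 0$ for $\ell < j$; so mod $g_j$ the matrix is block upper-triangular with a zero in the $(j,j)$ spot of a triangular block, forcing $\det \equiv 0 \pmod{g_j}$. Since $\det(B+CN)=\pm1$, we need $g_j = 1$, i.e. $\gcd(b_j,c_j)=1$.

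\textbf{Sufficiency.} Conversely, assume $\det(B) \equiv \pm 1 \pmod{c_1}$ and $\gcd(b_j,c_j)=1$ for all $j$. I would construct $N$ inductively. The idea: process rows from $j=p$ down to $j=1$ (or $1$ up to $p$, whichever makes the triangular structure cleanest), at each stage choosing $N_j$ so that after clearing subsequent rows the determinant of the remaining principal block is a unit modulo the appropriate modulus, using at each step that $\gcd(b_j,c_j)=1$ allows us to adjust the diagonal entry of row $j$ to be coprime to everything relevant, and using $\det B \equiv \pm1 \pmod{c_1}$ to fix the very last (or first) row. Concretely: by elementary column operations over $\mathbb{Z}$ (which don't affect the coset structure, since column operations act on the right and $C N \mapsto C N V$ still ranges over all matrices with row $j$ free mod $c_j$), reduce $B+CN$ toward upper-triangular Hermite form; the diagonal entries can be taken to be $b_j \bmod c_j$ adjusted by multiples of $c_j$, and $\gcd(b_j,c_j)=1$ lets us make each diagonal entry $\pm 1$ except possibly the last, whose value is constrained to be $\equiv \det(B) \bmod c_1 \equiv \pm 1$, hence can also be made $\pm 1$. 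The product of the diagonal is then $\pm1$, so the matrix is unimodular.

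\textbf{Main obstacle.} The subtle point is the sufficiency direction: one must choose all the $N_j$ \emph{simultaneously consistently}, respecting that row $j$ is only free modulo $c_j$ and the $c_j$ form a divisibility chain. The clean way is probably to argue by induction on $p$: split off the last row and column, use $\gcd(b_p,c_p)=1$ to handle that row, and check that the induction hypothesis applies to the $(p-1)\times(p-1)$ principal submatrix — but one has to verify that the inductive hypotheses ($\det$ of the smaller $B$ being $\pm 1$ mod $c_1$, and the smaller gcd conditions) are preserved, which requires some care with how column-clearing over $\mathbb{Z}$ interacts with the diagonal of the S.N.F.\ of $C$. I would expect to spend most of the proof nailing down exactly this bookkeeping, possibly by reformulating everything as a statement about when the coset $B + C\cdot M_{p}(\mathbb{Z})$ (with the row-wise modular structure) meets $GL_p(\mathbb{Z})$, and invoking the Hermite normal form theorem quoted above to put $B+CN$ in triangular form at the end.
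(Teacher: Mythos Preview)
Your necessity argument is essentially correct and is actually a bit cleaner than the paper's. The paper reduces modulo $c_{k+1}$ successively and computes the determinant explicitly row by row; you instead reduce modulo $g_j=\gcd(b_j,c_j)$ and exploit block triangularity. One small point you should make explicit: to get the block upper-triangular structure mod $g_j$ you need not only row $j$ but all rows $i\geq j$ to reduce to $B_i$, and this uses the S.N.F.\ divisibility $g_j\mid c_j\mid c_i$. Once that is said, your argument that the lower-right block is upper triangular with $(1,1)$ entry $b_j\equiv 0$ is fine.

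Your sufficiency argument, however, has a genuine gap. You write that ``$\gcd(b_j,c_j)=1$ lets us make each diagonal entry $\pm 1$'', but this is simply false with the operations you allow: row $j$ can only be changed by multiples of $c_j$, and $\gcd(b_j,c_j)=1$ does \emph{not} imply $b_j\equiv\pm 1\pmod{c_j}$ (take $b_j=2$, $c_j=5$). Column operations do not rescue this directly, since they act on all rows at once and you have no control over the off-diagonal entries of $B$. So the row-by-row strategy you sketch cannot produce $\pm 1$ on each diagonal entry independently.

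What the paper does instead is a two-stage reduction. First, using the B\'ezout relations $u_jb_j+v_jc_j=1$, it clears the off-diagonal entries of $B$ via operations of the form $B\mapsto BU+CY$, reducing to the case $B=\mathrm{diag}(b_1,\dots,b_p)$. Second (and this is the key idea you are missing), it does \emph{not} try to make each $b_j$ into $\pm 1$ separately; rather it performs explicit row/column operations that \emph{merge} adjacent diagonal entries, transforming $\mathrm{diag}(\dots,b_{p-1},b_p)$ into $\mathrm{diag}(\dots,-b_{p-1}b_p,1)$ while keeping $C$ in S.N.F. The divisibility $c_{p-1}\mid c_p$ together with $\gcd(b_p,c_p)=1$ ensures $\gcd(b_{p-1}b_p,c_{p-1})=1$, so the hypotheses propagate and one iterates down to $\mathrm{diag}(\pm\det B,1,\dots,1)$. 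Only at the very end is the condition $\det B\equiv\pm 1\pmod{c_1}$ invoked, to fix the single remaining entry. Your ``Main obstacle'' paragraph gestures toward an induction of this type, but the earlier sketch does not set it up correctly; you would need to replace the ``make each diagonal $\pm 1$'' step by this accumulation trick.
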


\begin{remark}
The condition 
\[ \gcd (b_j, c_j)=1 \text{ for all }j,\]
is equivalent to $d_p(B~C)=1$, see Proposition \ref{simplprop} below, where $(B~C)$ is the concatenation of $B$ and $C$.
\end{remark}

\begin{proof}
If there is such a matrix $N$, then $\det(B+CN)= \pm 1$, and projecting the matrices to $\mathcal{M}_p(\mathbb{Z}/(c_1))$ leads to the equation \[\det (B) \equiv \pm 1 ~[c_1].\]
Notice that the $(i,j)$ coefficient of $CN$ is given by $N_{i,j}c_i$. 
Recall that $c_k$ divides $c_{k+1}$ for all $k$.
Then, in $\mathbb{Z}/(c_2)$, all rows of $CN$ project to $0$ except the first one.
Thus $B+CN$ projects to a matrix with coefficients in $\mathbb{Z}/(c_2)$ which is
\[\begin{pmatrix}
 c_1N_1+B_1 \\
 B_2 \\
 \vdots \\
 B_p \\
\end{pmatrix}.\]
Using the linearity of the determinant with respect to the first row, we get: 
\[\pm 1 \equiv \det (B) + c_1 \det \begin{pmatrix}
N_1 \\
B_2 \\
\vdots \\
B_p \\
\end{pmatrix}  ~~ [c_2],\]
But $B$ is upper triangular, so 
\[\det \begin{pmatrix}
N_1 \\
B_2 \\
\vdots \\
B_p \\
\end{pmatrix} = N_{1,1}b_2...b_p \] 
and
\[\det (B)= b_1...b_p.\]
If $B+CN$ is in $GL_p(\mathbb{Z})$, we then have that
\[\pm 1 \equiv b_2...b_p(b_1+c_1 N_{1,1}) ~~ [c_2], \]
implying that $b_j$ is coprime with $c_2$ for any $j$ between $2$ and $p$.
In the same way, we have that 
\[CN \equiv \begin{pmatrix}
c_1 N_1 \\
\vdots \\
c_k N_k \\
0 \\
\vdots \\
0 \\
\end{pmatrix}~~ [c_{k+1}].\]
We get:
\[ \pm 1 \equiv b_{k+1}...b_p \det \begin{pmatrix}
B_1 + c_1 N_1  & \\
\vdots &  \\
B_k+c_k N_k  & \\
0 & I_{p-k} \\
\end{pmatrix}
 ~~ [c_{k+1}].\]
It shows that $b_{k+1}$ and $c_{k+1}$ are coprime.

The sufficient part reduces to some computations and use of B\'ezout's theorem.
We know that the problem is unchanged by the action of $GL_p(\mathbb{Z})^3$ previously described, but also of course, by the action $(B,C) \mapsto (B+CY,C)$ for any matrix $Y$ in $\Mp$. 
Adding the two invariance properties together, we can consider any matrix $BU+CY$ instead of $B$, where $Y$ is in $\Mp$ and $U$ is invertible. 
As $b_k$ and $c_k$ are coprime, there are integers $u_k$ and $v_k$ for each $k$ such that $u_k b_k+v_k c_k =1$.
We say that a matrix is \emph{upper unitriangular} if its diagonal coefficients are $1$ and if it is an upper triangular matrix.
Consider an invertible matrix $U$ which is upper unitriangular and whose coefficients above the diagonal are all $0$ except on the first row for which it is defined to be:
\[U_{1,j}=-u_1 B_{1,j}.\]
Let also $Y$ be an upper triangular matrix whose diagonal elements are all $0$ and whose coefficients above the diagonal are all zero except on the first row for which it is defined to be:
\[Y_{i,j}=-v_1 B_{1,j}, \]
for $j\geq 2$.

Then, we can see that all coefficients on the first row  and off the diagonal of $BU+CY$ are zero. 
The coefficient on the diagonal still being $b_1$.
We then can iterate the same operation on the $j$-th row for $j$ from $2$ to $p$ to get a diagonal matrix being
\[\begin{pmatrix}
b_1    & 0      & \ldots & 0 \\
0      & \ddots &        & \vdots \\
\vdots &        & \ddots & 0 \\   
0      & \ldots & 0      & b_p \\
\end{pmatrix}.\]%
We then assume that $B$ is such a diagonal matrix.
We now do the following operations (multiplication of $B$ on the right by a unimodular matrix, multiplication of $B$ and $C$ on the left by a unimodular matrix, adding a matrix $CY$ to $B$):%
\begin{itemize}
\item $BX + CY \rightarrow B'$ where $X$ is the matrix 
\[X:=\begin{pmatrix}
1      & 0      & \ldots & \ldots & 0      \\
0      & \ddots & \ddots &        & \vdots \\ 
\vdots & \ddots & \ddots & 0      & \vdots      \\ 
\vdots &        & \ddots & 1      & 0    \\
0      & \ldots & \ldots & u_p      & 1      \\
\end{pmatrix}\] 
and $Y$ is the matrix 
\[Y:= \begin{pmatrix}
0      & \ldots & \ldots & \ldots & 0      \\
\vdots & \ddots &        &        & \vdots \\
\vdots &        & \ddots &        & \vdots     \\
\vdots &        &        & \ddots & 0    \\
0      & \ldots & \ldots & v_p    & 0      \\
\end{pmatrix}.\]
$B'$ then corresponds to the matrix 
\[B':= \begin{pmatrix}
b_1    & 0      & \ldots & \ldots & 0      \\
0      & \ddots & \ddots &        & \vdots \\
\vdots & \ddots & \ddots & 0      & 0      \\
\vdots &        & \ddots & b_{p-1}& 0      \\
0      & \ldots & \ldots & 1      & b_p    \\
\end{pmatrix}.\]
\item We now do row operations, keeping in mind that row operations are made on both matrices $B$ and $C$. 
We add $-b_{p-1}$ times the $p$-th rows to the $p-1$-th ones. 
We change $B$ into $B'$ where we have:
\[ B':= \begin{pmatrix}
b_1    & 0      & \ldots & \ldots  & \ldots      & 0      \\
0      & \ddots & \ddots &         &             & \vdots \\
\vdots & \ddots & \ddots & 0       & 0           & 0      \\
\vdots &        & \ddots & b_{p-1} & 0           & 0      \\
\vdots &        &        & 0       & 0           & -b_p b_{p-1} \\
0      & \ldots & \ldots & 0       & 1           & b_p    \\
\end{pmatrix}\]
and for $C$:
\[C':= \begin{pmatrix}
c_1    & 0      & \ldots & \ldots      & 0      \\
0      & \ddots & \ddots &             & \vdots \\
\vdots & \ddots & \ddots & 0           & 0      \\
\vdots &        & \ddots & c_{p-1}     & -b_{p-1}c_p  \\
0      & \ldots & \ldots & 0           & c_p    \\
\end{pmatrix}.\]

\item We can do operations on the columns of $B$ and add $b_r$ times the $r-1$-th column to the $r$-th one to cancel $b_r$. 
We can then switch these two last columns and get for the matrix $B$:
\[ B':= \begin{pmatrix}
b_1    & 0      & \ldots & \ldots      & 0      \\
0      & \ddots & \ddots &             & \vdots \\
\vdots & \ddots & \ddots & 0           & 0      \\
\vdots &        & \ddots & -b_{p-1}b_p & 0      \\
0      & \ldots & \ldots & 0           & 1      \\
\end{pmatrix}.\]
Moreover, as $c_{r-1}$ divides $c_r$, we can cancel the $(r-1,r)$ coefficient of $C$ by column operations to get back to:
\[ C':= \begin{pmatrix}
c_1     & 0      & \ldots & 0      \\
0       & c_2    & \ddots & \vdots \\ 
\vdots  & \ddots & \ddots & 0      \\ 
0       & \ldots & 0      & c_p    \\
\end{pmatrix}.\] 

\item We have that $b_p$ and $c_p$ are coprime and that $c_{p-1}$ divides $c_p$. Thus $b_p$ and $c_{p-1}$ are also coprime.
The hypotheses implies then that $-b_{p-1}b_{p}$ is relatively prime with $c_{p-1}$. 
The same operations can be carried out on the submatrices of $B$ and $C$ constituted by the $p-1$-th columns and $p-1$-th rows. 
An induction then leads to a matrix $B$ of the following form: 
\[ B':= \begin{pmatrix}
\pm \det(B) & 0      & \ldots & 0      \\
0       & 1      & \ddots & \vdots \\ 
\vdots  & \ddots & \ddots & 0      \\ 
0       & \ldots & 0      & 1      \\
\end{pmatrix},\]
and the matrix $C$ remains unchanged.
\end{itemize}

As \[\det (B) \equiv \pm 1 ~~ [c_1],\] we can finally substract to $B'$ the matrix $CX'$ where coefficients of $X'$ are zero every where but in position $(1,1)$ for which it is $\pm\frac{\det(B) \pm 1}{c_1}$. 
We then get the identity matrix and prove the theorem.
\end{proof}

We now refine the condition "\emph{$b_j$ and $c_j$ are coprime for all $j$}".

\begin{proposition}
\label{simplprop}
Let $B$ and $C$ be two matrices in $\Mp$, such that $B$ is upper triangular and $C$ is in S.N.F..
The $j$-th diagonal coefficient of $B$ (resp. $C$) is denoted $b_j$ (resp. $c_j$).
The two following properties are equivalent:
\begin{itemize}
 \item $b_j$ and $c_j$ are coprime for all $j$,
 \item \[d_p \left(\begin{pmatrix}
B & C \\
\end{pmatrix}\right)=1.\]
Equivalently, the matrix $\begin{pmatrix}
B & C \\
\end{pmatrix}$ is surjective.
\end{itemize}
\end{proposition}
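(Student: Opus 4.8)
The plan is to combine the equivalence, already recorded just above the statement, between $d_p\big(\begin{pmatrix}B & C\end{pmatrix}\big)=1$ and the surjectivity of $\begin{pmatrix}B & C\end{pmatrix}\colon\mathbb{Z}^{2p}\to\mathbb{Z}^p$ with the elementary fact that an integral matrix is surjective if and only if it is surjective modulo every prime $\pi$: the cokernel $\coker\begin{pmatrix}B & C\end{pmatrix}$ is a finitely generated abelian group, $\coker\begin{pmatrix}\overline{B} & \overline{C}\end{pmatrix}=\coker\begin{pmatrix}B & C\end{pmatrix}\otimes\mathbb{F}_\pi$ by right-exactness, and a finitely generated abelian group vanishes as soon as it is killed by reduction modulo every prime. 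Writing $\overline{\phantom{X}}$ for reduction modulo a fixed prime $\pi$ and $\mathbb{F}_\pi=\mathbb{Z}/(\pi)$, it therefore suffices to prove that $\begin{pmatrix}\overline{B} & \overline{C}\end{pmatrix}$ is surjective over $\mathbb{F}_\pi$ for every $\pi$ if and only if $\gcd(b_j,c_j)=1$ for all $j$. Since $C$ is in S.N.F., $\overline{C}=\operatorname{diag}(\overline{c_1},\dots,\overline{c_p})$, so the image $V$ of $\begin{pmatrix}\overline{B} & \overline{C}\end{pmatrix}$ equals $\Ima(\overline{B})+\operatorname{span}\{e_j : \pi\nmid c_j\}$, where $e_1,\dots,e_p$ is the standard basis.

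For the ``if'' direction I would assume $\gcd(b_j,c_j)=1$ for all $j$ and prove $e_j\in V$ by induction on $j$. Because $B$ is upper triangular, its $j$-th column is $\overline{b_j}\,e_j+\sum_{i<j}\overline{B_{ij}}\,e_i$, so the inductive hypothesis gives $\overline{b_j}\,e_j\in V$; also $\overline{c_j}\,e_j\in V$, being the $j$-th column of $\overline{C}$; and $\gcd(b_j,c_j)=1$ forces $\overline{b_j}\neq 0$ or $\overline{c_j}\neq 0$ in $\mathbb{F}_\pi$, whence $e_j\in V$. Thus $V=\mathbb{F}_\pi^p$ for every $\pi$, i.e. $\begin{pmatrix}B & C\end{pmatrix}$ is surjective.

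For the converse I would argue by contraposition: suppose some prime $\pi$ divides $\gcd(b_j,c_j)$ for some $j$. Then $\pi\mid c_i$ for all $i\geq j$ (as $c_j\mid c_{j+1}\mid\cdots\mid c_p$), so rows $j,\dots,p$ of $\overline{C}$ vanish; and by upper-triangularity columns $1,\dots,j-1$ of $\overline{B}$ are supported in rows $1,\dots,j-1$, so on rows $j,\dots,p$ only columns $j,\dots,p$ of $\overline{B}$ survive, and there they form the trailing principal upper-triangular block with diagonal $\overline{b_j},\dots,\overline{b_p}$, which is singular since $\overline{b_j}=0$. Hence the image of the submatrix of $\begin{pmatrix}\overline{B} & \overline{C}\end{pmatrix}$ on rows $j,\dots,p$ — equivalently the image of $\begin{pmatrix}\overline{B} & \overline{C}\end{pmatrix}$ after projecting $\mathbb{F}_\pi^p$ onto its last $p-j+1$ coordinates — is a proper subspace of $\mathbb{F}_\pi^{p-j+1}$, so $\begin{pmatrix}B & C\end{pmatrix}$ is not surjective over $\mathbb{F}_\pi$. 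I expect the only point requiring care to be the bookkeeping in this last step, namely identifying precisely which rows and columns of the triangular matrix $B$ remain nonzero after discarding the rows forced to vanish by $\pi\mid c_i$; everything else is a short linear-algebra computation over a field.
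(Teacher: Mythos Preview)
Your proof is correct and takes a genuinely different route from the paper's.

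The paper proves the forward implication by invoking the explicit matrix reductions carried out in the proof of Lemma~\ref{unim}: those row and column operations transform $(B\;C)$ into $(B'\;C')$ with $B'=\operatorname{diag}(\pm\det B,1,\ldots,1)$ and $C'$ still in S.N.F., from which surjectivity is read off. For the converse the paper argues directly with minors: any $p\times p$ submatrix of $(B\;C)$ with nonzero determinant must use, for each index $j$, either the $j$-th column of $B$ or the $j$-th column of $C$ (by triangularity/diagonality), and then the common factor $\gcd(b_\ell,c_\ell)$ divides every such minor.

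You instead reduce modulo each prime $\pi$ and argue by linear algebra over $\mathbb{F}_\pi$: an inductive sweep using upper-triangularity for the ``if'' direction, and a rank count on the trailing block after projecting onto rows $j,\ldots,p$ for the contrapositive. This is cleaner and fully self-contained---it does not rely on the lengthy manipulations of Lemma~\ref{unim}---while the paper's approach has the advantage of reusing work already done and keeping everything phrased in terms of determinantal divisors, which is the language of the surrounding section.
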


\begin{proof}
If $b_j$ and $c_j$ are coprime for all $j$, then we saw in the proof of Lemma \ref{unim} that the matrix $\begin{pmatrix}
B & C \\
\end{pmatrix}$ can be put in the form $\begin{pmatrix}
B' & C' \\
\end{pmatrix}$ where we have 
\[ B':= \begin{pmatrix}
\pm \det(B) & 0      & \ldots & 0      \\
0       & 1      & \ddots & \vdots \\ 
\vdots  & \ddots & \ddots & 0      \\ 
0       & \ldots & 0      & 1      \\
\end{pmatrix},\]
and $C'$ is the Smith normal form of $C$, modulo change of basis modifying $B$ and $C$ separately.
As $c_j$ and $b_j$ are coprime and $c_j$ divides $c_{j+1}$ for all $j$, we have that $c_1$ and $\det(B)$ are coprime. 
Thus, 
$\begin{pmatrix}
B' & C' \\
\end{pmatrix}$
 is surjective, and 
$\begin{pmatrix}
B & C \\
\end{pmatrix}$ 
as well.

We prove the other ?sense?.
We can also assume that $B$ is upper triangular and $C$ in Smith normal form. 
As earlier, we use the notation $b_j$ (resp. $c_j$) for the diagonal coefficients of $B$ (resp. $C$). 
Assume that for some $\ell$, $b_{\ell}$ and $c_{\ell}$ are not coprime.
As $c_{\ell}$ divides $c_m$ for $m \geq \ell$, the integers $b_{\ell}$ and $c_m$ are not coprime.
Let $D$ be a submatrix of size $p \times p$ of $\begin{pmatrix}
B & C \\
\end{pmatrix}$ whose determinant is not $0$.
Then, for all $j$ between $1$ and $p$, the matrix $D$ must have the $j$-th column of $B$ or the $j$-th column of $C$ as one of its column. 
If not, we would have $\det (D) =0$ as $B$ is upper triangular and $C$ is diagonal.
In particular, $D$  must have the $\ell$-th column of $B$ or the $\ell$-th column of $C$ as one of its column.
Thus, $\det (D)$ is divisible by the g.c.d. of $b_{\ell}$ and $c_{\ell}$, which is not $1$.
Hence, all submatrix of $\begin{pmatrix}
B & C \\
\end{pmatrix}$ is divisible by this g.c.d. which implies that $d_p \left(\begin{pmatrix}
B & C \\
\end{pmatrix}\right)$ is not $1$ and finally that $\begin{pmatrix}
B & C \\
\end{pmatrix}$ is not surjective.

\end{proof}

We now go back to the problem of extending non-critically a germ $\tilde{f}$ such that $f$ has only one maximum, one minimum and critical points of indices $k$ and $k+1$ where $2\leq k \leq n-2$. 
We also assume that all points of index $k$ (resp. $k+1$) and label $+$ have critical values higher than those of points of index $k$ (resp. $k+1$) and label $-$. 
Thus, for $j \in \{k,k+1\}$, the group $G_{j}(\tilde{f})$ can be identified to $\Hom (\mathbb{Z}\mathcal{C}^+_{j}(\tilde{f}), \mathbb{Z}\mathcal{C}^-_{j}(\tilde{f}))$, through the identification of an element $M\in G_{j}(\tilde{f})$ with its down-left submatrix $N$.

We are in position to prove the main theorem of the section, restated below:
\begin{theorem}
\label{2indices}
  $\tilde{f}$ extends non-critically to a function $F$ on the disk $\Dn$ if and only if 
  $p_k$ is the rank of $\mathbb{Z}\mathcal{C}_{k+1}^+(\tilde{f})$ and that 
\[\det (\partial_{++,k+1}) \equiv \pm 1 ~[d_1(\partial_{+-,k+1})].\]
\end{theorem}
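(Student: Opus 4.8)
\emph{Step 1: reduce to Problem $\Omega$.} The plan is first to show that, in this two-index situation, both the necessary condition of Theorem \ref{th1} and the conclusion of the sufficiency theorem proved above collapse to a single demand about $\partial_{++,k+1}$. Under the standing hypotheses the Morse complex $\mathbb{Z}\mathcal{C}(f)$ has generators only in degrees $0$, $k$, $k+1$ and $n$; since $2\le k\le n-2$ one has $\partial_j=0$ for every $j\ne k+1$ and $\partial(max)=0$ because $H_n(\Sn)\simeq\mathbb{Z}$, so the whole boundary operator is recorded by $\partial_{k+1}$, and $H_k(\Sn)=0$ forces $\partial_{k+1}$ to be surjective onto $\mathbb{Z}\mathcal{C}_k(f)$; in particular $(\partial_{++,k+1}\ \partial_{+-,k+1})$ is surjective onto $\mathbb{Z}\mathcal{C}^+_k(\tilde f)$. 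Moreover, since $\mathbb{Z}\mathcal{C}^+(\tilde f)$ must carry $H_n\simeq\mathbb{Z}$, the maximum is labeled $+$ and the minimum $-$, so in the $(++)$-complex the only possibly non-zero differential sits in degree $k+1$. Hence property $(\mathcal P)$ holds for $\tilde f$ if and only if there is an integral matrix $N=N_{k+1}$ — which by the label hypothesis at index $k+1$ is an arbitrary element of $\Hom(\mathbb{Z}\mathcal{C}^+_{k+1}(\tilde f),\mathbb{Z}\mathcal{C}^-_{k+1}(\tilde f))$ — such that $\partial_{++,k+1}-\partial_{+-,k+1}N$ is invertible: this makes $H_k$ and $H_{k+1}$ of the $(++)$-complex vanish, forces $p_{k+1}=p_k$, and all the equations $(M\partial M^{-1})_{-+,j}=0$ then hold automatically for $j\ne k+1$, while for $j=k+1$ one solves $N_k(\partial_{++,k+1}-\partial_{+-,k+1}N)=\partial_{--,k+1}N-\partial_{-+,k+1}$ for $N_k$, the solution being integral because the right-hand factor is unimodular. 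By Theorem \ref{th1} a non-critical extension implies property $(\mathcal P)$, and by the two-index sufficiency theorem above (using $n\ge 6$) the displayed data conversely yields a non-critical extension. Thus $\tilde f$ extends non-critically if and only if $p_k$ is the rank of $\mathbb{Z}\mathcal{C}^+_{k+1}(\tilde f)$ and there is an integral $N$ with $\partial_{++,k+1}+\partial_{+-,k+1}N$ invertible (the sign of $N$ being immaterial): this is exactly Problem $\Omega$ with $B=\partial_{++,k+1}$ and $C=\partial_{+-,k+1}$.

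\emph{Step 2: solve this instance of $\Omega$.} Using the invariance of Problem $\Omega$ under the three unimodular actions, I would put $C$ into Smith normal form — absorbing the left factor into $B$ — and then bring the resulting $B$ into upper triangular (Hermite) form by a right multiplication; after replacing $C$, if its shape requires, by a square matrix with the same column lattice, we land in the hypotheses of Lemma \ref{unim}. That lemma says such an $N$ exists if and only if $\det(B)\equiv\pm 1~[c_1]$ and $\gcd(b_j,c_j)=1$ for all $j$, where $b_j$, $c_j$ are the diagonal entries of the transformed $B$ and $C$. Since $c_1=d_1(\partial_{+-,k+1})$ and $\det(B)=\pm\det(\partial_{++,k+1})$ are invariant under the actions, the congruence is precisely $\det(\partial_{++,k+1})\equiv\pm 1~[d_1(\partial_{+-,k+1})]$. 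By Proposition \ref{simplprop}, "$\gcd(b_j,c_j)=1$ for all $j$" is equivalent to "$(B\ C)$ is surjective", and surjectivity is preserved by the actions, so it amounts to surjectivity of $(\partial_{++,k+1}\ \partial_{+-,k+1})$, which was established in Step 1 as a consequence of $H_k(\Sn)=0$. Hence the coprimality clause is automatic, only the congruence remains, and together with $p_{k+1}=p_k$ this is the assertion of the theorem.

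The main obstacle is Step 1: checking that in the two-index case property $(\mathcal P)$ and the conclusion of the sufficiency theorem both reduce to the single requirement that $\partial_{++,k+1}+\partial_{+-,k+1}N$ be invertible for some $N$, i.e. that the off-diagonal vanishing $(M\partial M^{-1})_{-+}=0$ and the pieces of the $(++)$-complex in degrees other than $k+1$ impose nothing new. Everything after this is bookkeeping: the unimodular normalizations, the appeal to Lemma \ref{unim}, and the observation — via Proposition \ref{simplprop} — that surjectivity of $\partial_{k+1}$ onto $\mathbb{Z}\mathcal{C}_k(f)$, forced by $H_k(\Sn)=0$, is exactly what kills the coprimality condition of Lemma \ref{unim}.
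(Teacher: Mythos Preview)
Your proposal is correct and follows essentially the same route as the paper: both directions are reduced to Problem~$\Omega$ for $B=\partial_{++,k+1}$ and $C=\partial_{+-,k+1}$ via Theorem~\ref{th1} and the two-index sufficiency theorem, and then Lemma~\ref{unim} together with Proposition~\ref{simplprop} is applied, the coprimality clause being killed by the surjectivity of $\partial_{k+1}$ coming from $H_k(\Sn)=0$. The only cosmetic difference is that the paper, in the sufficiency direction, re-performs the handle slides explicitly (using Lemma~\ref{Freed+} to normalize $\partial_{++,k+1}$ to the identity and then choosing $N_k=-\partial^1_{-+,k+1}$), whereas you invoke the already-proved sufficiency theorem directly; your packaging is arguably cleaner, and your remark about padding or truncating $C$ to a square matrix with the same column lattice before applying Lemma~\ref{unim} is a detail the paper leaves implicit.
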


Notice that the boundary operator $\partial_j$ is non-zero only for $j=k+1$.

\begin{proof}
Assume that the germ extends non-critically.
 Using Theorem \ref{th1}, let \[\partial'_{++,k+1}:=\partial_{++,k+1}+\partial_{+-,k+1}N_{k+1}\] be a matrix such that the chain complex
 \[0 \to \mathbb{Z}\mathcal{C}^+_{k+1}(\tilde{f}) \overset{\partial'_{++,k+1}}{\longrightarrow} \mathbb{Z}\mathcal{C}^+_{k+1}(\tilde{f}) \to 0\] is acyclic.
Then $\partial'_{++,k+1}$ must be an isomorphism implying that $p_k$ is the rank of $\mathbb{Z}\mathcal{C}^+_{k+1}(\tilde{f})$
and that there is a matrix $N_{k+1}$ such that $\partial_{++,k+1}+\partial_{+-,k+1}N_{k+1}$ is invertible.
Then \[\det (\partial_{++,k+1}) \equiv \pm 1 ~[d_1(\partial_{+-,k+1})].\]

Assume now that $p_k$ is the rank of $\mathbb{Z}\mathcal{C}_{k+1}^+(\tilde{f})$ and that 
\[\det (\partial_{++,k+1}) \equiv \pm 1 ~[d_1(\partial_{+-,k+1})].\]
From Proposition \ref{simplprop}, we know that there is a matrix $N_{k+1}$ such that $\partial_{++,k+1}-\partial_{+-,k+1}N_{k+1}$ is invertible if and only if the matrix  $\begin{pmatrix}
\partial_{++,k+1} & \partial_{+-,k+1}
\end{pmatrix}$ is surjective.
The complex 
\[ 0 \to \mathbb{Z}\mathcal{C}_{k+1}(f) \overset{\partial_{k+1}}{\rightarrow}  \mathbb{Z}\mathcal{C}_{k}(f) \to 0 \] is acyclic, since it comes from a chain complex giving the homology of the sphere $\Sn$.
Thus, $\partial_{k+1}$ is invertible, and in particular surjective.
Hence, the matrix $\begin{pmatrix}
\partial_{++,k+1} & \partial_{+-,k+1}
\end{pmatrix}$ is surjective as well.
Let now $N_{k+1}$ be such that $\partial_{++,k+1}+\partial_{+-,k+1}N_{k+1}$ is invertible.
Make all the handle slides corresponding to $N_{k+1}$.
Using Proposition \ref{Freed+}, we can make handle slides between points of index $k+1$ and label $+$ to have a pseudo-gradient $X^1$ for which $\partial^1_{++,k+1}$ is the identity. 
As $G_k(\tilde{f})$ is maximal, we can make handle slides of points of index $k$ and label $+$ over points of index $k$ and label $-$ corresponding to the matrix $N_k = - \partial^1_{-+,k+1}$. 
It leads to a pseudo-gradient $X^2$ for which $\partial^2_{++,k+1} = I_{p_k}$ and $\partial^2_{-+,k+1}=0$.
Finally, we can use Theorem \ref{2indices} and conclude.
\end{proof}

\begin{acknow}
I would like to thank J.-C. Sikorav, M. Boileau and J. Barge for their comments and the fruitful discussions I had with them.\\
I am also grateful to E. Ghys for his numerous helpful comments, his advice and for his support.\\
Lastly, I would like to thank F. Laudenbach for the interest he had for this work, for his comments, advice and all the precious discussions I had with him.
\end{acknow}
  \newpage

\bibliographystyle{plain}
\bibliography{morsec}

\begin{address}
\noindent
UMPA - UMR 5669 CNRS \\
ENS de Lyon (site Sciences)\\
46, all\'ee d'Italie\\
69364 Lyon Cedex 07 \\
France;
\begin{email}
\href{valentin.seigneur@ens-lyon.fr}{valentin.seigneur@ens-lyon.fr}
\end{email}
\end{address}
\end{document}